\title{Weighted projective lines and rational surface singularities}
\author{Osamu Iyama and Michael Wemyss}
\institution{Graduate School of Mathematics, Nagoya University, Chikusa-ku, Nagoya, 464-8602, Japan}\\
\email{iyama@math.nagoya-u.ac.jp}} \\
\institution{School of Mathematics and Statistics, University of Glasgow, University Place, Glasgow, G12 8QQ, UK}\\
\email{michael.wemyss@glasgow.ac.uk}}
\date{\vspace{-5ex}} 
\journal{\'Epijournal de G\'eom\'etrie Alg\'ebrique} 
\newdimen\origiwspc
\font
\numberwithin{equation}{section}
\renewcommand{\p@equation}{\arabic{section}.\Alph{equation}\expandafter\@gobble}
\setlist[enumerate,1]{label={\rm (\arabic*)}}
\renewcommand{\p@enumi}{\arabic{enumi}\expandafter\@gobble}
\newcommand{\colonequals}{:=}
\newcommand{\qedhere}{}
\tikzset{>=stealth',
       cvertex/.style={circle,draw=black,inner sep=1pt,outer sep=3pt},
       vertex/.style={circle,fill=black,inner sep=1pt,outer sep=3pt},
       star/.style={circle,fill=yellow,inner sep=0.75pt,outer sep=0.75pt},
       tvertex/.style={inner sep=1pt,font=\scriptsize},
       gap/.style={inner sep=0.5pt,fill=white}}
\tikzstyle{mybox} = [draw=black, fill=blue!10, very thick,
\tikzstyle{boxtitle} =[fill=blue!50, text=white,rectangle,rounded corners]
\newtheorem{thm}{Theorem}[section]
\newtheorem{prop}[thm]{Proposition}
\newtheorem{lemma}[thm]{Lemma}
\newtheorem{defin}[thm]{Definition}
\newtheorem{cor}[thm]{Corollary}
\newtheorem{example}[thm]{Example} 
\newtheorem{notation}[thm]{Notation}
\newtheorem{remark}[thm]{Remark}
\renewcommand{\p@subsection}{\arabic{section}.\arabic{subsection}\expandafter\@gobble}
\renewcommand{\t}[1]{\textnormal{#1}}
\newcommand{\looptop}[2]{\xy \SelectTips{cm}{10}
\POS(0,0) \endxy}
\newcommand{\ve}{\upvarepsilon}
\def\top{\mathop{\rm top}\nolimits}
\def\op{\mathop{\rm op}\nolimits}
\def\GL{\mathop{\rm GL}\nolimits}
\def\CM{\mathop{\sf CM}\nolimits}
\def\SCM{\mathop{\sf SCM}\nolimits}
\def\mod{\mathop{\sf mod}\nolimits}
\def\gr{\mathop{\sf gr}\nolimits}
\def\qgr{\mathop{\sf qgr}\nolimits}
\def\Qcoh{\mathop{\sf Qcoh}\nolimits}
\def\coh{\mathop{\sf coh}\nolimits}
\def\Mod{\mathop{\sf Mod}\nolimits}
\def\Hom{\mathop{\rm Hom}\nolimits}
\def\RHom{\mathop{\rm {\bf R}Hom}\nolimits}
\def\End{\mathop{\rm End}\nolimits}
\def\Ext{\mathop{\rm Ext}\nolimits}
\def\add{\mathop{\sf add}\nolimits}
\def\Ker{\mathop{\rm Ker}\nolimits}
\def\Im{\mathop{\rm Im}\nolimits}
\def\Sing{\mathop{\rm Sing}\nolimits}
\def\Spec{\mathop{\rm Spec}\nolimits}
\def\Max{\mathop{\rm Max}\nolimits}
\def\gl{\mathop{\rm gl.dim}\nolimits}
\def\D{\mathop{\rm{D}^{}}\nolimits}
\def\Db{\mathop{\rm{D}^b}\nolimits}
\def\vect{\mathop{{\sf vect}}\nolimits}
\def\inc{\mathop{\rm{inc}}\nolimits}
\newcommand{\p}{\mathfrak{p}}
\newcommand{\m}{\mathfrak{m}}
\def\bl{\mathop{\boldsymbol{\lambda}}\nolimits}
\def\bm{\mathop{\boldsymbol{\mu}}\nolimits}
\def\bp{\mathop{\mathbf{{}_{}p}}\nolimits}
\def\bq{\mathop{\mathbf{{}_{}q}}\nolimits}
\newcommand{\KK}{ \Bbbk}
\newcommand{\bL}{\mathbb{L}}
\newcommand{\bN}{\mathbb{N}}
\newcommand{\bP}{\mathbb{P}}
\newcommand{\bX}{\mathbb{X}}
\newcommand{\bZ}{\mathbb{Z}}
\newcommand{\bT}{\mathbb{T}}
\newcommand{\bTot}{\mathbb{T}\mathrm{ot}}
\newcommand{\oc}{{\vec{c}}}
\newcommand{\ox}{{\vec{x}}}
\newcommand{\os}{{\vec{s}}}
\newcommand{\ow}{{\vec{\omega}}}
\newcommand{\oy}{{\vec{y}}}
\newcommand{\oz}{{\vec{z}}}
\newcommand{\osc}{{\vec{\mathsf{c}}}}
\newcommand{\osx}{{\vec{\mathsf{x}}}}
\newcommand{\osym}{{\vec{\mathsf{y}}_m}}
\newcommand{\osyl}{{\vec{\mathsf{y}}_\ell}}
\renewcommand{\sp}{{\mathsf{p}}}
\newcommand{\st}{{\mathsf{t}}}
\newcommand{\sa}{{\mathsf{a}}}
\newcommand{\sx}{{\mathsf{x}}}
\newcommand{\cC}{\mathcal{C}}
\newcommand{\cE}{\mathcal{E}}
\newcommand{\cF}{\mathcal{F}}
\newcommand{\cK}{\mathcal{K}}
\newcommand{\cL}{\mathcal{L}}
\newcommand{\cM}{\mathcal{M}}
\newcommand{\cO}{\mathcal{O}}
\newcommand{\cS}{\mathcal{S}}
\newcommand{\cV}{\mathcal{V}}
\newcommand{\cZ}{\mathcal{Z}}
\def\bl{\mathop{\boldsymbol{\lambda}}\nolimits}
\def\bp{\mathop{\mathbf{{}_{}p}}\nolimits}
\def\Rq{\mathop{{\rm\bf R}q}\nolimits}
\def\RHom{\mathop{{\rm{\bf R}Hom}}\nolimits}
\def\sHom{\mathop{\mathcal{H}om}\nolimits}
\newcommand\RDerived[1]{\mathop{{\rm\bf R}{#1}}\nolimits}
\def\RHom{\mathop{{\rm{\bf R}Hom}}\nolimits}
\newcommand\Ltimes{\overset{\rm\bf L}{\otimes}}
\newcommand\Pos{\mathsf{GPos}}
\begin{document}


\maketitle



\begin{prelims}

\vspace{-0.55cm}

\def\abstractname{Abstract}
\abstract{In this paper we study rational surface singularities $R$ with star shaped dual graphs, and under very mild assumptions on the self-intersection numbers we give an explicit description of all their special Cohen--Macaulay modules.  We do this by realising $R$ as a certain $\mathbb{Z}$-graded Veronese subring $S^{\ox}$ of the homogeneous coordinate ring $S$ of the Geigle--Lenzing weighted projective line $\mathbb{X}$, and we realise the special CM modules as explicitly described summands of the canonical tilting bundle on $\mathbb{X}$.  We then give a second proof that these are special CM modules by comparing $\qgr S^{\ox}$ and $\coh\bX$, and we also give a necessary and sufficient combinatorial criterion for these to be equivalent categories.   In turn, we show that $\qgr S^{\ox}$ is equivalent to $\qgr\Upgamma$ where $\Upgamma$ is the corresponding reconstruction algebra, and that the degree zero piece of $\Upgamma$ coincides with Ringel's canonical algebra.   This implies that $\Upgamma$ contains the canonical algebra and furthermore $\qgr\Upgamma$ is derived equivalent to the canonical algebra, thus linking the reconstruction algebra of rational surface singularities to the canonical algebra of representation theory.}

\vspace{-0.1cm}

\keywords{Weighted projective line, rational surface singularities, reconstruction algebra, canonical algebra, tilting theory, Cohen-Macaulay modules}

\vspace{-0.1cm}

\MSCclass{32S25 (primary); 14E16, 16G50, 18E30 (secondary)}


\languagesection{Fran\c{c}ais}{%

\vspace{-0.2cm}
{\bf Titre. Droites projectives \`a poids et singularit\'es rationnelles de surfaces} \\[1mm]
{\bf R\'esum\'e.} Dans cet article, nous \'etudions les singularit\'es rationnelles de surfaces $R$ de graphes duaux \'etoil\'es. Sous de l\'eg\`eres hypoth\`eses sur les nombres d'auto-intersection, nous donnons une description explicite de tous leurs modules Cohen--Macaulay sp\'eciaux. Pour cela, nous r\'ealisons $R$ comme un certain sous-anneau de Veronese $\mathbb{Z}$-gradu\'e $S^{\ox}$ de l'anneau de coordonn\'ees homog\`enes $S$ de la droite projective pond\'er\'ee de Geigle--Lenzing $\mathbb{X}$ et nous r\'ealisons les modules CM sp\'eciaux comme des facteurs directs explicites du fibr\'e basculant canonique de $\mathbb{X}$. Nous donnons ensuite une seconde d\'emonstration du fait que ce sont des modules CM sp\'eciaux en comparant $\qgr S^{\ox}$ et $\coh\bX$, et nous \'enon\c{c}ons \'egalement un crit\`ere combinatoire n\'ecessaire et suffisant d'\'equivalence pour ces deux cat\'egories. En outre nous montrons que $\qgr S^{\ox}$ est \'equivalente \`a $\qgr\Upgamma$ o\`u $\Upgamma$ est l'alg\`ebre de reconstruction correspondante, et que la partie de degr\'e z\'ero de $\Upgamma$ co\"{\i}ncide avec l'alg\`ebre canonique de Ringel. Cela implique que $\Upgamma$ contient l'alg\`ebre canonique et de plus que $\qgr\Upgamma$ est \'equivalente au sens d\'eriv\'e \`a l'alg\`ebre canonique, reliant ainsi l'alg\`ebre de reconstruction des singularit\'es rationnelles de surfaces \`a la th\'eorie des repr\'esentations de l'alg\`ebre canonique.}

\end{prelims}


\newpage

\setcounter{tocdepth}{1} \tableofcontents

\section{Introduction}

\subsection{Motivation and Overview}\label{motivation section}
It is well known that any rational surface singularity has only finitely many indecomposable special CM modules, but it is in general a difficult task to classify and describe them explicitly.
In this paper we use the combinatorial structure encoded in the homogeneous coordinate ring $S$ of the Geigle--Lenzing weighted projective line $\bX$ to solve this problem for a large class of examples arising from star shaped dual graphs, extending our previous work \cite{IW} to cover a much larger class of varieties.  In the process, we link $S$, its Veronese subrings, the reconstruction algebra and the canonical algebra, through a range of categorical equivalences.

A hint of a connection between rational surface singularities and the canonical algebra can be found in the lecture notes \cite{Ringel}. In his study of the canonical algebra $\Lambda_{\bp,\bl}$, Ringel drew pictures \cite[p196]{Ringel} of canonical tilting bundles on $\bX$ for the cases $\bp=(2,3,3)$, $(2,3,4)$ and $(2,3,5)$, which correspond to Dynkin diagrams $E_6$, $E_7$ and $E_8$.  For example, in the $E_7$ case, Ringel's picture is the following.
\begin{equation}
\begin{array}{c}
\begin{tikzpicture}[xscale=0.9,yscale=0.9]
\draw[gray] (0,-2) -- (0.5,-2.5);
\draw[gray] (0,-1) -- (1.5,-2.5);
\draw[gray] (0,0) -- (2.5,-2.5);
\draw[gray] (0.5,0.5) -- (3.5,-2.5);
\draw[gray] (1.5,0.5) -- (4.5,-2.5);
\draw[gray] (2.5,0.5) -- (5.5,-2.5);
\draw[gray] (3.5,0.5) -- (6.5,-2.5);
\draw[gray] (4.5,0.5) -- (7.5,-2.5);
\draw[gray] (5.5,0.5) -- (8.5,-2.5);
\draw[gray] (6.5,0.5) -- (9.5,-2.5);
\draw[gray] (7.5,0.5) -- (10.5,-2.5);
\draw[gray] (8.5,0.5) -- (11.5,-2.5);
\draw[gray] (9.5,0.5) -- (12.5,-2.5);
\draw[gray] (10.5,0.5) -- (12.5,-1.5);
\draw[gray] (11.5,0.5) -- (12.5,-0.5);
\draw[gray] (0,0) -- (0.5,0.5);
\draw[gray] (0,-1) -- (1.5,0.5);
\draw[gray] (0,-2) -- (2.5,0.5);
\draw[gray] (0.5,-2.5) -- (3.5,0.5);
\draw[gray] (1.5,-2.5) -- (4.5,0.5);
\draw[gray] (2.5,-2.5) -- (5.5,0.5);
\draw[gray] (3.5,-2.5) -- (6.5,0.5);
\draw[gray] (4.5,-2.5) -- (7.5,0.5);
\draw[gray] (5.5,-2.5) -- (8.5,0.5);
\draw[gray] (6.5,-2.5) -- (9.5,0.5);
\draw[gray] (7.5,-2.5) -- (10.5,0.5);
\draw[gray] (8.5,-2.5) -- (11.5,0.5);
\draw[gray] (9.5,-2.5) -- (12.5,0.5);
\draw[gray] (10.5,-2.5) -- (12.5,-0.5);
\draw[gray] (11.5,-2.5) -- (12.5,-1.5);
\draw[gray] (0,-1) -- (0.5,-1.1) -- (1,-1) -- (1.5,-1.1) -- (2,-1) -- (2.5,-1.1) -- (3,-1) -- (3.5,-1.1) -- (4,-1) -- (4.5,-1.1) -- (5,-1) -- (5.5,-1.1) -- (6,-1) --(6.5,-1.1) -- (7,-1) -- (7.5,-1.1) -- (8,-1) -- (8.5,-1.1) -- (9,-1) -- (9.5,-1.1) --(10,-1) -- (10.5,-1.1) -- (11,-1) -- (11.5,-1.1) -- (12,-1) --  (12.5,-1.1);
\node (R0) at (0.5,0.5) [vertex] {};
\node (R1) at (1.5,0.5) [vertex] {};
\node (R2) at (2.5,0.5) [vertex] {};
\node (R3) at (3.5,0.5) [vertex] {};
\node (R4) at (4.5,0.5) [vertex] {};
\node (R5) at (5.5,0.5) [vertex] {};
\node (R6) at (6.5,0.5) [vertex] {};
\node (R7) at (7.5,0.5) [vertex] {};
\node (R8) at (8.5,0.5) [vertex] {};
\node (R9) at (9.5,0.5) [vertex] {};
\node (R10) at (10.5,0.5) [vertex] {};
\node (R11) at (11.5,0.5) [vertex] {};
\node (R12) at (12.5,0.5) [vertex] {};
\node (A0) at (0,0) [vertex] {};
\node (A1) at (1,0) [vertex] {};
\node (A2) at (2,0) [vertex] {};
\node (A3) at (3,0) [vertex] {};
\node (A4) at (4,0) [vertex] {};
\node (A5) at (5,0) [vertex] {};
\node (A6) at (6,0) [vertex] {};
\node (A7) at (7,0) [vertex] {};
\node (A8) at (8,0) [vertex] {};
\node (A9) at (9,0) [vertex] {};
\node (A10) at (10,0) [vertex] {};
\node (A11) at (11,0) [vertex] {};
\node (A12) at (12,0) [vertex] {};
\node (B0) at (0.5,-0.5) [vertex] {};
\node (B1) at (1.5,-0.5) [vertex] {};
\node (B2) at (2.5,-0.5) [vertex] {};
\node (B3) at (3.5,-0.5) [vertex] {};
\node (B4) at (4.5,-0.5) [vertex] {};
\node (B5) at (5.5,-0.5) [vertex] {};
\node (B6) at (6.5,-0.5) [vertex] {};
\node (B7) at (7.5,-0.5) [vertex] {};
\node (B8) at (8.5,-0.5) [vertex] {};
\node (B9) at (9.5,-0.5) [vertex] {};
\node (B10) at (10.5,-0.5) [vertex] {};
\node (B11) at (11.5,-0.5) [vertex] {};
\node (B12) at (12.5,-0.5) [vertex] {};
\node (C0) at (0,-1)  [vertex] {};
\node (C1) at (0.5,-1.1)  [vertex] {};
\node (C2) at (1,-1)  [vertex] {};
\node (C3) at (1.5,-1.1)  [vertex] {};
\node (C4) at (2,-1)  [vertex] {};
\node (C5) at (2.5,-1.1)  [vertex] {};
\node (C6) at (3,-1)  [vertex] {};
\node (C7) at (3.5,-1.1)  [vertex] {};
\node (C8) at (4,-1)  [vertex] {};
\node (C9) at (4.5,-1.1)  [vertex] {};
\node (C10) at (5,-1)  [vertex] {};
\node (C11) at (5.5,-1.1)  [vertex] {};
\node (C12) at (6,-1)  [vertex] {};
\node (C13) at (6.5,-1.1)  [vertex] {};
\node (C14) at (7,-1)  [vertex] {};
\node (C15) at (7.5,-1.1)  [vertex] {};
\node (C16) at (8,-1)  [vertex] {};
\node (C17) at (8.5,-1.1)  [vertex] {};
\node (C18) at (9,-1)  [vertex] {};
\node (C19) at (9.5,-1.1)  [vertex] {};
\node (C20) at (10,-1)  [vertex] {};
\node (C21) at (10.5,-1.1)  [vertex] {};
\node (C22) at (11,-1)  [vertex] {};
\node (C23) at (11.5,-1.1)  [vertex] {};
\node (C24) at (12,-1)  [vertex] {};
\node (C25) at (12.5,-1.1)  [vertex] {};
\node (D0) at (0.5,-1.5)  [vertex] {};
\node (D1) at (1.5,-1.5)  [vertex] {};
\node (D2) at (2.5,-1.5)  [vertex] {};
\node (D3) at (3.5,-1.5)  [vertex] {};
\node (D4) at (4.5,-1.5)  [vertex] {};
\node (D5) at (5.5,-1.5)  [vertex] {};
\node (D6) at (6.5,-1.5)  [vertex] {};
\node (D7) at (7.5,-1.5)  [vertex] {};
\node (D8) at (8.5,-1.5)  [vertex] {};
\node (D9) at (9.5,-1.5)  [vertex] {};
\node (D10) at (10.5,-1.5)  [vertex] {};
\node (D11) at (11.5,-1.5)  [vertex] {};
\node (D12) at (12.5,-1.5)  [vertex] {};
\node (E0) at (0,-2)  [vertex] {};
\node (E1) at (1,-2)  [vertex] {};
\node (E2) at (2,-2)  [vertex] {};
\node (E3) at (3,-2)  [vertex] {};
\node (E4) at (4,-2)  [vertex] {};
\node (E5) at (5,-2) [vertex] {};
\node (E6) at (6,-2)  [vertex] {};
\node (E7) at (7,-2)  [vertex] {};
\node (E8) at (8,-2)  [vertex] {};
\node (E9) at (9,-2) [vertex] {};
\node (E10) at (10,-2)  [vertex] {};
\node (E11) at (11,-2)  [vertex] {};
\node (E12) at (12,-2)  [vertex] {};
\node (F0) at (0.5,-2.5)  [vertex] {};
\node (F1) at (1.5,-2.5)  [vertex] {};
\node (F2) at (2.5,-2.5)  [vertex] {};
\node (F3) at (3.5,-2.5)  [vertex] {};
\node (F4) at (4.5,-2.5)  [vertex] {};
\node (F5) at (5.5,-2.5)  [vertex] {};
\node (F6) at (6.5,-2.5)  [vertex] {};
\node (F7) at (7.5,-2.5)  [vertex] {};
\node (F8) at (8.5,-2.5)  [vertex] {};
\node (F9) at (9.5,-2.5)  [vertex] {};
\node (F10) at (10.5,-2.5)  [vertex] {};
\node (F11) at (11.5,-2.5)  [vertex] {};
\node (F12) at (12.5,-2.5)  [vertex] {};
\draw (0.5,0.5) circle (3.5pt);
\node at (0.5,0.85) {$\scriptstyle 0$};
\draw (4.5,0.5) circle (3.5pt);
\node at (4.5,0.85) {$\scriptstyle a_1$};
\draw (6.5,0.5) circle (3.5pt);
\node at (6.5,0.85) {$\scriptstyle c_2$};
\draw (8.5,0.5) circle (3.5pt);
\node at (8.5,0.85) {$\scriptstyle a_2$};
\draw (12.5,0.5) circle (3.5pt);
\node at (12.5,0.85) {$\scriptstyle \omega$};
\draw (3.5,-2.5) circle (3.5pt);
\node at (3.5,-2.85) {$\scriptstyle c_1$};
\draw (6.5,-2.5) circle (3.5pt);
\node at (6.5,-2.85) {$\scriptstyle b$};
\draw (9.5,-2.5) circle (3.5pt);
\node at (9.5,-2.85) {$\scriptstyle c_3$};
\draw[->,line width=1pt] ($(R0) + (-65:4.5pt)$) -- ($(F3) + (155:4.5pt)$);
\draw[->,line width=1pt] (F3) -- (R6);
\draw[->,line width=1pt] (R6) -- (F9);
\draw[->,line width=1pt] ($(F9) + (25:4.5pt)$) -- ($(R12) + (-115:4.5pt)$);
\draw[->,line width=1pt,rounded corners] ($(R0) + (-45:4.5pt)$) -- ($(C4) + (-135:0pt)$) -- ($(C5)$) -- ($(C6)$) -- ($(A4)$) -- (F6);
\draw[->,line width=1pt,rounded corners] ($(R0) + (-25:4.5pt)$) -- ($(C4) + (25:2pt)$) -- ($(C5)+ (90:2pt)$) -- ($(C6)+ (90:2pt)$) -- ($(R4)+ (-155:4.5pt)$);
\draw[->,line width=1pt,rounded corners] ($(R4) + (-25:4.5pt)$) -- ($(C12) + (25:2pt)$) -- ($(C13)+ (90:2pt)$) -- ($(C14)+ (90:2pt)$) -- ($(R8)+ (-155:4.5pt)$);
\draw[->,line width=1pt,rounded corners] ($(R8) + (-25:4.5pt)$) -- ($(C20) + (25:2pt)$) -- ($(C21)+ (90:2pt)$) -- ($(C22)+ (90:2pt)$) -- ($(R12)+ (-155:4.5pt)$);
\draw[->,line width=1pt,rounded corners] (F6) -- ($(A9)$) --  ($(C20) + (-135:0pt)$) -- ($(C21)$) -- ($(C22)$) -- (R12);
\end{tikzpicture}
\end{array}\label{RingelPicture}
\end{equation}
What is remarkable is that all of Ringel's pictures are identical to ones the authors drew in \cite[\S7--9]{IW} when classifying special CM modules for certain families of quotient singularities $\KK[[x,y]]^G$ with $G\leq \GL(2,\KK)$. For example, \cite[8.2]{IW} contains the following picture, indicating the positions of special CM modules in the AR quiver of $\KK[[x,y]]^{\mathbb{O}_{13}}$.
\[
\begin{array}{c}
\begin{tikzpicture}[xscale=0.9,yscale=0.9]
\draw[gray] (0,-2) -- (0.5,-2.5);
\draw[gray] (0,-1) -- (1.5,-2.5);
\draw[gray] (0,0) -- (2.5,-2.5);
\draw[gray] (0.5,0.5) -- (3.5,-2.5);
\draw[gray] (1.5,0.5) -- (4.5,-2.5);
\draw[gray] (2.5,0.5) -- (5.5,-2.5);
\draw[gray] (3.5,0.5) -- (6.5,-2.5);
\draw[gray] (4.5,0.5) -- (7.5,-2.5);
\draw[gray] (5.5,0.5) -- (8.5,-2.5);
\draw[gray] (6.5,0.5) -- (9.5,-2.5);
\draw[gray] (7.5,0.5) -- (10.5,-2.5);
\draw[gray] (8.5,0.5) -- (11.5,-2.5);
\draw[gray] (9.5,0.5) -- (12.5,-2.5);
\draw[gray] (10.5,0.5) -- (12.5,-1.5);
\draw[gray] (11.5,0.5) -- (12.5,-0.5);
\draw[gray] (0,0) -- (0.5,0.5);
\draw[gray] (0,-1) -- (1.5,0.5);
\draw[gray] (0,-2) -- (2.5,0.5);
\draw[gray] (0.5,-2.5) -- (3.5,0.5);
\draw[gray] (1.5,-2.5) -- (4.5,0.5);
\draw[gray] (2.5,-2.5) -- (5.5,0.5);
\draw[gray] (3.5,-2.5) -- (6.5,0.5);
\draw[gray] (4.5,-2.5) -- (7.5,0.5);
\draw[gray] (5.5,-2.5) -- (8.5,0.5);
\draw[gray] (6.5,-2.5) -- (9.5,0.5);
\draw[gray] (7.5,-2.5) -- (10.5,0.5);
\draw[gray] (8.5,-2.5) -- (11.5,0.5);
\draw[gray] (9.5,-2.5) -- (12.5,0.5);
\draw[gray] (10.5,-2.5) -- (12.5,-0.5);
\draw[gray] (11.5,-2.5) -- (12.5,-1.5);
\draw[gray] (0,-1) -- (0.5,-1.1) -- (1,-1) -- (1.5,-1.1) -- (2,-1) -- (2.5,-1.1) -- (3,-1) -- (3.5,-1.1) -- (4,-1) -- (4.5,-1.1) -- (5,-1) -- (5.5,-1.1) -- (6,-1) --(6.5,-1.1) -- (7,-1) -- (7.5,-1.1) -- (8,-1) -- (8.5,-1.1) -- (9,-1) -- (9.5,-1.1) --(10,-1) -- (10.5,-1.1) -- (11,-1) -- (11.5,-1.1) -- (12,-1) --  (12.5,-1.1);
\node (R0) at (0.5,0.5) [gap] {$\scriptstyle R$};
\node (R1) at (1.5,0.5) [vertex] {};
\node (R2) at (2.5,0.5) [vertex] {};
\node (R3) at (3.5,0.5) [vertex] {};
\node (R4) at (4.5,0.5) [vertex] {};
\node (R5) at (5.5,0.5) [vertex] {};
\node (R6) at (6.5,0.5) [vertex] {};
\node (R7) at (7.5,0.5) [vertex] {};
\node (R8) at (8.5,0.5) [vertex] {};
\node (R9) at (9.5,0.5) [vertex] {};
\node (R10) at (10.5,0.5) [vertex] {};
\node (R11) at (11.5,0.5) [vertex] {};
\node (R12) at (12.5,0.5) [vertex] {};
\node (A0) at (0,0) [vertex] {};
\node (A1) at (1,0) [vertex] {};
\node (A2) at (2,0) [vertex] {};
\node (A3) at (3,0) [vertex] {};
\node (A4) at (4,0) [vertex] {};
\node (A5) at (5,0) [vertex] {};
\node (A6) at (6,0) [vertex] {};
\node (A7) at (7,0) [vertex] {};
\node (A8) at (8,0) [vertex] {};
\node (A9) at (9,0) [vertex] {};
\node (A10) at (10,0) [vertex] {};
\node (A11) at (11,0) [vertex] {};
\node (A12) at (12,0) [vertex] {};
\node (B0) at (0.5,-0.5) [vertex] {};
\node (B1) at (1.5,-0.5) [vertex] {};
\node (B2) at (2.5,-0.5) [vertex] {};
\node (B3) at (3.5,-0.5) [vertex] {};
\node (B4) at (4.5,-0.5) [vertex] {};
\node (B5) at (5.5,-0.5) [vertex] {};
\node (B6) at (6.5,-0.5) [vertex] {};
\node (B7) at (7.5,-0.5) [vertex] {};
\node (B8) at (8.5,-0.5) [vertex] {};
\node (B9) at (9.5,-0.5) [vertex] {};
\node (B10) at (10.5,-0.5) [vertex] {};
\node (B11) at (11.5,-0.5) [vertex] {};
\node (B12) at (12.5,-0.5) [vertex] {};
\node (C0) at (0,-1)  [vertex] {};
\node (C1) at (0.5,-1.1)  [vertex] {};
\node (C2) at (1,-1)  [vertex] {};
\node (C3) at (1.5,-1.1)  [vertex] {};
\node (C4) at (2,-1)  [vertex] {};
\node (C5) at (2.5,-1.1)  [vertex] {};
\node (C6) at (3,-1)  [vertex] {};
\node (C7) at (3.5,-1.1)  [vertex] {};
\node (C8) at (4,-1)  [vertex] {};
\node (C9) at (4.5,-1.1)  [vertex] {};
\node (C10) at (5,-1)  [vertex] {};
\node (C11) at (5.5,-1.1)  [vertex] {};
\node (C12) at (6,-1)  [vertex] {};
\node (C13) at (6.5,-1.1)  [vertex] {};
\node (C14) at (7,-1)  [vertex] {};
\node (C15) at (7.5,-1.1)  [vertex] {};
\node (C16) at (8,-1)  [vertex] {};
\node (C17) at (8.5,-1.1)  [vertex] {};
\node (C18) at (9,-1)  [vertex] {};
\node (C19) at (9.5,-1.1)  [vertex] {};
\node (C20) at (10,-1)  [vertex] {};
\node (C21) at (10.5,-1.1)  [vertex] {};
\node (C22) at (11,-1)  [vertex] {};
\node (C23) at (11.5,-1.1)  [vertex] {};
\node (C24) at (12,-1)  [vertex] {};
\node (C25) at (12.5,-1.1)  [vertex] {};
\node (D0) at (0.5,-1.5)  [vertex] {};
\node (D1) at (1.5,-1.5)  [vertex] {};
\node (D2) at (2.5,-1.5)  [vertex] {};
\node (D3) at (3.5,-1.5)  [vertex] {};
\node (D4) at (4.5,-1.5)  [vertex] {};
\node (D5) at (5.5,-1.5)  [vertex] {};
\node (D6) at (6.5,-1.5)  [vertex] {};
\node (D7) at (7.5,-1.5)  [vertex] {};
\node (D8) at (8.5,-1.5)  [vertex] {};
\node (D9) at (9.5,-1.5)  [vertex] {};
\node (D10) at (10.5,-1.5)  [vertex] {};
\node (D11) at (11.5,-1.5)  [vertex] {};
\node (D12) at (12.5,-1.5)  [vertex] {};
\node (E0) at (0,-2)  [vertex] {};
\node (E1) at (1,-2)  [vertex] {};
\node (E2) at (2,-2)  [vertex] {};
\node (E3) at (3,-2)  [vertex] {};
\node (E4) at (4,-2)  [vertex] {};
\node (E5) at (5,-2)  [vertex] {};
\node (E6) at (6,-2)  [vertex] {};
\node (E7) at (7,-2)  [vertex] {};
\node (E8) at (8,-2)  [vertex] {};
\node (E9) at (9,-2) [vertex] {};
\node (E10) at (10,-2)  [vertex] {};
\node (E11) at (11,-2)  [vertex] {};
\node (E12) at (12,-2)  [vertex] {};
\node (F0) at (0.5,-2.5)  [vertex] {};
\node (F1) at (1.5,-2.5)  [vertex] {};
\node (F2) at (2.5,-2.5)  [vertex] {};
\node (F3) at (3.5,-2.5)  [vertex] {};
\node (F4) at (4.5,-2.5)  [vertex] {};
\node (F5) at (5.5,-2.5)  [vertex] {};
\node (F6) at (6.5,-2.5)  [vertex] {};
\node (F7) at (7.5,-2.5)  [vertex] {};
\node (F8) at (8.5,-2.5)  [vertex] {};
\node (F9) at (9.5,-2.5)  [vertex] {};
\node (F10) at (10.5,-2.5)  [vertex] {};
\node (F11) at (11.5,-2.5)  [vertex] {};
\node (F12) at (12.5,-2.5)  [vertex] {};
\draw (0.5,0.5) circle (4.5pt);
\draw (4.5,0.5) circle (4.5pt);
\draw (6.5,0.5) circle (4.5pt);
\draw (8.5,0.5) circle (4.5pt);
\draw (12.5,0.5) circle (4.5pt);
\draw (3.5,-2.5) circle (4.5pt);
\draw (6.5,-2.5) circle (4.5pt);
\draw (9.5,-2.5) circle (4.5pt);
\end{tikzpicture}
\end{array}
\]
Further, although they were not drawn in \cite{IW}, the arrows in \eqref{RingelPicture} are implicit in the calculation of the quiver of the corresponding reconstruction algebra \cite[\S4]{WemGL2}. 
This paper grew out of trying to give a conceptual explanation for this coincidence, since a connection between the mathematics underpinning the two pictures did not seem to be known. 

In fact, the connection turns out to be explained by a very general phenomenon.  Recall first that one of the basic properties of the canonical algebra $\Lambda_{\bp,\bl}$ is that there is always a derived equivalence \cite{GL1}
\[
\Db(\coh\bX_{\bp,\bl})\simeq \Db(\mod \Lambda_{\bp,\bl})
\]
where $\coh\bX_{\bp,\bl}$ is the weighted projective line of Geigle--Lenzing (for details, see \S\ref{geom intro}).  Thus, to explain the above coincidence, we are led to consider the possibility of linking the weighted projective line, viewed as a Deligne--Mumford stack, to the study of rational surface singularities.  However, the weighted projective line $\bX_{\bp,\bl}$ cannot itself be the stack that we are after, since it only has dimension one, and rational surface singularities have, by definition, dimension two.

We need to increase the dimension, and the most naive way to do this is to consider the total space of a line bundle over $\bX_{\bp,\bl}$.  We thus choose any member of the grading group $\ox\in\bL$ and consider the total space stack $\bTot(\cO_{\bX}(-\ox))$ (for definition, see \S\ref{geom intro}).  From tilting on this and its coarse moduli, under mild assumptions we prove that the Veronese subring $S^{\ox}\colonequals \bigoplus_{i\in\bZ}S_{i\ox}$ is a weighted homogeneous rational surface singularity, giving the first concrete connection between the above two settings.  Furthermore, from the stack $\bTot(\cO_{\bX}(-\ox))$ we then describe the special CM $S^{\ox}$-modules, and give  precise information regarding the minimal resolution of $\Spec S^{\ox}$ and its derived category.  

Our results recover known special cases, such as the domestic case (corresponding to Dynkin diagrams), where it is known that $S^{-\ow}$ is a simple singularity.   In that setting there is an equivalence $\CM^{\bL}\!S\simeq\CM^{\bZ}\!S^{-\ow}$, and this leads us to investigate more general categorical equivalences.  We do this for very general $S$ and $\ox$, and through a range of categorical equivalences we are then able to relate $\CM^{\bZ}\!S^{\ox}$ and $\vect\bX$, which  finally allows us in Section~\ref{domestic section} to explain categorically why the above two pictures must be the same.

We now describe our results in detail.

\subsection{Veronese Subrings and Special CM modules}\label{geom intro}
Throughout, let $\KK$ denote an algebraically closed field of characteristic zero.  For any $n\geq 0$, choose positive integers $p_1,\hdots,p_n$ with all $p_i\geq 2$ and set $\bp\colonequals (p_1,\hdots,p_n)$.
Furthermore, choose pairwise distinct points $\lambda_1,\hdots,\lambda_n$ in $\bP^1$, and denote $\bl\colonequals (\lambda_1,\hdots,\lambda_n)$. Let $\ell_i(t_0,t_1)\in\KK[t_0,t_1]$ be the linear form defining $\lambda_i$, and write
\[
S_{\bp,\bl}=S\colonequals \frac{\KK[t_0,t_1,x_1,\hdots,x_n]}{(x_i^{p_i}-\ell_i(t_0,t_1)\mid 1\leq i\leq n)}.
\]
Moreover, let $\bL=\bL(p_1,\hdots,p_n)$ be the abelian group generated by the elements $\ox_1,\hdots,\ox_n$ subject to the relations
$p_1\ox_1=p_2\ox_2=\cdots =p_n\ox_n=:\oc$.  With this input $S$ is an $\bL$-graded algebra with $\deg x_i\colonequals \ox_i$ and $\deg t_j\colonequals \oc$, and $\bL$ is a rank one abelian group, possibly containing torsion. 
Often we normalize $\bl$ so that $\lambda_1=0$, $\lambda_2=\infty$ and $\lambda_3=1$, however it is important for changing parameters later that we allow ourselves flexibility.

From this, consider the stack
\[
\bX_{\bp,\bl}=\bX\colonequals [(\Spec S_{\bp,\bl}\backslash 0)/\Spec \KK\bL],
\]
with coarse moduli space denoted $X_{\bp,\bl}=X$.  It is well known that $X\cong \bP^1$, regardless of $\bp$ and $\bl$ (see~\ref{notation throughout}\eqref{notation throughout 2b}).

To increase the dimension we choose an element $\ox\in\bL$ and consider both the Veronese subring given by $S^{\ox}\colonequals \bigoplus_{i\in\bZ}S_{i\ox}$ and the total space stack
\[
\bT^{\ox} =  \bTot(\cO_{\bX_{\bp,\bl}}(-\ox))\colonequals [(\Spec S_{\bp,\bl}\backslash 0\times\Spec\KK[t])/\Spec \KK\bL],
\]
where $\bL$ acts on $t$ with weight $-\ox$.  Writing $\ox=\sum_{i=1}^na_i\ox_i+a\oc$ in normal form (see \ref{notation throughout}\eqref{notation throughout 2}), we show in \ref{sings on T} that the coarse moduli space $T^{\ox}$ is a surface containing a $\bP^1$, and on that $\bP^1$ complete locally the singularities of $T^{\ox}$ are of the form 
\begin{equation}\label{T picture}
\begin{array}{c}
\begin{tikzpicture}[scale=1.5] 
\draw (-0.2,0) to [bend left=5] node[pos=0.75,above] {$\scriptstyle \bP^1$} (4.2,0);
\filldraw [black] (0.0,0.01) circle (1pt);
\filldraw [black] (1,0.08) circle (1pt);
\filldraw [black] (2,0.11) circle (1pt);
\filldraw [black] (4,0.01) circle (1pt);
\node at (0,-0.15) {$\scriptstyle \frac{1}{p_1}(1,-a_1)$};
\node at (0,0.2) {$\scriptstyle \lambda_1$};
\node at (1,-0.1) {$\scriptstyle \frac{1}{p_2}(1,-a_2)$};
\node at (1,0.275) {$\scriptstyle \lambda_2$};
\node at (2,-0.075) {$\scriptstyle \frac{1}{p_3}(1,-a_3)$};
\node at (2,0.3) {$\scriptstyle \lambda_3$};
\node at (3,-0.1) {$\scriptstyle \hdots$};
\node at (4,-0.15) {$\scriptstyle \frac{1}{p_n}(1,-a_n)$};
\node at (4,0.2) {$\scriptstyle \lambda_n$};
\end{tikzpicture} 
\end{array}
\end{equation}
where for notation see \ref{cyclic quot def}. 

As is standard, positivity conditions on $\ox$ are needed in order to contract the zero section of $T^{\ox}$.  It turns out that the correct notion is to assume that $\ox\in\bL$ is not torsion, and further that $\ow-i\ox\notin\bL_+$ for all $i\geq 0$ (for the definition of $\ow$ and $\bL_+$ see \ref{notation throughout}\eqref{notation throughout 2}). Under this mild positivity restriction, we show in  \ref{we have a map!} that there is a canonical morphism
\[
\upgamma\colon T^{\ox}\to\Spec S^{\ox}
\]
satisfying $\RDerived \upgamma_*\!\cO_{T^{\ox}}=\cO_{S^{\ox}}$, and further in \ref{is projective} that $\upgamma$ is projective birational.  Composing $\upgamma$ with the minimal resolution $\upvarphi\colon Y^{\ox}\to T^{\ox}$ of $T^{\ox}$, gives the following.
\begin{thm}[=\ref{Sox is rational}]\label{Sox is rational intro}
If $\ox\in\bL$ is not torsion, and $\ow-i\ox\notin\bL_+$ for all $i\geq 0$, then $S^{\ox}$ is a rational surface singularity.
\end{thm}

In the setting of the above theorem, all the datum can be summarized by the following commutative diagram

\vspace{-0.6cm}

\begin{equation}
\begin{array}{c}
\begin{tikzpicture}
\node (top 1) at (0,0) {$\bT^{\ox}$};
\node (top 2) at (2.5,0) {$\bX$};
\node (bottom 1) at (0,-1.5) {$T^{\ox}$};
\node (bottom 2) at (2.5,-1.5) {$X\cong\mathbb{P}^1$};
\node (min) at (-1.5,-1) {$Y^{\ox}$};
\node (V) at (0,-2.5) {$\Spec S^{\ox}$};
\draw[->] (top 1) -- node[left] {$\scriptstyle g$} (bottom 1);
\draw[->] (top 2) -- node[right] {$\scriptstyle f$} (bottom 2);
\draw[->] (top 1) -- node[above] {$\scriptstyle q$} (top 2);
\draw[->] (bottom 1) -- node[above] {$\scriptstyle p$} (bottom 2);
\draw[->] (min) -- node[above] {$\scriptstyle \upvarphi$} (bottom 1);
\draw[->] (bottom 1) -- node[right] {$\scriptstyle \upgamma$} (V);
\draw[->,densely dotted] (min) -- node[below left] {$\scriptstyle \uppi$} (V);
\end{tikzpicture}
\end{array}\label{stack diagram 1}
\end{equation}

\vspace{-0.1cm}

We remark that the coarse moduli space $T^{\ox}$ is a singular line bundle in the sense of Dolgachev \cite[\S4]{Dolgachev} and Pinkham \cite[\S3]{Pinkham}, which also appears in the work of Orlik--Wagreich \cite{OW} and many others.  However, the key difference in our approach is that the grading group giving the quotient is $\bL$ not $\bZ$, and indeed it is the extra combinatorial structure of $\bL$ that allows us to extract the geometry much more easily.

It is in fact easy to check (see \ref{last comb hope}\eqref{last comb hope 1}) that the positivity condition on $\ox$ in \ref{Sox is rational intro} is satisfied if $0\neq \ox\in\bL_+$.  This setting is particularly pleasant, since tilting behaves well.
\begin{thm}[=\ref{tilting on coarse T}]
If $0\neq \ox\in\bL_+$, then $p^*(\cO_{\bP^1}\oplus\cO_{\bP^1}(1))$ is a tilting bundle on $T^{\ox}$.  
\end{thm}

Writing $\cE\colonequals \bigoplus_{i\in [0,\oc\,]}\cO_{\bX}(i)$ for the Geigle--Lenzing tilting bundle on $\bX$ \cite{GL1}, our next main result is then the following.

\begin{thm}\label{tilting intro}
If $0\neq\ox\in\bL_+$, then with notation as in \eqref{stack diagram 1},
\begin{enumerate}
\item\label{tilting intro 2}\textnormal{(=\ref{tilting on stack T})} $q^*\cE$ is a tilting bundle on $\bT^{\ox}$ such that 
\[
\begin{tikzpicture}[xscale=1]
\node (a1) at (0,0) {$\D(\Qcoh\bT^{\ox})$};
\node (a2) at (5,0) {$\D(\Mod\End_{\bT^{\ox}}(q^*\cE))$};
\node (b1) at (0,-1.5) {$\D(\Qcoh\bX)$};
\node (b2) at (5,-1.5) {$\D(\Mod\Lambda_{\bp,\bl})$};
\draw[->] (a1) -- node[above] {$\scriptstyle \RHom_{\bT^{\ox}}(q^*\cE,-)$} node[below] {$\scriptstyle \sim$} (a2);
\draw[->] (b1) -- node[above] {$\scriptstyle \RHom_{\bX}(\cE,-)$} node[below] {$\scriptstyle \sim$} (b2);
\draw[->] (a1) -- node[left] {$\scriptstyle \Rq_*$} (b1);
\draw[->] (a2) -- node[right] {$\mbox{\scriptsize res}$}  (b2);
\end{tikzpicture}
\]
commutes, where $\Lambda_{\bp,\bl}$ is the canonical algebra of Ringel.
\item\label{tilting intro 3}\textnormal{(=\ref{stackminres})} 
There is a fully faithful embedding $ 
\Db(\coh Y^{\ox})\hookrightarrow \Db(\coh\bT^{\ox})$.
\end{enumerate}
\end{thm}

It is by analysing \ref{tilting intro}\eqref{tilting intro 3} that we are able to extract the special CM $S^{\ox}$-modules below.  We show in \S\ref{changing parameters} that for any non-torsion $\ox=\sum_{i=1}^na_i\ox_i+a\oc$ we can change parameters and replace  $(\bp,\bl,\ox)$ by $(\bp'\!\!\colonequals (\sp_i\mid i\in I),\bl',
\osx\colonequals \sum_{i\in I}\sa_i\osx_i+a\osc\in\bL^{\! \prime}_{\phantom\prime})$ such that $S^{\ox}_{\bp,\bl}=S^{\osx}_{\bp',\bl'}$ and $(\sp_i,\sa_i)=1$ holds.  See \ref{change parameters so coprime} for full details.  With this in mind, we are then able to precisely control when the embedding in \ref{tilting intro}\eqref{tilting intro 3} is an equivalence.

\begin{prop}[=\ref{stackminres 2A}]\label{stackminres 2A intro}
If $0\neq\ox=\sum_{i=1}^na_i\ox_i+a\oc\in\bL_+$ with  $(p_i,a_i)=1$ for all $1\leq i\leq n$, then the embedding in  \ref{tilting intro}\eqref{tilting intro 3}  is an equivalence if and only if every $a_i$ is $1$, that is $\ox=\sum_{i=1}^{n}\ox_i+a\oc$.
\end{prop}

Combining the above tilting result with \eqref{T picture} and a combinatorial argument,  we are in fact able to determine the precise dual graph (for definition see \ref{dual graph defin}) of the morphism $\uppi$ in \eqref{stack diagram 1}.  Recall that for each $\frac{1}{p_i}(1,-a_i)$ in \eqref{T picture} with $a_i\neq 0$, we can consider the Hirzebruch--Jung continued fraction expansion
\begin{equation}
\frac{p_i}{p_i-a_i}=
\upalpha_{i1}-\frac{1}{\upalpha_{i2} - \frac{1}{\upalpha_{i3} -
\frac{1}{(...)}}} \colonequals [\upalpha_{i1},\hdots,\upalpha_{im_i}],\label{HJ intro}
\end{equation}
with each $\upalpha_{ij}\geq 2$; see \S\ref{iseries section} for full details.

\begin{thm}[{=\ref{degen lemma}, \ref{middle SI number}}]\label{dual graph general intro}
Let $0\neq\ox\in\bL_+$ and as above write $\ox=\sum_{i=1}^na_i\ox_i+a\oc$ in normal form. Then the dual graph of the morphism $\uppi\colon Y^{\ox}\to\Spec S^{\ox}$ is
\begin{equation}\label{key dual graph}
\begin{array}{c}
\begin{tikzpicture}[xscale=1,yscale=0.8]
\node (0) at (0,0) [vertex] {};
\node (A1) at (-3,1) [vertex]{};
\node (A2) at (-3,2) [vertex] {};
\node (A3) at (-3,3) [vertex] {};
\node (A4) at (-3,4) [vertex] {};
\node (B1) at (-1.5,1) [vertex] {};
\node (B2) at (-1.5,2) [vertex] {};
\node (B3) at (-1.5,3) [vertex] {};
\node (B4) at (-1.5,4) [vertex] {};
\node (C1) at (0,1) [vertex] {};
\node (C2) at (0,2) [vertex] {};
\node (C3) at (0,3) [vertex] {};
\node (C4) at (0,4) [vertex] {};
\node (n1) at (2,1) [vertex] {};
\node (n2) at (2,2) [vertex] {};
\node (n3) at (2,3) [vertex] {};
\node (n4) at (2,4) [vertex] {};
\node at (-3,2.6) {$\vdots$};
\node at (-1.5,2.6) {$\vdots$};
\node at (0,2.6) {$\vdots$};
\node at (2,2.6) {$\vdots$};
\node at (1,3.5) {$\hdots$};
\node at (1,1.5) {$\hdots$};
\node (T) at (0,4.25) {};
\node at (0,-0.2) {$\scriptstyle -\upbeta$};
\node at (-2.6,1) {$\scriptstyle -\upalpha_{11}$};
\node at (-2.6,2) {$\scriptstyle -\upalpha_{12}$};
\node at (-2.35,3) {$\scriptstyle -\upalpha_{1m_1-1}$};
\node at (-2.45,4) {$\scriptstyle -\upalpha_{1m_1}$};
\node at (-1.1,1) {$\scriptstyle -\upalpha_{21}$};
\node at (-1.1,2) {$\scriptstyle -\upalpha_{22}$};
\node at (-0.85,3) {$\scriptstyle -\upalpha_{2m_2-1}$};
\node at (-0.95,4) {$\scriptstyle -\upalpha_{2m_2}$};
\node at (0.4,1) {$\scriptstyle -\upalpha_{31}$};
\node at (0.4,2) {$\scriptstyle -\upalpha_{32}$};
\node at (0.65,3) {$\scriptstyle -\upalpha_{3m_3-1}$};
\node at (0.55,4) {$\scriptstyle -\upalpha_{3m_3}$};
\node at (2.45,1) {$\scriptstyle -\upalpha_{n1}$};
\node at (2.45,2) {$\scriptstyle -\upalpha_{n2}$};
\node at (2.7,3) {$\scriptstyle -\upalpha_{nm_n-1}$};
\node at (2.6,4) {$\scriptstyle -\upalpha_{nm_n}$};
\draw (A1) -- (0);
\draw (B1) -- (0);
\draw (C1) -- (0);
\draw (n1) -- (0);
\draw (A2) -- (A1);
\draw (B2) -- (B1);
\draw (C2) -- (C1);
\draw (n2) -- (n1);
\draw (A4) -- (A3);
\draw (B4) -- (B3);
\draw (C4) -- (C3);
\draw (n4) -- (n3);
\end{tikzpicture}
\end{array}
\end{equation}
where the arm $[\upalpha_{i1},\hdots,\upalpha_{im_i}]$ corresponds to $i\in\{1,\ldots,n\}$ with $a_i\neq0$, and the $\upalpha_{ij}$ are given by the Hirzebruch--Jung continued fractions in \eqref{HJ intro}.
Furthermore, writing $v=\#\{i\mid a_i\neq 0\}$ for the number of arms, we have $\upbeta=a+v$.
\end{thm}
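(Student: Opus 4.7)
The plan is to leverage the explicit description of $T^{\ox}$ recorded in \eqref{T picture}: it is a surface containing a $\mathbb{P}^1$ (call it $C$) carrying a cyclic quotient singularity $\frac{1}{p_i}(1,-a_i)$ at each $\lambda_i$ with $a_i\neq 0$, and smooth elsewhere. The minimal resolution $\varphi\colon Y^{\ox}\to T^{\ox}$ therefore resolves each cyclic singularity independently, and by classical Hirzebruch--Jung theory the exceptional fibre over $\lambda_i$ is a chain of rational curves with self-intersections $-\alpha_{i1},\hdots,-\alpha_{im_i}$ given by the continued fraction $p_i/(p_i-a_i)=[\alpha_{i1},\hdots,\alpha_{im_i}]$, attaching transversally to the strict transform $\tilde C$ of $C$ at its $E_{i1}$-end. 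Since $\tilde C$ is the unique component meeting every arm, this determines both the shape of the graph in \eqref{key dual graph} and all arm self-intersections; only $\beta=-\tilde C^2$ remains to be computed.

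For $\beta$ I would use the standard $\mathbb{Q}$-pullback trick. Write $\varphi^*C=\tilde C+\sum_{i,j}c_{ij}E_{ij}$ as a $\mathbb{Q}$-divisor on $Y^{\ox}$ and impose $\varphi^*C\cdot E_{ij}=0$ for all $i,j$. This is precisely the linear system attached to each Hirzebruch--Jung arm, and its well-known solution gives $c_{i1}=(p_i-a_i)/p_i$ at the attaching vertex of each non-trivial arm (easily verified by induction on $m_i$, or by comparing with partial convergents of the continued fraction). The projection formula then collapses to
\[
C^2=(\varphi^*C)^2=\varphi^*C\cdot\tilde C=\tilde C^2+\sum_{i:\,a_i\neq 0}\frac{p_i-a_i}{p_i},
\]
so $\beta=-\tilde C^2=-C^2+v-\sum_i a_i/p_i$, and the problem reduces to computing the rational self-intersection $C^2$.

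This last step is the main obstacle. The cleanest route is to pass to the stack $\bT^{\ox}$: since $\bT^{\ox}\to\bX$ is the total space of $\cO_{\bX}(-\ox)$, the self-intersection of the zero section $\bX\hookrightarrow\bT^{\ox}$ equals the $\bL$-graded degree $\deg\cO_{\bX}(-\ox)=-(a+\sum_i a_i/p_i)$. One then has to verify that this orbifold number descends to the desired rational intersection number $C^2=-\deg(\ox)$ on the coarse moduli $T^{\ox}$; this is a standard compatibility for smooth tame Deligne--Mumford stacks, but to keep the argument self-contained it can be checked by a direct local computation in each cyclic chart around $\lambda_i$, where the stabiliser order $p_i$ cancels the $\mathbb{Q}$-Cartier denominator of $C$. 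Substituting $C^2=-(a+\sum_i a_i/p_i)$ into the formula above then yields $\beta=a+v$ after the $a_i/p_i$ contributions cancel, as claimed.
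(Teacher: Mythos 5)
Your proposal is correct in outline, but it takes a genuinely different route from the paper for the only nontrivial part of the statement, namely the value of $\beta$. The identification of the arms is the same in both arguments (resolve the cyclic points of \eqref{T picture} recorded in \ref{sings on T}). For $\beta$, however, the paper never computes $\tilde C^2$ directly: it first shows the fundamental cycle of $\pi$ is reduced (\ref{fund is reduced}, via the tilting bundle $p^*\cV$ and the vanishing $\Ext^1_{T^{\ox}}(\cL,\cO_{T^{\ox}})=0$), then proves by a purely algebraic computation in the $\bL$-graded ring $S$ that the reconstruction algebra has exactly $a$ arrows from $S(\oc)^{\ox}$ to $R$ (\ref{number of arrows}, using \ref{criterion for special}, \ref{specials lag approach thm} and \ref{decompose Stx-c}), and finally reads off $\beta-v=-E_{\rm mid}\cdot Z_f=a$ from the intersection-theoretic presentation of the quiver in \ref{GL2 for R}; the degenerate cases $\ox\in[0,\oc\,]$ are treated separately in \ref{degen lemma}. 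Your route -- the $\mathbb{Q}$-pullback $\varphi^*C=\tilde C+\sum c_{ij}E_{ij}$ combined with $C^2=\deg\cO_{\bX}(-\ox)=-(a+\sum_i a_i/p_i)$ -- is more elementary and more uniform (it handles $v=0,1$ by the same formula, recovering \ref{degen lemma} rather than quoting it), at the cost of having to set up $\mathbb{Q}$-intersection theory on the singular surface $T^{\ox}$ and justify the descent of the orbifold degree to the coarse space; the paper's route avoids all of that but leans on machinery (special CM modules, the reconstruction algebra, \cite{WemGL2}) that it develops for other purposes anyway. One point in your argument genuinely requires the verification you promise rather than a wave at "standard HJ theory": the orientation of each chain, i.e.\ that $\tilde C$ attaches at the $E_{i1}$-end of the expansion $p_i/(p_i-a_i)=[\alpha_{i1},\dots,\alpha_{im_i}]$ and that the coefficient there is $c_{i1}=(p_i-a_i)/p_i$ rather than $a_i/p_i$. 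The local model from the proof of \ref{sings on T} shows $C$ is the image of the weight-$1$ axis in $\frac{1}{p_i}(1,-a_i)$, and a toric (or inductive) check confirms the claimed coefficient; with the opposite convention the $a_i/p_i$ terms would not cancel and the final formula would fail, so this is the one step that cannot be omitted.
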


We first establish in \ref{when Y min res} that $\uppi$ is the minimal resolution if and only if $\ox\notin[0,\oc\,]$.  Theorem~\ref{dual graph general intro} is then proved by splitting into the two cases $\ox\notin[0,\oc\,]$ and $\ox\in[0,\oc\,]$, with the verification in both cases being rather different.  Note that the case $\ox\in [0,\oc\,]$ is degenerate as $[0,\oc\,]$ is a finite interval, containing only those $\ox$ of the form $a_i\ox_i$ for some $i$ and some $0\leq a_i\leq p_i$.  In this paper we are mostly interested in special CM modules and these are defined using the minimal resolution: this is why below the condition $\ox\notin[0,\oc\,]$ often appears.

We remark that for $0\neq \ox\in\bL_+$, $S^{\ox}$ is rarely a quotient singularity, and it is even more rare for it to be ADE.  Nevertheless, the dual graphs of all quotient singularities $\KK^{2}/G$ (where $G$ is a small subgroup of $\GL(2,\KK)$) are known \cite{Brieskorn}, and so whether $S^{\ox}$ is a quotient singularity can, if needed, be immediately determined by \ref{dual graph general intro}, after contracting ($-1$)-curves if necessary.

One key observation in this paper is that controlling the stack $\bT^{\ox}$ allows us not only to obtain a rational surface singularity $S^{\ox}$, with its dual graph, but furthermore it also allows us to determine the special CM $S^{\ox}$-modules.  Indeed, in effect we simply compare the two resolutions  
\[
\begin{tikzpicture}
\node (A) at (-1,1) {$Y^{\ox}$};
\node (B) at (1,1) {$\bT^{\ox}$};
\node (base) at (0,-0.25) {$\Spec S^{\ox}$};
\draw[->] (A) --node[left]{$\scriptstyle \uppi$}(base);
\draw[->] (B) --node[right]{$\scriptstyle \uppi^\prime\colonequals \upgamma\circ g$}(base);
\end{tikzpicture}
\]
constructed above.  It is known that $Y^{\ox}$ has a tilting bundle $\cM$  \cite{VdB1d, Wunram}, and by \ref{tilting intro}\eqref{tilting intro 2} that $\bT^{\ox}$ has tilting bundle $q^*\cE$, where $\cE$ is the Geigle--Lenzing tilting bundle on $\bX$.  Pushing these down to $\Spec S^{\ox}$ gives the following result. Throughout, we write $\SCM S^{\ox}$ for the category of special CM $S^{\ox}$-modules; for definitions see \S\ref{Prelim ReconAlg}.  For $\oy\in\bL$, write $S(\oy)^{\ox}\colonequals \bigoplus_{i\in\bZ}S_{\oy+i\ox}$.

\begin{thm}
If $0\neq \ox\in\bL_+$ with $\ox\notin [0,\oc\,]$, then the following hold.
\begin{enumerate}
\item{\rm \cite{Wunram}} $\SCM S^{\ox}= \add\uppi_*\cM$.
\item\textnormal{(=\ref{stackminres})} $\uppi_*\cM$ is a summand of $\uppi^\prime_*(q^*\cE)=\bigoplus_{\oy\in[0,\oc\,]}S(\oy)^{\ox}$.
\end{enumerate}
\end{thm}

Furthermore, we say precisely which summands of $\uppi^\prime_*(q^*\cE)$ give the special CM modules.  As notation, recall that the $i$-series associated to the Hirzebruch--Jung continued fraction expansion $\frac{r}{a}=[\upalpha_1,\hdots,\upalpha_m]$ is defined as $i_0=r$, $i_1=a$ and  $i_{t}=\upalpha_{t-1}i_{t-1}-i_{t-2}$ for all $t$ with $2\leq t\leq m+1$, and we write
\[
I(r,a)\colonequals \{i_0,i_1,\hdots,i_{m+1}\}.
\]  
As convention $I(r,r)=\emptyset$. 

\begin{thm}[{=\ref{specials determined thm}, \ref{specials lag approach thm}}]\label{specials general intro}
If $\ox\in\bL_+$ with $\ox\notin [0,\oc\,]$,  write $\ox$ in normal form $\ox=\sum_{i=1}^na_i\ox_i+a\oc$.
Then 
\[
\SCM S^{\ox}=\add\{S(u\ox_j)^{\ox}\mid j\in [1,n],  u\in I(p_j,p_j-a_j)   \}.
\]
\end{thm}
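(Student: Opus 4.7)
The plan is to identify $\SCM S^{\ox}$ with the indecomposable summands of the pushforward to $\Spec S^{\ox}$ of the Geigle--Lenzing tilting bundle, by transporting tilting data from the stack $\bT^{\ox}$ down to the minimal resolution $Y^{\ox}$ via Theorem \ref{tilting intro}, and then matching the resulting summands to the combinatorics of Theorem \ref{dual graph general intro}.

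For any $\oy \in \bL$, the module $S(\oy)^{\ox}$ arises as the graded $S^{\ox}$-module of global sections of $q^*\cO_{\bX}(\oy)$ along the composition $\psi \circ q \colon \bT^{\ox} \to \Spec S^{\ox}$, so the indecomposable summands of $(\psi \circ q)_*(q^*\cE) = \bigoplus_{i \in [0,\oc\,]} S(i)^{\ox}$ are natural candidates for Cohen--Macaulay $S^{\ox}$-modules. The embedding $\Db(\coh Y^{\ox}) \hookrightarrow \Db(\coh \bT^{\ox})$ of Theorem \ref{tilting intro}(3) lets one pull $q^*\cE$ back to a tilting object on $Y^{\ox}$, whose non-trivial summands then correspond under $\pi_*$ to the indecomposable special CM modules, by the standard characterisation of specials as summands of a tilting bundle on the minimal resolution of a rational surface singularity.

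The combinatorial heart of the proof is that each Hirzebruch--Jung expansion \eqref{HJ intro} resolves the cyclic quotient $\tfrac{1}{p_j}(1,-a_j)$ which, by \eqref{T picture}, is the singularity of $T^{\ox}$ over $\lambda_j$. The $i$-series $I(p_j, p_j - a_j)$ records the numerators appearing in the toric resolution of this cyclic quotient, and chasing the $\bL$-grading through the resolution identifies each exceptional curve in the $j$-th arm of \eqref{key dual graph} with the line bundle $\cO_{\bT^{\ox}}(u\ox_j)$ for the corresponding $u \in I(p_j, p_j - a_j)$, hence with the module $S(u\ox_j)^{\ox}$. Completeness follows from a count: the nominal cardinality $\sum_{j:a_j\neq 0}(m_j+2) = 2v + \sum_j m_j$ collapses to $2 + \sum_j m_j$ after noting that the two boundary entries of each $i$-series (namely $u=0$ and $u=p_j$) respectively yield $S^{\ox}$ and $S(\oc)^{\ox}$, which are shared across all $v$ arms; this matches the number of exceptional curves in \eqref{key dual graph} plus one, as required. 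Speciality itself then follows from Ext-vanishing on $Y^{\ox}$, which is built into the tilting property of the pulled-back bundle.

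The main obstacle is the chase in the previous paragraph: one must track the $\bL$-grading through the local toric resolution at each $\lambda_j$ and check that the recursion $i_t = \alpha_{j,t-1} i_{t-1} - i_{t-2}$ defining the $i$-series matches the intersection-theoretic recursion governing the divisors dual to the exceptional curves in the $j$-th arm, whose self-intersection numbers $-\alpha_{j,t}$ are supplied by \eqref{HJ intro}. Once this local matching is made rigorous, the global identification of summands and the counting argument combine to yield the claimed description of $\SCM S^{\ox}$.
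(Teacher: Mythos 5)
Your overall strategy --- realise the candidate modules as $\psi_*g_*q^*\cO_{\bX}(u\ox_j)$, localise at the cyclic quotient points of $T^{\ox}$, and let the $i$-series do the selection --- is the same as the paper's geometric proof (Theorem~\ref{specials determined thm}), but two essential steps are missing or misplaced.

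First, the ``combinatorial heart'' that you defer as the main obstacle is not something to be re-derived by chasing the $\bL$-grading through a toric resolution: it is exactly Wunram's classification of special CM modules over a cyclic quotient singularity $\frac{1}{p_j}(1,-a_j)$, namely that among Herzog's rank-one CM modules $S_t$, $t\in[0,p_j]$, the special ones are precisely those with $t\in I(p_j,p_j-a_j)$ (Theorem~\ref{Wunram specials thm}(2) in the paper). Your description of $I(p_j,p_j-a_j)$ as recording ``numerators in the toric resolution'' is not the relevant characterisation, and without invoking (or reproving) Wunram's cyclic result your argument never establishes which $u$ give special modules. Moreover, passing from speciality at the local cyclic quotient point of $T^{\ox}$ to speciality of the pushforward over $\Spec S^{\ox}$ is not automatic: the paper needs the fundamental cycle of $\pi$ to be reduced (Corollary~\ref{fund is reduced}) to conclude that the torsion-free pullbacks are the special bundles on $Y^{\ox}$ globally; this ingredient is absent from your proposal.

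Second, your use of Theorem~\ref{tilting intro}(3) is circular relative to the logical order of the paper: the embedding $\Db(\coh Y^{\ox})\hookrightarrow\Db(\coh\bT^{\ox})$ is \emph{deduced from} the classification of specials (its proof identifies $\pi_*\cM$ as a sub-collection of the summands of $\bigoplus_{\oy\in[0,\oc\,]}S(\oy)^{\ox}$, which presupposes knowing which summands occur), so it cannot be used as an input. Relatedly, $q^*\cE$ does not pull back to a tilting bundle on $Y^{\ox}$ --- only a proper sub-collection of its summands does unless every $a_i=1$ (Corollary~\ref{stackminres 2A}) --- and determining that sub-collection is precisely the content of the theorem. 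The counting argument at the end is fine arithmetic but cannot substitute for these missing steps, since it presumes the listed modules are already known to be special and pairwise non-isomorphic.
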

This allows us to construct both $R=S^{\ox}$, and its special CM modules, for (almost) every star shaped dual graph.  We remark that this is the first time that special CM modules have been classified in any example with infinite CM representation type, and indeed, due to the non--tautness of the dual graph, in an uncountable family of examples.  For simplicity in this paper, we restrict the explicitness to certain families of examples, and refer the reader to \S\ref{Sect 5.2} for more details. 

By construction, all the special CM $S^{\ox}$-modules have a natural $\bZ$-grading, and we let $N$ denote their sum.  By definition the \emph{reconstruction algebra} is defined to be $\Upgamma_{\ox}\colonequals \End_{S^{\ox}}(N)$, and in this setting it inherits a $\bN$-grading from the grading of the special CM modules in \ref{specials general intro}.  In general, it is not generated  in degree one over its degree zero piece, but nevertheless the degree zero piece is always some canonical algebra of Ringel.  We state the first half of the following result vaguely, giving a much more precise description of the parameters in \ref{deg 0 general prop}.
\begin{prop}\label{deg 0 intro}
Suppose that $x\in\bL_+$ with $\ox\notin [0,\oc\,]$.
\begin{enumerate}
\item\label{deg 0 intro 1} \textnormal{(=\ref{deg 0 general prop})} The degree zero part of $\Upgamma_{\ox}$ is isomorphic to the canonical algebra  $\Lambda_{\bq,\bm}$, for some suitable parameters $(\bq,\bm)$.
\item\label{deg 0 intro 2} \textnormal{(=\ref{grading inherited})} For $\os\colonequals \sum_{i=1}^n\ox_i$, then $\Upgamma_{\os}$ is generated  in degree one over its degree zero piece. Moreover the degree zero piece is the canonical algebra $\Lambda_{\bp,\bl}$.
\end{enumerate}
\end{prop}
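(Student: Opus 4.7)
The plan is to first reinterpret $\Gamma_{\!\ox,0}$ geometrically as an endomorphism algebra of line bundles on $\bX$, then identify this with a canonical algebra by a combinatorial analysis of the indexing set.  Graded $S^{\ox}$-module morphisms $S(\oy_1)^{\ox}\to S(\oy_2)^{\ox}$ of $\bZ$-degree $k$ are precisely multiplication by elements of $S_{\oy_2-\oy_1+k\ox}$, so
\[
\Gamma_{\!\ox,k}\;=\;\bigoplus_{(j,u),(j'\!,u')} S_{u'\ox_{j'}-u\ox_j+k\ox},
\]
where $(j,u)$ ranges over the indexing set from Theorem~\ref{specials general intro}.  Taking $k=0$ and using the Geigle--Lenzing identification $S_{\oy}=H^0(\bX,\cO_\bX(\oy))$ gives
\[
\Gamma_{\!\ox,0}\;\cong\;\End_{\bX}\bigg(\bigoplus_{(j,u)}\cO_\bX(u\ox_j)\bigg),
\]
so part~(1) reduces to identifying this endomorphism algebra with a canonical algebra.

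For part~(1), I would analyse the indexing set combinatorially.  Indices $j$ with $a_j=0$ contribute nothing since $I(p_j,p_j)=\emptyset$, while for $j$ with $a_j\neq 0$ the $i$-series is strictly decreasing $p_j=i_{j,0}>i_{j,1}>\cdots >i_{j,m_j+1}=0$.  All arms share the endpoints $0$ (at $u=0$) and $\oc$ (at $u=p_j$), but each arm's $m_j$ interior elements $u\ox_j$ are distinct from those of other arms.  Setting $v=\#\{j:a_j\neq 0\}$, $q_j=m_j+1$ for such $j$, and $\bm$ the corresponding sub-tuple of $\bl$, this matches the vertex structure of $[0,\oc_\bq]\subset \bL(\bq)$ for the canonical algebra $\Lambda_{\bq,\bm}$.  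I would construct an explicit bijection between the two index sets and verify that the algebra structures agree by computing the Hom spaces $H^0(\bX_{\bp,\bl},\cO(u'\ox_{j'}-u\ox_j))$ via monomials in $t_0,t_1,x_1,\ldots,x_n$ and matching them with the corresponding $H^0$ spaces on $\bX_{\bq,\bm}$.

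For part~(2), specialise to $\ox=\os$, i.e.\ $a_i=1$ for all $i$.  A short calculation shows $\tfrac{p_j}{p_j-1}=[2,2,\ldots,2]$ ($p_j-1$ twos), so the $i$-series is $p_j,p_j-1,\ldots,1,0$ and hence $I(p_j,p_j-1)=\{0,1,\ldots,p_j\}$.  The indexing set is therefore exactly $[0,\oc]\subset\bL$, and the degree zero part is $\End_\bX(\cE)=\Lambda_{\bp,\bl}$ by Geigle--Lenzing.  For the degree-one generation claim, surjectivity of the multiplication map
\[
\bigoplus_{\oy\in[0,\oc]} S_{\os+\oy-\oy_1}\otimes_\KK S_{k\os+\oy_2-\oy}\;\longrightarrow\; S_{(k+1)\os+\oy_2-\oy_1}
\]
is the key input; I would establish this by showing that every monomial in the target in $t_0,t_1,x_1,\ldots,x_n$ factors as a product with an appropriate intermediate $\oy\in[0,\oc]$, working by induction on $k$ after reducing to the case $\oy_1=0$.

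The main obstacle in part~(1) is matching the arrow multiplicities and defining relations on the two sides of the identification, rather than just the underlying quiver: the Hirzebruch--Jung combinatorics must interact correctly with the multiplicative structure of $S$.  For part~(2), the combinatorial monomial bookkeeping required for the surjectivity statement is the main technical step, though geometrically this should reflect a Koszul-like property of the Geigle--Lenzing ring $S$ with respect to the element $\os$.
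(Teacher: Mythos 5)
Your overall strategy for part (1) is close to the paper's (which proves \ref{deg 0 general prop} by identifying $(\Gamma_{\!\ox})_0$ with $\End^{\bL}_S(\bigoplus_u S(u\ox_i))$ and then observing that deleting the non-$i$-series vertices from $[0,\oc\,]$ composes arrows into the canonical algebra $\Lambda_{\bq,\bm}$ with $\bq=(m_i+1)_{i\in I}$). However, your opening identity
$\Gamma_{\!\ox,k}=\bigoplus S_{u'\ox_{j'}-u\ox_j+k\ox}$
is not automatic: multiplication by elements of $S_{\oy_2-\oy_1+k\ox}$ certainly gives degree-$k$ morphisms $S(\oy_1)^{\ox}\to S(\oy_2)^{\ox}$, but the claim that these are \emph{all} such morphisms is exactly \ref{slightly stronger}, which requires the finiteness hypothesis of \ref{WPL as qgrZ}, i.e.\ $(p_i,a_i)=1$ for all $i$. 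Without that hypothesis the natural map $S_{\oy_2-\oy_1+k\ox}\to\Hom_R^{\bZ}(S(\oy_1)^{\ox},S(\oy_2)^{\ox})_k$ can fail to be surjective (already for $\ox=2\ox_1$ with $p_1=4$ one finds $S(\ox_1)^{\ox}\cong S^{\ox}$, so the Hom spaces are strictly larger than the formula predicts). The paper repairs this by first invoking the change of parameters \ref{change parameters so coprime}, which replaces $(\bp,\bl)$ by $(\bp'\!,\bl')$ with the coprimality condition holding and leaves $S^{\ox}$ and the data $(m_i)$ unchanged; your proof needs this reduction before its first displayed equation. With that inserted, the rest of your combinatorial matching of the index set (arms of length $m_j$, shared endpoints $0$ and $\oc$, one-dimensional Hom spaces spanned by powers of $x_j$ along each arm, canonical relations coming from $x_j^{p_j}=\ell_j(t_0,t_1)$) is sound and is essentially the paper's ``composing arrows'' step made explicit.

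For part (2), your identification of the index set with $[0,\oc\,]$ and of $(\Gamma_{\!\os})_0$ with $\End_{\bX}(\cE)=\Lambda_{\bp,\bl}$ is correct (and here $a_i=1$ so coprimality is automatic). But the degree-one generation claim is reduced to the surjectivity of the multiplication map
\[
\bigoplus_{\oy\in[0,\oc\,]} S_{\os+\oy-\oy_1}\otimes_\KK S_{k\os+\oy_2-\oy}\longrightarrow S_{(k+1)\os+\oy_2-\oy_1},
\]
which you defer as ``the main technical step''; this \emph{is} the theorem, not bookkeeping, and your suggested mechanism (factoring individual monomials through an intermediate $\oy$) will not work as stated --- one genuinely needs linear combinations obtained from the relations $x_i^{p_i}=\ell_i(t_0,t_1)$, exactly as in the arguments of \ref{decompose Stx-c} and \ref{2 gen as S modules}. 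The paper instead deduces \ref{grading inherited} from the explicit quiver presentation of $\Gamma_{\!\os}$ in \ref{all arrows} and \ref{recon relations}: the black (canonical) arrows have degree zero, each red arrow reversing an $x_i$-arrow is labelled $x_1\cdots\widehat{x_i}\cdots x_n$ and has degree one, and since the algebra is generated by its arrows the generation statement follows. If you want to avoid the presentation, you must actually carry out the surjectivity argument; as it stands this step is a gap.
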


\subsection{Geigle--Lenzing Weighted Projective Lines via Rational Surface Singularities}\label{Section 1.3}

For an abelian group $G$ and a $G$-graded noetherian $\KK$-algebra, we write $\mod^G\! A$ for the category of finitely generated $G$-graded $A$-modules, $\mod^G_0\! A$ for the subcategory of finite dimensional modules, and $\qgr^G\! A\colonequals \mod^G\! A\,/\!\mod^G_0\! A$ for the Serre quotient.  Motivated by the above, and also the fact that when studying curves it should not matter how we embed them into surfaces (and thus be independent of any self-intersection numbers that appear), we then investigate when $\qgr^\bZ\! S^{\ox}\simeq \coh\bX$.

In very special cases, $\coh\mathbb{X}_{\bp,\bl}$ is already known to be equivalent to $\qgr^\bZ\! R$ for some connected graded commutative ring $R$ \cite[8.4]{GL91}.  The nicest situation is when the star-shaped dual graph is of Dynkin type, and further $R$ is the ADE quotient singularity associated to the Dynkin diagram via the McKay correspondence (with a slightly non-standard grading).  However, all the previous attempts to link the weighted projective line to rational singularities have taken all self-intersection numbers to be $-2$, which is well-known to restrict the possible configurations to ADE Dynkin type.

One of our main results is the following, which does not even require that $\ox\in\bL_+$.
 
\begin{thm}[{=\ref{WPL as qgrZ}}]\label{WPL as qgrZ intro}
Suppose that $\ox=\sum_{i=1}^na_i\ox_i+a\oc$ is not torsion, and write $R\colonequals S^{\ox}$.  Then the following conditions are equivalent.
\begin{enumerate}
\item The natural functor $(-)^{\ox}\colon\CM^\bL\!S\to\CM^\bZ\!R$ is an equivalence.
\item The natural functor $(-)^{\ox}\colon\qgr^\bL\!S\to\qgr^\bZ\!R$ is an equivalence.
\item For any $\oz\in\bL$, the ideal $I^{\oz}\colonequals S(\oz)^{\ox}\cdot S(-\oz)^{\ox}$ of $R$ satisfies $\dim_\KK(R/I^{\oz})<\infty$.
\item $(p_i,a_i)=1$ for all $1\le i\le n$.
\end{enumerate}
\end{thm}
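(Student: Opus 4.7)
The plan is to prove $(1)\Leftrightarrow(2)\Leftrightarrow(3)\Leftrightarrow(4)$ as a circle of implications, decomposed into three natural bridges. For $(1)\Leftrightarrow(2)$, use that $R=S^{\ox}$ is a rational surface singularity by Theorem~\ref{tilting intro}(1), so both sides sit inside the standard framework relating graded CM modules to $\qgr$ via the Serre quotient by finite-length modules, and the analogous description is available on the $\bL$-graded side. Since the Veronese functor $(-)^{\ox}$ commutes with these inclusions and quotients, it is an equivalence on $\CM$ precisely when it is on $\qgr$.

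For $(2)\Leftrightarrow(3)$, observe that $\qgr^{\bL}\!S$ carries an $\bL$-action by twists $M\mapsto M(\oy)$, whereas $\qgr^{\bZ}\!R$ carries only a $\bZ$-action by shifts. For $(-)^{\ox}$ to be an equivalence, each $\bL$-twist $S(\oz)^{\ox}$ must become invertible in $\qgr^{\bZ}\!R$. The image of the natural pairing $S(\oz)^{\ox}\otimes_R S(-\oz)^{\ox}\to R$ is precisely $I^{\oz}$, so invertibility modulo finite length translates exactly to $\dim_{\k} R/I^{\oz}<\infty$.

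The combinatorial heart is $(3)\Leftrightarrow(4)$, handled via the family of characters $\chi_i\colon\bL\to\bZ/p_i\bZ$ defined by $\chi_i(\ox_j)=\delta_{ij}$ (well-defined since $p_j\delta_{ij}\equiv 0\pmod{p_i}$, matching $\chi_i(\oc)=0$). For $\neg(4)\Rightarrow\neg(3)$: if $d:=\gcd(p_i,a_i)>1$ for some $i$, then $\chi_i(\ox)=a_i$ forces the image of $\bZ\ox$ in $\bZ/p_i\bZ$ to be the proper subgroup $d\bZ/p_i\bZ$, which misses the coset $\chi_i(\ox_i)=1$. Taking $\oz=\ox_i$, every generator of $I^{\ox_i}$ is a product of elements with $\chi_i$-degrees in $1+d\bZ/p_i\bZ$ and $-1+d\bZ/p_i\bZ$, so tracking $x_i$-exponents through $x_i^{p_i}=\ell_i$ shows that $I^{\ox_i}$ is contained in a proper graded ideal of $R$; one then checks directly (e.g.\ by localization at the generic point of $V(\ell_i)\subset\Spec R$) that the quotient is infinite-dimensional. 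Conversely, for $(4)\Rightarrow(3)$: coprimality $(p_j,a_j)=1$ ensures $\chi_j(\bZ\ox)=\bZ/p_j\bZ$ for every $j$, removing the obstruction at every stacky point; combined with the stacky picture of Theorem~\ref{tilting intro}(2), this corresponds geometrically to $\cO_{\bX}(-\ox)$ descending to a genuine line bundle near each $\lambda_j$ on the coarse moduli, forcing the multiplication map $S_{\oz+k\ox}\otimes S_{-\oz+l\ox}\to S_{(k+l)\ox}$ to be surjective for $k+l\gg 0$ at every height-one prime of $R$, so that $I^{\oz}$ is $\m$-primary.

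The main obstacle is the direction $(4)\Rightarrow(3)$: verifying surjectivity of the multiplication map at every height-one prime requires a careful local analysis at each stacky point of $\bX$. The stack-theoretic framework from Theorem~\ref{tilting intro}(2) and the diagram~\eqref{stack diagram 1} is essential here, as it converts the coprimality condition $(p_j,a_j)=1$ into local triviality of $\cO_{\bX}(-\ox)$ on each coarse moduli neighbourhood, from which the required surjectivity follows. By contrast, the necessity direction $\neg(4)\Rightarrow\neg(3)$ is a self-contained character-theoretic computation once the witness $\oz=\ox_i$ is chosen.
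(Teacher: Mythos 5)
Your overall architecture matches the paper's, and your direction $\neg(4)\Rightarrow\neg(3)$ is essentially the paper's argument: the character $\chi_i$ is well-defined, $\chi_i(\bZ\ox)=d\bZ/p_i\bZ$ misses $1$ and $-1$ when $d=(p_i,a_i)>1$, so $I^{\ox_i}\subseteq Sx_i^2\cap R$ lies in a height-one prime and the quotient is infinite-dimensional. The genuine gap is in $(4)\Rightarrow(3)$, which you yourself flag as the main obstacle. The geometric mechanism you propose is false: for $(p_j,a_j)=1$ with $p_j\geq 2$ the normal form forces $a_j\neq 0$, so the stabilizer $\mu_{p_j}$ at $\lambda_j$ acts on the fibre of $\cO_{\bX}(-\ox)$ by the \emph{nontrivial} character $-a_j$, and the bundle does \emph{not} descend to a line bundle near $\lambda_j$ on the coarse moduli --- indeed \ref{sings on T} shows the coarse total space acquires a $\frac{1}{p_j}(1,-a_j)$ singularity there precisely because of this. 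What coprimality actually buys is the absence of pseudo-reflections, equivalently the surjectivity $\chi_j(\bZ\ox)=\bZ/p_j\bZ$ that you already observed. The paper converts this directly into the needed statement: after reducing to $\oz=\ox_i$ via $I^{\oy}\cdot I^{\oz}\subseteq I^{\oy+\oz}$, one shows $I^{\ox_i}$ contains both a power of $x_i$ (from $S_{\ox_i}\cdot S_{-\ox_i+p\ox}$ with $p\ox=q\oc$) and, using B\'ezout $a_i\ell+1=p_im$ so that $\ox_i+\ell\ox$ has zero $\ox_i$-coefficient in normal form, a monomial in the $x_j$ with $j\neq i$; these two elements cut out only the origin, so $R/I^{\ox_i}$ is finite-dimensional. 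Your proposal contains the right combinatorial seed but never performs this step, and the descent claim you substitute for it cannot be repaired as stated.

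A secondary under-justification: $(1)\Leftrightarrow(2)$ does not follow merely from the Veronese functor ``commuting with inclusions and quotients''. An equivalence on a Serre quotient need not restrict to an equivalence on a subcategory, nor conversely. The paper proves both $(1)$ and $(2)$ are equivalent to $(3)$ through the matrix algebra $C=(A(i-j)^H)_{i,j}$ with idempotent $e$, showing each is equivalent to $\dim_{\KK}C/(e)<\infty$; the $\CM$ case needs the depth-two analysis of \ref{depth 2 properties} (in particular that $X\in\mod_0^H\!C$ iff $\Ext^i_C(X,\mod_2^H\!C)=0$ for $i=0,1$), and the $\qgr$ case needs that both the kernel and cokernel of $Ce\otimes_BeC\to C$ are finite-dimensional. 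Similarly, your $(2)\Leftrightarrow(3)$ sketch correctly identifies $I^{\oz}$ as the obstruction to invertibility of twists, but the sufficiency direction (``translates exactly'') is precisely the content of that kernel/cokernel computation and is not automatic.
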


The above theorem implies that for a non-torsion element $\ox=\sum_{i=1}^na_i\ox_i+a\oc$ of $\bL$, there is an equivalence  $\qgr^\bZ\!S_{\bp,\bl}^{\ox}\simeq \coh\bX_{\bp,\bl}$ if and only if $(p_i,a_i)=1$ for all $i$ with $1\leq i\leq n$. Thus, by choosing a suitable $\ox$, the weighted projective line can be defined using only connected $\bN$-graded rational surface singularities.  Also, we remark that in the case $(p_i,a_i)\neq 1$ we still have that $\qgr S^{\ox}$ is equivalent to some weighted projective line, but the parameters are no longer $(\bp,\bl)$.  We leave the details to \S\ref{qgr Veronese section}. 

Combining the above gives our next main result.

\begin{cor}[{=\ref{WPL as qgrZ}, \ref{qgrR via qgrLambda}}]\label{S=R=Gamma intro}
Let $\ox\in\bL_+$ with $\ox\notin[0,\oc\,]$, and write $\ox=\sum_{i=1}^na_i\ox_i+a\oc$ in normal form.  If $(p_i,a_i)=1$ for all $1\leq i\leq n$, then
\[
\coh\bX_{\bp,\bl}\simeq \qgr^\bZ\! S^{\ox}\simeq\qgr^\bZ\!\Upgamma_{\ox},
\]
and further $\Upgamma_{\ox}$ is an $\bN$-graded ring, with zeroth piece a canonical algebra.
\end{cor}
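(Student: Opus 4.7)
The corollary is essentially an assembly of earlier results, which I would carry out in three steps.

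\emph{Step 1: The equivalence $\coh\bX_{\bp,\bl}\simeq\qgr^\bZ\!S^{\ox}$.} I apply the preceding theorem, namely \ref{WPL as qgrZ}. Since $\ox\in\bL_+\setminus[0,\oc\,]$ makes $\ox$ non-torsion in $\bL$, and the hypothesis $(p_i,a_i)=1$ holds for all $i$, that theorem yields an equivalence of abelian categories
\[
(-)^{\ox}\colon\qgr^\bL\!S\xrightarrow{\sim}\qgr^\bZ\!R,\qquad R=S^{\ox}.
\]
Composing with the classical Geigle--Lenzing presentation $\coh\bX_{\bp,\bl}\simeq\qgr^\bL\!S$ then gives the first equivalence.

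\emph{Step 2: The equivalence $\qgr^\bZ\!S^{\ox}\simeq\qgr^\bZ\!\Gamma_{\!\ox}$.} I form the graded reflexive $R$-module
\[
N:=\bigoplus_{j\in[1,n]}\bigoplus_{u\in I(p_j,p_j-a_j)}S(u\ox_j)^{\ox},
\]
which by Theorem~\ref{specials general intro} is the direct sum of the indecomposable graded special CM $R$-modules; by construction $\Gamma_{\!\ox}=\End^\bZ_R(N)$. The strategy is to transport $N$ across the equivalence of Step~1: under $\qgr^\bZ\!R\simeq\coh\bX$, each summand $S(u\ox_j)^{\ox}$ sheafifies to the line bundle $\cO_{\bX}(u\ox_j)$. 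A combinatorial check, using the $i$-series description together with the coprimality $(p_j,a_j)=1$, identifies the multiset $\{u\ox_j\}_{j,u}$ with the indexing set $[0,\oc\,]$, so that $N$ corresponds to the Geigle--Lenzing tilting bundle $\cE=\bigoplus_{i\in[0,\oc\,]}\cO_{\bX}(i)$. Since $\End_{\bX}(\cE)\cong\Lambda_{\bp,\bl}$, a standard graded-tilting argument applied to the functor $\Hom^\bZ_R(N,-)$ on $\qgr^\bZ\!R$ then descends to the desired equivalence $\qgr^\bZ\!R\simeq\qgr^\bZ\!\Gamma_{\!\ox}$.

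\emph{Step 3: The grading on $\Gamma_{\!\ox}$.} A homogeneous morphism $S(u\ox_j)^{\ox}\to S(u'\ox_{j'})^{\ox}$ of $\bZ$-degree $d$ is represented by an element of $S$ of $\bL$-degree $(u'\ox_{j'}-u\ox_j)+d\ox$. Because the indices $u\ox_j$ and $u'\ox_{j'}$ all lie in $[0,\oc\,]$ while $\ox\in\bL_+\setminus[0,\oc\,]$, a short positivity computation in $\bL_+$ forces $d\ge 0$. Hence $\Gamma_{\!\ox}$ is $\bN$-graded, and its degree-zero piece is identified with the canonical algebra $\Lambda_{\bp,\bl}$ by Proposition~\ref{deg 0 intro}, after checking that under the coprimality hypothesis the parameters $(\bq,\bm)$ of \ref{deg 0 intro}\eqref{deg 0 intro 1} coincide with $(\bp,\bl)$.

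\emph{Expected obstacle.} The delicate point is Step~2: one must ensure that the graded-tilting/Morita equivalence descends cleanly from the module category to $\qgr$, which requires matching the torsion (finite length) subcategories on the $R$- and $\Gamma_{\!\ox}$-sides. This is typically achieved by verifying noetherianity of $\Gamma_{\!\ox}$ in its $\bZ$-grading and a Krull-dimension comparison; once the combinatorial identification of $N$ with $\cE$ in Step~2 is secured, the remaining descent should be essentially formal.
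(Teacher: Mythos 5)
Step 1 of your plan is exactly the paper's argument, but Steps 2 and 3 contain a genuine gap. The combinatorial identification at the heart of your Step 2 is false: under the hypothesis $(p_j,a_j)=1$ alone, the set $\{u\ox_j \mid u\in I(p_j,p_j-a_j)\}$ is a \emph{proper} subset of $[0,\oc\,]$ unless every $a_j=1$, since by \ref{i series all} one has $I(p_j,p_j-a_j)=[0,p_j]$ if and only if $a_j=1$ (for instance $p_j=5$, $a_j=2$ gives $\frac{5}{3}=[2,3]$ with $i$-series $5>3>1>0$, missing $2$ and $4$). So $N$ does not correspond to the Geigle--Lenzing tilting bundle $\cE$ but only to a proper summand of it; this failure is precisely why the embedding in \ref{stackminres 2A} is an equivalence only when all $a_i=1$. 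The same error propagates into Step 3: the degree-zero piece of $\Gamma_{\!\ox}$ is the canonical algebra $\Lambda_{\bq,\bm}$ with $\bq=(m_i+1)_{i\in I}$ determined by the lengths of the continued fractions $\frac{p_i}{p_i-a_i}$ (\ref{deg 0 general prop}), and these parameters do \emph{not} coincide with $(\bp,\bl)$ under mere coprimality --- which is why the corollary claims only that the zeroth piece is \emph{some} canonical algebra.

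Moreover, even if $N$ could be identified with $\cE$, a tilting argument yields a derived equivalence, not the asserted equivalence of abelian categories $\qgr^\bZ\!S^{\ox}\simeq\qgr^\bZ\!\Gamma_{\!\ox}$; the descent you flag as an ``expected obstacle'' is not formal and is not how the paper proceeds. The paper's route (\ref{qgrR via qgrLambda}) is different and elementary: with $\Gamma_{\!\ox}=\End_R(N)$, $R$ a summand of $N$ and $e$ the corresponding idempotent, one has $e\Gamma_{\!\ox}e\cong R$, and $\dim_k\Gamma_{\!\ox}/(e)<\infty$ because $R$ is a two-dimensional normal domain, so $\add N_\p=\add R_\p$ for every non-maximal prime $\p$ and hence $(\Gamma_{\!\ox}/(e))_\p=0$. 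Proposition \ref{comm to noncomm} then gives $\qgr^\bZ\!\Gamma_{\!\ox}\simeq\qgr^\bZ\!e\Gamma_{\!\ox}e=\qgr^\bZ\!S^{\ox}$ directly, with the description of the degree-zero piece supplied by \ref{canonicalEnd}. Your positivity computation establishing the $\bN$-grading in Step 3 is fine.
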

In the case when $(p_i,a_i)\neq 1$ we have a similar result but again there is a change of parameters, so we refer the reader to \ref{qgrR via qgrLambda} for details.  Combining \ref{S=R=Gamma intro} with \ref{deg 0 intro}\eqref{deg 0 intro 2}, we can view the weighted projective line $\bX_{\bp,\bl}$ as an Artin--Zhang noncommutative projective scheme over the canonical algebra $\Lambda_{\bp,\bl}$ \cite{Minamoto}. 

Note that \ref{WPL as qgrZ intro}(2)$\Leftrightarrow$(4) was shown independently in \cite[6.6]{CCZ}.  

\subsection{Some Particular Veronese Subrings}
We then investigate the particular Veronese subrings $S^{\os_a}$ for $\os_a\colonequals \os+a\oc$ for some $a\geq 0$, where $\os\colonequals \sum_{i=1}^n\ox_i$.  We call $S^{\os_a}$ the \emph{$a$-Wahl Veronese subring}, and in this case, the singularities in \eqref{T picture} are all of the form $\frac{1}{p_i}(1,-1)$, which are cyclic Gorenstein and so have a resolution consisting of only ($-2$)-curves.  Thus resolving the singularities in \eqref{T picture}, by \ref{dual graph general intro} we see that the dual graph of the minimal resolution of $\Spec S^{\os_a}$ is 
\begin{equation}\label{s Veron dual graph}
\begin{array}{c}
\begin{tikzpicture}[xscale=0.9,yscale=0.9]
\node (0) at (0,0) [vertex] {};
\node (A1) at (-3,1) [vertex] {};
\node (A2) at (-3,2) [vertex] {};
\node (A3) at (-3,3) [vertex] {};
\node (A4) at (-3,4) [vertex] {};
\node (B1) at (-1.5,1) [vertex] {};
\node (B2) at (-1.5,2) [vertex] {};
\node (B3) at (-1.5,3) [vertex] {};
\node (B4) at (-1.5,4) [vertex] {};
\node (C1) at (0,1) [vertex] {};
\node (C2) at (0,2) [vertex] {};
\node (C3) at (0,3) [vertex] {};
\node (C4) at (0,4) [vertex] {};
\node (n1) at (2,1) [vertex] {};
\node (n2) at (2,2) [vertex] {};
\node (n3) at (2,3) [vertex] {};
\node (n4) at (2,4) [vertex] {};
\node at (-3,2.6) {$\vdots$};
\node at (-1.5,2.6) {$\vdots$};
\node at (0,2.6) {$\vdots$};
\node at (2,2.6) {$\vdots$};
\node at (1,3.5) {$\hdots$};
\node at (1,1.5) {$\hdots$};
\node (T) at (0,4.25) {};
\node at (0,-0.2) {$\scriptstyle -n-a$};
\node at (-2.7,1) {$\scriptstyle -2$};
\node at (-2.7,2) {$\scriptstyle -2$};
\node at (-2.7,3) {$\scriptstyle -2$};
\node at (-2.7,4) {$\scriptstyle -2$};
\node at (-1.2,1) {$\scriptstyle -2$};
\node at (-1.2,2) {$\scriptstyle -2$};
\node at (-1.2,3) {$\scriptstyle -2$};
\node at (-1.2,4) {$\scriptstyle -2$};
\node at (0.3,1) {$\scriptstyle -2$};
\node at (0.3,2) {$\scriptstyle -2$};
\node at (0.3,3) {$\scriptstyle -2$};
\node at (0.3,4) {$\scriptstyle -2$};
\node at (2.3,1) {$\scriptstyle -2$};
\node at (2.3,2) {$\scriptstyle -2$};
\node at (2.3,3) {$\scriptstyle -2$};
\node at (2.3,4) {$\scriptstyle -2$};
\draw (A1) -- (0);
\draw (B1) -- (0);
\draw (C1) -- (0);
\draw (n1) -- (0);
\draw (A2) -- (A1);
\draw (B2) -- (B1);
\draw (C2) -- (C1);
\draw (n2) -- (n1);
\draw (A4) -- (A3);
\draw (B4) -- (B3);
\draw (C4) -- (C3);
\draw (n4) -- (n3);
\draw [decorate,decoration={brace,amplitude=5pt},xshift=-4pt,yshift=0pt]
(2,1) -- (2,4) node [black,midway,xshift=-0.55cm] 
{$\scriptstyle p_n-1$};
\draw [decorate,decoration={brace,amplitude=5pt},xshift=-4pt,yshift=0pt]
(0,1) -- (0,4) node [black,midway,xshift=-0.55cm] 
{$\scriptstyle p_3-1$};
\draw [decorate,decoration={brace,amplitude=5pt},xshift=-4pt,yshift=0pt]
(-1.5,1) -- (-1.5,4) node [black,midway,xshift=-0.55cm] 
{$\scriptstyle p_2-1$};
\draw [decorate,decoration={brace,amplitude=5pt},xshift=-4pt,yshift=0pt]
(-3,1) -- (-3,4) node [black,midway,xshift=-0.55cm] 
{$\scriptstyle p_1-1$};
\end{tikzpicture}
\end{array}
\end{equation}
where there are $n$ arms, and the number of vertices on arm $i$ is $p_i-1$.  It turns out that these particular Veronese subrings have many nice properties; not least by \ref{stackminres 2A intro} they are precisely the Veronese subrings for which 
\[
\Db(\coh Y^{\ox})\hookrightarrow \Db(\coh\bT^{\ox})
\]
is an equivalence.  In \S\ref{domestic section} we investigate $S^{\os_a}$ in the case when $(p_1,p_2,p_3)$ forms a Dynkin triple, in which case $S^{\os_a}$ is isomorphic to a quotient singularity by some finite subgroup of $\GL(2,\KK)$ of type $\mathbb{T}$, $\mathbb{O}$ or $\mathbb{I}$ (see \ref{Veronese=quotient} for details).  In this situation $S^{\os_a}$ and its reconstruction algebra have a very nice relationship to the preprojective algebra of the canonical algebra, and this is what turns out to explain the motivating coincidence from \S\ref{motivation section} in \ref{equiv last intro} below.

For arbitrary parameters $(\bp,\bl)$, the Veronese subring $S^{\os}$ has a particularly nice form.

\begin{thm}[{=\ref{S is generated}}]
For any $\mathbb{X}_{\bp,\bl}$, $S^{\os}$ is generated by the homogeneous elements 
\begin{eqnarray*}
\mathsf{u}_i&\colonequals &\left\{\begin{array}{ll}x_1^{p_1+p_2}x_3^{p_2}\hdots x_n^{p_2}&i=1,\\
x_2^{p_1+p_2}x_3^{p_1}\hdots x_n^{p_1}&i=2,\\ 
-x_1^{p_i}x_2^{p_2+p_i}x_3^{p_i}\hdots \widehat{x_i}\hdots x_n^{p_i}&3\leq i\leq n,\end{array}\right.\\
\mathsf{v}&\colonequals &x_1x_2\hdots x_n.
\end{eqnarray*}
\end{thm}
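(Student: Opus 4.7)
The plan is to verify that the proposed generators lie in $S^{\os}$, describe each graded piece $S_{k\os}$ explicitly, and then show that products in these generators exhaust each piece. The crux will be a combinatorial spanning argument in $\KK[t_0,t_1]$.

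First I would verify degrees using the defining relation $p_j\ox_j=\oc$ in $\bL$: direct computation gives $\deg_{\bL}\mathsf{v}=\os$, $\deg_{\bL}\mathsf{u}_1=p_2\os$, $\deg_{\bL}\mathsf{u}_2=p_1\os$, and $\deg_{\bL}\mathsf{u}_i=p_i\os$ for $i\ge 3$, so all of them lie in $S^{\os}$. Next I would describe $S_{k\os}$: since $S$ is a free $\KK[t_0,t_1]$-module on the basis $\{x_1^{c_1}\cdots x_n^{c_n} : 0\le c_i<p_i\}$, unpacking the $\bL$-degree condition for a reduced monomial via the explicit structure of the relation lattice $\langle p_ie_i-p_je_j\rangle\subseteq\bZ^n$ shows that $t_0^at_1^bx_1^{c_1}\cdots x_n^{c_n}$ lies in $S_{k\os}$ iff $c_i\equiv k\pmod{p_i}$ and $a+b=N(k):=\sum_i\lfloor k/p_i\rfloor$. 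Hence $S_{k\os}=x_1^{r_1(k)}\cdots x_n^{r_n(k)}\cdot\KK[t_0,t_1]_{N(k)}$ with $r_i(k)=k\bmod p_i$, and in particular $\dim_\KK S_{k\os}=N(k)+1$.

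For the generation step, rewriting each $\mathsf{u}_i$ using $x_j^{p_j}=\ell_j$ yields $\mathsf{u}_1=\ell_1(x_1x_3\cdots x_n)^{p_2}$, $\mathsf{u}_2=\ell_2(x_2x_3\cdots x_n)^{p_1}$, and $\mathsf{u}_i=-\ell_2\prod_{j\ne i}x_j^{p_i}$ for $i\ge 3$. A direct expansion then gives
\[
\mathsf{v}^{j_0}\mathsf{u}_1^{j_1}\mathsf{u}_2^{j_2}\prod_{i\ge 3}\mathsf{u}_i^{j_i}=\pm\, x^{r(k)}\cdot\ell_1^{A_1}\ell_2^{A_2}\prod_{i\ge 3}\ell_i^{A_i}
\]
with explicit non-negative integers $A_i$ summing to $N(k)$, whenever $(j_\bullet)$ satisfies $k=j_0+p_2j_1+p_1j_2+\sum_{i\ge 3}p_ij_i$. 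It therefore suffices to show that, as $(j_\bullet)$ ranges over non-negative tuples with this degree, the resulting $\ell$-products span $\KK[t_0,t_1]_{N(k)}$.

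The main obstacle is this last combinatorial spanning, since the positivity constraint $j_0\ge 0$ restricts which exponent tuples $(A_1,\ldots,A_n)$ are reachable. To push the argument through, I would use the relation $\mathsf{u}_1\mathsf{u}_2=\mathsf{v}^{p_1+p_2}$ (immediate from the definitions, since both sides equal $(x_1\cdots x_n)^{p_1+p_2}$) to reduce to $\min(j_1,j_2)=0$, and exploit the auxiliary identities $\mathsf{u}_1\ell_2=\mathsf{v}^{p_2}\ell_1$, $\mathsf{u}_2\ell_1=\mathsf{v}^{p_1}\ell_2$, and $\mathsf{u}_i\ell_i=-\mathsf{v}^{p_i}\ell_2$ (for $i\ge 3$), obtained in the same way. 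Substituting $\ell_i=\alpha_i\ell_1+\beta_i\ell_2$ into the last identity and eliminating $\ell_1,\ell_2$ via the first two yields relations of the form $\alpha_i\mathsf{v}^{p_1}\mathsf{u}_i+\beta_i\mathsf{u}_2\mathsf{u}_i+\mathsf{v}^{p_i}\mathsf{u}_2=0$ purely among the generators, which let us trade powers of $\mathsf{v}$ for products of the $\mathsf{u}_j$. A finite combinatorial check using the fact that any $N(k)+1$ suitably generic products $\ell_1^{a_1}\cdots\ell_n^{a_n}$ of total degree $N(k)$ form a basis of $\KK[t_0,t_1]_{N(k)}$ then gives the required spanning and completes the proof.
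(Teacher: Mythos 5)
Your setup is sound and genuinely different from the paper's: the degree computations, the description $S_{k\os}=x_1^{r_1(k)}\cdots x_n^{r_n(k)}\cdot\KK[t_0,t_1]_{N(k)}$ with $\dim_{\KK}S_{k\os}=N(k)+1$ (this is \ref{basic observation}), the reduction of generation to the statement that the $\ell$-products coming from degree-$k$ monomials $\mathsf{v}^{j_0}\prod_i\mathsf{u}_i^{j_i}$ span $\KK[t_0,t_1]_{N(k)}$, and the identities $\mathsf{u}_1\mathsf{u}_2=\mathsf{v}^{p_1+p_2}$, $\mathsf{u}_1\ell_2=\mathsf{v}^{p_2}\ell_1$, $\mathsf{u}_i\ell_i=-\mathsf{v}^{p_i}\ell_2$ are all correct (they are exactly the relations verified in \ref{check relations}). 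The gap is the final spanning step, which is where essentially all of the content of the proposition lives and which you do not carry out. The ``fact'' you invoke --- that any $N(k)+1$ suitably generic products $\ell_1^{a_1}\cdots\ell_n^{a_n}$ of total degree $N(k)$ form a basis --- is false as stated (any family with a common linear factor spans a proper subspace), and the reachable products are very far from generic. For instance, with $(p_1,p_2,p_3)=(2,3,7)$ and $k=7$ one has $N(k)=6$ and, after using $\mathsf{u}_1\mathsf{u}_2=\mathsf{v}^{p_1+p_2}$ to remove duplicates, exactly seven distinct $\ell$-products, namely $\ell_1^{5-j}\ell_2^{j}\ell_3$ for $0\le j\le 5$ together with $\ell_1^3\ell_2^3$; six of the seven share the common factor $\ell_3$, and spanning holds only for the structural reason that those six exhaust $\ell_3\cdot\KK[t_0,t_1]_5$ while the seventh is prime to $\ell_3$. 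So you would need to determine precisely which exponent vectors $(A_1,\dots,A_n)$ are reachable for arbitrary $k$ and $\bp$ and prove spanning for that specific set; this is not a ``finite combinatorial check,'' since $k$ and $\bp$ are unbounded, and the trading relations you list reduce the set of products but do not by themselves establish that the survivors span.

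For contrast, the paper avoids this spanning problem entirely: it proves the a priori stronger statement that $S^{\os}/\mathsf{v}S^{\os}$ is spanned by the pure powers $\mathsf{u}_i^{\ell}$, by induction on the statistic $\ell(X)=\sum_{\ell_i>0}\ell_i$ attached to a monomial $X=x_1^{a_1}\cdots x_n^{a_n}\in S_{N\os}$ (where $a_i=N+\ell_ip_i$), repeatedly substituting $x_k^{p_k}=\lambda'x_i^{p_i}+\lambda''x_j^{p_j}$ until each resulting monomial is either divisible by $\mathsf{v}$ or equal to $\pm\mathsf{u}_{i'}^{-\ell_i}$. If you want to rescue your route, you should either run an explicit induction on $k$ for the spanning claim (which will likely reproduce something close to the paper's argument in the $t_0,t_1$ coordinates), or isolate and prove a precise analogue of \ref{basis of Sn-1} for the families of exponent vectors that are actually reachable.
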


\begin{prop}[{=\ref{dual graph assignment}}]
With notation as above, the modules $S(u\ox_j)^{\os}$ appearing in \ref{specials general intro} are precisely the following ideals of $S^{\os}$, and furthermore they correspond to the dual graph of the minimal resolution of $\Spec S^{\os}$ \eqref{s Veron dual graph} in the following way:
\[
\begin{array}{c}
\begin{tikzpicture}[xscale=1,yscale=1,bend angle=30, looseness=1]
\node (0) at (0,0) {$\scriptstyle (\mathsf{v}^{p_2},\mathsf{u}_1)$};
\node (A1) at (-3.25,1) {$\scriptstyle (\mathsf{v}^{p_2+1},\mathsf{u}_1)$};
\node (A2) at (-3.25,2) {$\scriptstyle (\mathsf{v}^{p_2+2},\mathsf{u}_1)$};
\node (A3) at (-3.25,3) {$\scriptstyle (\mathsf{v}^{p_2+p_1-2},\mathsf{u}_1)$};
\node (A4) at (-3.25,4) {$\scriptstyle (\mathsf{v}^{p_2+p_1-1},\mathsf{u}_1)$};
\node (B1) at (-1.5,1) {$\scriptstyle (\mathsf{u}_1,\mathsf{v}^{p_2-1})$};
\node (B2) at (-1.5,2) {$\scriptstyle (\mathsf{u}_1,\mathsf{v}^{p_2-2})$};
\node (B3) at (-1.5,3) {$\scriptstyle (\mathsf{u}_1,\mathsf{v}^2)$};
\node (B4) at (-1.5,4) {$\scriptstyle (\mathsf{u}_1,\mathsf{v})$};
\node (C1) at (0,1) {$\scriptstyle (\mathsf{u}_3,\mathsf{v}^{p_3-1})$};
\node (C2) at (0,2) {$\scriptstyle (\mathsf{u}_3,\mathsf{v}^{p_3-2})$};
\node (C3) at (0,3) {$\scriptstyle (\mathsf{u}_3,\mathsf{v}^{2})$};
\node (C4) at (0,4) {$\scriptstyle (\mathsf{u}_3,\mathsf{v})$};
\node (n1) at (2,1) {$\scriptstyle (\mathsf{u}_n,\mathsf{v}^{p_n-1})$};
\node (n2) at (2,2) {$\scriptstyle (\mathsf{u}_n,\mathsf{v}^{p_n-2})$};
\node (n3) at (2,3) {$\scriptstyle (\mathsf{u}_n,\mathsf{v}^{2})$};
\node (n4) at (2,4) {$\scriptstyle (\mathsf{u}_n,\mathsf{v})$};
\node at (-3.25,2.6) {$\vdots$};
\node at (-1.5,2.6) {$\vdots$};
\node at (0,2.6) {$\vdots$};
\node at (2,2.6) {$\vdots$};
\node at (1,2.5) {$\hdots$};
\draw [-] (A1)+(-30:8.5pt) -- ($(0) + (160:11.5pt)$);
\draw [-] (B1) --(0);
\draw [-] (C1) --(0);
\draw [-] (n1) --(0);
\draw [-] (A2) -- (A1);
\draw [-] (B2) --(B1);
\draw [-] (C2) --(C1);
\draw [-] (n2) --(n1);
\draw [-] (A4) -- (A3);
\draw [-] (B4) --(B3);
\draw [-] (C4) --(C3);
\draw [-] (n4) --  (n3);
\end{tikzpicture}
\end{array}
\]
\end{prop}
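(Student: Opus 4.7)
My plan has two parts: first, identifying each module $S(u\ox_j)^{\os}$ with an explicit ideal of $S^{\os}$; second, matching these ideals to dual graph vertices of \eqref{s Veron dual graph}.

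For the first part, I would begin by noting that $\ox=\os$ corresponds to all $a_j=1$, so the Hirzebruch--Jung expansion $\frac{p_j}{p_j-1}=[2,2,\ldots,2]$ (with $p_j-1$ entries) gives $m_j=p_j-1$ and the $i$-series $I(p_j,p_j-1)=\{0,1,\ldots,p_j\}$; the extreme indices $u=0$ and $u=p_j$ collapse respectively to $S^{\os}$ and to the common module $S(\oc)^{\os}$ (for all $j$, since $p_j\ox_j=\oc$), leaving exactly $2+\sum_j(p_j-1)$ distinct specials, which matches the $1+\sum_j(p_j-1)$ vertices of \eqref{s Veron dual graph} together with the trivial special. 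To realize $S(u\ox_j)^{\os}=\bigoplus_{k\in\bZ}S_{u\ox_j+k\os}$ as a divisorial ideal of $S^{\os}$, I would embed it in the fraction field, clear denominators by a suitable element, and read off minimal generators from the monomial basis of the lowest-degree graded pieces of $S$, using the defining relations $x_i^{p_i}=\ell_i(t_0,t_1)$ to rewrite them in terms of the generators $\mathsf{u}_i$ and $\mathsf{v}$ furnished by the preceding theorem.

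For the second part, the assignment to vertices follows from combining the tilting equivalences of Theorem~\ref{tilting intro} (which identify $S(u\ox_j)^{\os}$ with the line bundle $\cO_{\bX}(u\ox_j)$ on the Geigle--Lenzing stack) with the standard bijection between non-trivial indecomposable specials and exceptional curves of the minimal resolution, under which a special $M$ with full sheaf $\widetilde{M}$ is assigned the unique curve $E$ satisfying $c_1(\widetilde{M})\cdot E=1$ and $c_1(\widetilde{M})\cdot E'=0$ for $E'\neq E$. The intersection numbers of $\cO_{\bX}(u\ox_j)$ with the exceptional curves can be read off from the local structure at the stacky point $\lambda_j$, pinning down the vertex assignment: $S(\oc)^{\os}$ to the central vertex, and $S(u\ox_j)^{\os}$ (for $1\leq u\leq p_j-1$) to the appropriate vertex on arm $j$.

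The main obstacle is the asymmetric handling of arms $1$, $2$, and $i\geq 3$ visible in the statement: arm $1$ uses ideals of the form $(\mathsf{v}^{p_2+k},\mathsf{u}_1)$, arm $2$ uses $(\mathsf{u}_1,\mathsf{v}^{p_2-k})$, and arms $i\geq 3$ use $(\mathsf{u}_i,\mathsf{v}^{p_i-k})$. Each case requires a separate monomial computation, mirroring the asymmetric definitions of $\mathsf{u}_1,\mathsf{u}_2,\mathsf{u}_i$ in the preceding theorem. In particular, verifying $S(\oc)^{\os}=(\mathsf{v}^{p_2},\mathsf{u}_1)$ hinges on using $x_2^{p_2}=\ell_2(t_0,t_1)$ to extract $\mathsf{v}^{p_2}$ and $\mathsf{u}_1$ as generators, and more generally the generators along each arm are obtained by incrementally multiplying by $\mathsf{v}$ and reducing using the defining relations.
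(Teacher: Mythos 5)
Your proposal is correct and, for the identification of the modules as ideals, follows essentially the paper's route: the paper first computes two explicit generators of each $S(u\ox_j)^{\os}$ as a submodule of $S$ (\ref{2 gen as S modules}, via an inductive manipulation of monomials using the relations $x_i^{p_i}+x_1^{p_1}-\lambda_ix_2^{p_2}=0$), and then clears denominators by multiplying by a suitable homogeneous element to land on the stated ideals in the coordinates $\mathsf{u}_i,\mathsf{v}$ of \ref{Veronese result}. Be aware that the real work is hidden in your phrase ``read off minimal generators from the monomial basis of the lowest-degree graded pieces'': the two generators sit in different degrees ($0$ and $p_j-u$), and proving that they generate the whole module is exactly the case-by-case monomial induction of \ref{2 gen as S modules}, not a lowest-degree inspection. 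For the vertex assignment your route differs mildly: you invoke Wunram's criterion $c_1(\widetilde{M})\cdot E=\delta$ directly, computing intersection numbers from the local structure at the stacky points, which is essentially how \ref{specials determined thm} already assigns modules to arms; the paper's proof of this proposition instead observes that the irreducible morphisms among the $S(\oy)$ (the canonical-algebra arrows of \cite{GL1}) must survive as arrows in the quiver of the reconstruction algebra, and the intersection-theoretic description of that quiver in \ref{GL2 for R} then forces $S(\oc)^{\os}$ to the central vertex and fixes the order along each arm (cf.\ \ref{positions forced}). Both arguments are valid; the paper's has the small advantage of requiring no further Chern-class computation beyond what is already encoded in \ref{GL2 for R}.
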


The relations between $\mathsf{u}_1,\hdots,\mathsf{u}_n,\mathsf{v}$ turn out to be easy to describe, and remarkably have already appeared in the literature. It is  well-known \cite[3.6]{Wahl} that there is a family of rational surface singularities $R_{\bp,\bl}$ where the dual graph of the minimal resolution of $\Spec R_{\bp,\bl}$ is precisely \eqref{dual graph} with $a=0$.  Indeed, in \cite{Wahl} $R_{\bp,\bl}$ is defined as follows: given the same data $(\bp,\bl)$ as above normalised so that $\lambda_1=(1:0)$, $\lambda_2=(0:1)$ and $\lambda_3,\hdots,\lambda_n\in\KK^*$ are pairwise distinct, we can consider the commutative $\KK$-algebra $R_{\bp,\bl}$, generated by $u_1,\hdots,u_n,v$ subject to the relations given by the $2\times 2$ minors of the matrix
\[
\left(
\begin{array}{ccccc}
u_2&u_3&\hdots&u_{n}&v^{p_2}\\
v^{p_1}&\lambda_3u_3+v^{p_3}&\hdots&\lambda_nu_n+v^{p_n}&u_1
\end{array}
\right)
\]
This is a connected $\mathbb{N}$-graded ring graded by $\deg v\colonequals 1$, $\deg u_1\colonequals p_2$, $\deg u_2\colonequals p_1$ and $\deg u_i\colonequals p_i$ for all $3\leq i\leq n$.  

We show that $S^{\os}$ recovers precisely the above $R_{\bp,\bl}$.
\begin{thm}[{=\ref{Veronese result}}]\label{R and S intro}
There is an isomorphism $R_{\bp,\bl}\cong S^{\os}$ of $\bZ$-graded algebras given by $u_i\mapsto \mathsf{u}_i$ for $1\leq i\leq n$ and $v\mapsto\mathsf{v}$.
\end{thm}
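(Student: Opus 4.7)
The plan is to verify that the assignment $u_i \mapsto \mathsf{u}_i$ and $v \mapsto \mathsf{v}$ extends to a well-defined $\bZ$-graded homomorphism $\phi\colon R_{\bp,\bl} \to S^{\os}$, deduce surjectivity from the earlier generation theorem, and obtain injectivity from a Krull dimension comparison.

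First I would check the degrees. Using $p_i \ox_i = \oc$ in $\bL$, direct bookkeeping shows that $\mathsf{v}$ has $\bL$-degree $\os$ and each $\mathsf{u}_i$ has $\bL$-degree $p_i \os$, so the candidate map lands in $S^{\os} = \bigoplus_{m\in\bZ}S_{m\os}$ and is $\bZ$-graded of degree zero once the grading on $S^{\os}$ is $S^{\os}_m = S_{m\os}$.

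Next I would verify the $2\times 2$ minor relations of the Wahl matrix by direct substitution, after normalising so that $\ell_1 = t_1$, $\ell_2 = t_0$ and $\ell_i = \lambda_i t_0 - t_1$ for $3 \leq i \leq n$, giving $x_1^{p_1} = t_1$, $x_2^{p_2} = t_0$ and $x_i^{p_i} = \lambda_i t_0 - t_1$ in $S$. The minor between the first and last columns collapses immediately to $\mathsf{u}_1\mathsf{u}_2 = (x_1\cdots x_n)^{p_1+p_2} = \mathsf{v}^{p_1+p_2}$. For the mixed-type minors the crucial identity is
\[ \lambda_i \mathsf{u}_i + \mathsf{v}^{p_i} = -t_1\, X_i, \qquad X_i := x_1^{p_i}x_2^{p_i}\prod_{j\geq 3,\, j\neq i} x_j^{p_i}, \]
which follows from $\mathsf{u}_i = -t_0 X_i$ (obtained by extracting $x_2^{p_2}=t_0$ from the defining monomial of $\mathsf{u}_i$) and $\mathsf{v}^{p_i} = x_i^{p_i} X_i = (\lambda_i t_0 - t_1)X_i$. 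With this identity the remaining minors all collapse: the minor pairing column $k$ with the last column (and similarly column $1$ with column $k$) reduces by rearrangement using $x_1^{p_1}=t_1$, and the minor pairing columns $k$ and $k'$ reduces via the further identity $x_i^{p_i} - x_j^{p_j} = (\lambda_i - \lambda_j)t_0$. These computations show that $\phi$ is well-defined.

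Surjectivity of $\phi$ then follows at once from \ref{S is generated} (applied with $\ox = \os$), which asserts that $\mathsf{u}_1,\ldots,\mathsf{u}_n,\mathsf{v}$ generate $S^{\os}$. For injectivity I would use that $R_{\bp,\bl}$ is a $2$-dimensional integral domain, being the coordinate ring of a rational surface singularity by Wahl's theorem, while $S^{\os}$ is also a $2$-dimensional integral domain, being a rational surface singularity by \ref{tilting intro}\eqref{tilting intro 1}; a surjection between two integral domains of the same Krull dimension is automatically injective, since otherwise the quotient would have strictly smaller dimension. The main obstacle is the relation verification: the Wahl matrix is finely calibrated so that the non-obvious sums $\lambda_i u_i + v^{p_i}$ become the single-term expression $-t_1 X_i$ under $\phi$, and confirming this cleanly requires care with both the normalisation of the $\ell_i$ and the sign convention in the definition of $\mathsf{u}_i$ for $i\geq 3$.
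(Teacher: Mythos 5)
Your proposal is correct and follows essentially the same route as the paper's proof of \ref{Veronese result}: check the $2\times 2$ minor relations of the Wahl matrix to get a well-defined graded homomorphism, invoke \ref{S is generated} for surjectivity, and deduce injectivity from the fact that $R_{\bp,\bl}$ is a two-dimensional domain surjecting onto the two-dimensional ring $S^{\os}$. The only cosmetic difference is in the relation check: the paper verifies only the minors containing the last column and uses that $S^{\os}$ is a domain to conclude the rest vanish, whereas you verify all minors directly via the $t_0,t_1$ factorisations, which works equally well.
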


Thus the Veronese method we develop in this paper for constructing rational surface singularities recovers as a special case the example of \cite{Wahl}, but in a way suitable for arbitrary labelled star-shaped graphs, and also in a way suitable for obtaining the special CM modules.  

We then present the reconstruction algebra of $R_{\bp,\bl}\cong S^{\os}$, since again in this situation it has a particularly nice form. In principle, using \ref{specials general intro}, we can do this for any Veronese $S^{\ox}$ with $0\neq\ox\in\bL_+$, but for notational ease we restrict ourselves to the case $\ox=\os$.  

\begin{thm}[=\ref{recon relations}]
The reconstruction algebra $\Upgamma_{\bp,\bl}$ of $R_{\bp,\bl}$ can be written explicitly as a quiver with relations.   It is the path algebra of the double of the quiver $Q_{\bp}$ of the canonical algebra, subject to the relations induced by the canonical relations, and furthermore at every vertex, all 2-cycles that exist at that vertex are equal.
\end{thm}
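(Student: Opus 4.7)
The strategy is to leverage what has already been established. By Proposition~\ref{deg 0 intro}\eqref{deg 0 intro 2}, the degree-zero part of $\Gamma_{\bp,\bl}=\Gamma_{\!\os}$ is isomorphic to the canonical algebra $\Lambda_{\bp,\bl}$, whose quiver is $Q_{\bp}$ and whose relations are the canonical relations; moreover $\Gamma_{\!\os}$ is generated in degree one over this subalgebra. Thus the task reduces to (a) identifying the degree-one generators, which will be shown to produce precisely a set of reverse arrows doubling $Q_{\bp}$, and (b) determining the additional mixed-degree relations.

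First, to locate the degree-one generators, for each ordered pair of indecomposable special CM modules $N=S(u\ox_j)^{\os}$ and $N'=S(u'\ox_{j'})^{\os}$ appearing in Theorem~\ref{specials general intro} (with $I(p_j,p_j-1)=\{1,\dots,p_j-1\}$, as all arm curves are $(-2)$-curves), I would compute the dimension of the graded piece $\Hom_R(N,N')_1$. Using the $\bL$-graded description of $S$, such morphisms correspond to homogeneous elements of $S$ of degree $u'\ox_{j'}-u\ox_j+\os$. A direct inspection of the monomial basis of $S$ shows that nonzero degree-one morphisms exist precisely between vertices of $Q_{\bp}$ that are connected by an arrow of $Q_{\bp}$, and point in the \emph{opposite} direction to that arrow. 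Consequently, the underlying quiver of $\Gamma_{\!\os}$ is the double of $Q_{\bp}$.

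Next I would derive the relations. The canonical relations are automatic in degree zero. The new, mixed-degree relations arise from interpreting each $2$-cycle as multiplication by a homogeneous element of $R$ of $\bZ$-degree one. Concretely, at a vertex $v$ of the doubled quiver corresponding to a special CM module $N_v$, any $2$-cycle through $v$ is, viewed inside $\End_R(N_v)$, equal to multiplication by an explicit element of $R$. Using the generators $\mathsf{u}_1,\dots,\mathsf{u}_n,\mathsf{v}$ of $S^{\os}=R_{\bp,\bl}$ provided by Theorem~\ref{R and S intro}, together with the ideal descriptions of the $N_v$ given in the preceding proposition (=\ref{dual graph assignment}), one checks that this scalar depends only on $v$ and not on the choice of outgoing arrow. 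This yields the relation ``all $2$-cycles at each vertex are equal''. Finally, completeness of the presentation is confirmed by a Hilbert series comparison: the algebra defined by the proposed relations has the correct graded dimensions of $\Hom_R(N_v,N_{v'})_d$, which are computable directly from the explicit $\bZ$-graded form of each $N_v$.

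The main obstacle is the third step: verifying that the scalar assigned to each $2$-cycle is the same across \emph{all} edges incident to a given vertex. This requires a direct calculation inside $S^{\os}$ using the $2\times 2$ minor relations among the $\mathsf{u}_i$ and $\mathsf{v}$ recorded in \S\ref{Sect 5.2}, and it is where the specific parameters $(\bp,\bl)$ interact with the combinatorics; in particular, at the vertices $0$ and $\omega$ (each incident to $n$ edges) the canonical relations of $\Lambda_{\bp,\bl}$ must be used to force the $n$ a priori distinct $2$-cycle scalars to coincide. Once this symmetry has been established, all remaining verifications are routine bookkeeping.
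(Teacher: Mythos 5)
Your overall architecture --- identify the quiver, exhibit the relations, then prove the presentation is complete --- is reasonable, and your first steps are consistent with what the paper establishes elsewhere: the quiver of $\Gamma_{\!\os}$ is the double of $Q_{\bp}$ (this is \ref{recon quiver and number relations} specialised to $a=0$ and all $\alpha_{ij}=2$, made explicit in \ref{all arrows}), the degree-one generators are exactly the reverse arrows (\ref{grading inherited}), and the $2$-cycle identities can indeed be verified directly from the explicit morphisms in \ref{all arrows} together with the $2\times2$ minor relations of \ref{Veronese result}. These steps produce a surjection from the presented algebra onto $\Gamma_{\!\os}$. The divergence from the paper, and the genuine gap, is in how you certify that this surjection is an isomorphism.

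You claim completeness is "confirmed by a Hilbert series comparison: the algebra defined by the proposed relations has the correct graded dimensions." The target dimensions are fine --- $\Hom_R(S(\oy)^{\os},S(\oy')^{\os})_d\cong S_{\oy'-\oy+d\os}$ by \ref{slightly stronger} and \eqref{Hom between Ss}, and $\dim_{\KK}S_{\oz}$ is elementary. But the graded dimensions of the abstractly presented algebra $\KK\overline{Q}_{\bp}/\langle\cS'\rangle$ are not computable by inspection: you would need a normal form for that quotient (a Gr\"obner-basis or diamond-lemma argument), which is essentially equivalent in difficulty to the theorem itself and is not routine bookkeeping. Since $\Gamma_{\!\os}$ is not the preprojective algebra of $\Lambda_{\bp,\bl}$, there is also no off-the-shelf Hilbert series to quote. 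The paper avoids this entirely: working in the completion it invokes \cite[3.4]{BIRS}, by which the cardinality of a minimal set of relations between two vertices equals $\dim\Ext^2_{\hat{\Gamma}}(S_a,S_b)$, and these numbers are already known from intersection theory via \ref{GL2 for R} and \ref{recon quiver and number relations}; it then only has to check that the proposed relations are linearly independent modulo $IJ+JI$ and that their count matches the geometric count. To close your gap, either supply the normal-form computation or replace the Hilbert series comparison with this $\Ext^2$-counting argument.
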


We refer the reader to \ref{recon relations} for more details, but remark that the reconstruction algebra was originally invented in order to extend the notion of a preprojective algebra to a more general geometric setting.  In our situation here, the reconstruction algebra is not quite the preprojective algebra of the canonical algebra $\Lambda_{\bp,\bl}$, but the relations in \ref{recon relations} are mainly of the same form as the preprojective relations; the reconstruction algebra should perhaps be thought of as a better substitute.

In the last section of the paper, finally we can explain the coincidence of the two motivating pictures, as a consequence of the following result. 

\begin{thm}[=\ref{equiv last}]\label{equiv last intro}
Let $R$ be the $(m-3)$-Wahl Veronese subring associated with $(p_1,p_2,p_3)=(2,3,3)$, $(2,3,4)$ or $(2,3,5)$ and $m\ge3$, and $\mathfrak{R}$ its completion. Let $G\leq \bL$ be the cyclic group generated by $(h(m-2)+1)\ow$, where $h=6$, $12$ or $30$ respectively.  Then
\begin{enumerate}
\item There are equivalences $\vect\bX\simeq\CM^\bZ\!R$ and 
\[
F\colon (\vect\bX)/G\xrightarrow{\sim}\CM\mathfrak{R},
\]
where $(\vect\bX)/G$ is the complete orbit category (for the definition, see \S\ref{domestic section}).
\item For the canonical tilting bundle $\mathcal{E}$ on $\bX$, we have $\SCM\mathfrak{R}=\add F\mathcal{E}$.
\end{enumerate}
\end{thm}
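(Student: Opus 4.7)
The plan is to deduce the two equivalences from the machinery developed earlier in the paper, and then to match the summands of the canonical tilting bundle $\cE$ with the classified special CM modules. Set $\ox:=\os+(m-3)\oc=\sum_{i=1}^3\ox_i+(m-3)\oc$, so that $R=S^{\ox}$. Since every $a_i=1$, the coprime hypothesis of Corollary~\ref{S=R=Gamma intro} is satisfied, so the Veronese functor $(-)^{\ox}\colon\CM^{\bL}\!S\to\CM^{\bZ}\!R$ is an equivalence; composing with the standard sheafification equivalence $\CM^{\bL}\!S\simeq\vect\bX$ for the Geigle--Lenzing line yields the first claim $\vect\bX\simeq\CM^{\bZ}\!R$.

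For the second equivalence, completion induces a functor $\CM^{\bZ}\!R\to\CM\mathfrak{R}$ whose effect on morphisms is to sum over all integer shifts, so $\CM\mathfrak{R}$ is realised as the orbit category of $\CM^{\bZ}\!R$ under the degree-one shift $(1)$. Transporting along $\vect\bX\simeq\CM^{\bZ}\!R$, the functor $(1)$ becomes $-\otimes\cO_{\bX}(\ox)$, so it suffices to identify $\langle\ox\rangle\leq\bL$ with $G$. In each Dynkin case $\bL$ is torsion-free of rank one; writing $\bL=\bZ\oy$ with $\ox_i=(h/p_i)\oy$ gives $\oc=h\oy$ and $\ow=\oc-\sum_{i=1}^3\ox_i=-\oy$, so a direct computation yields
\[
\ox \;=\; \sum_{i=1}^3\ox_i+(m-3)\oc \;=\; \bigl(h(m-2)+1\bigr)\oy \;=\; -\bigl(h(m-2)+1\bigr)\ow,
\]
hence $\langle\ox\rangle=G$ as subgroups of $\bL$, and one obtains $F\colon(\vect\bX)/G\xrightarrow{\sim}\CM\mathfrak{R}$.

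For part (2), Theorem~\ref{specials general intro} applied with $a_j=1$ gives the continued fraction $p_j/(p_j-1)=[2,\ldots,2]$ with $p_j-1$ twos, so the associated $i$-series is $I(p_j,p_j-1)=\{0,1,\ldots,p_j\}$. Thus the indecomposable special CM $R$-modules are the $S(u\ox_j)^{\ox}$ for $1\leq j\leq 3$ and $0\leq u\leq p_j$; modulo the identifications $p_j\ox_j=\oc$ and $0\cdot\ox_j=0$, this set coincides with the indexing set $[0,\oc]$ for the summands of $\cE$, and a cardinality check $\sum_{j=1}^3 p_j-1$ in each of the three cases $(2,3,3)$, $(2,3,4)$, $(2,3,5)$ confirms the bijection. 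Under $F$, the line bundle $\cO_{\bX}(\oy)$ goes to the completion of $S(\oy)^{\ox}$, so $F\cE$ is term by term the direct sum of indecomposable special CM $\mathfrak{R}$-modules, giving $\SCM\mathfrak{R}=\add F\cE$.

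The principal obstacle will be making rigorous the orbit-category identification $\CM\mathfrak{R}\simeq(\vect\bX)/G$: although natural in spirit, this requires checking both full faithfulness (the $\Hom$ in $\CM\mathfrak{R}$ between completed modules agrees with the $G$-summed $\Hom$ on $\vect\bX$) and essential surjectivity (every indecomposable CM $\mathfrak{R}$-module lifts to some $S(\oy)^{\ox}$). Essential surjectivity can be handled by identifying $\mathfrak{R}$ as a quotient singularity of Platonic type (from the discussion in Section~\ref{domestic section}) and invoking the McKay correspondence, whereas full faithfulness requires careful bookkeeping of degrees through the $\ox$-Veronese.
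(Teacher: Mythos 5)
Your overall strategy matches the paper's: Theorem~\ref{WPL as qgrZ} gives $\CM^{\bL}\!S\simeq\CM^{\bZ}\!R$ (the coprimality $(p_i,1)=1$ is automatic), the degree shift $(1)$ on $\CM^{\bZ}\!R$ corresponds to $(\os_{m-3})$ on $\CM^{\bL}\!S$, and part (2) is exactly the paper's argument via \ref{specials determined thm} and \ref{i series all}. However, your identification of $\langle\ox\rangle$ with $G$ contains a genuine error: you assert that $\bL$ is torsion-free in each Dynkin case and write $\bL=\bZ\oy$. This is false for $(2,3,3)$ and $(2,3,4)$: a Smith normal form computation gives $\bL(2,3,3)\cong\bZ\oplus\bZ/3$ and $\bL(2,3,4)\cong\bZ\oplus\bZ/2$; only $\bL(2,3,5)\cong\bZ$. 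The intermediate claim you want, namely $(h(m-2)+1)\ow=-\os_{m-3}$, is nevertheless true, and the paper proves it (Lemma~\ref{h s lemma}) by the direct computation $h\ow=(h-\sum_i h/p_i)\oc=-\oc$, hence $(h+1)\ow=-\os$ and $(h(m-2)+1)\ow=-\os-(m-3)\oc$, which never leaves $\bL$ and needs no torsion-freeness. You should replace your cyclic-group argument by this computation.

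The second issue is the orbit-category equivalence $(\CM^{\bZ}\!R)/\bZ\simeq\CM\mathfrak{R}$, which you correctly flag as the principal obstacle but do not close. The paper disposes of it in one line by observing that $\mathfrak{R}$ has finite CM representation type and citing \cite[15.14]{Y}, which yields that every CM $\widehat{R}$-module is gradable and that the completion functor realises $\CM\mathfrak{R}$ as the orbit category of $\CM^{\bZ}\!R$ by the degree shift. Your proposed alternative --- essential surjectivity via the identification of $\mathfrak{R}$ with a Platonic quotient singularity (Proposition~\ref{Veronese=quotient}) together with Herzog's theorem, and full faithfulness via the fact that the $\mathfrak{m}$-adic completion of the $\bZ$-graded Hom space $\bigoplus_i\Hom^{\bZ}_R(M,N(i))$ is the product $\prod_i\Hom^{\bZ}_R(M,N(i))$ --- would also work, but as written it remains a sketch rather than a proof.
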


\medskip
\noindent
\textbf{Acknowledgements.}  The authors thank Kazushi Ueda and Atsushi Takahashi for many helpful comments and remarks, and  Mitsuyasu Hashimoto, Ryo Takahashi and Yuji Yoshino for valuable discussions on reflexive modules. Part of this work was completed when O.I.\ visited Edinburgh in March and September 2012, and he thanks people in Edinburgh for hospitality during his visit.

\medskip
\noindent
\textbf{Conventions.}  Throughout, $\KK$ denotes an algebraically closed field of characteristic zero. All modules will be right modules, and for a ring $A$ write $\mod A$ for the category of finitely generated right $A$-modules.  If $G$ is an abelian group and $A$ is a noetherian $G$-graded ring, $\gr^G\! A$ will denote the category of finitely generated $G$-graded right $A$-modules.  Throughout when composing maps $fg$ will mean $f$ then $g$, similarly for arrows $ab$ will mean $a$ then $b$.  Note that with this convention $\Hom_R(M,N)$ is an $\End_R(M)^{\op}$-module and an $\End_R(N)$-module.  For $M\in\mod A$ we write $\add M$ for the full subcategory consisting of summands of finite direct sums of copies of $M$.

\section{Preliminaries}\label{prelim}

\subsection{Notation} We first fix notation.  For $n\geq 0$, choose positive integers $p_1,\hdots,p_n$ with all $p_i\geq 2$, and set $\bp\colonequals (p_1,\hdots,p_n)$.  Likewise, for pairwise distinct points $\lambda_1,\hdots,\lambda_n\in\mathbb{P}^1$,  set $\bl\colonequals (\lambda_1,\hdots,\lambda_n)$.   Let $\ell_i(t_0,t_1)\in\KK[t_0,t_1]$ be the linear form defining $\lambda_i$. 

\begin{notation}\label{notation throughout}{\rm
To this data we associate the following.
\begin{enumerate}
\item\label{notation throughout 2} The abelian group $\bL=\bL(p_1,\hdots,p_n)$ generated by the elements $\ox_1,\hdots,\ox_n$ subject to the relations
$p_1\ox_1=p_2\ox_2=\cdots =p_n\ox_n=:\oc$.  Note that $\bL$ is an ordered group with $\bL_+=\sum_{i=1}^n\mathbb{Z}_{\geq 0}\ox_i$ as positive elements .  Since $\bL/\bZ\oc\cong\prod_{i=1}^n\bZ/p_i\bZ$ canonically, each $\ox\in\bL$ can be written uniquely in \emph{normal form} as $\ox=\sum_{i=1}^na_i\ox_i+a\oc$ with $0\leq a_i<p_i$ and $a\in\mathbb{Z}$.  Then $\ox$ belongs to $\bL_+$ if and only if $a\geq0$.  The \emph{dualizing element} $\ow\in\bL$ is defined to be
\[
\ow\colonequals (n-2)\oc\,-\sum_{i=1}^n\ox_i.
\]
\item\label{notation throughout 2b} The commutative $\KK$-algebra $S_{\bp,\bl}$ defined as
\[
S_{\bp,\bl}=S\colonequals \frac{\KK[t_0,t_1,x_1,\hdots,x_n]}{(x_i^{p_i}-\ell_i(t_0,t_1)\mid 1\leq i\leq n)}.
\]
As in the introduction, this is $\bL$-graded by $\deg x_i\colonequals \ox_i$, and defines the weighted projective line $\mathbb{X}_{\bp,\bl}\colonequals [(\Spec S\backslash 0)/\Spec \KK\bL]$. Then its coarse moduli space $X_{\bp,\bl}\colonequals (\Spec S\backslash 0)/\Spec \KK\bL$ is $\bP^1$.
In fact, the open cover $\Spec S\backslash0=U_0\cup U_1$ with $U_i\colonequals \Spec S_{t_i}$ induces an open cover $X_{\bp,\bl}=X_0\cup X_1$ with $X_i\colonequals \Spec (S_{t_i})_0$, where $(S_{t_i})_0$ is the degree zero part of $S_{t_i}$.
By inspection $(S_{t_i})_0=\KK[t_{1-i}/t_i]$, and it follows that $X_{\bp,\bl}\cong\bP^1$.
\end{enumerate}
When $n\geq2$, often we choose $\lambda_1=(1:0)$ and $\lambda_2=(0:1)$, in which case $\ell_1=t_1$, $\ell_2=t_0$ and $\ell_i=\lambda_it_0-t_1$ for $3\le i\le n$, and there is a presentation
\[
S_{\bp,\bl}=\frac{\KK[x_1,\hdots,x_n]}{(x_i^{p_i}+x_1^{p_1}-\lambda_ix_2^{p_2}\mid 3\leq i\leq n)}.
\]
Moreover, when $n\geq 2$, we can further associate  
\begin{enumerate}[resume]
\item The quiver
\[
\begin{array}{cc}
Q_{\bp}\colonequals  &
\begin{array}{c}
\begin{tikzpicture}[xscale=1,yscale=1]
\node (0) at (0,0) [vertex] {};
\node (A1) at (-3,1) [vertex] {};
\node (A2) at (-3,2) [vertex] {};
\node (A3) at (-3,3) [vertex] {};
\node (A4) at (-3,4) [vertex] {};
\node (B1) at (-1.5,1) [vertex] {};
\node (B2) at (-1.5,2) [vertex] {};
\node (B3) at (-1.5,3) [vertex] {};
\node (B4) at (-1.5,4) [vertex] {};
\node (C1) at (0,1) [vertex] {};
\node (C2) at (0,2) [vertex] {};
\node (C3) at (0,3) [vertex] {};
\node (C4) at (0,4) [vertex] {};
\node (n1) at (2,1) [vertex] {};
\node (n2) at (2,2) [vertex] {};
\node (n3) at (2,3) [vertex] {};
\node (n4) at (2,4) [vertex] {};
\node at (-3,2.6) {$\vdots$};
\node at (-1.5,2.6) {$\vdots$};
\node at (0,2.6) {$\vdots$};
\node at (2,2.6) {$\vdots$};
\node at (1,3.5) {$\hdots$};
\node at (1,1.5) {$\hdots$};
\node (T) at (0,5) [cvertex] {};
\draw [->] (A1) -- node[fill=white,inner sep=1pt]{$\scriptstyle x_1$}(0);
\draw [->] (B1) -- node[fill=white,inner sep=1pt]{$\scriptstyle x_2$}(0);
\draw [->] (C1) -- node[fill=white,inner sep=1pt]{$\scriptstyle x_3$}(0);
\draw [->] (n1) -- node[fill=white,inner sep=1pt]{$\scriptstyle x_n$}(0);
\draw [->] (A2) -- node[right] {$\scriptstyle x_1$} (A1);
\draw [->] (B2) -- node[right]  {$\scriptstyle x_2$}(B1);
\draw [->] (C2) -- node[right]  {$\scriptstyle x_3$}(C1);
\draw [->] (n2) -- node[right]  {$\scriptstyle x_n$}(n1);
\draw [->] (A4) -- node[right] {$\scriptstyle x_1$} (A3);
\draw [->] (B4) -- node[right]  {$\scriptstyle x_2$}(B3);
\draw [->] (C4) -- node[right]  {$\scriptstyle x_3$}(C3);
\draw [->] (n4) -- node[right]  {$\scriptstyle x_n$}(n3);
\draw [->] (T) -- node[fill=white,inner sep=1pt]{$\scriptstyle x_1$}(A4);
\draw [->] (T) -- node[fill=white,inner sep=1pt]{$\scriptstyle x_2$}(B4);
\draw [->] (T) -- node[fill=white,inner sep=1pt]{$\scriptstyle x_3$}(C4);
\draw [->] (T) -- node[fill=white,inner sep=1pt]{$\scriptstyle x_n$}(n4);
\draw [decorate,decoration={brace,amplitude=5pt},xshift=-4pt,yshift=0pt]
(2,1) -- (2,4) node [black,midway,xshift=-0.55cm] 
{$\scriptstyle p_n-1$};
\draw [decorate,decoration={brace,amplitude=5pt},xshift=-4pt,yshift=0pt]
(0,1) -- (0,4) node [black,midway,xshift=-0.55cm] 
{$\scriptstyle p_3-1$};
\draw [decorate,decoration={brace,amplitude=5pt},xshift=-4pt,yshift=0pt]
(-1.5,1) -- (-1.5,4) node [black,midway,xshift=-0.55cm] 
{$\scriptstyle p_2-1$};
\draw [decorate,decoration={brace,amplitude=5pt},xshift=-4pt,yshift=0pt]
(-3,1) -- (-3,4) node [black,midway,xshift=-0.55cm] 
{$\scriptstyle p_1-1$};
\end{tikzpicture}
\end{array}
\end{array}
\]
(where there are $n$ arms, and the number of vertices on arm $i$ is $p_i-1$).
\item The \emph{canonical algebra} $\Lambda_{\bp,\bl}$, namely the path algebra of the quiver $Q_{\bp}$ subject to the relations 
\[
I\colonequals \langle x_1^{p_1}-\lambda_ix_2^{p_2}+x_i^{p_i}\mid 3\leq i\leq n\rangle.
\]
There is a degenerate definition of the canonical algebra if $0\leq n\leq 1$; see \cite{GL1}.
\item\label{notation throughout 5} The commutative $\KK$-algebra $R_{\bp,\bl}$, generated by $u_1,\hdots,u_n,v$ subject to the relations given by the $2\times 2$ minors of the matrix
\[
\left(
\begin{array}{ccccc}
u_2&u_3&\hdots&u_{n}&v^{p_2}\\
v^{p_1}&\lambda_3u_3+v^{p_3}&\hdots&\lambda_nu_n+v^{p_n}&u_1
\end{array}
\right)
\]
This is a connected $\mathbb{N}$-graded ring graded by $\deg u_1\colonequals p_2$, $\deg u_2\colonequals p_1$, $\deg v\colonequals 1$, and $\deg u_i\colonequals p_i$ for all $3\leq i\leq n$.  
\end{enumerate}
We will also consider
\begin{enumerate}[resume]
\item Star-shaped graphs of the form
\begin{equation}\label{dual graph}
\begin{array}{c}
\begin{tikzpicture}[xscale=1,yscale=1]
\node (0) at (0,0) [vertex] {};
\node (A1) at (-3,1) [vertex]{};
\node (A2) at (-3,2) [vertex] {};
\node (A3) at (-3,3) [vertex] {};
\node (A4) at (-3,4) [vertex] {};
\node (B1) at (-1.5,1) [vertex] {};
\node (B2) at (-1.5,2) [vertex] {};
\node (B3) at (-1.5,3) [vertex] {};
\node (B4) at (-1.5,4) [vertex] {};
\node (C1) at (0,1) [vertex] {};
\node (C2) at (0,2) [vertex] {};
\node (C3) at (0,3) [vertex] {};
\node (C4) at (0,4) [vertex] {};
\node (n1) at (2,1) [vertex] {};
\node (n2) at (2,2) [vertex] {};
\node (n3) at (2,3) [vertex] {};
\node (n4) at (2,4) [vertex] {};
\node at (-3,2.6) {$\vdots$};
\node at (-1.5,2.6) {$\vdots$};
\node at (0,2.6) {$\vdots$};
\node at (2,2.6) {$\vdots$};
\node at (1,3.5) {$\hdots$};
\node at (1,1.5) {$\hdots$};
\node (T) at (0,4.25) {};
\node at (0,-0.2) {$\scriptstyle -\upbeta$};
\node at (-2.6,1) {$\scriptstyle -\upalpha_{11}$};
\node at (-2.6,2) {$\scriptstyle -\upalpha_{12}$};
\node at (-2.35,3) {$\scriptstyle -\upalpha_{1n_1-1}$};
\node at (-2.45,4) {$\scriptstyle -\upalpha_{1n_1}$};
\node at (-1.1,1) {$\scriptstyle -\upalpha_{21}$};
\node at (-1.1,2) {$\scriptstyle -\upalpha_{22}$};
\node at (-0.85,3) {$\scriptstyle -\upalpha_{2n_2-1}$};
\node at (-0.95,4) {$\scriptstyle -\upalpha_{2n_2}$};
\node at (0.4,1) {$\scriptstyle -\upalpha_{31}$};
\node at (0.4,2) {$\scriptstyle -\upalpha_{32}$};
\node at (0.65,3) {$\scriptstyle -\upalpha_{3n_3-1}$};
\node at (0.55,4) {$\scriptstyle -\upalpha_{3n_3}$};
\node at (2.45,1) {$\scriptstyle -\upalpha_{v1}$};
\node at (2.45,2) {$\scriptstyle -\upalpha_{v2}$};
\node at (2.7,3) {$\scriptstyle -\upalpha_{vn_v-1}$};
\node at (2.6,4) {$\scriptstyle -\upalpha_{vn_v}$};
\draw (A1) -- (0);
\draw (B1) -- (0);
\draw (C1) -- (0);
\draw (n1) -- (0);
\draw (A2) -- (A1);
\draw (B2) -- (B1);
\draw (C2) -- (C1);
\draw (n2) -- (n1);
\draw (A4) -- (A3);
\draw (B4) -- (B3);
\draw (C4) -- (C3);
\draw (n4) -- (n3);
\end{tikzpicture}
\end{array}
\end{equation}
where there are $v\geq 2$ arms, each $n_i\geq 1$, each $\upalpha_{ij}\geq 2$ and $\upbeta\geq 1$. Later, we will assume $\upbeta\geq v$.
\end{enumerate}}
\end{notation}

We next give some properties of $S_{\bp,\bl}$ and related rings that will be required later, all being elementary in nature.
We start with a general result. Let $G$ be an abelian group and $A$ a $G$-graded ring.
Then $A$ is a \emph{$G$-domain} whenever a product of non-zero homogeneous elements is again non-zero, and a \emph{$G$-field} if any non-zero homogeneous element is invertible. A homogeneous element $x\in A$ is \emph{$G$-prime} if $A/(x)$ is an $G$-domain.
If $A$ is a $G$-domain, then the \emph{quotient $G$-field} of $A$ is the localization of $A$ with respect to the set of all non-zero homogeneous elements.
A $G$-domain is \emph{$G$-factorial} if every non-zero homogeneous element in $A$ is a product of $G$-prime elements in $A$. 

\begin{prop}\label{L-factorial}
With notation as above,
\begin{enumerate}
\item\label{L-factorial 1} $S_{\bp,\bl}$ is an $\bL$-factorial $\bL$-domain.
\item\label{L-factorial 3} Any $G$-factorial $G$-domain $A$ is \emph{$G$-normal}, i.e.\ if a homogeneous element $x$ in the quotient $G$-field of $A$ satisfies an equality $x^m+a_1x^{m-1}+\cdots+a_m=0$ for some $a_i\in A$, then $x\in A$. In particular, $A_0$ is a normal domain (in the usual sense).
\item\label{L-factorial 4} Let $A$ be a $G$-field and $A[y]$ the $G$-graded polynomial ring with a homogeneous indeterminate $y$. Then any homogeneous ideal of $A[y]$ is principal.
\item\label{L-factorial 5} If $A$ is a $G$-factorial $G$-domain, the localization of $A$ by a set of homogeneous elements is $G$-factorial.  The $G$-graded polynomial ring $A[y_1,\ldots,y_m]$ with homogeneous indeterminates $y_1,\ldots,y_m$ is also $G$-factorial.
\end{enumerate}
\end{prop}
\begin{proof}
Part \eqref{L-factorial 1} is well known, see e.g. \cite[1.3]{GL1} or \cite[Section 2.2]{HIMO}. Parts \eqref{L-factorial 3}, \eqref{L-factorial 4} and the first assertion of \eqref{L-factorial 5} are easy. The second assertion of \eqref{L-factorial 5} follows from a parallel argument to the classical case \cite[Section VII.3.5]{B4} using \eqref{L-factorial 4} and the $G$-version of Gauss's Lemma.
\end{proof}

Let $S=S_{\bp,\bl}$.
Recall from the introduction that for $\ox\in\bL$, $S^{\ox}\colonequals \bigoplus_{i\in\bZ}S_{i\ox}$. We will also be interested in the $\bN$-graded version, so define $S^{\bN\ox}\colonequals \bigoplus_{i\geq 0}S_{i\ox}$.

\begin{cor}\label{Two dim and normal prelim}
With notation as above, let $\ox\in\bL$.
\begin{enumerate}
\item\label{L-factorial 2} If $\ox\in\bL$ is not torsion, then $S^{\ox}$ is a noetherian $\KK$-algebra with $\dim S^{\ox}=2$, and $S$ is a finitely generated $S^{\ox}$-module.
\item\label{L-factorial 6} Let $S[t]$ be the $\bL$-graded polynomial ring with $\deg t=-\ox$. Then $(S[t])_0\cong S^{\bN\ox}$ holds, and this is a normal domain.
\item\label{L-factorial 7} Suppose  $-i\ox\notin \bL_+$ for all $i> 0$.  Then $S^{\ox}$ is a noetherian normal domain with $\dim S^{\ox}=2$, and has at worst a unique singular point corresponding to $\bigoplus_{i>0}S_{i\ox}$.
\end{enumerate}
\end{cor}
\begin{proof}
\eqref{L-factorial 2} Since $\ox$ is not torsion, $\bZ\ox\subseteq\bL$ has finite index, and so the first two assertions of \eqref{L-factorial 2} are easy; see e.g.\ \ref{Veron is fg2}.  In particular, necessarily $\dim S^{\ox}=\dim S=2$. \\
\eqref{L-factorial 6} The equality $(S[t])_0=\bigoplus_{i\geq 0}S_{i\ox}t^i\cong S^{\bN\ox}$ is clear. Furthermore, $S[t]$ is an $\bL$-factorial $\bL$-domain according to \ref{L-factorial}\eqref{L-factorial 1}\eqref{L-factorial 5}. Thus $(S[t])_0$ is a normal domain by \ref{L-factorial}\eqref{L-factorial 3}.\\
\eqref{L-factorial 7} The assumption $-i\ox\notin \bL_+$ for all $i> 0$  implies that $\ox$ is not torsion, since otherwise $-Nx=0\in\bL_+$ for some $N>0$. It also forces $S^{\ox}=S^{\bN\ox}$, so the first half of the result follows by combining parts \eqref{L-factorial 2}  and \eqref{L-factorial 6}. 

The second half is a general property of a positively graded two-dimensional normal domain (e.g.\ \cite[p1]{Pinkham}). In fact, since $S^{\ox}$ is a $\bZ$-graded finitely generated $\KK$-algebra, by the Jacobian criterion, there is a $\bZ$-graded ideal $I$ of $S^{\ox}$ such that $\Sing S^{\ox}=\Spec(S^{\ox}/I)$. 
Since $S^{\ox}$ is normal, all singularities of $S^{\ox}$ are isolated and $\dim_\KK(S^{\ox}/I)<\infty$ holds. Since $I$ is $\bZ$-graded, it contains $\bigoplus_{i>\ell}S_{i\ox}$ for $\ell\gg0$ and hence $\sqrt{I}$ contains $\bigoplus_{i>0}S_{i\ox}$. Thus $\Sing S^{\ox}\subseteq\{\bigoplus_{i>0}S_{i\ox}\}$.
\end{proof}

The following will be required later, and all are well known  (see \cite{GL1}). 

\begin{lemma}\label{basic observation}
If $\ox=\sum_{i=1}^na_i\ox+a\oc\in\bL_+$ is in normal form, then the following hold.
\begin{enumerate}
\item\label{basic observation 1} $S_{\ox}=(\prod_{i=1}^nx_i^{a_i})S_{a\oc}$.
\item\label{basic observation 2} $S_{a\oc}$ is an $(a+1)$-dimensional vector space,
and a basis of $S_{a\oc}$ is given by
$t_0^\ell t_1^{a-\ell}$ with $0\le\ell\le a$.
\item\label{basic observation 3} $S_{\ox+m\oc}=S_{\ox}\cdot S_{m\oc}$ for all $m\geq 0$.
\end{enumerate}
\end{lemma}

\subsection{Preliminaries on Rational Surface Singularities}\label{prelim RSS}
We briefly review some combinatorics associated to rational surface singularities.  Let $R$ be a finitely generated noetherian $\KK$-algebra, or alternatively the completion of such an algebra at a maximal ideal.
Recall that $R$ is said to be a \emph{rational surface singularity} if $\dim R=2$ and there exists $f\colon X\to\Spec R$ a resolution such that $\mathbf{R}f_*\mathcal{O}_X=\mathcal{O}_R$. If this property holds for one resolution, it holds for all resolutions \cite[5.10]{KM}, and automatically $R$ must be normal \cite[5.8]{KM}.

In our setting later $R$ will be a rational surface singularity with a unique singular point, at the origin.  Completing at this maximal ideal to give $\mathfrak{R}$,  in the minimal resolution $Y\to\Spec \mathfrak{R}$ the fibre above the origin is well-known to be a tree (i.e.\ a finite connected graph with no cycles) of $\mathbb{P}^1$s denoted $\{ E_i\}_{i\in I}$.  Their self-intersection numbers satisfy $E_i\cdot E_i\leq -2$, and moreover the intersection matrix $(E_i\cdot E_j)_{i,j\in I}$ is negative definite. We encode the intersection matrix in the form of the labelled dual graph:

\begin{defin}\label{dual graph defin}
Suppose that $\{ E_i \}_{i\in I}$  is a collection of $\mathbb{P}^1$s forming the exceptional locus in a resolution of some rational surface singularity.  The dual graph is defined as follows: for each curve $E_i$ there is a vertex, with $E_i\cdot E_j$ edges connecting  the vertices corresponding to $E_i$ and $E_j$.  Furthermore, every vertex is labelled with the self-intersection number of the corresponding curve.
\end{defin}

The dual graph of a complete local rational surface singularity is well known to be a labelled tree (see e.g.\ \cite[1.3]{Brieskorn}).   Conversely, suppose that $T$ is a tree, with vertices denoted $E_1,\hdots,E_n$, labelled with integers $w_1,\hdots,w_n$.  To this data we associate the symmetric matrix $M_T=(b_{ij})_{1\leq i,j\leq n}$ with $b_{ii}$ defined by $b_{ii}\colonequals w_i$, and $b_{ij}$ (with $i\neq j$) defined to be the number of edges linking the vertices $E_i$ and $E_j$.  We write $\cZ$ for the free abelian group generated by the vertices $E_i$, and call its elements \emph{cycles}.  The matrix $M_T$ defines a symmetric bilinear form $(-,-)$ on $\cZ$ and in analogy with the geometry, we will often write $Y\cdot Z$ instead of $(Y,Z)$, and consider
\[
\cZ_{\top}\colonequals \{ Z=\sum_{i=1}^na_iE_i\in\cZ\mid Z\neq 0, \mbox{ all }a_i\geq 0, \mbox{ and }Z\cdot E_i\leq 0 \mbox{ for all } i \}.
\]
If there exists $Z\in\cZ_{\top}$ such that $Z\cdot Z<0$, then automatically $M_T$ is negative definite \cite[Prop 2(ii)]{Artin}.   In this case, $\cZ_{\top}$ admits a unique smallest element $Z_f$, called the \emph{fundamental cycle}. Whenever all the coefficients in $Z_f$ are one, the fundamental cycle is said to be \emph{reduced}.

We now consider the case of the labelled graph \eqref{dual graph} and calculate some combinatorics that will be needed later.  Denoting the set of vertices of \eqref{dual graph} by $I$, considering $Z\colonequals \sum_{i\in I}E_i$ it is easy to see that 
\begin{equation}\label{Zf dot Ei}
\begin{array}{cc}
(-Z\cdot E_i)_{i\in I}=&
\begin{array}{c}
\begin{tikzpicture}[xscale=1,yscale=1]
\node (0) at (0,0) {$\scriptstyle \upbeta-v$};
\node (A1) at (-3,1) {$\scriptstyle \upalpha_{11}-2$};
\node (A2) at (-3,2) {$\scriptstyle \upalpha_{12}-2$};
\node (A3) at (-3,3) {$\scriptstyle \upalpha_{1n_1-1}-2$};
\node (A4) at (-3,4) {$\scriptstyle \upalpha_{1n_1}-1$};
\node (B1) at (-1.5,1) {$\scriptstyle \upalpha_{21}-2$};
\node (B2) at (-1.5,2) {$\scriptstyle \upalpha_{22}-2$};
\node (B3) at (-1.5,3) {$\scriptstyle \upalpha_{2n_2-1}-2$};
\node (B4) at (-1.5,4) {$\scriptstyle \upalpha_{2n_2}-1$};
\node (C1) at (0,1) {$\scriptstyle \upalpha_{31}-2$};
\node (C2) at (0,2) {$\scriptstyle \upalpha_{32}-2$};
\node (C3) at (0,3) {$\scriptstyle \upalpha_{3n_3-1}-2$};
\node (C4) at (0,4) {$\scriptstyle \upalpha_{3n_3}-1$};
\node (n1) at (2,1) {$\scriptstyle \upalpha_{v1}-2$};
\node (n2) at (2,2) {$\scriptstyle \upalpha_{v2}-2$};
\node (n3) at (2,3) {$\scriptstyle \upalpha_{vn_v-1}-2$};
\node (n4) at (2,4) {$\scriptstyle \upalpha_{vn_v}-1$};
\node at (-3,2.6) {$\vdots$};
\node at (-1.5,2.6) {$\vdots$};
\node at (0,2.6) {$\vdots$};
\node at (2,2.6) {$\vdots$};
\node at (1,3.5) {$\hdots$};
\node at (1,1.5) {$\hdots$};
\node (T) at (0,4.25) {};
\draw (A1) -- (0);
\draw (B1) -- (0);
\draw (C1) -- (0);
\draw (n1) -- (0);
\draw (A2) -- (A1);
\draw (B2) -- (B1);
\draw (C2) -- (C1);
\draw (n2) -- (n1);
\draw (A4) -- (A3);
\draw (B4) -- (B3);
\draw (C4) -- (C3);
\draw (n4) -- (n3);
\end{tikzpicture}
\end{array}
\end{array}
\end{equation}
and so $Z$ satisfies $Z\cdot E_i\leq 0$ for all $i\in I$ if and only if $\upbeta\geq v$. Since $\cZ_{\top}$ does not contain elements smaller than $Z$, the fundamental cycle $Z_f$ is given by $Z=\sum_{i\in I}E_i$ if and only if $\upbeta\geq v$. In this case $Z_f$ is reduced.

We remark that in general there will be many singularities with dual graph \eqref{dual graph}, and indeed a labelled graph $T$ is called \emph{taut} if there exists a unique  rational surface singularity (up to isomorphism in the category of complete local $\KK$-algebras) which has  $T$ for its dual graph of its minimal resolution.  It is well known that the labelled graph \eqref{dual graph} is taut if and only if $v=3$ \cite{Laufer}.   

\subsection{Preliminaries on Reconstruction Algebras}\label{Prelim ReconAlg}
Let $R$ be a rational surface singularity. A CM $R$-module $M$ is called \emph{special} if $\Ext^1_R(M,R)=0$ \cite{IW}, and we write $\SCM R$ for the category of special CM $R$-modules. 

The following local-to-global lemma is useful. In particular, if $R$ has a unique singular point $\m$, to conclude that $\add M=\SCM R$ it suffices to check this complete locally at $\m$.  

\begin{lemma}\label{Check locally gen}
Let $R$ be a rational surface singularity, and $M\in \CM R$.  If $\add \widehat{M}_\m=\SCM \widehat{R}_\m$ for all $\m\in\Max R$, then  $\add M=\SCM R$. 
\end{lemma}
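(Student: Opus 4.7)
The plan is to prove both $\add M\subseteq\SCM R$ and $\SCM R\subseteq\add M$ by reducing each to the hypothesis via completion, using faithful flatness throughout. For the first inclusion, it suffices to show $M\in\SCM R$, i.e.\ $\Ext^1_R(M,R)=0$. Since this module is finitely generated, vanishing can be checked after localization at each $\m\in\Max R$ and then after completion. As $M$ is finitely generated, Ext commutes with this flat base change, giving
\[
\Ext^1_R(M,R)_\m\otimes_{R_\m}\widehat{R}_\m\cong\Ext^1_{\widehat{R}_\m}(\widehat{M}_\m,\widehat{R}_\m),
\]
and the right-hand side vanishes by hypothesis since $\widehat{M}_\m\in\SCM\widehat{R}_\m$. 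Faithful flatness of completion and localization then delivers $\Ext^1_R(M,R)=0$.

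For the reverse inclusion, take $N\in\SCM R$, set $\Lambda:=\End_R(M)$, and $P:=\Hom_R(M,N)$, regarded as a finitely generated right $\Lambda$-module. The classical projectivization equivalence $\Hom_R(M,-)\colon\add M\xrightarrow{\sim}\proj\Lambda$, with quasi-inverse $(-)\otimes_\Lambda M$, reduces the task to verifying two assertions: (i) $P$ is projective over $\Lambda$, and (ii) the counit $\eta_N\colon P\otimes_\Lambda M\to N$, $f\otimes m\mapsto f(m)$, is an isomorphism. Granting both, $P$ is a summand of some $\Lambda^k$, so applying $(-)\otimes_\Lambda M$ realizes $N\cong P\otimes_\Lambda M$ as a summand of $M^k$, i.e.\ $N\in\add M$.

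Both (i) and (ii) are local conditions over $R$ and survive passage to completion at each $\m\in\Max R$. Compatibility of Hom and tensor with flat base change for finitely generated modules yields $\widehat{P}_\m\cong\Hom_{\widehat{R}_\m}(\widehat{M}_\m,\widehat{N}_\m)$ and $\widehat{\Lambda}_\m\cong\End_{\widehat{R}_\m}(\widehat{M}_\m)$, and identifies the base change of $\eta_N$ with the corresponding evaluation map over $\widehat{R}_\m$. Applying the first paragraph to $N$ gives $\widehat{N}_\m\in\SCM\widehat{R}_\m$, so by hypothesis $\widehat{N}_\m\in\add\widehat{M}_\m$; the projectivization equivalence over $\widehat{R}_\m$ then verifies both (i) and (ii) after completion at $\m$. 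Descending via faithful flatness yields $N\in\add M$.

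The proof is essentially bookkeeping: one must verify that Hom, Ext, tensor products and projectivity all commute with localization and completion in the expected way, which is standard for finitely generated modules over noetherian (or module-finite) rings. No additional geometric input is needed. The lemma conceptually just expresses that both \emph{specialness} and \emph{membership in $\add M$} are local conditions detected at the singular points via completion, and the hypothesis supplies precisely this local data.
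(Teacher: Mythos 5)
Your argument is correct. The first inclusion ($\add M\subseteq\SCM R$) is exactly the paper's: $\Ext^1_R(M,R)$ is a finitely generated module whose vanishing is detected after localization and completion, which is all the paper says as well. For the reverse inclusion the paper simply cites \cite[2.26]{IW4} for the statement that $\add\widehat{X}_\m\subseteq\add\widehat{M}_\m$ for all $\m$ implies $\add X\subseteq\add M$, whereas you reprove this fact from scratch via projectivization over $\Lambda=\End_R(M)$: you characterise $N\in\add M$ by the two conditions that $\Hom_R(M,N)$ is $\Lambda$-projective and that the evaluation counit is an isomorphism, and observe that both are finitely-generated-module conditions over $R$ that descend along the faithfully flat maps $R\to R_\m\to\widehat R_\m$. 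This is a legitimate and essentially self-contained substitute for the citation, at the cost of the routine bookkeeping you acknowledge (compatibility of $\Hom$, $\otimes$ and projectivity with flat base change for module-finite $R$-algebras). One small wording slip: when you say ``applying the first paragraph to $N$'' to get $\widehat N_\m\in\SCM\widehat R_\m$, you actually need the opposite implication to the one proved there (from $\Ext^1_R(N,R)=0$ to its vanishing after completion, together with preservation of depth), but this follows from the same base-change isomorphism, so it is not a gap.
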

\begin{proof}
Since Ext groups localise and complete, certainly $M\in\SCM R$ and thus $\add M\subseteq \SCM R$. Next, let $X\in\SCM R$.  Then $\add\widehat{X}_\m\subseteq\SCM \widehat{R}_{\m}$ for all $\m\in\Max R$, so by assumption $\add\widehat{X}_\m\subseteq\add \widehat{M}_\m$ for all $\m\in\Max R$.  By \cite[2.26]{IW4} we conclude that $\add X\subseteq\add M$, so $X\in\add M$ and thus $\add M\supseteq \SCM R$.
\end{proof}

The following asserts that a global additive generator of $\SCM R$ exists, regardless of the number of points in the singular locus.

\begin{thm}[{\cite{VdB1d}}]\label{recon Db min res}
Let $R$ be a rational surface singularity, and $\uppi\colon X\to \Spec R$ the minimal resolution.  Then the following statements hold.
\begin{enumerate}
\item There exists $M\in\SCM R$ such that $\SCM R=\add M$. 
\item There is a triangle equivalence $\Db(\mod\End_R(M))\cong\Db(\coh X)$.
\end{enumerate}
\end{thm}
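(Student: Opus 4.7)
The plan is to reduce the result to the construction of a tilting bundle on the minimal resolution $\pi\colon X\to\Spec R$, following Van den Bergh's non-commutative resolutions philosophy adapted to the specialness condition. The first step is to construct, on $X$, line bundles $\mathcal{L}_i$ indexed by the exceptional curves $\{E_i\}_{i\in I}$ with dual intersection property $\deg(\mathcal{L}_i|_{E_j})=\delta_{ij}$. Since the intersection form on the exceptional locus is negative definite such rational duals exist by linear algebra, and Artin's contractibility criterion together with rationality of $R$ lifts these to honest line bundles in a neighbourhood of the exceptional fibre. I then set
\[
\mathcal{T}:=\mathcal{O}_X\oplus\bigoplus_{i\in I}\mathcal{L}_i.
\]

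The second step is to verify that $\mathcal{T}$ is a tilting bundle. Vanishing of $\Ext^{>0}_X(\mathcal{T},\mathcal{T})$ reduces, by the Leray spectral sequence and $\dim X=2$, to checking higher cohomology of $\mathcal{L}_i^{\vee}\otimes\mathcal{L}_j$, which is controlled on the one-dimensional exceptional fibre using that each $\mathcal{L}_i$ has non-negative degree on every $E_j$. Generation of $\Db(\coh X)$ follows because the structure sheaves of the $E_i$ (and hence skyscrapers at all closed points) can be resolved by differences of the $\mathcal{L}_i$ together with $\mathcal{O}_X$.

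Next I would push $\mathcal{T}$ down: set $M_i:=\pi_*\mathcal{L}_i$ and $M:=\pi_*\mathcal{T}=R\oplus\bigoplus_i M_i$. Rationality of $R$ combined with non-negativity of $\mathcal{L}_i$ on the $E_j$ gives $R^1\pi_*\mathcal{L}_i=0$, so each $M_i$ is reflexive, hence CM on the normal surface $\Spec R$. Specialness $\Ext^1_R(M_i,R)=0$ follows by a local duality argument translating it to $R^1\pi_*\mathcal{L}_i^{\vee}=0$, which holds because on a rational surface singularity Grauert--Riemenschneider vanishing and the fact that $\mathcal{L}_i^{\vee}$ has non-positive degrees on $E_j$ combine appropriately. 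Pushforward induces an isomorphism $\End_X(\mathcal{T})\cong\End_R(M)$, since $\pi_*\sHom(\mathcal{L}_i,\mathcal{L}_j)$ computes $\Hom_R(M_i,M_j)$ and the $R^1\pi_*$ of these Hom-sheaves vanishes. Combining tilting with this ring isomorphism yields part (2).

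For part (1), the inclusion $\add M\subseteq\SCM R$ is now established. For the reverse, by \ref{Check locally gen} I may assume $R$ is complete local. Given any $N\in\SCM R$, the reflexive pullback $\tilde N:=(\pi^*N)^{\vee\vee}$ is a vector bundle on $X$ whose restrictions to each $E_j$ decompose into line bundles of degree $0$ or $1$ (the Wunram-type characterisation of specialness); these are precisely sums of copies of the restrictions of $\mathcal{O}_X$ and the $\mathcal{L}_i$, so $\tilde N\in\add\mathcal{T}$, and applying $\pi_*$ gives $N\in\add M$. The main obstacle is precisely this last Wunram-type classification for arbitrary rational surface singularities (beyond the quotient or taut cases); everything else is standard tilting theory combined with the rationality and normality of the ambient geometry.
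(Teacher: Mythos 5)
Your overall strategy --- construct a tilting bundle on the minimal resolution, push it down, and identify the summands of the pushforward with the special CM modules, globalising via Lemma \ref{Check locally gen} --- is the same as the paper's, which obtains the tilting object by citing \cite[3.2.5]{VdB1d}, the isomorphism $\End_X(\cO_X\oplus\cM)\cong\End_R(R\oplus\pi_*\cM)$ from \cite[4.1]{DW2}, and the identification with $\SCM$ from \cite{Wunram,IW}. However, your specific candidate $\cT=\cO_X\oplus\bigoplus_{i}\cL_i$ with the $\cL_i$ \emph{line} bundles is not a tilting bundle in general, so the argument has a genuine gap at its core.

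The problem is the Ext-vanishing in your second step. While $\Ext^1_X(\cO_X,\cL_i)\cong H^1(X,\cL_i)$ does vanish ($\cL_i$ is nef on the exceptional fibre of a rational singularity), the group $\Ext^1_X(\cL_i,\cO_X)\cong H^1(X,\cL_i^{-1})$ need not: $\cL_i^{-1}$ has degree $-1$ on $E_i$, so neither nefness nor Grauert--Riemenschneider applies. In fact $\dim_{\KK}H^1(X,\cL_i^{-1})=r_i-1$, where $r_i$ is the rank of Wunram's $i$-th indecomposable special CM module, and by \cite{Wunram} $r_i$ equals the coefficient of $E_i$ in the fundamental cycle $Z_f$; so the vanishing fails whenever $Z_f$ is non-reduced, which already happens for the type $\mathbb{D}$, $\mathbb{T}$, $\mathbb{O}$, $\mathbb{I}$ quotient singularities appearing in \S\ref{domestic section}. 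The correct objects (as recalled in \S\ref{Prelim ReconAlg}) are the bundles $\cM_i$ with $H^1(\cM_i^\vee)=0$ and $c_1(\cM_i)\cdot E_j=\delta_{ij}$, obtained from $\cL_i$ by the universal extension $0\to\cO_X^{\oplus(r_i-1)}\to\cM_i\to\cL_i\to0$ killing $H^1(\cL_i^{-1})$; it is $\cO_X\oplus\bigoplus_i\cM_i$ that is the (pro)generator. The same rank issue breaks your final step: the reflexive pullback of an indecomposable special CM module is generally \emph{not} a direct sum of $\cO_X$ and the $\cL_i$ (its restrictions to the $E_j$ do not determine it, and it may be indecomposable of rank $\geq 2$). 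You correctly flag the Wunram-type classification as an input you are assuming, but the rank phenomenon already invalidates the construction of $\cT$ one step earlier.
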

\begin{proof}
This is known but usually only stated when $R$ is complete, so for the convenience of the reader we provide a proof. By \cite[3.2.5]{VdB1d} there is a progenerator $\cO_X\oplus\cM$ for the category of perverse sheaves (with perversity $-1$), which induces an equivalence
\[
\Db(\mod\End_X(\cO_X\oplus\cM))\cong\Db(\coh X).
\]  
There is an isomorphism $\End_X(\cO_X\oplus\cM)\cong\End_R(R\oplus \uppi_*\cM)$ by \cite[4.1]{DW2}. Furthermore, $\cO_X\oplus\cM$ remains a progenerator under flat base change \cite[3.1.6]{VdB1d}, so $\add \widehat{M}_\m=\SCM \widehat{R}_\m$ by \cite{Wunram,IW}.  The result then follows using \ref{Check locally gen}.
\end{proof}

\begin{defin}
For any $M\in \SCM R$ such that $\SCM R=\add M$, we call $\End_R(M)$ the reconstruction algebra.
\end{defin}
In this global setting, the reconstruction algebra is only defined up to Morita equivalence. Only after completing $R$, or in certain other settings (see \ref{Gamma convention remark}) will there be a canonical choice.

When $\mathfrak{R}$ is a complete local rational surface singularity with minimal resolution $X\to\Spec\mathfrak{R}$, there is a much more explicit description of the additive generator of $\SCM \mathfrak{R}$.  Let $\{E_i\mid i\in I\}$ denote the irreducible exceptional curves, then for each $i\in I$, by \cite{Wunram} there exists a CM $\mathfrak{R}$-module $M_i$ such that $H^1(\mathcal{M}_i^\vee)=0$ and $c_1(\mathcal{M}_i)\cdot E_j=\updelta_{ij}$ hold, where $\mathcal{M}_i\colonequals \uppi^*(M_i)^{\vee\vee}$ for $(-)^\vee=\sHom_X(-,\cO_X)$. 

\begin{thm}[{\cite[1.2]{Wunram}}]\label{Wunram main} There is a bijection
\[
\begin{array}{c}
\begin{tikzpicture}
\node (A) at (-2.5,0) {$\{ \mbox{irreducible exceptional curves in min.\ resolution}\}$};
\node (B) at (6,0) {$
\{\mbox{non-free, indecomposable special CM $\mathfrak{R}$-modules} \}
$};
\draw[<->] (2.1,0) -- node [above] {$\scriptstyle $} (1.45,0);
\draw[|->] (1.45,-0.6) -- node [above] {$\scriptstyle $} (2.1,-0.6);
\node at (1,-0.6) {$E_i$};
\node at (2.6,-0.6) {$M_i.$} ;
\end{tikzpicture}
\end{array}
\]
Furthermore, the rank of $M_i$, as an $\mathfrak{R}$-module, coincides with the co-efficient of $E_i$ in $Z_f$.
\end{thm} 

It follows that $\mathfrak{R}\oplus\bigoplus_{i\in I}M_i$ is the natural additive generator for $\SCM\mathfrak{R}$.
\begin{defin}
Let $\mathfrak{R}$ be a complete local rational surface singularity. We call $\Upgamma\colonequals \End_{\mathfrak{R}}(\mathfrak{R}\oplus(\bigoplus_{i\in I}M_i))$
the \emph{reconstruction algebra} of $\mathfrak{R}$.
\end{defin}

\begin{remark}\label{Gamma convention remark}{\rm
If $R$ is a rational surface singularity with a unique singular point, and if there exist $L_i\in \CM R$ such that $\widehat{L}_i\cong M_i$ for all $i$, then we also use the letter $\Upgamma$ to denote the particular reconstruction algebra 
\[
\Upgamma\colonequals \End_R(R\oplus\bigoplus_{i\in I}L_i)
\]
of $R$.  Such $L_i$ are not guaranteed to exist, in general.}
\end{remark}

In the complete local setting, the quiver of $\Upgamma$, and the number of its relations, is completely determined by the intersection theory.  

\begin{thm}[{\cite[3.3]{WemGL2}}]\label{GL2 for R}
Let $\mathfrak{R}$ be a complete local rational surface singularity. The quiver and the numbers of relations of $\Upgamma$ is given as follows: for every $i\in I$ associate a vertex labelled $i$ corresponding to $M_i$, and also associate a vertex labelled $\begin{tikzpicture} \node at (0,0) [cvertex] {};\end{tikzpicture}$ corresponding to $\mathfrak{R}$.  Then the number of arrows and relations between the vertices is
\[
\begin{tabular}{*4c}
\toprule
&&Number of arrows&Number of relations\\
\midrule
$i\rightarrow j$&&$(E_i\cdot E_j)_+$  & $(-1-E_i\cdot E_j)_+$\\
$\begin{tikzpicture} \node at (0,0) [cvertex] {};\end{tikzpicture}\rightarrow\begin{tikzpicture} \node at (0,0) [cvertex] {};\end{tikzpicture}$&&$0$&$-Z_K\cdot Z_f+1=-1-Z_f\cdot Z_f$\\
$i\rightarrow \begin{tikzpicture} \node at (0,0) [cvertex] {};\end{tikzpicture}$&&$-E_i\cdot Z_f$&$0$\\
$\begin{tikzpicture} \node at (0,0) [cvertex] {};\end{tikzpicture}\rightarrow i$&& $((Z_K-Z_f)\cdot E_i)_+$& $((Z_K-Z_f)\cdot E_i)_-$\\
\bottomrule\\
\end{tabular}
\]
where for $a\in\bZ$ 
\[
a_+\colonequals \left\{\begin{array}{ccc}a& \t{if}& a\geq 0\\ 0 & \t{if}&a<0\end{array}\right. \quad \t{and}\quad a_-=\left\{\begin{array}{rcc}0& \t{if} &a\geq 0\\ -a & \t{if}&a<0\end{array}\right. ,
\]  
and the canonical cycle $Z_K$ is by definition the rational cycle defined by the condition $Z_K\cdot E_i=E_i^2+2$ for all $i\in I$.
\end{thm}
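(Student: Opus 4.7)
The plan is to exploit the derived equivalence $\Db(\mod\Gamma)\simeq\Db(\coh X)$ from Theorem~\ref{recon Db min res}, which reduces the combinatorics of the quiver and relations of $\Gamma$ to sheaf-cohomology computations on the minimal resolution $X\to\Spec\mathfrak{R}$. For a basic finite-dimensional algebra, the number of arrows from vertex $i$ to vertex $j$ equals $\dim_\KK\Ext^1_{\Gamma}(S_i,S_j)$ and the (minimal) number of relations from $i$ to $j$ equals $\dim_\KK\Ext^2_{\Gamma}(S_i,S_j)$. So the task is to compute these Ext groups on the geometric side.

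First I would identify the objects dual to the generators $\mathfrak{R}$ and $M_i$ under the derived equivalence. Using Van den Bergh's perverse-sheaf description (whose existence is recalled in the proof of \ref{recon Db min res}), the simple $\Gamma$-modules at $i\in I$ correspond to shifts of $\cO_{E_i}(-1)$, while the simple at the vertex $\begin{tikzpicture} \node at (0,0) [cvertex] {};\end{tikzpicture}$ (attached to $\mathfrak{R}$) corresponds to a structure sheaf $\cO_{Z_f}$ supported on the fundamental cycle, or equivalently to $\cO_p$ for a smooth point $p$ in the nonsingular part after appropriate shift. The key input from Wunram is $c_1(\mathcal{M}_i)\cdot E_j=\delta_{ij}$, which pins down the relationship between $M_i$ and the dual simples.

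Next I would compute, one cell at a time, the Ext groups between these simples. The ingredients are the short exact sequences $0\to\cO_X(-E_i)\to\cO_X\to\cO_{E_i}\to 0$, Riemann--Roch on each $\cO_{E_i}\cong\mathbb{P}^1$, and Serre duality on $X$. The identity $\chi(\cO_{E_i}(-1),\cO_{E_j}(-1))=-E_i\cdot E_j$ together with Serre duality produces, for $i\neq j$, exactly $(E_i\cdot E_j)_+$ in $\Ext^1$ and $(E_i\cdot E_j)_-$ in $\Ext^2$; the diagonal entries $E_i\cdot E_i\leq -2$ contribute only to $\Ext^2$, giving the $(-1-E_i\cdot E_j)_+$ relations in the $i\to j$ slot of the table. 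The mixed terms are computed similarly: $\Ext^1(\cO_{E_i}(-1),\cO_{Z_f})$ and $\Ext^1(\cO_{Z_f},\cO_{E_i}(-1))$ become intersections $-E_i\cdot Z_f$ and $((Z_K-Z_f)\cdot E_i)_\pm$ respectively, where the canonical cycle $Z_K$ enters through Serre duality and the adjunction formula $E_i^2+K_X\cdot E_i=-2$ (which is the very definition of $Z_K$). Finally, $\Ext^2(\cO_{Z_f},\cO_{Z_f})$ computed by Serre duality against $\omega_X$ gives $-Z_K\cdot Z_f+1=-1-Z_f\cdot Z_f$ relations at the $\begin{tikzpicture} \node at (0,0) [cvertex] {};\end{tikzpicture}$-vertex (using $Z_K+Z_f\equiv 0$ intersection-theoretically against $Z_f$ by rationality).

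The main obstacle is the systematic case analysis that produces the $(\,\cdot\,)_+$ and $(\,\cdot\,)_-$ decorations: whenever the Euler characteristic between two simples is negative, the deficit must be placed in either $\Ext^1$ or $\Ext^2$ according to whether a $\Hom$ (or higher Ext) vanishes, which requires care in verifying the appropriate vanishings from rationality ($\RDerived{f}_*\cO_X=\cO_{\Spec\mathfrak{R}}$) and from the negative-definiteness of the intersection matrix. Once this bookkeeping is organised, each entry of the table follows by a short calculation, and the resulting quiver with relations agrees with that of $\Gamma$ because a tilting module's endomorphism algebra is reconstructed from the $\Ext^\bullet$-quiver of its dual simples.
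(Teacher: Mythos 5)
This theorem is not proved in the paper at all: it is imported verbatim from \cite[3.3]{WemGL2}, so there is no in-paper argument to compare yours against. That said, the architecture you propose --- pass through the derived equivalence $\Db(\mod\Gamma)\simeq\Db(\coh X)$ of \ref{recon Db min res}, identify the images of the simple $\Gamma$-modules as Van den Bergh's perverse simples $\cO_{E_i}(-1)[1]$ for $i\in I$ and $\cO_{Z_f}$ for the vertex corresponding to $\mathfrak{R}$, and read off arrows and relations as $\dim_\KK\Ext^1$ and $\dim_\KK\Ext^2$ between these objects --- is exactly the strategy of the proof in the cited source, so your plan is the right one at the structural level.

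Two details are wrong as written. First, the simple at the $\mathfrak{R}$-vertex is $\cO_{Z_f}$ and is \emph{not} ``equivalently $\cO_p$ for a smooth point $p$'': a skyscraper off the exceptional locus corresponds to a $\Gamma$-module supported away from the closed point of $\Spec\mathfrak{R}$, a different simple entirely (and over the complete local $\mathfrak{R}$ there is no such closed point to use). This does not derail the computation, since you in fact work with $\cO_{Z_f}$. Second, your justification of $-Z_K\cdot Z_f+1=-1-Z_f\cdot Z_f$ is incorrect: the asserted relation $(Z_K+Z_f)\cdot Z_f=0$ would give $Z_K\cdot Z_f=-Z_f^2$, hence $-Z_K\cdot Z_f+1=Z_f^2+1$, which equals $-1-Z_f^2$ only when $Z_f^2=-1$. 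The identity actually needed is Artin's rationality criterion $p_a(Z_f)=0$, which via adjunction reads $Z_f\cdot(Z_f+K_X)=-2$, i.e.\ $(Z_K-Z_f)\cdot Z_f=2$, i.e.\ $Z_K\cdot Z_f=Z_f^2+2$. Finally, be aware that the genuinely laborious content of \cite[3.3]{WemGL2} is precisely the part you leave as ``bookkeeping'': the case analysis producing the $(\,\cdot\,)_+$ and $(\,\cdot\,)_-$ decorations in the mixed entries, using Riemann--Roch, duality for the proper morphism $X\to\Spec\mathfrak{R}$ (the resolution itself is not proper, so Serre duality must be applied to complexes with proper support), and the vanishing supplied by rationality.
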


\subsection{Hirzebruch--Jung Continued Fraction Combinatorics}\label{iseries section} We review briefly the notation and combinatorics surrounding dimension two cyclic quotient singularities.

\begin{defin}\label{cyclic quot def}
For $r,a\in\bN$ with $r>a$ the group
$G=\frac{1}{r}(1,a)$ is defined by
\[
G= \left\langle \zeta\colonequals \left(\begin{array}{cc} \ve & 0\\ 0& \ve^a
\end{array}\right)\right\rangle \leq \GL(2,\KK),
\]
where $\ve$ is a primitive $r^{\rm th}$ root of unity.
By abuse of notation, we also write $\frac{1}{r}(1,a)$ for the corresponding quotient singularity $\KK[x,y]^G$.
\end{defin}

\begin{remark}{\rm
In the literature it is often assumed that the greatest common divisor $(r,a)$ is $1$, which is equivalent to the group having no pseudoreflections.  However we do not make this assumption, since in our construction later groups with pseudoreflections naturally appear.}
\end{remark}

Provided that $a\neq 0$, we consider the Hirzebruch--Jung continued fraction expansion of $\frac{r}{a}$, namely
\[
\frac{r}{a}=\upalpha_1-\frac{1}{\upalpha_2 - \frac{1}{\upalpha_3 -
\frac{1}{(...)}}} \colonequals [\upalpha_1,\hdots,\upalpha_n]
\]
with each $\upalpha_i\geq 2$.  The labelled Dynkin diagram
\[
\begin{tikzpicture}[xscale=1.2]
  \node (1) at (1,0) [vertex] {};
  \node (2) at (2,0) [vertex] {};
 \node (3) at (4,0) [vertex] {};
 \node (4) at (5,0)[vertex] {};
 \node at (3,0) {$\cdots$};
  \node (1a) at (0.9,-0.25) {$\scriptstyle -\upalpha_{1}$};
  \node (2a) at (1.9,-0.25) {$\scriptstyle - \upalpha_{2}$};
 \node (3a) at (3.9,-0.25) {$\scriptstyle -\upalpha_{n-1}$};
 \node (4a) at (4.9,-0.25) {$\scriptstyle - \upalpha_{n}$};
\draw [-] (1) -- (2);
\draw [-] (2) -- (2.6,0);
\draw [-] (3.4,0) -- (3);
\draw [-] (3) -- (4);
\end{tikzpicture}
\]
is precisely the dual graph of the
minimal resolution of $\KK^{2}/\frac{1}{r}(1,a)$ \cite[Satz8]{Riemen}.  Note that \cite{Riemen} assumed the condition $(r,a)=1$, but the result holds generally: if we write $h\colonequals (r,a)$, then the quotient singularities $\frac{1}{r}(1,a)$ and $\frac{1}{r/h}(1,a/h)$ are isomorphic, and furthermore both have the same Hirzebruch--Jung continued fraction expansion.

\begin{defin}\label{combdata}
For integers $1\leq a<r$ as above, consider the continued fraction expansion $\frac{r}{a}=[\upalpha_1,\hdots,\upalpha_n]$.  Then the $i$-series is defined as $i_0=r$, $i_1=a$ and 
\[
i_{t}=\upalpha_{t-1}i_{t-1}-i_{t-2}
\]
for all $t$ with $2\leq t\leq n+1$. Thus $i_{n+1}=0$ holds. Let $I(r,a)\colonequals \{i_0,i_1,\hdots,i_{n+1}\}$, and by convention $I(r,r)\colonequals\emptyset$.
\end{defin}

The following lemma is elementary, and will be needed later.

\begin{lemma}\label{i series all}
For integers $1\leq a<r$, $I(r,a)=[0,r]$ if and only if $a=r-1$.
\end{lemma}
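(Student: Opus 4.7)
My approach is to exploit the standard fact that the $i$-series is \emph{strictly decreasing} from $i_0=r$ down to $i_{m+1}=0$, and then count. This decreasing property follows from the definition of the Hirzebruch--Jung continued fraction: each $\alpha_{t-1}$ is the unique integer with $0 \le \alpha_{t-1} i_{t-1} - i_{t-2} < i_{t-1}$, equivalently $\alpha_{t-1}=\lceil i_{t-2}/i_{t-1}\rceil$, so $0 \le i_t < i_{t-1}$ holds for every $t\ge 1$. Hence the $m+2$ elements of $I(r,a)$ are distinct integers in $[0,r]$, so $|I(r,a)|\le r+1$, with equality forcing $I(r,a)=[0,r]$ and forcing consecutive elements of the decreasing sequence to differ by exactly $1$.

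For the ``if'' direction, assume $a=r-1$. Then $\tfrac{r}{r-1}=[2,2,\ldots,2]$ with $r-1$ entries, so $m=r-1$. The recursion $i_t = 2i_{t-1}-i_{t-2}$ with $i_0=r$, $i_1=r-1$ gives by easy induction $i_t = r-t$ for $0\le t\le r$, so $I(r,r-1)=\{0,1,\ldots,r\}=[0,r]$.

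For the ``only if'' direction, assume $I(r,a)=[0,r]$. Then in particular $r-1\in I(r,a)$, so $r-1=i_j$ for some $j$. Since $i_0=r\ne r-1$ we must have $j\ge 1$. Because the $i$-series is strictly decreasing, $i_1\ge i_j = r-1$; but also $i_1 < i_0=r$, forcing $i_1=r-1$, i.e.\ $a=r-1$.

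Nothing in this argument looks delicate; the only point worth checking carefully is the strict decrease of the $i$-series, and this is immediate from the defining recursion for Hirzebruch--Jung expansions. So I do not anticipate any genuine obstacle.
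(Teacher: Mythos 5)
Your proof is correct. The paper states this lemma without proof (calling it elementary), so there is no argument to compare against; your verification — that the $i$-series is strictly decreasing because $\alpha_{t-1}=\lceil i_{t-2}/i_{t-1}\rceil$ forces $0\le i_t<i_{t-1}$, giving $i_1=a=r-1$ whenever $r-1$ appears in the series, together with the explicit computation $\tfrac{r}{r-1}=[2,\ldots,2]$ and $i_t=r-t$ for the converse — is a complete and standard justification of exactly the fact the authors take for granted.
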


For a cyclic quotient singularity $G=\frac{1}{r}(1,a)$,  consider  
\[
S_{t}\colonequals \{ f\in\KK[x,y] \mid \sigma\cdot f=\upvarepsilon^t f \},
\]
for $t\in [0,r]$, and note that $S_0\cong S_r$.  Further, for $k$ with $0\leq k\leq r-1$,  we say that a monomial $x^my^n$ has \emph{weight $k$} if $m+an=k$ mod $r$, that is $x^my^n\in S_k$.  It is the $i$-series that determines which CM $S^G$-modules are special.

\begin{thm}\label{Wunram specials thm}
For $G=\frac{1}{r}(1,a)$,   
\begin{enumerate}
\item \textnormal{\cite{Herzog}} $\CM S^G=\add \{ S_t\mid t\in [0,r] \}$.
\item \textnormal{\cite{WunramCyclic}} $\SCM S^G=\add \{ S_{t}\mid t\in I(r,a) \}$.
\end{enumerate}
\end{thm}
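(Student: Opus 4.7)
The first statement is Herzog's classical theorem on CM modules over invariant rings of finite subgroups of $\GL(2,\KK)$ acting on $S=\KK[x,y]$. My plan is to use the $G$-equivariant correspondence. Since $G$ is abelian of order $r$, as an $S^G$-module we have the isotypic decomposition
\[
S=\bigoplus_{t=0}^{r-1}S_t,
\]
where each $S_t$ is the rank-one reflexive $S^G$-submodule spanned by monomials $x^my^n$ with $m+an\equiv t\pmod r$. Since $S^G$ is two-dimensional, a module is CM if and only if it is reflexive, so each $S_t$ lies in $\CM S^G$, giving the inclusion $\add\{S_t\}\subseteq \CM S^G$. For the reverse inclusion, the equivalence $M\mapsto M\otimes_{S^G}S$ between reflexive $S^G$-modules and $G$-equivariant reflexive $S$-modules (Auslander) reduces the problem to classifying $G$-equivariant CM $S$-modules. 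Since $S$ is regular of dimension two, every such module is $G$-equivariant projective, hence a direct sum of the tautological bundles $S\otimes_\KK \rho_t$ over the characters $\rho_t$ of $G$, and these correspond under the equivalence to the $S_t$. The case where $G$ contains pseudoreflections (which happens when $h=(r,a)>1$) is handled using the isomorphism $\frac{1}{r}(1,a)\cong\frac{1}{r/h}(1,a/h)$ of invariant rings, which regroups the $S_t$'s but preserves the statement.

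For the second statement, the plan is to combine Wunram's intrinsic characterisation of specials with an explicit computation of Chern classes on the minimal resolution $\pi\colon Y\to\Spec S^G$. By Wunram's theorem the indecomposable non-free specials $\{M_i\}_{i\in I}$ are in bijection with the exceptional curves $\{E_i\}_{i\in I}$ of $\pi$, determined by the normalisation $c_1(\mathcal{M}_i)\cdot E_j=\delta_{ij}$, where $\mathcal{M}_i:=(\pi^*M_i)^{\vee\vee}$. By Riemenschneider, the number of exceptional curves equals the length of the Hirzebruch--Jung expansion $\frac{r}{a}=[\alpha_1,\ldots,\alpha_n]$, and so $|I(r,a)|=n+2$ matches the number of special indecomposables $n+1$ plus the trivial module $S^G=S_0$. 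The explicit identification is obtained by computing, for each $t\in[0,r)$, the divisor class $c_1(\widetilde{\mathcal{S}_t})$ of the reflexive pullback of $S_t$ in the $G$-equivariant resolution, using the toric description of the minimal resolution of $\frac{1}{r}(1,a)$. A direct calculation on the toric fan produces the intersection numbers in terms of the recursion $i_{t}=\alpha_{t-1}i_{t-1}-i_{t-2}$, from which one sees that $c_1(\widetilde{\mathcal{S}_t})\cdot E_j$ gives a Kronecker delta precisely when $t$ is one of the terms $i_0,i_1,\ldots,i_{n+1}$ of the $i$-series, i.e. $t\in I(r,a)$.

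The main obstacle is to align the combinatorics of the $i$-series with the intersection theory on the minimal resolution, which amounts to a careful bookkeeping of toric divisors and the recursion defining $[\alpha_1,\ldots,\alpha_n]$. An alternative route is to verify the specialness criterion $\Ext^1_{S^G}(S_t,S^G)=0$ by constructing an explicit minimal presentation of $S_t$: each $S_t$ is generated by two monomials $x^{m}y^{n}$, $x^{m'}y^{n'}$ whose choice is governed by the continued fraction, and the syzygies of $S_t$ can be read off from the $i$-series. Computing $\Ext^1$ directly via this presentation and identifying its vanishing with the condition $t\in I(r,a)$ provides an independent confirmation.
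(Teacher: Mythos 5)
Your proposal takes a genuinely different route from the paper. The paper's own proof contains almost no computation: it observes that both statements are classically known in the complete local, pseudoreflection-free case, and then supplies only the two reduction steps needed to apply them in the present generality --- namely, the local-to-global principle of \ref{Check locally gen} (which lets one check $\add M=\SCM S^G$, and likewise for $\CM$, after completing at the unique singular point), and the identification $\frac{1}{r}(1,a)=\frac{1}{r/h}(1,a/h)$ for $h:=(r,a)$ to dispose of pseudoreflections. You instead set out to re-derive the cited results. For part (1) your argument (isotypic decomposition $S=\bigoplus_t S_t$, reflexive $=$ CM in dimension two, Auslander's equivalence with $G$-equivariant reflexive $S$-modules, which are free since $S$ is regular) is the standard proof of Herzog's theorem and is sound, with the pseudoreflection case handled the same way as in the paper. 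What your approach buys is self-containedness; what the paper's buys is brevity and a clean separation between the classical content and the two genuinely new reduction steps.

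Two caveats. First, your treatment of part (2) is a plan rather than a proof: ``a direct calculation on the toric fan produces the intersection numbers \dots from which one sees that $c_1(\widetilde{\mathcal{S}_t})\cdot E_j$ gives a Kronecker delta precisely when $t\in I(r,a)$'' is exactly the nontrivial content of \cite{WunramCyclic}, and executing it (or the alternative $\Ext^1$ computation via explicit two-generator presentations) is a substantial bookkeeping exercise that you have not carried out; your count of indecomposables is also slightly garbled, since $I(r,a)$ has $n+2$ elements but $i_0=r$ and $i_{n+1}=0$ both give $S^G$, leaving $n$ non-free specials for the $n$ exceptional curves. Second, you do not address the issue the paper's proof is actually about: Wunram's theorem is stated for complete local rings, whereas here $S^G$ is a graded affine ring, so one needs either the localization-and-completion argument of \ref{Check locally gen} or a genuinely global version of the toric computation; your proposal implicitly assumes the latter without flagging it.
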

\begin{proof}
Both results are usually stated in the complete case, with no pseudoreflections, so since we are working more generally, we give the proof. Since $S^G$ has a unique singular point, by \ref{Check locally gen} (and its counterpart in the $\CM S^G$ case) it suffices to prove both results in the complete local setting.  In this case, when $(r,a)=1$, part (1) is \cite{Herzog} and part (2) is \cite{WunramCyclic}. When $(r,a)\neq1$, the result is still true since $\frac{1}{r}(1,a)=\frac{1}{r/h}(1,a/h)$ for $h\colonequals (r,a)$.
\end{proof}

In what follows, we will require a different characterization of members of the $i$-series, by reinterpreting a result of Ito \cite[3.7]{Ito}.  As notation, if $(r,a)=1$ then the \emph{$G$-basis} $B(G)$ is defined to be the set of monomials which are not divisible by any $G$-invariant monomial.  We usually draw $B(G)$ in a $2\times 2$ grid.

\begin{example}{\rm 
Consider $G=\frac{1}{17}(1,10)$.  Then $B(G)$ is 
\[
\begin{tikzpicture}[xscale=0.8,yscale=0.4]
\node at (0,0) {$\scriptstyle 1$};
\node at (0,-1) {$\scriptstyle x$};
\node at (0,-2) {$\scriptstyle x^2$};
\node at (0,-3) {$\scriptstyle x^3$};
\node at (0,-4) {$\scriptstyle x^4$};
\node at (0,-5) {$\scriptstyle x^5$};
\node at (0,-6) {$\scriptstyle x^6$};
\node at (0,-7) {$\scriptstyle x^7$};
\node at (0,-8) {$\scriptstyle \vdots$};
\node at (0,-9) {$\scriptstyle x^{16}$};
\node at (1,0) {$\scriptstyle y^{\phantom 2}$};
\node at (1,-1) {$\scriptstyle xy^{\phantom 2}$};
\node at (1,-2) {$\scriptstyle x^2y$};
\node at (1,-3) {$\scriptstyle x^3y$};
\node at (1,-4) {$\scriptstyle x^4y$};
\node at (1,-5) {$\scriptstyle x^5y$};
\node at (1,-6) {$\scriptstyle x^6y$};
\node at (2,0) {$\scriptstyle y^2$};
\node at (2,-1) {$\scriptstyle xy^2$};
\node at (2,-2) {$\scriptstyle x^2y^2$};
\node at (2,-3) {$\scriptstyle x^3y^2$};
\node at (2,-4) {$\scriptstyle x^4y^2$};
\node at (2,-5) {$\scriptstyle x^5y^2$};
\node at (2,-6) {$\scriptstyle x^6y^2$};
\node at (3,0) {$\scriptstyle y^3$};
\node at (3,-1) {$\scriptstyle xy^3$};
\node at (3,-2) {$\scriptstyle x^2y^3$};
\node at (3,-3) {$\scriptstyle x^3y^3$};
\node at (4,0) {$\scriptstyle y^4$};
\node at (4,-1) {$\scriptstyle xy^4$};
\node at (4,-2) {$\scriptstyle x^2y^4$};
\node at (4,-3) {$\scriptstyle x^3y^4$};
\node at (5,0) {$\scriptstyle y^5$};
\node at (6,0) {$\scriptstyle \hdots$};
\node at (7,0) {$\scriptstyle y^{16}$};
\draw (-0.5,0.5) -- (7.5,0.5) -- (7.5,-0.5) -- (4.5,-0.5) -- (4.5,-3.5) -- (2.5,-3.5)--(2.5,-6.5)--(0.5,-6.5)--(0.5,-9.5)--(-0.5,-9.5)--cycle;
\end{tikzpicture}
\]}
\end{example}

For $G=\frac{1}{r}(1,a)$ with $(r,a)=1$, recall that the \emph{$L$-space} $L(G)$ is defined to be 
\[
L(G)\colonequals \{1,x,\hdots,x^{r-1},y,\hdots,y^{r-1}\},
\]
so called since in the $2\times 2$ grid the shape of $L(G)$ looks like the letter L.

\begin{thm}[{=\cite[3.7]{Ito}}]\label{Ito thm}
When $(r,a)=1$, the elements of $I(r,a)$ are precisely those numbers in $[0,r]$ that do not appear as weights of monomials in the region $B(G)\backslash L(G)$.
\end{thm}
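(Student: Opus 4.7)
The plan is to identify this theorem as a direct translation of Ito's description in \cite[3.7]{Ito} of the $G$-basis $B(G)$ in terms of Hirzebruch--Jung data. Writing $\frac{r}{a} = [\alpha_1, \ldots, \alpha_n]$, Ito describes $B(G)$ explicitly as a staircase region in the monomial lattice whose convex corners are certain lattice points built from the continued fraction data, and whose boundary along the two coordinate axes is exactly $L(G) = \{1, x, \ldots, x^{r-1}, y, \ldots, y^{r-1}\}$.

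First I would record the weights. Since $(r,a) = 1$, the monomials in $L(G)$ realise each weight in $\{0, 1, \ldots, r-1\}$ exactly once on each axis, with the monomials $x^k$ contributing all values through their exponents, and the $y^k$ providing a second listing through multiplication by $a$ modulo $r$. Then I would show by induction on $t$ that the weights of the convex-corner monomials of $B(G)$, read off moving inward along the staircase, form the sequence $i_0, i_1, \ldots, i_{n+1}$. The inductive step uses the recursion $i_t = \alpha_{t-1} i_{t-1} - i_{t-2}$ from Definition~\ref{combdata}, which mirrors the coordinate recursion satisfied by consecutive staircase corners in Ito's description.

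Next I would complete the proof by comparing weight multisets in $B(G)$. Every weight in $[0, r]$ is represented in $L(G)$, so the claim reduces to the statement that a weight $k$ is realised by some monomial in $B(G) \setminus L(G)$ if and only if $k \notin I(r,a)$. Using the corner-weight calculation of the previous paragraph together with a column-by-column count of the staircase interior, I would verify that the weights $i_0, \ldots, i_{n+1}$ appear only once in $B(G)$, namely via their realisations in $L(G)$, while every other weight in $[0, r]$ is realised by at least one interior monomial of $B(G)$.

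The main obstacle is the coordinate-level bookkeeping identifying successive staircase corners of $B(G)$ with consecutive terms of the $i$-series: once the continued-fraction recursion is confirmed to control both the lattice geometry of the corners and the arithmetic of the weights simultaneously, the theorem follows essentially by matching two equinumerous sets. Much of the work is thus notational translation from the combinatorial description in \cite[3.7]{Ito} into the weight language used in the statement here.
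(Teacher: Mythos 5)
First, a remark on what you are comparing against: the paper does not prove this statement at all. The theorem header ``$=$\cite[3.7]{Ito}'' means it is imported verbatim from Ito's paper, so the only internal material in the same spirit is the proof of \ref{i-series region}, which \emph{assumes} \ref{Ito thm} for its equivalence (1)$\Leftrightarrow$(2) and only proves (2)$\Leftrightarrow$(3) by a column-by-column weight count.

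Your outline has a genuine gap at its central step. The claim that ``the weights of the convex-corner monomials of $B(G)$, read off moving inward along the staircase, form the sequence $i_0,i_1,\hdots,i_{n+1}$'' is false, and fails already in the paper's own example $G=\frac{1}{17}(1,10)$, where $\frac{17}{10}=[2,4,2,2]$ and $I(17,10)=\{17,10,3,2,1,0\}$: the outer corners of $B(G)$ are $x^{16}$, $x^6y^2$, $x^3y^4$, $y^{16}$ with weights $16,9,9,7$; the reflex corners $x^6y$, $x^3y^3$, $y^5$ all have weight $16$; and the minimal invariant monomials bounding the staircase, namely $x^{17}$, $x^7y$, $x^4y^3$, $xy^5$, $y^{17}$, all have weight $0$. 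None of these lists is the $i$-series of $\frac{r}{a}$. The underlying reason is that the \emph{shape} of $B(G)$ is governed by the continued fraction of $\frac{r}{r-a}$ (the $x$-exponents of the generators of the invariant ring run through $I(r,r-a)$, here $17,7,4,1,0$), whereas the theorem concerns $I(r,a)$; so the recursion of \ref{combdata} for $\frac{r}{a}$ does not control the corner coordinates, and the ``matching of two equinumerous sets'' you invoke at the end has nothing to match. A correct argument must instead record that column $\ell\ge1$ of $B(G)\setminus L(G)$ realises exactly the weights $[\ell a+1]_r,\hdots,[\ell a+h_\ell-1]_r$, where $h_\ell$ is the column height (determined by $I(r,r-a)$), and then carry out the nontrivial combinatorial identification of the complement of the union of these intervals in $[0,r]$ with $I(r,a)$. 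That identification is the actual content of Ito's theorem, and it is precisely the part your proposal leaves to ``verification''; the proof of \ref{i-series region} illustrates the kind of bookkeeping required.
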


\begin{example}{\rm
Consider $G=\frac{1}{17}(1,10)$.  Then $B(G)\backslash L(G)$ is the region 
\[
\begin{tikzpicture}[xscale=0.8,yscale=0.4]
\node at (0,0) {$\scriptstyle 1$};
\node at (0,-1) {$\scriptstyle x$};
\node at (0,-2) {$\scriptstyle x^2$};
\node at (0,-3) {$\scriptstyle x^3$};
\node at (0,-4) {$\scriptstyle x^4$};
\node at (0,-5) {$\scriptstyle x^5$};
\node at (0,-6) {$\scriptstyle x^6$};
\node at (0,-7) {$\scriptstyle x^7$};
\node at (0,-8) {$\scriptstyle \vdots$};
\node at (0,-9) {$\scriptstyle x^{16}$};
\node at (1,0) {$\scriptstyle y^{\phantom 2}$};
\node at (1,-1) {$\scriptstyle xy^{\phantom 2}$};
\node at (1,-2) {$\scriptstyle x^2y$};
\node at (1,-3) {$\scriptstyle x^3y$};
\node at (1,-4) {$\scriptstyle x^4y$};
\node at (1,-5) {$\scriptstyle x^5y$};
\node at (1,-6) {$\scriptstyle x^6y$};
\node at (2,0) {$\scriptstyle y^2$};
\node at (2,-1) {$\scriptstyle xy^2$};
\node at (2,-2) {$\scriptstyle x^2y^2$};
\node at (2,-3) {$\scriptstyle x^3y^2$};
\node at (2,-4) {$\scriptstyle x^4y^2$};
\node at (2,-5) {$\scriptstyle x^5y^2$};
\node at (2,-6) {$\scriptstyle x^6y^2$};
\node at (3,0) {$\scriptstyle y^3$};
\node at (3,-1) {$\scriptstyle xy^3$};
\node at (3,-2) {$\scriptstyle x^2y^3$};
\node at (3,-3) {$\scriptstyle x^3y^3$};
\node at (4,0) {$\scriptstyle y^4$};
\node at (4,-1) {$\scriptstyle xy^4$};
\node at (4,-2) {$\scriptstyle x^2y^4$};
\node at (4,-3) {$\scriptstyle x^3y^4$};
\node at (5,0) {$\scriptstyle y^5$};
\node at (6,0) {$\scriptstyle \hdots$};
\node at (7,0) {$\scriptstyle y^{16}$};
\draw (0.5,-0.5) -- (4.5,-0.5)  -- (4.5,-3.5) -- (2.5,-3.5)--(2.5,-6.5)--(0.5,-6.5)--cycle;
\end{tikzpicture}
\]
Replacing the monomials in the above region by their corresponding weights gives
\[
\begin{tikzpicture}[xscale=0.8,yscale=0.4]
\node at (1,-1) {$\scriptstyle 11$};
\node at (1,-2) {$\scriptstyle 12$};
\node at (1,-3) {$\scriptstyle 13$};
\node at (1,-4) {$\scriptstyle 14$};
\node at (1,-5) {$\scriptstyle 15$};
\node at (1,-6) {$\scriptstyle 16$};
\node at (2,-1) {$\scriptstyle 4$};
\node at (2,-2) {$\scriptstyle 5$};
\node at (2,-3) {$\scriptstyle 6$};
\node at (2,-4) {$\scriptstyle 7$};
\node at (2,-5) {$\scriptstyle 8$};
\node at (2,-6) {$\scriptstyle 9$};
\node at (3,-1) {$\scriptstyle 14$};
\node at (3,-2) {$\scriptstyle 15$};
\node at (3,-3) {$\scriptstyle 16$};
\node at (4,-1) {$\scriptstyle 7$};
\node at (4,-2) {$\scriptstyle 8$};
\node at (4,-3) {$\scriptstyle 9$};
\draw (0.5,-0.5) -- (4.5,-0.5)  -- (4.5,-3.5) -- (2.5,-3.5)--(2.5,-6.5)--(0.5,-6.5)--cycle;
\end{tikzpicture}
\]
and so by \ref{Ito thm}, the $i$-series consists of those numbers that do not appear in the above region, which are precisely the numbers $0$, $1$, $2$, $3$, ${10}$ and $17$.   Indeed, in this example $\frac{17}{10}=[2,4,2,2]$ and the $i$-series is
\[
i_0=17>i_1=10>i_2=3>i_3=2>i_4=1>i_5=0.
\]}
\end{example}

The following lemma, which we use later, is an extension of \ref{Ito thm}. For integers $r>0$ and $k$, write $[k]_r$ for the unique integer $k'$ satisfying $0\le k'\le r-1$ and $k-k'\in r\bZ$.

\begin{lemma}\label{i-series region}
Assume $(r,a)=1$. For $0\le u\le r-1$, the following are equivalent.
\begin{enumerate}
\item $u\in I(r,r-a)$.
\item $u$ does not appear in $B(G)\backslash L(G)$ for $G\colonequals \frac{1}{r}(1,-a)$.
\item For every integer $\ell\ge1$, there exists an integer $m\in [1,\ell]$ such that $[u+\ell a-1]_r\ge[ma-1]_r$.
\end{enumerate}
\end{lemma}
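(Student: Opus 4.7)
The plan is to treat the equivalences (1)$\iff$(2) and (2)$\iff$(3) separately, with the first being essentially immediate and the second requiring a careful arithmetic translation.

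For (1)$\iff$(2), note that $\frac{1}{r}(1,-a)$ and $\frac{1}{r}(1,r-a)$ denote the same subgroup of $\GL(2,\KK)$, and $I(r,r-a)$ is by definition the $i$-series attached via the Hirzebruch--Jung expansion of $r/(r-a)$ to this group. Hence the equivalence is a direct application of Ito's theorem \ref{Ito thm} to $G=\frac{1}{r}(1,r-a)$, with no further work needed.

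For (2)$\iff$(3), I first translate the geometric description of $B(G)\setminus L(G)$ into arithmetic data. For $G=\frac{1}{r}(1,-a)$ the weight of $x^my^n$ is $[m-an]_r$, and invariance in column $k$ amounts to $m\equiv ak\pmod r$. Since $(r,a)=1$ forces $[ak]_r\in[1,r-1]$ for $k\in[1,r-1]$, the minimal non-trivial invariant monomials are $x^r$, $y^r$, and $x^{[ak]_r}y^k$ for $k\in[1,r-1]$. It follows that $x^my^n\in B(G)$ iff $m,n<r$ and $m<[ak]_r$ for all $k\in[1,n]$, and hence that $u$ appears as a weight in $B(G)\setminus L(G)$ precisely when there exists $\ell\in[1,r-1]$ such that $[u+a\ell]_r\geq 1$ and $[u+a\ell]_r<[ak]_r$ for every $k\in[1,\ell]$ (taking $n=\ell$ and $m=[u+a\ell]_r$).

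The bridge to (3) is the elementary identity $[v-1]_r=[v]_r-1$ whenever $[v]_r\geq 1$. Applying this to $v=u+a\ell$ (valid since $[u+a\ell]_r\geq 1$) and to $v=ak$ for $k\in[1,r-1]$ (valid by coprimality) rewrites $[u+a\ell]_r<[ak]_r$ as $[u+a\ell-1]_r<[ak-1]_r$. Hence $u$ appears in $B(G)\setminus L(G)$ iff some $\ell\geq 1$ satisfies $[u+\ell a-1]_r<[ma-1]_r$ for every $m\in[1,\ell]$, which is exactly the negation of (3). Two small edge cases must be checked: first, that an $\ell$ violating (3) automatically satisfies $\ell\leq r-1$ — if $m_0\in[1,r-1]$ is the inverse of $a$ modulo $r$ then $[am_0-1]_r=0$, so $\ell\geq m_0$ would force $[u+\ell a-1]_r<0$, a contradiction; and second, that the constraint $[u+a\ell]_r\geq 1$ is automatic, since $[u+\ell a-1]_r<[a\ell-1]_r\leq r-1$ forces $[u+\ell a-1]_r\leq r-2$.

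The main obstacle is simply the careful bookkeeping around the $-1$ shift and these edge cases; conceptually the argument is a direct arithmetic reformulation of the $G$-basis combined with Ito's theorem, with no additional geometric input required.
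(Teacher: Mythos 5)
Your proof is correct and follows essentially the same route as the paper: (1)$\Leftrightarrow$(2) is Ito's theorem applied to $\frac{1}{r}(1,-a)=\frac{1}{r}(1,r-a)$, and (2)$\Leftrightarrow$(3) comes from describing column $\ell$ of $B(G)\setminus L(G)$ via the minimal invariant monomials $x^{[ak]_r}y^k$ and translating through the identity $[v-1]_r=[v]_r-1$. Your explicit handling of the edge cases ($\ell\ge r$ and $[u+a\ell]_r=0$) spells out what the paper's diagrammatic argument leaves implicit.
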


\begin{proof}
(1)$\Leftrightarrow$(2) This is \ref{Ito thm}.\\
(2)$\Leftrightarrow$(3) We will establish the following claim: $u$ does not appear in column $\ell$ of $B(G)\backslash L(G)$ if and only if there exists an integer $m$ satisfying $1\le m\le\ell$ and $[u+\ell a-1]_r\ge[ma-1]_r$.

The first row of $B(G)$ is $0,-a,-2a,-3a,\ldots$. Now for each $m$ with $1\le m\le \ell$, we find the first occurrence of weight $0$ in column $m$, and use this to draw the following diagram:
\[
\begin{tikzpicture}[xscale=0.8,yscale=0.4]
\node at (1,-1) {$\scriptstyle -ma$};
\node at (1,-2) {$\scriptstyle 1-ma$};
\node at (1,-3) {$\scriptstyle 2-ma$};
\node at (1,-4) {$\scriptstyle \vdots$};
\node at (1,-5) {$\scriptstyle -2$};
\node at (1,-6) {$\scriptstyle -1$};
\node at (1,-7) {$\scriptstyle 0$};
\node at (2,-1) {$\scriptstyle\cdots$};
\node at (2,-2) {$\scriptstyle\cdots$};
\node at (2,-3) {$\scriptstyle\cdots$};
\node at (2,-4) {$\scriptstyle\ddots$};
\node at (2,-5) {$\scriptstyle\cdots$};
\node at (2,-6) {$\scriptstyle\cdots$};
\node at (2,-7) {$\scriptstyle\cdots$};
\node at (3.5,-1) {$\scriptstyle -\ell a$};
\node at (3.5,-2) {$\scriptstyle 1-\ell a$};
\node at (3.5,-3) {$\scriptstyle 2-\ell a$};
\node at (3.5,-4) {$\scriptstyle \vdots$};
\node at (3.5,-5) {$\scriptstyle -2+(m-\ell)a$};
\node at (3.5,-6) {$\scriptstyle -1+(m-\ell)a$};
\node at (3.5,-7) {$\scriptstyle (m-\ell)a$};
\draw (0.25,-1.5) -- (4.75,-1.5)  -- (4.75,-6.5) --(0.25,-6.5)--cycle;
\draw[densely dotted] (2.5,-1.6) -- (4.65,-1.6)  -- (4.65,-6.4) --(2.5,-6.4)--cycle;
\end{tikzpicture}
\]
The column $\ell$ of $B(G)\backslash L(G)$ is the intersection, over all $m$ with $1\le m\le \ell$,  of the above dotted regions.
It is clear that $u$ does not appear in the dotted region in the above diagram if and only if $[u+\ell a-1]_r\geq[ma-1]_r$.
The claim follows.
\end{proof}

\begin{notation}\label{notationremark}{\rm
Throughout the remainder of the paper, to aid readability we will use the following simplified notation.
\begin{equation*}
\begin{tabular}{*3c}
\toprule
{\bf Notation}&{\bf Meaning}&{\bf Simplified Notation}\\
\midrule
$\mathbb{X}_{\bp,\bl}$&Weighted projective line & $\mathbb{X}$\\
$S_{\bp,\bl}$ & Defining ring of $\mathbb{X}_{\bp,\bl}$& $S$\\
$\Lambda_{\bp,\bl}$&Canonical algebra&$\Lambda$\\
$S_{\bp,\bl}^{\ox}$ & Veronese of $S_{\bp,\bl}$ with respect to $\ox\in\bL$& $S^{\ox}$\\
${Y}^{\ox}_{\bp,\bl}$&Resolution of $\Spec S^{\ox}_{\bp,\bl}$ in \eqref{stack diagram 1} & $Y^{\ox}$\\
\bottomrule\\
\end{tabular}
\end{equation*}
Throughout it will be implicit that we are working generally, with parameters $(\bp,\bl)$.}
\end{notation}

\section{The Total Space $\bT$}\label{stack}

\subsection{Definition and First Properties}
With notation as in \ref{notationremark}, let $\ox\in\bL$ and consider the Veronese subring $S^{\ox}$, and the total space stack defined by
\[
\bT^{\ox}\colonequals \bTot(\cO_{\bX}(-\ox))\colonequals [(\Spec S\backslash 0\times\Spec\KK[t])/\Spec \KK\bL],
\]
where $\bL$ acts on $t$ with weight $-\ox$.  There is a natural projection $q\colon\bT^{\ox}\to\bX$, and a natural map $g\colon\bT^{\ox}\to T^{\ox}$ to its coarse moduli space.

We remark that $T^{\ox}$ has a natural open cover.  Indeed, the open covering $\Spec S\backslash0=U_0\cup U_1$ with $U_i\colonequals \Spec S_{t_i}$ induces an open cover 
\[
(\Spec S\backslash0)\times\Spec \KK[t]=U'_0\cup U'_1\quad \mbox{with}\quad U'_i\colonequals U_i\times\Spec \KK[t]=\Spec S_{t_i}[t],
\]
which in turn implies that  $T^\ox$ has an open cover
\begin{equation}\label{T=V0 cup V1}
T^{\ox}=V_0\cup V_1\ \mbox{ with }\ V_i\colonequals \Spec (S_{t_i}[t])_0,
\end{equation}
where $(S_{t_i}[t])_0$ is the degree zero part of the $\bL$-graded ring $S_{t_i}[t]$ with $\deg t=-\ox$.
As in \ref{notation throughout}\eqref{notation throughout 2b}, the curve $X_i\colonequals \Spec (S_{t_i})_0$ in $V_i$ gives the coarse moduli $X=X_0\cup X_1\cong\mathbb{P}^1$ of $\bX$.

We first investigate the singularities of $T^{\ox}$. 

\begin{prop}\label{sings on T2}
If $\ox\in\bL$, then $T^{\ox}$ is a surface containing the coarse moduli $X\cong \bP^1$ of $\bX$.
Moreover $T^{\ox}$ is normal, and all its singularities are isolated and lie on $X$.
\end{prop}

\begin{proof}
Fix $i=0,1$ and let $A\colonequals S_{t_i}[t]$ and $B\colonequals (S_{t_i}[t])_0$ so that $V_i=\Spec B$. 

Since $A$ is an $\bL$-factorial $\bL$-domain by \ref{L-factorial}\eqref{L-factorial 1}\eqref{L-factorial 5}, its degree zero part $B$ is a normal domain by \ref{L-factorial}\eqref{L-factorial 3}.
Now we claim $\dim B=2$. Note that $S$ is a finitely generated $\bL$-graded module over the $\bZ\oc$-graded subring $C\colonequals \KK[t_0,t_1]$.
Thus $A$ is a finitely generated $\bL$-graded $C_{t_i}[t]$-module, and similarly $B$ is a finitely generated $(C_{t_i}[t])_0$-module.
Let $p$ be the smallest positive integer satisfying $p\ox\in\bZ\oc$, and $p\ox=q\oc$ for $q\in\bZ$. Then $(C_{t_i}[t])_0=(C_{t_i}[t^p])_0$ is the polynomial ring with two variables $t_{1-i}/t_i$ and $t_i^qt^p$. Thus $\dim B=\dim (C_{t_i}[t])_0=2$.

Consider next the $\bZ$-grading on $A=S_{t_i}[t]$ defined by $\deg t=1$ and $\deg x=0$ for any $x\in S_{t_i}$. This gives a $\bZ$-grading on $B$ such that $B=\bigoplus_{j\ge0}B_j$ and $B_0=(S_{t_i})_0$.
Since $B$ is a $\bZ$-graded finitely generated $\KK$-algebra, by the Jacobian criterion, there is a $\bZ$-graded ideal $I$ of $B$ such that $\Sing B=\Spec(B/I)$. 
Since $B$ is normal, all the singularities of $B$ are isolated and $\dim_\KK(B/I)<\infty$ holds. Since $I$ is $\bZ$-graded, it contains $\bigoplus_{j>\ell}B_j$ for $\ell\gg0$ and hence $\sqrt{I}$ contains $\bigoplus_{j>0}B_j$. Consequently, $\Sing B$ is contained in $\Spec B_0=X_i\subset X$.
\end{proof}

\begin{prop}\label{sings on T}
Suppose that $\ox\in\bL$ and write $\ox$ in normal form as $\ox=\sum_{i=1}^na_i\ox_i+a\oc$ for some $0\leq a_i<p_i$ and $a\in\mathbb{Z}$.  Then on $X\cong\bP^1$, complete locally the singularities of $T^{\ox}$ are of the form 
\begin{equation}\label{T picture 2}
\begin{array}{c}
\begin{tikzpicture}[scale=1.5] 
\draw (-0.2,0) to [bend left=5] node[pos=0.75,above] {$\scriptstyle \bP^1$} (4.2,0);
\filldraw [black] (0.0,0.01) circle (1pt);
\filldraw [black] (1,0.08) circle (1pt);
\filldraw [black] (2,0.11) circle (1pt);
\filldraw [black] (4,0.01) circle (1pt);
\node at (0,-0.15) {$\scriptstyle \frac{1}{p_1}(1,-a_1)$};
\node at (0,0.2) {$\scriptstyle \lambda_1$};
\node at (1,-0.1) {$\scriptstyle \frac{1}{p_2}(1,-a_2)$};
\node at (1,0.275) {$\scriptstyle \lambda_2$};
\node at (2,-0.075) {$\scriptstyle \frac{1}{p_3}(1,-a_3)$};
\node at (2,0.3) {$\scriptstyle \lambda_3$};
\node at (3,-0.1) {$\scriptstyle \hdots$};
\node at (4,-0.15) {$\scriptstyle \frac{1}{p_n}(1,-a_n)$};
\node at (4,0.2) {$\scriptstyle \lambda_n$};
\end{tikzpicture} 
\end{array}
\end{equation}
\end{prop}
\begin{proof}
We will show that $\widehat{\cO}_{T^{\ox},\lambda_i}$ is the completion of $\frac{1}{p_i}(1,-a_i)$. By symmetry, we only have to consider the case $i=1$.
We use the presentation of $S$ given in \ref{notation throughout}(2)
\[
S=\frac{\KK[x_1,\hdots,x_n]}{(x_i^{p_i}+x_1^{p_1}-\lambda_ix_2^{p_2}\mid 3\leq i\leq n)}.
\]
and the open cover $T^{\ox}=V_0\cup V_1$ given in \eqref{T=V0 cup V1}, where $t_0=x_2^{p_2}$ and $t_1=x_1^{p_1}$. Thus $\lambda_1=(1:0)$ belongs to $V_0=\Spec B$ for $B\colonequals(S_{t_0}[t])_0=(S_{x_2}[t])_0$.

Let $\mathfrak{m}$ be the maximal ideal of $B$ corresponding to $\lambda_1$. We shall show that $\widehat{\cO}_{T^{\ox},\lambda_1}=\widehat{B}_{\mathfrak{m}}$ is the completion of $\frac{1}{p_1}(1,-a_1)$.
For the polynomial ring $\KK[x_1,\ldots,x_n,t]$ and the formal power series ring $\KK[[\sx_1,\st]]$, consider the morphism
\[
f\colon\KK[x_1,\ldots,x_n,t]\to P\colonequals \KK[[\sx_1,\st]]
\]
of $\KK$-algebras defined by $f(t)=\st$, $f(x_1)=\sx_1$, $f(x_2)=1$ and $f(x_i)=(\lambda_i-\sx_1^{p_1})^{1/p_i}$ for $3\le i\le n$, where a $p_i$-th root of $\lambda_i-\sx_1^{p_1}$ exists since $\KK$ is an algebraically closed field of characteristic zero.
Since $f$ sends $x_i^{p_i}+x_1^{p_1}-\lambda_i x_2^{p_2}$ to zero for all $3\le i\le n$, it induces a morphism of $\KK$-algebras
 $f\colon S[t]\to P$, and further since $f(t_0)=f(x_2^{p_2})=1$ this induces a morphism of $\KK$-algebras
\begin{equation}\label{invert x_2}
f\colon S_{t_0}[t]\to P.
\end{equation}
Let $C\colonequals \frac{1}{p_1}(1,-a_1)=\langle\zeta\rangle$ be the cyclic group acting on $P$ by $\zeta\sx_1=\ve\sx_1$ and $\zeta\st=\ve^{-a_1}\st$ for a primitive $p_1$-th root $\ve$ of unity. Certainly $f(x_i)$ with $2\le i\le n$ belongs to $\KK[[\sx_1^{p_1}]]\subset P^C$. Now we claim that \eqref{invert x_2} induces a morphism of $\KK$-algebras
\begin{equation}\label{then degree zero}
f\colon B=(S_{t_0}[t])_0\to P^C.
\end{equation}
If a monomial $X=x_1^{\ell_1}\ldots x_n^{\ell_n}t^\ell\in S_{x_2}[t]=S_{t_0}[t]$ has degree zero, then $\ell_1\ox_1+\ldots+\ell_n\ox_n+\ell\ox=0$ holds. Looking at the coefficient of $\ox_1$, we have $\ell_1+\ell a_1\in p_1\bZ$. Thus $f(X)=\sx_1^{\ell_1}\st^\ell\prod_{i=2}^nf(x_i)^{\ell_i}$ belongs to $P^C$, and the assertion follows.

The closed subscheme $X_0=\Spec(S_{t_0})_0$ of $V_0=\Spec B$ is defined by the ideal $(tS_{t_0}[t])_0$ of $B$, and the closed point $\lambda_1$ of $X_0$ is defined by the ideal $(t_1/t_0)$ of $(S_{t_0})_0=\KK[t_1/t_0]$.
Therefore the maximal ideal $\mathfrak{m}$ of $B$ is generated by monomials $x_1^{\ell_1}\cdots x_n^{\ell_n}t^\ell$ with $\ell\ge1$ and $t_1/t_0=x_1^{p_1}/x_2^{p_2}$.
In particular, $f(\mathfrak{m})$ is contained in the maximal ideal $\mathfrak{n}$ of $P^C$, and hence \eqref{then degree zero} induces a morphism
\begin{equation}\label{then complete}
f\colon \widehat{B}_{\mathfrak{m}}\to P^C.
\end{equation}
We show that this is an isomorphism. Since $B$ is a normal domain which is finitely generated over a field, $\widehat{B}_{\mathfrak{m}}$ is also a normal domain by Zariski's Main Theorem \cite[VIII.13 Theorem 32]{ZS}.
Since $\dim\widehat{B}_{\mathfrak{m}}=2=\dim P^C$, it suffices to show that \eqref{then complete} is surjective, or equivalently, \eqref{then degree zero} gives a surjective map $\mathfrak{m}\to\mathfrak{n}/\mathfrak{n}^2$.
Since the $\KK$-vector space $\mathfrak{n}/\mathfrak{n}^2$ is spanned by monomials in $P^C$, it suffices to show that any monomial $\sx_1^{\ell_1}\st^\ell$ in $P^C$ belongs to $\Im f+\mathfrak{n}^2$.
Since $\sx_1^{\ell_1}\st^\ell$ is invariant under the action of $C$, the coefficient of $\ox_1$ in the normal form of $\ell_1\ox_1+\ell\ox$ is zero.
Thus there exist $\ell_2\in\bZ$ and $\ell_3,\ldots,\ell_n\in\bZ_{\ge0}$ such that $\ell_1\ox_1+\ldots+\ell_n\ox_n+\ell\ox=0$.
Now $X\colonequals x_1^{\ell_1}\cdots x_n^{\ell_n}t^\ell\in B$ satisfies
\[
f(\upalpha X)\equiv\sx_1^{\ell_1}\st^\ell\ \mod\mathfrak{n}^2\ \mbox{ for }\ \upalpha\colonequals \prod_{i=3}^n\lambda_i^{-\ell_i/p_i}.
\]
Hence \eqref{then complete} is an isomorphism.
\end{proof}

The following calculation will be one of our main technical tools.

\begin{prop}\label{the Cech argument}
For any $\ox\in\bL$,
\[
H^i(\cO_{T^{\ox}})=\left\{\begin{array}{ll}
\bigoplus_{j\ge0}S_{j\ox}&i=0,\\
\bigoplus_{j\ge0}(S_{\ow-j\ox})^*&i=1,\\
0&i\ge2.\end{array}\right.
\]
Therefore there is a canonical morphism $\upgamma\colon T^{\ox}\to\Spec S^{\bN\ox}$.
\end{prop}
\begin{proof}
We calculate $H^i(\cO_{T^{\ox}})$ as the \v{C}ech cohomology with respect to the open affine cover $T^{\ox}=V_0\cup V_1$ in \eqref{T=V0 cup V1}. Since $H^0(V_i,\cO_{T^{\ox}})=(S_{t_i}[t])_0$ for $i=0,1$ and $H^0(V_0\cap V_1,\cO_{T^{\ox}})=(S_{t_0t_1}[t])_0$, the complex
\begin{equation}\label{cech1}
0\to(S_{t_0}[t])_0\oplus(S_{t_1}[t])_0\to (S_{t_0t_1}[t])_0\to0
\end{equation}
has cohomology $H^i(\cO_{T^{\ox}})$ with $i\ge0$. Thus $H^i(\cO_{T^{\ox}})=0$ for any $i\ge2$.

Now let ${\mathfrak a}=St_0+St_1$. Then the local cohomologies $H^i_{\mathfrak a}(S)$ of $S$ are the cohomologies of the extended \v{C}ech complex
\begin{equation}\label{cech2}
0\to S\to S_{t_0}\oplus S_{t_1}\to S_{t_0t_1}\to0
\end{equation}
by \cite[Theorem 5.1.20]{BS}.
Since $t_0,t_1$ is an $S$-sequence, we have $H^0_{\mathfrak a}(S)=H^1_{\mathfrak a}(S)=0$ by \cite[Theorem 6.2.7]{BS}. Thus \eqref{cech2} gives an exact sequence
\[
0\to (S[t])_0\to (S_{t_0}[t])_0\oplus(S_{t_1}[t])_0\to(S_{t_0t_1}[t])_0\to(H^2_{\mathfrak a}(S)\otimes_\KK\KK[t])_0\to0.
\]
Comparing with \eqref{cech1} gives isomorphisms
\[
H^0(\cO_{T^{\ox}})\simeq(S[t])_0=\bigoplus_{j\ge0}S_{j\ox}\ \mbox{ and }\ H^1(\cO_{T^{\ox}})\simeq(H^2_{\mathfrak a}(S)\otimes_\KK\KK[t])_0=\bigoplus_{j\ge0}H^2_{\mathfrak a}(S)_{j\ox}.
\]
Since $\sqrt{\mathfrak a}$ is the $\bL$-graded maximal ideal $\m$ of $S$, we have $H^i_{\mathfrak a}(S)=H^i_{\m}(S)$.
Furthermore, $S(\ow)$ being an $\bL$-graded canonical module of $S$, the $\bL$-graded local duality theorem \cite[Theorem 14.4.1]{BS} gives the required isomorphism
\[
H^2_{\mathfrak a}(S)_{j\ox}=H^2_{\m}(S)_{j\ox}\simeq(\Hom_S(S,S(\ow))_{-j\ox})^*=(S_{\ow-j\ox})^*.
\]
The last statement follows from \cite[Exercise II.2.4]{Hartshorne}, since $S^{\bN\ox}\colonequals \bigoplus_{j\ge0}S_{j\ox}$.
\end{proof}

There is also a map $f\colon\bX\to X$ from $\bX$ to its coarse moduli space, and $p: T^{\ox}\to X$ an obvious morphism, which together with the above form a commutative diagram
\begin{equation}
\begin{array}{c}
\begin{tikzpicture}
\node (top 1) at (0,0) {$\bT^{\ox}$};
\node (top 2) at (2.5,0) {$\bX$};
\node (bottom 1) at (0,-1.5) {$T^{\ox}$};
\node (bottom 2) at (2.5,-1.5) {$X\cong\mathbb{P}^1$};
\draw[->] (top 1) -- node[left] {$\scriptstyle g$} (bottom 1);
\draw[->] (top 2) -- node[right] {$\scriptstyle f$} (bottom 2);
\draw[->] (top 1) -- node[above] {$\scriptstyle q$} (top 2);
\draw[->] (bottom 1) -- node[above] {$\scriptstyle p$} (bottom 2);
\end{tikzpicture}
\end{array}\label{need comm for proj}
\end{equation}

We now try to contract the $\mathbb{P}^1$ in \eqref{T picture 2} by taking global sections.  As is usual, to do this requires some form of negativity for $\bT^{\ox}=\bTot(\cO_{\bX}(-\ox))$;  in the language here, this translates into some form of positivity for $\ox$.  This is slightly technical to state, and we will require the following lemma.  Recall that there is a group homomorphism
\[
\updelta\colon\bL\to\mathbb{Q}
\]
sending $\oc\mapsto 1$ and $\ox_i\mapsto \frac{1}{p_i}$.  It is elementary that $\updelta(\bL_+)\subset{\mathbb Q}_{\ge0}$, and $\ox$ is torsion if and only if $\updelta(\ox)=0$. 
Also, using normal form, it is clear that $\bL\setminus\bL_+$ has the maximum element $\sum_{i=1}^n(p_i-1)\ox_i-\oc=\ow+\oc$. In particular, $\ow\notin\bL_+$. 

\begin{lemma}\label{positivity combinatorial lemma} 
If  $\ox\in\bL$, then the following hold.
\begin{enumerate}
\item\label{positivity combinatorial lemma 1}  $-i\ox\notin\bL_+$ for all $i>0$ $\iff$ $\updelta(\ox)> 0$.
\item\label{positivity combinatorial lemma 2}  $\ow-i\ox\notin\bL_+$ for all $i\geq 0$ $\Rightarrow$ $\updelta(\ox)\geq 0$.
\end{enumerate}
\end{lemma}
\begin{proof}
(1)($\Leftarrow$) If $i>0$, then $\updelta(-i\ox)=-i\updelta(\ox)<0$. The result follows since $\updelta(\bL_+)\subset{\mathbb Q}_{\ge0}$.\\
(1)(2)($\Rightarrow$) Since $\ow+\oc$ is the maximum element of $\bL\setminus\bL_+$, any $\oz\in\bL$ satisfying 
\begin{equation}
\updelta(\oz)>\updelta(\ow+\oc) \label{ineq to plus}
\end{equation} 
belongs to $\bL_+$. If $\updelta(\ox)<0$, then
\[\updelta(-j\ox)=-j\updelta(\ox)\to+\infty\ \mbox{ and }\ \updelta(\ow-j\ox)=\updelta(\ow)-j\updelta(\ox)\to+\infty\]
as $j\to\infty$. Hence for sufficiently large $j$, both $\updelta(-j\ox)$ and $\updelta(\ow-j\ox)$ are larger than $\updelta(\ow+\oc)$. Thus, by \eqref{ineq to plus}, both $-j\ox$ and $\ow-j\ox$ belong to $\bL_+$, a contradiction.  Hence $\updelta(\ox)\ge0$. Thus (2) follows. To complete the proof of (1)($\Rightarrow$), notice that the assumption implies that $\ox$ is not torsion, as in the proof of \ref{Two dim and normal prelim}\eqref{L-factorial 7}, so $\updelta(\ox)\neq0$.~
\end{proof}

This leads to our key new definition.
\begin{defin}
We define the \emph{geometrically positive} elements of $\bL$ to be
\[
\Pos(\bL)\colonequals \{ \ox\in\bL\mid \ox\mbox{ is not torsion, and } \ow-j\ox\notin \bL_+ \mbox{ for all }j\geq 0\}.
\]
\end{defin}

Given any $\ox\in\bL$, recall from \ref{the Cech argument} that $H^0(\cO_{T^{\ox}})=S^{\bN\ox}$ holds, giving rise to a canonical morphism $\upgamma\colon T^{\ox}\to\Spec S^{\bN\ox}$.

\begin{prop}\label{last comb hope}
Suppose that $\ox\in\bL$.
\begin{enumerate}
\item\label{last comb hope 1} If $0\neq\ox\in\bL_+$, then $\ox\in\Pos(\bL)$.
\item\label{last comb hope 2} The following conditions are equivalent.
\begin{enumerate}
\item $\ox\in\Pos(\bL)$.
\item $-i\ox\notin\bL_+$ for all $i>0$, and $\ow-j\ox\notin \bL_+$ for all $j\geq 0$.
\item $S^{\bN\ox}=S^{\ox}$ and $\mathbf{R}^{\kern -1pt t} \upgamma_*\cO_{T^{\ox}}=0$ for all $t>0$.
\end{enumerate}
\end{enumerate}
\end{prop}
\begin{proof}
(1) Clearly $\ox$ is not torsion. Since $\ow\notin\bL_+$ and $\ow-j\ox\le\ow$ for $j\geq0$, we have $\ow-j\ox\notin\bL_+$. Thus $\ox\in\Pos(\bL)$.\\
(2)(a)$\Leftrightarrow$(b). The condition $\ow-j\ox\notin \bL_+$ for all $j\geq 0$ is common to both.  Thus we just need to prove, assuming this condition, that $\ox$ is not torsion (equivalently, $\updelta(\ox)\neq 0$) if and only if $-i\ox\notin\bL_+$ for all $i>0$.  But this follows from \ref{positivity combinatorial lemma}.\\
(b)$\Leftrightarrow$(c) Follows from \ref{the Cech argument}.
\end{proof}

\begin{cor}\label{we have a map!}
If $\ox\in\Pos(\bL)$, then there is a canonical morphism
\[
\upgamma\colon T^{\ox}\to\Spec S^{\ox}
\]
such that $\RDerived \upgamma_*\!\cO_{T^{\ox}}=\cO_{S^{\ox}}$. 
\end{cor}

\subsection{The morphism $\upgamma$} In this subsection we show, under the assumption in \ref{we have a map!}, that  $\upgamma$ is a projective birational morphism.  This then implies that $T^{\ox}$ is a partial resolution of singularities of $\Spec S^{\ox}$, which indeed is our motivation for studying the stack $\bT^{\ox}$ and its coarse moduli $T^{\ox}$.

\begin{lemma}\label{rel ample}
Suppose that $\ox\in\Pos(\bL)$. Then
\begin{enumerate}
\item\label{rel ample 1} $\upgamma$ is a finite type morphism between noetherian schemes.
\item\label{rel ample 2} $\cL\colonequals p^*\cO(1)$ is an ample bundle on $T^{\ox}$.
\item\label{rel ample 3} $\cL$ is $\upgamma$-relatively ample.
\end{enumerate}
\end{lemma}
\begin{proof}
(1) $T^{\ox}$ is noetherian since it is covered by a finite number of affine charts (namely two) in \eqref{T=V0 cup V1}, each given by a noetherian ring.  Further $\Spec S^{\ox}$ is noetherian since $S^{\ox}$ is by \ref{Two dim and normal prelim}.  Now the morphism $\upgamma$ is quasi-compact since $T^{\ox}$ is noetherian and thus quasi-compact, and $\Spec S^{\ox}$ is affine.    Further, composing $\upgamma$ with the structure morphisms $s\colon \Spec S^{\ox}\to \Spec \KK$ gives a morphism $s\circ \upgamma$ of finite type, since  $T^{\ox}$ is covered by finitely generated $\KK$-algebras.  By the left cancelation property \cite[II.Ex.3.13(f)]{Hartshorne}, $\upgamma$ also has finite type.\\
(2) It is well-known that $\cO(1)$ is ample on $\bP^1$, or equivalently, relatively ample with respect to the structure morphism $\bP^1\to\Spec \KK$.    Since $p$ is affine, pulling back yields a bundle $p^*\cO(1)$ which is relatively ample with respect to the composition $T^{\ox}\to\bP^1\to\Spec \KK$ \cite[II.5.1.12]{EGA}.  But this is just the structure morphism for $T^{\ox}$, hence it follows that $p^*\cO(1)$ is ample on $T^{\ox}$. \\
(3) This follows immediately from (2), given $\Spec S^{\ox}$ is affine.
\end{proof}

As notation, for $\oy\in\bL$ write $S(\oy)^{\ox}\colonequals \bigoplus_{i\in\bZ}S_{\oy+i\ox}\supset S(\oy)^{\bN\ox}\colonequals \bigoplus_{i\geq 0}S_{\oy+i\ox}$.
\begin{lemma}\label{pushforward L}
Suppose that $\ox\in\Pos(\bL)$. Then
\begin{enumerate}
\item\label{pushforward L 0} For all $\oy\in\bL$, $\upgamma_*g_*q^*\cO_{\bX}(\oy)=S(\oy)^{\bN\ox}$.
\item\label{pushforward L 1} $\upgamma_*\cL=S(\oc)^{\bN\ox}$  holds, and this is a finitely generated $S^{\ox}$-module.
\item\label{pushforward L 2} $\upgamma_*\cL^{-n}$ and $\mathbf{R}^1\upgamma_*\cL^{-n}$ are finitely generated $S^{\ox}$-modules for all $n\geq 0$. 
\end{enumerate}
\end{lemma}
\begin{proof}
(1) Note first that
\[
\upgamma_*g_*q^*\cO_{\bX}(\oy)
=
\mathrm{H}^0(\bT,q^*\cO_{\bX}(\oy))
=
\mathrm{H}^0(\bX,q_*q^*\cO_{\bX}(\oy)).
\]
By the projection formula $q_*q^*(\cO_{\bX}(\oy))=\bigoplus_{i\geq 0}\cO_{\bX}(i\ox+\oy)$, and so the above equals  
\[
\bigoplus_{i\geq 0}\mathrm{H}^0(\bX,\cO_{\bX}(i\ox+\oy))
=
\bigoplus_{i\geq 0} S_{i\ox+\oy}
=
S(\oy)^{\bN\ox}.
\]
(2) Note that $g_*g^*\cL=\cL$ by the projection formula, and so
\[
\upgamma_*\cL=\upgamma_*g_*g^*\cL
=\upgamma_*g_*g^*p^*\cO_{\bP^1}(1)
=\upgamma_*g_*q^*f^*\cO_{\bP^1}(1)
=\upgamma_*g_*q^*\cO_{\bX}(\oc).
\]
Hence $\upgamma_*\cL=S(\oc)^{\bN\ox}$ by \eqref{pushforward L 0}. Now by \ref{Two dim and normal prelim}\eqref{L-factorial 2}, $S(\oc)^{\ox}$ is a finitely generated $S^{\ox}$-module, hence its submodule $\upgamma_*\cL$ is also a finitely generated $S^{\ox}$-module, since $S^{\ox}$ is noetherian.\\
(3) We know by \ref{we have a map!} that the result is true for $n=0$ since $\RDerived \upgamma_*\!\cO_{T^{\ox}}=\cO_{S^{\ox}}$.   Part \eqref{pushforward L 1} shows that $f_*\cL$ is finitely generated.  Pulling up the Euler exact sequence from $\bP^1$ gives an exact sequence
\begin{equation}
0\to\cL^{-1}\to\cO^{\oplus 2}\to \cL\to 0\label{Euler upstairs}
\end{equation}
on $T^{\ox}$, and pushing down gives an exact sequence
\[
0\to \upgamma_*\cL^{-1}\to (S^{\ox})^{\oplus 2}\to \upgamma_*\cL\to 
\mathbf{R}^1\upgamma_*\cL^{-1}\to 0.
\]
Since $S^{\ox}$ is noetherian, and the middle two objects are finitely generated, necessarily the outer objects are also finitely generated. Hence the result is true for $n=1$.

By induction, we can thus assume that the result is true for $n-1$ and $n-2$.  Twisting the sequence \eqref{Euler upstairs} appropriately, then pushing down, gives an exact sequence
\[
0\to \upgamma_*\cL^{-n}\to (\upgamma_*\cL^{-n+1})^{\oplus 2}\to \upgamma_*\cL^{-n+2}\to 
\mathbf{R}^1\upgamma_*\cL^{-n}\to (\mathbf{R}^1\upgamma_*\cL^{-n+1})^{\oplus 2} \to \mathbf{R}^1\upgamma_*\cL^{-n+2}\to 0.
\]
By induction the second, third, fifth and sixth objects are finitely generated.  Hence so too are the first and fourth.  By induction the result follows.
\end{proof}

\begin{thm}\label{is projective}
Suppose that $\ox\in\Pos(\bL)$. Then $\upgamma\colon T^{\ox}\to\Spec S^{\ox}$ is a projective birational morphism, satisfying $\RDerived \upgamma_*\!\cO_{T^{\ox}}=\cO_{S^{\ox}}$.
\end{thm}
\begin{proof}
We first claim that $\upgamma$ is proper.  Since by \ref{rel ample}\eqref{rel ample 1} $\upgamma$ is a finite type morphism between noetherian schemes, by \cite{Rydh} it suffices to show that both $\upgamma_*$ and $\mathbf{R}^1\upgamma_*$ preserve coherent sheaves. Pick $\cF\in\coh T^{\ox}$.

Since by \ref{rel ample}\eqref{rel ample 2} $\cL$ is ample, there exists some $n\geq 0$ such that $\cF\otimes \cL^n$ is generated by its global sections.  Hence for some $N> 0$ there exists a surjection $\cO^{\oplus N}\twoheadrightarrow \cF\otimes \cL^n$ and thus a surjection $(\cL^{-n})^{\oplus N}\twoheadrightarrow\cF$. Write $\cK$ for the kernel, then pushing down yields an exact sequence
\[
0\to\upgamma_*\cK
\to\upgamma_*(\cL^{-n})^{\oplus N}
\to\upgamma_*\cF
\to\mathbf{R}^1\upgamma_*\cK
\to\mathbf{R}^1\upgamma_*(\cL^{-n})^{\oplus N}
\to\mathbf{R}^1\upgamma_*\cF
\to 0,
\]
since $\mathbf{R}^2\upgamma_*=0$ by \v{C}ech cohomology.  But $\mathbf{R}^1\upgamma_*(\cL^{-n})^{\oplus N}$ is coherent by \ref{pushforward L}\eqref{pushforward L 2}, so it follows from the above exact sequence that $\mathbf{R}^1\upgamma_*\cF$ is also coherent.  Since $\cF$ was an arbitrary coherent sheaf, we also deduce that $\mathbf{R}^1\upgamma_*\cK$ is coherent.  Thus in the above exact sequence, combining with \ref{pushforward L}\eqref{pushforward L 2} we see that the second, fourth, fifth and sixth objects are coherent.  It follows that the third one is too, namely $\upgamma_*\cF$.

Hence $\upgamma$ is proper.  Further $\cL$ is $\upgamma$-relatively ample by \ref{rel ample}\eqref{rel ample 3}, and $\Spec S^{\ox}$ is separated since it is affine, and it is clearly quasi-compact.  It is well known that these conditions imply that $\upgamma$ is projective \cite[II.5.5.3]{EGA}.  Lastly, $\upgamma$ is birational by inspection, and the statement $\RDerived \upgamma_*\!\cO_{T^{\ox}}=\cO_{S^{\ox}}$ is just \ref{we have a map!}.
\end{proof}

\begin{cor}\label{Sox is rational}
Suppose that $\ox\in\Pos(\bL)$. Then $S^{\ox}$ is a rational surface singularity. 
\end{cor}
\begin{proof}
By \ref{Two dim and normal prelim}\eqref{L-factorial 2}, $S^{\ox}$ is two-dimensional and noetherian.  Further, by \ref{is projective}, $\upgamma\colon T^{\ox}\to\Spec S^{\ox}$ is a projective birational morphism such that $\RDerived \upgamma_*\!\cO_{T^{\ox}}=\cO_{S^{\ox}}$. Now by \ref{sings on T}, all the singularities on $T^{\ox}$ are rational, hence there exists a resolution $\upvarphi\colon Y\to T^{\ox}$ such that $\RDerived \upvarphi_*\!\cO_{Y}=\cO_{T^{\ox}}$.  Composing gives a projective birational morphism 
\[
\upgamma\circ \upvarphi\colon Y\to\Spec S^{\ox}
\] 
such that $\RDerived( \upgamma\circ \upvarphi)_*\cO_{Y}=\cO_{S^{\ox}}$.
\end{proof}

In the sequel write $\upvarphi\colon Y^{\ox}\to T^{\ox}$ for the minimal resolution of $T^{\ox}$, and consider the composition $\uppi\colon Y^{\ox}\to T^{\ox}\to \Spec S^{\ox}$.  We remark that this composition need not be the minimal resolution of $\Spec S^{\ox}$, and indeed later in \ref{when Y min res} we give a precise criterion for when it is.  Nevertheless, as in the introduction, we summarize the above information in the following commutative diagram
\begin{equation}
\begin{array}{c}
\begin{tikzpicture}
\node (top 1) at (0,0) {$\bT^{\ox}$};
\node (top 2) at (2.5,0) {$\bX$};
\node (bottom 1) at (0,-1.5) {$T^{\ox}$};
\node (bottom 2) at (2.5,-1.5) {$X\cong\mathbb{P}^1$};
\node (min) at (-1.5,-1) {$Y^{\ox}$};
\node (V) at (0,-2.5) {$\Spec S^{\ox}$};
\draw[->] (top 1) -- node[left] {$\scriptstyle g$} (bottom 1);
\draw[->] (top 2) -- node[right] {$\scriptstyle f$} (bottom 2);
\draw[->] (top 1) -- node[above] {$\scriptstyle q$} (top 2);
\draw[->] (bottom 1) -- node[above] {$\scriptstyle p$} (bottom 2);
\draw[->] (min) -- node[above] {$\scriptstyle \upvarphi$} (bottom 1);
\draw[->] (bottom 1) -- node[right] {$\scriptstyle \upgamma$} (V);
\draw[->,densely dotted] (min) --node[below left] {$\scriptstyle \uppi$} (V);
\end{tikzpicture}
\end{array}\label{stack diagram 2}
\end{equation}

\subsection{Tilting on $\bT^{\ox}$ and $T^{\ox}$}
Write $\cV\colonequals \cO_{\bP^1}\oplus\cO_{\bP^1}(1)\in\coh\bP^1$, and $\cE\colonequals \bigoplus_{\oy\in[0,\oc\,]}\cO_{\bX}(\oy)\in\coh\bX$. The following result is well known.

\begin{thm}\label{tilting on P1 and X}
The following statements hold.
\begin{enumerate}
\item $\cV$ is a tilting bundle on $\mathbb{P}^1$.
\item $\cE$ is a tilting bundle on $\bX$.
\end{enumerate}
\end{thm}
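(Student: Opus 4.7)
The plan is to verify the two standard tilting criteria: higher self-Ext vanishing and classical generation of the bounded derived category. Both parts are classical; part (1) is Beilinson's theorem and part (2) is one of the foundational results of Geigle--Lenzing \cite{GL1}, so my argument would amount to a brief indication of the standard proofs.

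For (1), the standard computation
\begin{equation*}
\Ext^i_{\bP^1}(\cO_{\bP^1}(a), \cO_{\bP^1}(b)) = H^i(\bP^1, \cO_{\bP^1}(b-a))
\end{equation*}
with $a,b \in \{0,1\}$ forces $b-a \in \{-1,0,1\}$, on which $H^{i}=0$ for $i \geq 1$. For generation, the Euler sequence $0 \to \cO_{\bP^1}(-1) \to \cO_{\bP^1}^{\oplus 2} \to \cO_{\bP^1}(1) \to 0$ places $\cO_{\bP^1}(-1)$ in $\thick(\cV)$; twisting and iterating shows that every $\cO_{\bP^1}(m)$ lies in $\thick(\cV)$, and then an arbitrary coherent sheaf is reached by a resolution of twisted line bundles.

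For (2), the Ext vanishing reduces to computing $H^i(\bX, \cO_{\bX}(\oz))$ for $\oz = \oy_2 - \oy_1$ with $\oy_1, \oy_2 \in [0,\oc]$, in which case $\oz \in [-\oc,\oc]$. Since $\bX$ has dimension one only $i = 1$ is nontrivial, and Serre duality on $\bX$ gives $H^1(\cO_{\bX}(\oz))^* \cong H^0(\cO_{\bX}(\ow - \oz))$ with dualizing element $\ow = (n-2)\oc - \sum_{i=1}^n \ox_i$. The cohomology of line bundles computed by Geigle--Lenzing identifies $H^0(\cO_{\bX}(\oy)) = S_{\oy}$, which vanishes precisely when $\oy \notin \bL_+$, and it is then a direct check using the normal form decomposition that $\ow - \oz \notin \bL_+$ for every $\oz \in [-\oc,\oc]$.

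For generation of $\Db(\coh \bX)$ by $\cE$, the short exact sequences
\begin{equation*}
0 \to \cO_{\bX}(\oy - \ox_i) \xrightarrow{\cdot x_i} \cO_{\bX}(\oy) \to C_i(\oy) \to 0,
\end{equation*}
with torsion cokernels $C_i(\oy)$ supported at the $\lambda_i$, let one shift by any $\pm\ox_i$ modulo a torsion sheaf which also lies in $\thick(\cE)$; combined with the pullback of the Beilinson argument along $f\colon \bX \to \bP^1$ to handle shifts by $\oc$, this places every $\cO_{\bX}(\oy)$ in $\thick(\cE)$, and then an arbitrary coherent sheaf follows. The main piece of bookkeeping is the combinatorial verification of the Serre-duality vanishing in the full range $[-\oc,\oc]$; once $\ow$ is rewritten in normal form as $\sum(p_i-1)\ox_i - 2\oc$, and the normal form of $\oz$ is carefully tracked, this becomes straightforward.
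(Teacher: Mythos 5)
Your argument is correct and is precisely the classical one: the paper offers no proof of its own here, simply citing Beilinson for (1) and Geigle--Lenzing for (2), and your sketch reproduces the standard arguments contained in those references (the Euler-sequence generation on $\bP^1$, and for $\bX$ the Serre-duality reduction to $\ow-\oz\notin\bL_+$ for $\oz\in[-\oc,\oc\,]$ together with the $x_i$-multiplication sequences). Your normal-form bookkeeping checks out --- $\ow=\sum_{i}(p_i-1)\ox_i-2\oc$, so the extreme case $\ow+\oc$ has $\oc$-coefficient $-1$ and hence lies outside $\bL_+$ --- which is exactly the computation the paper itself redoes later in the proof of \ref{tilting on stack T}.
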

\begin{proof}
Part (1) is \cite{B83} and part (2) is \cite{GL1}.
\end{proof}

In this subsection we lift these tilting bundles to tilting bundles on both $T^{\ox}$ and $\bT^{\ox}$, again under the assumption that $0\neq \ox\in\bL_+$.  This is the singular line bundle (respectively stack) version of the usual trick of lifting tilting bundles on projective Fano varieties to the total spaces of various vector bundles, considered by many authors  \cite{TarigUeda, Bridgelandt, BH, VdB2}.  We remark that without the restriction to $\bL_+$, the following is false. 

\begin{thm}\label{tilting on stack T}
If $0\neq\ox\in\bL_+$, then $q^*\cE$ is a tilting bundle on $\bT^{\ox}$ such that
\[
\begin{tikzpicture}[xscale=1]
\node (a1) at (0,0) {$\D(\Qcoh\bT^{\ox})$};
\node (a2) at (5,0) {$\D(\Mod\End_{\bT^{\ox}}(q^*\cE))$};
\node (b1) at (0,-1.5) {$\D(\Qcoh\bX)$};
\node (b2) at (5,-1.5) {$\D(\Mod\Lambda)$};
\draw[->] (a1) -- node[above] {$\scriptstyle \RHom_{\bT^{\ox}}(q^*\cE,-)$} node[below] {$\scriptstyle \sim$} (a2);
\draw[->] (b1) -- node[above] {$\scriptstyle \RHom_{\bX}(\cE,-)$} node[below] {$\scriptstyle \sim$} (b2);
\draw[->] (a1) -- node[left] {$\scriptstyle \Rq_*$} (b1);
\draw[->] (a2) -- node[right] {{\scriptsize\rm res}}  (b2);
\end{tikzpicture}
\]
commutes, where $\Lambda$ is the canonical algebra.
\end{thm}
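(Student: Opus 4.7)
The plan is to reduce everything to known facts about $\bX$ by exploiting that $q\colon\bT^{\ox}\to\bX$ is an affine morphism whose structure sheaf we can compute explicitly. Namely, since $\bT^{\ox}$ is the total space of the line bundle $\cO_{\bX}(-\ox)$, the pushforward of the structure sheaf decomposes as $q_*\cO_{\bT^{\ox}}=\bigoplus_{k\geq0}\cO_{\bX}(k\ox)$, the symmetric algebra on the dual of $\cO_{\bX}(-\ox)$.

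First I would verify the $\Ext$-vanishing. By the projection formula and adjunction,
\[
\Ext^i_{\bT^{\ox}}(q^*\cE,q^*\cE)\cong\Ext^i_{\bX}(\cE,\cE\otimes q_*\cO_{\bT^{\ox}})\cong\bigoplus_{k\geq0}\Ext^i_{\bX}(\cE,\cE(k\ox)),
\]
so it suffices to show $\Ext^i_{\bX}(\cO_{\bX}(\oy_1),\cO_{\bX}(\oy_2+k\ox))=0$ for all $i\geq1$, $\oy_1,\oy_2\in[0,\oc\,]$, $k\geq0$. For $i\geq2$ this is automatic since $\bX$ has cohomological dimension one, and for $i=1$ Serre duality reduces it to $H^0(\bX,\cO_{\bX}(\oomega+\oy_1-\oy_2-k\ox))=0$. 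The key combinatorial input is that $\oomega+\oc=\sum_{i=1}^n(p_i-1)\ox_i-\oc$ lies outside $\bL_+$ (by writing it in normal form with $a=-1$); since $\oy_1-\oy_2\leq\oc$ and $k\ox\in\bL_+$, one concludes that $\oomega+\oy_1-\oy_2-k\ox\not\in\bL_+$, so $H^0$ vanishes.

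Next I would establish that $q^*\cE$ generates $\Db(\Qcoh\bT^{\ox})$. Suppose $C\in\Db(\Qcoh\bT^{\ox})$ satisfies $\RHom_{\bT^{\ox}}(q^*\cE,C)=0$. By adjunction and affineness of $q$, this equals $\RHom_{\bX}(\cE,\Rq_*C)=\RHom_{\bX}(\cE,q_*C)$; since $\cE$ generates $\Db(\Qcoh\bX)$ by \ref{tilting on P1 and X}(2), we deduce $q_*C=0$, hence $C=0$ because $q$ is affine. Combined with the $\Ext$-vanishing, this shows $q^*\cE$ is a tilting bundle and gives the top row equivalence, while the bottom row equivalence with $\End_{\bX}(\cE)\cong\Lambda$ is \cite{GL1}.

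Finally, the commutativity of the square follows from naturality of the derived $(q^*,\Rq_*)$-adjunction: for $C\in\Db(\Qcoh\bT^{\ox})$ there is a canonical isomorphism $\RHom_{\bT^{\ox}}(q^*\cE,C)\cong\RHom_{\bX}(\cE,\Rq_*C)$, and this isomorphism is compatible with the action of $\Lambda=\End_{\bX}(\cE)$ on both sides (acting on the LHS via the canonical algebra map $\Lambda\to\End_{\bT^{\ox}}(q^*\cE)$ coming from pullback). The main obstacle is really the cohomological vanishing in the first step, since it is the place where one must carefully exploit the normal form presentation of $\bL$ and the explicit shape of $\oomega$; once that is in hand, everything else is formal manipulation with adjunctions.
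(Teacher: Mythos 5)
Your proposal is correct and follows essentially the same route as the paper's proof: adjunction plus the projection formula to reduce $\Ext$-vanishing to $H^0(\bX,\cO_\bX(\ow-i+j-k\ox))$ via Serre duality, the observation that $\ow+\oc\notin\bL_+$ handles the most positive case, and the same affineness-of-$q$ argument for generation and adjunction for commutativity of the square. The only cosmetic difference is the order of the steps, and that the paper additionally records that compact generation of $\D(\Qcoh\bT^{\ox})$ upgrades the generator to a classical generator of $\Perf(\bT^{\ox})$.
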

\begin{proof}
To simplify, we drop all $\ox$ from the notation and set $\bT\colonequals \bT^{\ox}$.  The generation argument is standard, as in \cite[4.1]{TarigUeda} and \cite[4.1]{Bridgelandt}, namely if $M\in\D(\Qcoh \bT)$ with $\Hom_{\D(\bT)}(q^*\cE,M[i])=0$ for all $i$, then $\Hom_{\D(\bX)}(\cE,q_*M[i])=0$ for all $i$, so since $\cE$ generates  $\D(\Qcoh \bX)$, $q_*M=0$ and so since $q$ is affine $M=0$.  Hence $q^*\cE$ generates $\D(\Qcoh \bT)$.

For Ext vanishing,
\begin{align*}
\Ext^1_{\bT}(q^*\cE,q^*\cE)&\cong\Ext^1_{\bX}(\cE,q_*q^*\cE)\tag{by adjunction}\\
&\cong\Ext^1_{\bX}(\cE,\bigoplus_{k\geq 0}\cE\otimes\cO_{\bX}(k\ox))\tag{by projection formula}\\
&\cong\bigoplus_{k\geq 0}\Ext^1_{\bX}(\cE,\cE\otimes\cO_{\bX}(k\ox))\\
&\cong\bigoplus_{k\geq 0}\bigoplus_{i\in[0,\oc\,]}\bigoplus_{j\in[0,\oc\,]}H^1(\bX,\cO_{\bX}(i-j+k\ox))\\
&\cong\bigoplus_{k\geq 0}\bigoplus_{i\in[0,\oc\,]}\bigoplus_{j\in[0,\oc\,]}H^0(\bX,\cO_{\bX}(\vec{\omega}-i+j-k\ox))^*\tag{by Serre duality}
\end{align*}
It suffices to check that $\vec{\omega}-i+j-k\ox\notin\bL_+$ for all $k\geq 0$ and all $i,j\in[0,\oc\,]$.  Since $0\neq\ox\in\bL_+$, clearly if suffices to check $i=k=0$ and $j=\oc$, this being the most positive case.  But $\vec{\omega}=(n-2)\oc-\sum_{t=1}^n\ox_t$, and so $\vec{\omega}+\oc=(n-1)\oc-\sum_{t=1}^n\ox_t\notin\bL_+$,	
as required.  Replacing $\Ext^1$ by $\Ext^i$, the above proof also shows that the higher Exts vanish.

The commutativity is just the adjunction $\RHom_{\bX}(\cE,\Rq_*(-))\cong\RHom_{\bT}(q^*\cE,-)$.
\end{proof}

\begin{thm}\label{tilting on coarse T}
If $0\neq\ox\in\bL_+$, then $p^*\cV$ is a tilting bundle on $T^{\ox}$. \end{thm}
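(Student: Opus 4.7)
The plan is to mirror the strategy of Theorem \ref{tilting on stack T}, proving generation and Ext-vanishing separately, and where possible reducing the Ext-vanishing to the stacky statement already established for $q^*\cE$ on $\bT^{\ox}$.

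For generation, I would let $M\in\D(\Qcoh T^{\ox})$ satisfy $\RHom_{T^{\ox}}(p^*\cV,M[i])=0$ for all $i$. By adjunction this gives $\RHom_{\bP^1}(\cV,\Rp_*M[i])=0$, and since $\cV$ is a tilting generator on $\bP^1$ by Theorem \ref{tilting on P1 and X}(1), this forces $\Rp_*M=0$. The morphism $p\colon T^{\ox}\to X\cong\bP^1$ is affine by construction (locally over $X$, the coarse moduli $T^{\ox}$ is the $\Spec$ of a Veronese-type algebra), so $M=0$, and since $\D(\Qcoh T^{\ox})$ is compactly generated this suffices to show $p^*\cV$ is a classical generator of $\Perf(T^{\ox})$.

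For Ext-vanishing, rather than a direct calculation on $T^{\ox}$, I plan to lift the problem to $\bT^{\ox}$ using the commutative square $f\circ q=p\circ g$ in \eqref{stack diagram 2}. Since $f^*\cO_{\bP^1}(d)=\cO_{\bX}(d\oc)$, we have $f^*\cV=\cO_{\bX}\oplus\cO_{\bX}(\oc)$, which is a direct summand of $\cE$. Hence $g^*p^*\cV=q^*f^*\cV$ is a direct summand of $q^*\cE$, which by Theorem \ref{tilting on stack T} has no higher self-Ext. Provided $\Rg_*\cO_{\bT^{\ox}}=\cO_{T^{\ox}}$, the projection formula yields $\Rg_*g^*p^*\cV=p^*\cV$, and adjunction gives
\[
\RHom_{T^{\ox}}(p^*\cV,p^*\cV)\cong\RHom_{T^{\ox}}(p^*\cV,\Rg_*g^*p^*\cV)\cong\RHom_{\bT^{\ox}}(g^*p^*\cV,g^*p^*\cV),
\]
so the desired Ext-vanishing follows.

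The main obstacle is the input $\Rg_*\cO_{\bT^{\ox}}=\cO_{T^{\ox}}$, i.e.\ that the coarse moduli morphism $g$ has no higher direct images of the structure sheaf. Since $\mathrm{char}\,\KK=0$ and the isotropy groups in $\bT^{\ox}$ are the finite cyclic groups coming from $\bX$, the stack is tame and this is a standard fact, but it is the only non-formal ingredient in the argument; an alternative is to bypass it by a direct \v{C}ech computation on $T^{\ox}$, combining $p_*g_*=f_*q_*$ with $g_*\cO_{\bT^{\ox}}=\cO_{T^{\ox}}$ and the Serre-duality argument of Theorem \ref{tilting on stack T} applied to the twists $\pm\oc$ of the summands of $\cV$, but this is longer and less structural.
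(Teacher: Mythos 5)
Your proposal is correct and follows essentially the same route as the paper: generation by pushing forward along the affine map $p$ and invoking the Beilinson tilting bundle on $\bP^1$, and Ext-vanishing via $g_*\cO_{\bT^{\ox}}=\cO_{T^{\ox}}$, the projection formula, adjunction, and the identification $g^*p^*\cV=q^*f^*\cV$ as a summand of $q^*\cE$ handled by Theorem \ref{tilting on stack T}. Your explicit flagging of the exactness of $g_*$ (tameness in characteristic zero) is a point the paper leaves implicit, but it is the same argument.
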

\begin{proof}
As above, when possible we drop all $\ox$ from the notation.  The generation argument is identical to the argument in \ref{tilting on stack T}.  The Ext-vanishing is also similar, namely writing $\cF\colonequals f^*\cV=\cO_{\bX}\oplus\cO_{\bX}(\oc)$ then there is a chain of isomorphisms
\begin{align*}
\Ext^i_T(p^*\cV,p^*\cV)&\cong\Ext^i_T(p^*\cV,g_*\cO_{\bT}\otimes_T p^*\cV)\tag{$g_*\cO_{\bT}=\cO_T$}\\
&\cong\Ext^i_T(p^*\cV,g_*g^*p^*\cV)\tag{projection formula}\\
&\cong\Ext^i_{\bT}(g^*p^*\cV,g^*p^*\cV)\tag{adjunction}\\
&\cong\Ext^i_{\bT}(q^*f^*\cV,q^*f^*\cV)\tag{commutativity of \eqref{stack diagram 2}}\\
&\cong\Ext^i_{\bT}(q^*\cF,q^*\cF)
\end{align*}
which is zero by \ref{tilting on stack T} since $q^*\cF$ is a summand of $q^*\cE$.
\end{proof}

Since $\uppi\colon Y^{\ox}\to\Spec S^{\ox}$ is a resolution of a rational surface singularity, the fundamental cycle exists.
\begin{cor}\label{fund is reduced}
If $0\neq\ox\in\bL_+$, then the fundamental cycle associated to the morphism $\uppi\colon Y^{\ox}\to\Spec S^{\ox}$ is reduced. 
\end{cor}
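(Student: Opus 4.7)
The plan is to show that the reduced cycle $Z := \sum_{E} E$ (summed over all exceptional curves of $\pi$) already lies in $\cZ_{\top}$, which forces $Z_f = Z$ and hence reducedness.

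First, I would identify the shape of the exceptional divisor of $\pi$. By \ref{sings on T}, the surface $T^{\ox}$ has only isolated cyclic quotient singularities of types $\frac{1}{p_i}(1,-a_i)$ for those $i$ with $a_i\neq 0$, all lying on the central curve $X\cong\bP^1$. Since $\varphi\colon Y^{\ox}\to T^{\ox}$ is the minimal resolution of $T^{\ox}$, each such singularity is replaced by a Hirzebruch--Jung chain of smooth rational curves whose self-intersections are prescribed by the continued fraction expansion of $p_i/(p_i-a_i)$, and this chain meets the strict transform $\tilde C$ of $X$ transversally at a single endpoint $E_{i1}$. Thus the dual graph of $\pi$ is star-shaped as in \eqref{dual graph}, with $v:=\#\{i\mid a_i\neq 0\}$ arms and central vertex $\tilde C$ of self-intersection $-\beta$. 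By the discussion immediately following \eqref{Zf dot Ei}, for a graph of this form the candidate $Z=\sum E_i$ lies in $\cZ_{\top}$ precisely when $\beta\geq v$, so it suffices to establish this inequality.

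The key computation is $\tilde C^2 = -a - v$, where $\ox = \sum_{i}a_i\ox_i + a\oc$ is the normal form. Since $\bX\hookrightarrow\bT^{\ox}$ is the zero section of the line bundle $\cO_{\bX}(-\ox)$, its $\mathbb{Q}$-self-intersection on the coarse moduli $T^{\ox}$ equals $\deg\cO_{\bX}(-\ox) = -a - \sum_{i} a_i/p_i$. Writing $\varphi^*X = \tilde C + \sum_i D_i$ with each $D_i$ supported on arm $i$, the projection constraints $\varphi^*X\cdot E_k = 0$ together with $\tilde C\cdot E_{i1}=1$ and $\tilde C\cdot E_{ij}=0$ for $j\geq 2$ determine $D_i$; an explicit inversion of the tridiagonal Hirzebruch--Jung intersection matrix for arm $i$ yields the leading coefficient $d^{(i)}_1 = (p_i-a_i)/p_i$. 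Expanding $(\varphi^*X)^2 = X^2_{T^{\ox}}$ then gives
\[
\tilde C^2 \;=\; X^2_{T^{\ox}} - \sum_{a_i\neq 0}d^{(i)}_1 \;=\; -a - \sum_{a_i\ne0}\tfrac{a_i}{p_i} - \sum_{a_i\ne0}\tfrac{p_i-a_i}{p_i} \;=\; -a - v.
\]
Since $\ox\in\bL_+$ forces $a\geq 0$ in the normal form, we conclude $\beta = a + v \geq v$, and hence $Z_f = Z$ is reduced.

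The main obstacle is the intersection-theoretic bookkeeping: one must justify both that the stacky self-intersection $\bX^2 = \deg\cO_{\bX}(-\ox)$ descends to the $\mathbb{Q}$-self-intersection of $X$ on $T^{\ox}$, and the local discrepancy identity $d^{(i)}_1 = (p_i-a_i)/p_i$. Both are classical and admit direct toric verifications at each singular point on $X$, and they are precisely the substantive technical content that feeds into the broader self-intersection computation later carried out in full as part of \ref{middle SI number}.
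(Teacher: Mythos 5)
Your argument is correct in outline, but it is a genuinely different proof from the one in the paper. The paper's proof never computes the central self-intersection number at this stage: it takes the line bundle $\cL=p^*\cO_{\bP^1}(1)$ on $T^{\ox}$, uses the Ext-vanishing supplied by the tilting bundle of \ref{tilting on coarse T} together with adjunction along $\varphi$ to get $H^1(\cL_Y^{-1})=0$, and then invokes Wunram's cohomological criterion \cite{Wunram} (cf.\ \cite[3.5.4]{VdB1d}) to conclude that the central curve has coefficient one in $Z_f$; reducedness of the whole cycle then follows from \eqref{Zf dot Ei} because every arm curve has self-intersection $\le-2$. Your route instead pins down $\beta$ exactly via Mumford's rational intersection theory on $T^{\ox}$: the identity $X^2_{T^{\ox}}=\deg\cO_{\bX}(-\ox)=-a-\sum a_i/p_i$ plus the local discrepancies $d_1^{(i)}=(p_i-a_i)/p_i$ give $\beta=a+v\ge v$, whence $Z=\sum E_i\in\cZ_{\top}$ and $Z_f=Z$. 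What your approach buys is that it proves strictly more than \ref{fund is reduced}: it yields the value $\beta=a+v$ directly, which the paper only obtains much later in \ref{middle SI number} via the arrow-counting argument of \ref{number of arrows}, and it does so without circularity since nothing from \S4 is used. What it costs is that the two intersection-theoretic inputs are asserted rather than proved, and they are exactly where the work lies: the Mumford self-intersection identity for the coarse zero-section, and the computation $d_1^{(i)}=(p_i-a_i)/p_i$, the latter requiring one to check that the strict transform of $X$ meets the end $E_{i1}$ of the chain for $\frac{p_i}{p_i-a_i}=[\alpha_{i1},\dots,\alpha_{im_i}]$ rather than the opposite end (meeting the other end would replace $p_i-a_i$ by its inverse modulo $p_i$ and break the computation). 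Both facts are classical (Orlik--Wagreich \cite{OW}, Pinkham \cite{Pinkham}) and verifiable torically at each $\lambda_i$ using the local model from \ref{sings on T}, so I would accept the proof once those two verifications are written out; the paper's argument is softer precisely because it sidesteps this bookkeeping by exploiting the tilting bundle it has already built.
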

\begin{proof}
Resolving the singularities in \eqref{T picture 2} it is clear that the dual graph of $\uppi$ is star shaped, with the middle curve of this star corresponding to the $\mathbb{P}^1$ in $T^{\ox}$.  By \ref{tilting on coarse T} the line bundle $\cL\colonequals p^{*}\cO_{\mathbb{P}^1}(1)$ on $T^{\ox}$ satisfies $\Ext^1_{T^{\ox}}(\cL,\cO_{T^{\ox}})=0$.  It clearly has degree one on the exceptional curve.  Then $\cL_Y\colonequals \upvarphi^*\cL=\mathbf{L}\upvarphi^*\cL$ is a line bundle on $Y^{\ox}$, with degree one on the middle curve and degree zero on all other curves.  Furthermore
\begin{align*}
H^1(\cL_Y^{-1})&\cong\Ext^1_{Y^{\ox}}(\cL_Y,\cO_{Y^{\ox}})\\
&\cong\Hom_{\Db(Y^{\ox})}(\mathbf{L}\upvarphi^*\cL,\cO_{Y^{\ox}}[1])\\
&\cong\Hom_{\Db(T^{\ox})}(\cL,\mathbf{R}\upvarphi_*\cO_{Y^{\ox}}[1])\\
&\cong\Ext^1_{T^{\ox}}(\cL,\cO_{T^{\ox}})\\
&=0.
\end{align*}
Since $\cL_Y$ has rank one, by \ref{Wunram main} (see also \cite[3.5.4]{VdB1d}), this implies that in the fundamental cycle of $\uppi$, the middle curve has coefficient one.  In \eqref{Zf dot Ei}, this implies that $\upbeta-v\geq 0$, and thus the fundamental cycle is reduced by the paragraph following  \eqref{Zf dot Ei}.
\end{proof}

In the sequel, we require the following description of some degenerate cases.
\begin{lemma}\label{degen lemma}
Let $0\neq\ox\in\bL_+$ and write $\ox=\sum_{i=1}^na_i\ox_i+a\oc$ with $a\ge0$ in normal form.
\begin{enumerate}
\item\label{degen lemma 1} If all $a_i=0$ (so necessarily $a>0$), then $Y^{\ox}=T^{\ox}=\cO_{\mathbb{P}^1}(-a)$ and $S^{\ox}=\KK[x,y]^{\frac{1}{a}(1,1)}$.
\item\label{degen lemma 2} If $a_i\neq 0$ and $a_j=0$ for all $j\neq i$, then the dual graph of $\uppi$ in \eqref{stack diagram 2} is
\[
\begin{tikzpicture}[xscale=1.2]
  \node (1) at (1,0) [vertex] {};
  \node (2) at (2,0) [vertex] {};
 \node (3) at (4,0) [vertex] {};
 \node (4) at (5,0)[vertex] {};
 \node at (3,0) {$\cdots$};
  \node (1a) at (0.9,-0.25) {$\scriptstyle -1-a$};
  \node (2a) at (1.9,-0.25) {$\scriptstyle - \upalpha_{i1}$};
 \node (3a) at (3.9,-0.25) {$\scriptstyle -\upalpha_{im_i-1}$};
 \node (4a) at (4.9,-0.25) {$\scriptstyle - \upalpha_{im_i}$};
\draw [-] (1) -- (2);
\draw [-] (2) -- (2.6,0);
\draw [-] (3.4,0) -- (3);
\draw [-] (3) -- (4);
\end{tikzpicture}
\]
where $\frac{p_i}{p_i-a_i}=[\upalpha_{i1},\hdots,\upalpha_{im_i}]$. 
\end{enumerate}
\end{lemma}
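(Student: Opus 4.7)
For part \eqref{degen lemma 1}, the assumption $\ox=a\oc$ makes every singularity $\frac{1}{p_i}(1,0)$ predicted by Proposition \ref{sings on T} a pseudo-reflection quotient, hence smooth; so $T^{\ox}$ is already smooth and $Y^{\ox}=T^{\ox}$. The identification $T^{\ox}\cong\cO_{\bP^1}(-a)$ follows from $\cO_{\bX}(-a\oc)=f^{*}\cO_{\bP^1}(-a)$: taking total spaces commutes with taking coarse moduli here because the stabilisers act trivially on the fibre, so $T^{\ox}=\bTot(\cO_{\bP^1}(-a))$. Finally, $S^{\ox}=\bigoplus_{k\ge0}H^0(\bX,\cO_{\bX}(ka\oc))=\bigoplus_{k\ge0}H^0(\bP^1,\cO_{\bP^1}(ka))=\KK[t_0,t_1]^{(a)}$, which is the $\frac{1}{a}(1,1)$-invariant subring of $\KK[t_0,t_1]$.

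For part \eqref{degen lemma 2}, the assumptions leave only the singularity $\frac{1}{p_i}(1,-a_i)$ at $\lambda_i$ of Proposition \ref{sings on T} to resolve; all other candidate singularities are pseudo-reflection quotients and hence smooth. The minimal resolution of $\frac{1}{p_i}(1,-a_i)$ produces the chain $[\alpha_{i1},\ldots,\alpha_{im_i}]$ associated to $\frac{p_i}{p_i-a_i}$, and from the explicit local model in the proof of Proposition \ref{sings on T} (in which $\bP^1$ corresponds to the weight-$1$ axis) one checks that the strict transform $E_0$ of $\bP^1\subset T^{\ox}$ meets $E_{i1}$ transversally at one point. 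This gives the shape of the dual graph; the only remaining task is to compute the self-intersection of $E_0$.

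For $E_0^2$, I would work on the smooth surface $Y^{\ox}$ and use the $\bQ$-Cartier pullback of the Weil divisor $\bP^1$. Writing $\varphi^{*}\bP^1=E_0+\sum_{j=1}^{m_i}c_jE_{ij}$, the orthogonality relations $\varphi^{*}\bP^1\cdot E_{ij}=0$ give the Hirzebruch--Jung recursion whose standard solution (in terms of the $i$-series of $\frac{p_i}{p_i-a_i}$) yields $c_1=(p_i-a_i)/p_i$. Hence
\[
E_0^2=\varphi^{*}\bP^1\cdot E_0-c_1=(\bP^1\cdot\bP^1)_{T^{\ox}}-\frac{p_i-a_i}{p_i}.
\]
For the rational self-intersection on the singular surface, I would pass to the smooth Deligne--Mumford stack $\bT^{\ox}$: the zero section $\bX\hookrightarrow\bT^{\ox}$ has normal bundle $\cO_{\bX}(-\ox)$, so its stacky self-intersection equals $\deg_{\bX}\cO_{\bX}(-\ox)=-(a+a_i/p_i)$. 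Since $g\colon\bT^{\ox}\to T^{\ox}$ is a tame coarse-moduli map of generic degree one, stack and coarse-moduli intersection numbers agree, giving $(\bP^1)^2_{T^{\ox}}=-(a+a_i/p_i)$. Substituting,
\[
E_0^2=-\Bigl(a+\tfrac{a_i}{p_i}\Bigr)-\tfrac{p_i-a_i}{p_i}=-a-1,
\]
as required.

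The main obstacle is the last identification of the rational self-intersection on $T^{\ox}$ with the stacky one on $\bT^{\ox}$; although this is a standard fact for tame DM stacks with cyclic stabilisers at isolated points, giving a self-contained justification requires either a careful comparison of intersection pairings on the stack and its coarse moduli, or an explicit construction of a $\bQ$-principal divisor on $T^{\ox}$ (for example by descending $t\cdot x_i^{a_i}t_0^{a}$ and tracking multiplicities through the stabiliser at $\lambda_i$) in order to pin down $[\bP^1]$ in the rational class group of $T^{\ox}$.
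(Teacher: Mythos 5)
Your proposal is correct in substance but takes a genuinely different route from the paper for the self-intersection number in part \eqref{degen lemma 2}. Part \eqref{degen lemma 1} is essentially the paper's argument run in the opposite direction: the paper first identifies $S^{a\oc}$ as the $a$-th Veronese of $\KK[t_0,t_1]$ and then \emph{deduces} $T^{\ox}=\cO_{\bP^1}(-a)$ from the fact that contracting a $(-b)$-curve yields $\frac{1}{b}(1,1)$, whereas you compute $T^{\ox}$ directly as a total space (using that the stabilisers act trivially on the fibre since $\oc$ restricts to the trivial character of each $\mu_{p_i}$) and read off $S^{\ox}$ from global sections; both are fine. For part \eqref{degen lemma 2}, the paper computes $E_0^2=-1-a$ either from the reconstruction-algebra quiver — using that the fundamental cycle is reduced (\ref{fund is reduced}) together with the intersection-theoretic count of arrows in \ref{GL2 for R} — or by an explicit gluing calculation, suppressing details in both cases. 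You instead run the classical Orlik--Wagreich/Pinkham-style computation: $E_0^2=(\bP^1)^2_{T^{\ox}}-c_1$ with $c_1=(p_i-a_i)/p_i$ the leading coefficient of the numerical pullback (this is $i_1/i_0$ for the $i$-series of $\frac{p_i}{p_i-a_i}$, and is worth a line of justification), and $(\bP^1)^2_{T^{\ox}}=\deg_{\bX}\cO_{\bX}(-\ox)=-(a+a_i/p_i)$. The arithmetic checks out (e.g.\ for $n=1$, $\bp=(2)$, $\ox=3\ox_1$ one recovers $[-2,-2]$ for $\frac{1}{3}(1,2)$, as it should be). The one step you correctly flag as incomplete — that the Mumford rational self-intersection of $\bP^1$ on the coarse space agrees with the degree of the normal bundle of the zero section on the stack $\bT^{\ox}$ — is indeed the crux of your method and does require an argument (the local model of \ref{sings on T} lets you descend a $\bQ$-Cartier multiple of $\bP^1$ and track multiplicities through the stabiliser, as you suggest); the paper sidesteps this entirely by working with the reduced fundamental cycle and \ref{GL2 for R}, which is arguably the cheaper route given what has already been established by that point. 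Your approach buys a self-contained, purely intersection-theoretic derivation of the coefficient $-1-a$ that generalises immediately to the value $\beta=a+v$ of \ref{middle SI number} without passing through \ref{number of arrows}.
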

\begin{proof}
(1) By \ref{basic observation} $S^{\oc}=\KK[t_0,t_1]$, and hence $S^{a\oc}$ is the $a$-th Veronese of $\KK[t_0,t_1]$, which is $\KK[x,y]^{\frac{1}{a}(1,1)}$.  Since $(S_{t_i}[t])_0=\KK[\frac{t_{1-i}}{t_i},t_i^{a}t]$, the description \eqref{T=V0 cup V1} of $T^{\ox}$ coincides with that of $\cO_{\mathbb{P}^1}(-a)$.  Thus $\cO_{\mathbb{P}^1}(-a)=T^{\ox}=Y^{\ox}$. \\
(2) There is only one singularity in \eqref{T picture 2}, which implies that the dual graph has the above Type $A$ form.  It is standard that $\upalpha_{ij}$ from $\frac{p_i}{p_i-a_i}=[\upalpha_{i1},\hdots,\upalpha_{im_i}]$ resolves $\frac{1}{p_i}(1,-a_i)$, and thus the only thing still to be verified is the self-intersection number $-1-a$.  There are two ways of doing this: since the fundamental cycle of $\uppi$ is reduced by \ref{fund is reduced}, the reconstruction algebra is easy to calculate and it can be directly verified that its quiver  has the form given by intersection rules in \ref{GL2 for R} (which, by \cite{WemGL2}, hold for non-minimal resolutions too).  Alternatively, the number $-1-a$ can be determined by an explicit gluing calculation on $T^{\ox}$; in both cases we suppress the details.
\end{proof}

\subsection{Special CM Modules and the Dual Graph}\label{dual subsect}
Choose $0\neq\ox\in\bL_+$. In this subsection we first give a precise criterion for when $\uppi\colon Y^{\ox}\to\Spec S^{\ox}$ in \eqref{stack diagram 2} is the minimal resolution, then we use the results of the previous subsections to determine the indecomposable special CM $S^{\ox}$-modules.

\begin{prop}\label{when Y min res}
Let $0\neq\ox\in\bL_+$.  Then $\uppi$ is the minimal resolution if and only if $\ox\notin[0,\oc\,]$.
\end{prop}
\begin{proof}
Write $\ox=\sum_{i=1}^na_i\ox_i+a\oc$ in normal form, then since $\ox\in\bL_+$, necessarily $a\geq 0$.  As in \ref{fund is reduced}, resolving the singularities in \eqref{T picture 2} it is clear that the dual graph of $\uppi$ is star shaped, and the only curve that might be a ($-1$)-curve is the middle one. \\
($\Leftarrow$) Suppose that $\ox\notin[0,\oc\,]$.  If all $a_i=0$ then necessarily $a\geq 2$, and so \ref{degen lemma}\eqref{degen lemma 1} shows that $\uppi$ is the minimal resolution.  Similarly, if $a_i\neq 0$ but $a_j=0$ for all $j\neq i$, then the assumption $\ox\notin[0,\oc\,]$ forces $a\geq 1$, and \ref{degen lemma}\eqref{degen lemma 2} then shows that $\uppi$ is minimal.

Hence we can assume that $\ox\notin[0,\oc\,]$ with at least two of the $a_i$ being non-zero.  This being the case, there are at least two singular points in \eqref{T picture 2}.  By \ref{fund is reduced}, since the fundamental cycle is reduced, the calculation \eqref{Zf dot Ei} shows that the middle curve then cannot be a ($-1$)-curve, hence the resolution is minimal.\\
($\Rightarrow$) By contrapositive, suppose that $0\neq \ox\in[0,\oc\,]$, say $\ox=a_i\ox_i$ for some $i$ and some $0<a_i<p_i$.  Since $a=0$, by \ref{degen lemma}\eqref{degen lemma 2} the resolution $\uppi$ is not minimal.
\end{proof}

Hence if $x\in\bL_+$ with $x\notin[0,\oc\,]$, it follows that the dual graph of the minimal resolution $\uppi\colon Y^{\ox}\to\Spec S^{\ox}$ is  \eqref{key dual graph}, except that we have not yet determined the precise value of $\upbeta$.  We will do this later in \ref{middle SI number}, since for the moment this value is not needed.  As notation, for $\oy\in\bL$ write $S(\oy)^{\ox}\colonequals \bigoplus_{i\in\bZ}S_{\oy+i\ox}$.

\begin{thm}\label{specials determined thm}
Suppose that $\ox\in\bL_+$ with $\ox\notin[0,\oc\,]$, and write $\ox=\sum_{i=1}^na_i\ox_i+a\oc$ with $a\ge0$ in normal form.
Then 
\[
\SCM S^{\ox}=\add\{S(u\ox_j)^{\ox}\mid j\in [1,n],  u\in I(p_j,p_j-a_j)   \}.
\]
\end{thm}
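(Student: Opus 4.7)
The plan is to analyse the resolution $\pi\colon Y^\ox\to\Spec S^\ox$ via the factorization $Y^{\ox}\xrightarrow{\varphi}T^{\ox}\xrightarrow{\psi}\Spec S^{\ox}$ of \eqref{stack diagram 2}, and then verify Wunram's criterion for each candidate module. Since $\Spec S^\ox$ has a unique singular point, by Lemma \ref{Check locally gen} it suffices to establish the equality after completion, and by \ref{when Y min res} $\pi$ is then the minimal resolution with dual graph \eqref{key dual graph}. Each $S(\oy)^\ox$ for $\oy\in\bL$ is a rank-one reflexive $S^\ox$-module, appearing as a summand of the isotypic decomposition $S=\bigoplus_{[\oy]\in\bL/\bZ\ox}S(\oy)^\ox$; the $\bL$-factoriality of $S$ (Proposition \ref{L-factorial}) together with $S^\ox$ being a two-dimensional normal domain gives reflexivity, hence the CM property, so each proposed $S(u\ox_j)^\ox$ is CM.

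The heart of the proof is a local computation at each singular point of $T^\ox$. By Proposition \ref{sings on T}, for each $j$ with $a_j\neq 0$ there is a cyclic quotient singularity of $T^\ox$ at $\lambda_j$, with $\widehat{\cO}_{T^\ox,\lambda_j}\cong\KK[[\sx_j,\st]]^{G_j}$ for $G_j:=\frac{1}{p_j}(1,-a_j)=\frac{1}{p_j}(1,p_j-a_j)$. Under this identification, the completion of $S(\oy)^\ox$ at $\lambda_j$ corresponds to the $G_j$-eigenspace of weight equal to the $j$-th component of the image of $\oy$ in $\bL/\bZ\oc\cong\prod_{i=1}^n\bZ/p_i\bZ$. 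For $\oy=u\ox_j$ this weight is $u\bmod p_j$ at $\lambda_j$ and $0$ at every other $\lambda_k$ (since $\ox_j$ generates only the $j$-th factor), so by Wunram's theorem for cyclic quotients (\ref{Wunram specials thm}) the local module at $\lambda_j$ is a special precisely when $u\in I(p_j,p_j-a_j)$, and at each $\lambda_k$ with $k\neq j$ it is free.

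To pass from local specialness on $T^\ox$ to global specialness on $S^\ox$, I would verify Wunram's intersection criterion on the minimal resolution $Y^\ox$: for $u\in I(p_j,p_j-a_j)$ with $0<u<p_j$, the line bundle $(\pi^*S(u\ox_j)^{\ox})^{\vee\vee}$ on $Y^\ox$ pairs to $1$ with a unique arm curve on the $j$-th arm, whose position matches the position of $u$ in the $i$-series, and to $0$ with every other exceptional curve (including the central curve $C$ and all curves on other arms). The endpoints $u=0$ and $u=p_j$ yield the free module $S^\ox$ and $S(\oc)^\ox$ respectively; the latter pulls back from $p^*\cO_{\mathbb{P}^1}(1)$ on $T^\ox$ and so pairs to $1$ with $C$ and to $0$ with every arm curve, identifying it with the special associated to $C$. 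A count of the distinct modules gives $2+\sum_{j\colon a_j\neq 0}m_j$, matching the total number of indecomposable specials via the dual graph \eqref{key dual graph} and Wunram's bijection with exceptional curves, so the claimed list is exhaustive.

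The main obstacle will be the intersection-theoretic verification on each arm of the minimal resolution: tracking how the $G_j$-character lattice maps into the Picard group of the minimal resolution of $\frac{1}{p_j}(1,-a_j)$, and pinning down which character indexes which arm curve. This amounts to a concrete reformulation of Lemma \ref{i-series region} in intersection-theoretic language on Hirzebruch--Jung resolutions, and is where the combinatorics of the $i$-series feed back into the geometry of $Y^\ox$.
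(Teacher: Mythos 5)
Your proposal follows essentially the same route as the paper: complete at the unique singular point via \ref{Check locally gen}, identify the singularities of $T^{\ox}$ as the cyclic quotients $\frac{1}{p_j}(1,-a_j)$ via \ref{sings on T}, apply Wunram's classification for cyclic quotients (\ref{Wunram specials thm}) locally, and then transport the answer to $Y^{\ox}$ and push down to $\Spec S^{\ox}$. The one step you correctly flag as the ``main obstacle'' --- passing from local specialness on $T^{\ox}$ to global specialness of the line bundles on $Y^{\ox}$, including the $H^1$-vanishing part of Wunram's criterion and the identification of $S(\oc)^{\ox}$ with the central curve --- is exactly the point the paper closes not by a direct intersection-theoretic computation on each arm, but by invoking the reducedness of the fundamental cycle (\ref{fund is reduced}, itself deduced from the tilting bundle $p^*(\cO_{\bP^1}\oplus\cO_{\bP^1}(1))$ on $T^{\ox}$); you would need that ingredient, or an equivalent substitute, to complete your argument.
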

\begin{proof}
The ring $S^{\ox}=S^{\bN\ox}$ has a unique singular point corresponding to the graded maximal ideal by \ref{Two dim and normal prelim}\eqref{L-factorial 7}. Thus, by \ref{Check locally gen} we may complete $S^{\ox}$ at this point and pass to the formal fibre, which is still the minimal resolution.  However, to aid readability, we do not add $\widehat{(-)}$ to the notation.

Consider the bundle $q^*\cE$ on $\bT$, and its pushdown $g_*q^*\cE$ on $T^{\ox}$.  At the point $\lambda_1$ of $T^{\ox}$, which is the singularity $\frac{1}{p_1}(1,-a_1)$ by \ref{sings on T}, the sheaves
\begin{equation}\label{CM at 1}
g_*q^*\cO,g_*q^*\cO(\ox_1),\hdots,g_*q^*\cO((p_1-1)\ox_1)
\end{equation}
are all locally free away from the point $\lambda_1$, since at any other singular point $\lambda_i$, multiplication by $x_1$ is invertible.  Further, at the point $\lambda_1$, \eqref{CM at 1} is a full list of the CM modules, indexed by the characters of  $\bZ_{p_1}=\frac{1}{p_1}(1,-a_1)$ in the obvious way.  Hence by \ref{Wunram specials thm}, which does not require any coprime assumption, the torsion-free pullbacks under $\upvarphi$ of
\[
\{ g_*q^*\cO(u\ox_1)\mid u\in I(p_1,p_1-a_1)\backslash \{0,p_1\}\}
\] 
are precisely the line bundles on $Y^{\ox}$ corresponding to the curves in arm $1$ of the dual graph. By \ref{Wunram main} and \ref{fund is reduced} they are the special bundles on $Y^{\ox}$ corresponding to the curves in arm $1$ of the dual graph, hence their pushdown (via $\uppi$) to $S^{\ox}$ are the special CM $S^{\ox}$-modules corresponding to arm $1$.  Since the pushdown under $\upvarphi$ of the torsion-free pullback of $\upvarphi$ is the identity, the pushdown to $S^{\ox}$ gives the modules
\[
\left\{ \upgamma_*g_*q^*\cO(u\ox_1)\mid u\in I(p_1,p_1-a_1)\backslash \{0,p_1\}\right\}.
\] 
Then, by \ref{pushforward L}\eqref{pushforward L 0}, $\upgamma_*g_*q^*\cO(u\ox_1)=\bigoplus_{i\geq 0}S_{i\ox+ux_1}$.
But since $\ox\in\bL_+$, $\ox\notin[0,\oc\,]$ and $u\leq p_1$ we see that $ux_1-\ox\leq \oc-\ox\ngeq 0$, and hence $\upgamma_*g_*q^*\cO(u\ox_1)=\bigoplus_{i\in\bZ}S_{i\ox+ux_1}\colonequals S(u\ox_1)^{\ox}$. 

The argument for the other arms is identical. The argument that the middle curve gives the special CM module $S(\oc)^{\ox}$ follows again by \ref{fund is reduced}.
\end{proof}

\begin{remark}\label{positions forced}{\rm
It is possible to assign each special CM $S^{\ox}$-module to its vertex in the dual graph of the minimal resolution across the bijection in \ref{Wunram main}, see below \ref{Ex3.10} for a typical example.  As in \ref{Ex3.10}, there are obvious irreducible morphisms between the special CM $S^{\ox}$-modules, so they must appear in the quiver of the reconstruction algebra.  By the intersection theory in \ref{GL2 for R}, we conclude that $S(\oc)^{\ox}$ corresponds to the middle vertex, and this forces the positions of the other special CM modules relative to the dual graph.}
\end{remark}

\begin{example}\label{Ex3.10}{\rm
Consider the example $(p_1,p_2,p_3)=(3,5,5)$ and $\ox=2\ox_1+2\ox_2+3\ox_3$.  The continued fractions for $\frac{p_i}{p_i-a_i}$, and the corresponding $i$-series are given by:
\[
\begin{array}{ll}
\frac{3}{3-2}=[3] & 3>1>0\\
\frac{5}{5-2}=[2,3] & 5>3>1>0\\
\frac{5}{5-3}=[3,2] & 5>2>1>0
\end{array}
\]
It follows from \ref{specials determined thm} that an additive generator of $\SCM S^{\ox}$ is given by the direct sum of the following circled modules:
\[
\begin{array}{c}
\begin{tikzpicture}[xscale=1.3,yscale=1.3]
\node (0) at (0,0) {$\scriptstyle S(\oc)^{\ox}$};
\node (A1) at (-1.5,1) {$\scriptstyle S(2\ox_1)^{\ox}$};
\node (A4) at (-1.5,4) {$\scriptstyle S(\ox_1)^{\ox}$};
\node (B1) at (0,1) {$\scriptstyle S(4\ox_2)^{\ox}$};
\node (B2) at (0,2) {$\scriptstyle S(3\ox_2)^{\ox}$};
\node (B3) at (0,3) {$\scriptstyle S(2\ox_2)^{\ox}$};
\node (B4) at (0,4) {$\scriptstyle S(\ox_2)^{\ox}$};
\node (C1) at (1.5,1) {$\scriptstyle S(4\ox_3)^{\ox}$};
\node (C2) at (1.5,2) {$\scriptstyle S(3\ox_3)^{\ox}$};
\node (C3) at (1.5,3) {$\scriptstyle S(2\ox_3)^{\ox}$};
\node (C4) at (1.5,4) {$\scriptstyle S(\ox_3)^{\ox}$};
\node (T) at (0,5) {$\scriptstyle S^{\ox}$};
\draw [->] (A1) -- node[fill=white,inner sep=1pt]{$\scriptstyle x_1$}(0);
\draw [->] (B1) -- node[fill=white,inner sep=1pt]{$\scriptstyle x_2$}(0);
\draw [->] (C1) -- node[fill=white,inner sep=1pt]{$\scriptstyle x_3$}(0);
\draw [->] (A4) -- node[right] {$\scriptstyle x_1$} (A1);
\draw [->] (B2) -- node[right]  {$\scriptstyle x_2$}(B1);
\draw [->] (C2) -- node[right]  {$\scriptstyle x_3$}(C1);
\draw [->] (B3) -- node[right]  {$\scriptstyle x_2$}(B2);
\draw [->] (C3) -- node[right]  {$\scriptstyle x_3$}(C2);
\draw [->] (B4) -- node[right]  {$\scriptstyle x_2$}(B3);
\draw [->] (C4) -- node[right]  {$\scriptstyle x_3$}(C3);
\draw [->] (T) -- node[fill=white,inner sep=1pt]{$\scriptstyle x_1$}(A4);
\draw [->] (T) -- node[fill=white,inner sep=1pt]{$\scriptstyle x_2$}(B4);
\draw [->] (T) -- node[fill=white,inner sep=1pt]{$\scriptstyle x_3$}(C4);
\draw[blue] (T) circle (11pt);
\draw[blue] (A4) circle (11pt);
\draw[blue] (B4) circle (11pt);
\draw[blue] (B2) circle (11pt);
\draw[blue] (C4) circle (11pt);
\draw[blue] (C3) circle (11pt);
\draw[blue] (0) circle (11pt);
\end{tikzpicture}
\end{array}
\]}
\end{example}

Consider the tilting bundle $\cM$ on $Y^{\ox}$, generated by global sections, constructed in \cite[3.5.4]{VdB1d}.
\begin{cor}\label{stackminres}
If $0\neq\ox\in\bL_+$, then the following statements hold.
\begin{enumerate}
\item\label{stackminres 0} $\uppi_*\cM$ is a summand of $\bigoplus_{\oy\in[0,\oc\,]}S(\oy)^{\ox}=\upgamma_*g_*(q^*\cE)$.
\item\label{stackminres 1} There is an idempotent $e\in\End_{\bT^{\ox}}(q^*\cE)$ such that $e\End_{\bT^{\ox}}(q^*\cE)e\cong\End_{Y^{\ox}}(\cM)$.
\item\label{stackminres 2} There is a fully faithful embedding 
\[
\Db(\coh Y^{\ox})\hookrightarrow \Db(\coh\bT^{\ox}).
\]
\end{enumerate}
\end{cor}
\begin{proof}
(1) By \ref{fund is reduced} the fundamental cycle is reduced. It follows that $\uppi_*\cM$ is a summand of $\bigoplus_{\oy\in[0,\oc\,]}S(\oy)^{\ox}$, by the argument in the proof of \ref{specials determined thm}.\\
(2)  Even although $\uppi\colon Y^{\ox}\to\Spec S^{\ox}$ need not be the minimal resolution, it is still true by \cite[4.3]{DW2} that
\[
\End_{Y^{\ox}}(\cM)\cong\End_{S^{\ox}}(\uppi_*\cM)
\]
On the other hand,
\[
\End_{\bT^{\ox}}(q^*\cE)
\cong \End_{S^{\ox}}(\upgamma_*g_*q^*\cE)
\cong\End_{S^{\ox}}(\bigoplus_{\oy\in[0,\oc\,]}S(\oy)^{\ox}).
\]
Thus by \eqref{stackminres 0} there is an idempotent $e\in\End_{\bT^{\ox}}(q^*\cE)$ such that $e\End_{\bT^{\ox}}(q^*\cE)e\cong\End_{Y^{\ox}}(\cM)$.\\
(3)  By \eqref{stackminres 1}, writing $A\colonequals \End_{\bT^{\ox}}(q^*\cE)$ then $\End_{Y^{\ox}}(\cM)=eAe$, thus there is an obvious embedding of derived categories
\[
\RHom_{eAe}(Ae,-)\colon\D(\Mod\End_{Y^{\ox}}(\cM))\hookrightarrow\D(\Mod\End_{\bT^{\ox}}(q^*\cE)),
\]
and also an embedding given by $-\otimes^{\bf L}_{eAe}eA$.
Regardless, since $\gl eAe<\infty$, the above induces an embedding
\[
\Db(\mod\End_{Y^{\ox}}(\cM))\hookrightarrow\Db(\mod\End_{\bT^{\ox}}(q^*\cE)).
\]
The left hand side is equivalent to $\Db(\coh Y^{\ox})$, so it suffices to show that the right hand side is equivalent to $\Db(\coh \bT^{\ox})$.   By \ref{tilting on stack T}, there is an unbounded derived equivalence $\D(\Mod\End_{\bT^{\ox}}(q^*\cE))\simeq\D(\Qcoh \bT^{\ox})$.  This automatically restricts to an equivalence on compact objects.  The compact objects of $\D(\Qcoh \bT^{\ox})$ are $\Db(\coh \bT^{\ox})$ by \cite[A.3]{BLS}, and since $\End_{\bT^{\ox}}(q^*\cE)$ has finite global dimension, the compact objects of $\D(\Mod\End_{\bT^{\ox}}(q^*\cE))$ are $\Db(\mod\End_{\bT^{\ox}}(q^*\cE))$, as required.
\end{proof}

We give a simple criterion for when the above is an equivalence later in \ref{stackminres 2A}.  Note that the above result is formally very similar to the case of quotient singularities, where the reconstruction algebra embeds into the quotient stack $[\KK^2/G]$, but this embedding is also very rarely an equivalence.

\section{Categorical Equivalences}\label{qgr Veronese section}

In this section we investigate the conditions on $\ox$ under which 
\[
\coh\bX\simeq\qgr^{\bZ}\!S^{\ox}\simeq\qgr^{\bZ}\!\Upgamma_{\ox}
\]
holds.  This allows us, in \ref{stackminres 2A}, to give a precise criterion for when the embedding in \ref{stackminres}\eqref{stackminres 2} is an equivalence, and further it allows us in \ref{middle SI number} to determine the middle self-intersection number in \eqref{key dual graph}. Throughout many results in this section, a coprime condition $(p_i,a_i)=1$ naturally appears, and in \S\ref{changing parameters} we show that we can always change parameters so that this coprime condition holds.

\subsection{General Results on Categorical Equivalences}
To simplify the notation, in this subsection we first produce categorical equivalences in a very general setting, before specialising in the next subsection to the case of the weighted projective line.  Throughout this subsection, $k$ denotes an arbitrary field.

We start with a basic observation. Let $G$ be an abelian group and $A$ a noetherian $G$-graded $k$-algebra.  As in \S\ref{Section 1.3} we consider the categories $\mod^G\! A$, $\mod^G_0\! A$ and $\qgr^G\! A$.  For an idempotent $e\in A_0$, $B\colonequals eAe$ is a noetherian $G$-graded $k$-algebra. The functor 
\[
E\colonequals e(-)\colon\mod^G\!A\to\mod^G\!B
\]
has a left adjoint functor $E_\lambda$ and a right adjoint functor $E_\rho$ given by
\begin{align*}
E_\lambda\colonequals Ae\otimes_B-&\colon\mod^G\!B\to\mod^G\!A\\
E_\rho\colonequals \Hom_B(eA,-)&\colon\mod^G\!B\to\mod^G\!A.
\end{align*}
Moreover $EE_\lambda={\rm id}_{\mod^G\!B}=EE_\rho$ holds, and for the natural morphism $m\colon Ae\otimes_BeA\to A$, the counit $\upeta\colon E_\lambda E\to {\rm id}_{\mod^G\!A}$ is given by $m\otimes_{A}-$ and the unit $\upvarepsilon\colon {\rm id}_{\mod^G\!A}\to E_\rho E$ is given by $\Hom_A(m,-)$.

The following basic observation is a prototype of our results in this subsection.

\begin{prop}\label{comm to noncomm}
If $\dim_{k}A/(e)<\infty$, then $E$ induces an equivalence $\qgr^G\!A\simeq \qgr^G\!B$.
\end{prop}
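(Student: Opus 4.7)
The plan is to descend the adjoint triple $(E_\lambda, E, E_\rho)$ to the Serre quotients, using the hypothesis on $A/(e)$ as the only genuine input. First I would observe that $E=e(-)$ is exact and obviously sends $\fl^G\!A$ into $\fl^G\!B$, hence induces $\bar E\colon \qgr^G\!A \to \qgr^G\!B$. Moreover $EE_\lambda = \mathrm{id}$ is automatic since $e(Ae\otimes_B N) = eAe\otimes_B N = B\otimes_B N = N$, which already gives $\bar E\bar E_\lambda = \mathrm{id}$ on quotients and hence essential surjectivity of $\bar E$.

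The main step is to show that the counit $\eta\colon E_\lambda E \to \mathrm{id}$ becomes a natural isomorphism in $\qgr^G\!A$, which will yield $\bar E_\lambda \bar E \cong \mathrm{id}$ and complete the equivalence. Applying $E$ to $\eta_M$ recovers the identity of $eM$, so by exactness both $K := \Ker \eta_M$ and $C := \Cok\eta_M$ satisfy $eK = 0 = eC$. The key observation is then that any finitely generated $N \in \gr^G\!A$ with $eN=0$ lies in $\fl^G\!A$: from $eN=0$ one deduces $eAN = 0$ (since $(ea)n = e(an) \in eN$), hence $AeA\cdot N = 0$, so $N$ is a finitely generated module over $A/(e) = A/AeA$, which has finite $\KK$-dimension by hypothesis. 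Applied to $C = M/AeM$, this is immediate as $C$ is a quotient of the finitely generated $M$. For $K$ one needs $E_\lambda E(M) = Ae\otimes_B eM$ to be finitely generated over $A$, which reduces to $eM$ being finitely generated over $B$ in the noetherian graded setting.

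The main obstacle is precisely this last technical point, that $eM$ is finitely generated over $B$ for $M \in \gr^G\!A$. If it cannot be verified directly, I would instead work with the right adjoint $E_\rho = \Hom_B(eA, -)$: the kernel $(0 :_M AeA)$ of the unit $\varepsilon_M$ is automatically a finitely generated submodule of $M$ annihilated by $AeA$, hence in $\fl^G\!A$ by the same dimension argument, and the cokernel of $\varepsilon$ admits a parallel treatment via the annihilator calculation above. In either route, the hypothesis $\dim_\KK A/(e)<\infty$ is used in exactly one place: to convert ``$E$-annihilation'' of a finitely generated module into membership in $\fl^G\!A$.
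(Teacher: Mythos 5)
Your argument is essentially the paper's own proof: the counit $E_\lambda E\to \mathrm{id}$ has kernel and cokernel annihilated by the two-sided ideal $(e)$, hence they are finitely generated $A/(e)$-modules and therefore finite dimensional, while $EE_\lambda=\mathrm{id}$ handles the other direction. The finite-generation point you flag (that $eM$ is finitely generated over $B$, so that $E_\lambda EM$ is finitely generated and its noetherian submodule $K$ too) is taken for granted in the paper as part of the assertion that $E$ is a functor $\mod^G\!A\to\mod^G\!B$, so your fallback via $E_\rho$ is not needed.
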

\begin{proof}
Clearly $E_\lambda$ and $E$ induce an adjoint pair $E_\lambda\colon \qgr^G\!A\to \qgr^G\!B$ and $E\colon \qgr^G\!B\to\qgr^G\!A$. For any $X\in\mod^G\!A$, both the kernel and cokernel of $m\otimes_{A}X\colon E_\lambda EX\to X$ are finite dimensional since they are finitely generated $(A/(e))$-modules. Therefore $E_\lambda$ and $E$ give the desired equivalences.
\end{proof}

In the rest of this subsection, let $G$ be an abelian group and $H$ a subgroup of $G$ of finite index. Assume that $A$ is a noetherian $G$-graded $k$-algebra, and let $B\colonequals A^H=\bigoplus_{g\in H}A_g$ be the $H$-Veronese subring of $A$. There is a natural functor
\[
(-)^H\colon \mod^G\! A\to\mod^H\! B
\]
given by $X^H\colonequals \bigoplus_{h\in H}X_h$.

\begin{lemma}\label{Veron is fg2}
$B$ is a noetherian $k$-algebra and $A$ is a finitely generated $B$-module.
\end{lemma}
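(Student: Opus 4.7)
The plan is to realize $B$ as the invariant subring of a finite abelian group action on $A$, and then invoke standard invariant theory. Since $H$ has finite index in the abelian group $G$, the quotient $G/H$ is a finite abelian group of some order $n$. Because $\KK$ is algebraically closed of characteristic zero, the Pontryagin dual $\Gamma := \Hom(G/H, \KK^\times)$ also has order $n$, and each character $\chi \in \Gamma$ induces a $\KK$-algebra automorphism $\sigma_\chi$ of $A$ by the rule $\sigma_\chi(a) := \chi(\bar{g})\,a$ on homogeneous $a \in A_g$; multiplicativity is immediate from the $G$-grading. A direct verification then gives $A^\Gamma = B$, since an element $a \in A_g$ is fixed by all $\chi \in \Gamma$ if and only if $\bar g = 0$ in $G/H$, i.e., $g \in H$.

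The Reynolds operator $\rho := \tfrac{1}{n}\sum_{\chi \in \Gamma}\sigma_\chi \colon A \to B$ is a $(B,B)$-bimodule retraction of $B \hookrightarrow A$. From this, noetherianity of $B$ follows by an ideal-contraction argument: any ascending chain of (two-sided) ideals of $B$ extends to an ascending chain of ideals in $A$ which stabilizes by noetherianity of $A$, and applying $\rho$ (which fixes $B$ pointwise and is $(B,B)$-bilinear) forces the original chain to stabilize. Alternatively, one can appeal to the skew group ring $A \# \Gamma$ being Morita equivalent to $A^\Gamma = B$ when $|\Gamma|$ is invertible in $\KK$, together with the fact that $A \# \Gamma$ is a free $A$-module of finite rank and hence is noetherian if and only if $A$ is.

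For finite generation of $A$ as a $B$-module, every homogeneous $a \in A_g$ is a root of the monic polynomial $\prod_{\chi \in \Gamma}(X - \sigma_\chi(a)) \in B[X]$, whose coefficients are elementary symmetric functions of the $\sigma_\chi(a)$ and therefore $\Gamma$-invariant. Thus every homogeneous element of $A$ is integral of uniform degree at most $n$ over $B$. In every setting in which this lemma will be applied, $A$ is a concretely finitely generated $\KK$-algebra; picking homogeneous generators $a_1, \ldots, a_m$, the classical Noether reduction then shows that $A$ is spanned as a $B$-module by the finite set of monomials $a_1^{i_1} \cdots a_m^{i_m}$ with $0 \le i_j < n$.

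The main subtlety lies in the finite-generation step: noetherianity of $A$ together with uniform integrality over $B$ is not, in general, sufficient to force $A$ to be finitely generated as a $B$-module, so one genuinely uses that $A$ is finitely generated as a $\KK$-algebra, which in the paper's applications is immediate from the explicit definition of the ring involved.
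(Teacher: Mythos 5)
Your proof of the noetherianity of $B$ is correct: the character group $\Gamma=\Hom(G/H,\KK^\times)$ does act as described, $A^\Gamma=B$, and the Reynolds operator (which is just the projection of $A=\bigoplus_{g}A_g$ onto the $H$-graded part, so the character-theoretic packaging is a detour rather than an error) gives the contraction $AI\cap B=I$ that forces ascending chains in $B$ to stabilise. The problem is the second half. The lemma asserts finite generation of $A$ over $B$ for an arbitrary noetherian $G$-graded $k$-algebra $A$ --- in particular $A$ is not assumed commutative, nor finitely generated as a $\KK$-algebra --- and your argument, as you yourself concede in the final paragraph, only establishes this under both of those extra hypotheses (commutativity is needed for the Noether reduction to monomials of bounded exponent, and finite generation over $\KK$ is needed to have the $a_1,\dots,a_m$ at all). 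So the proposal proves a weaker statement than the one asserted; the surrounding subsection of the paper is deliberately set up for general noetherian $G$-graded $k$-algebras, so this is a genuine restriction and not just pedantry, even though the one place the lemma is later invoked ($A=S$) happens to satisfy your extra hypotheses.

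The missing idea, which is how the paper proceeds, is to bypass integrality entirely. Decompose $A=\bigoplus_{\bar g\in G/H}A(g)^H$ as a finite direct sum of $B$-modules, and observe that for any $B$-submodule $M\subseteq A(g)^H$ the ideal $AM$ of $A$ satisfies $AM\cap A(g)^H=M$ (project onto the $H$-coset components: the only terms of $AM$ landing in $A(g)^H$ are those of the form $M\cdot B$). Hence the lattice of $B$-submodules of $A(g)^H$ embeds into the lattice of ideals of $A$, so each $A(g)^H$ is a noetherian $B$-module. Taking $\bar g=0$ gives that $B$ is noetherian, and the finite direct sum gives that $A$ is a noetherian, hence finitely generated, $B$-module --- with no commutativity, no finite generation over $\KK$, and no hypotheses on roots of unity or characteristic. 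Your worry that ``noetherianity of $A$ together with uniform integrality is not sufficient'' is well taken, but the correct conclusion is that integrality is the wrong tool here, not that an extra hypothesis is needed.
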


\begin{proof}
There is a finite direct sum decomposition $A=\bigoplus_{g\in G/H}A(g)^H$ as $B$-modules. For any submodule $M$ of $A(g)^H$, it is easy to check that the ideal  $AM$ of $A$ satisfies $AM\cap A(g)^H=M$. Therefore $A(g)^H$ is a noetherian $B$-module, since $A$ is a noetherian ring. The assertion follows.
\end{proof}

We say that $X\in\mod^G\!A$ has \emph{depth at least two} if $\Ext^i_A(Y,X)=0$ for any $i=0,1$ and $Y\in\mod^G\!A$ with $\dim_kY<\infty$. We write $\mod_2^G\!A$ for the full subcategory of $\mod^G\!A$ consisting of modules with depth at least two. We define $\mod^H_{2}B$ similarly.

\begin{thm}\label{WPL as qgrZ2}
Let $G$ be an abelian group, $H$ a subgroup of $G$ of finite index, $A$ a noetherian $G$-graded $k$-algebra, and $B\colonequals A^H$. Then the following conditions are equivalent. 
\begin{enumerate}
\item\label{WPL as qgrZ 2-1} The natural functor $(-)^{H}\colon\qgr^G\!A\to\qgr^H\!B$ is an equivalence.
\item\label{WPL as qgrZ 3-1} For any $i\in G$, the ideal $I^{i}\colonequals A(i)^{H}\cdot A(-i)^{H}$ of $B$ satisfies $\dim_k(B/I^{i})<\infty$.
\end{enumerate}
If $A$ belongs to $\mod_2^G\!A$, then the following condition is also equivalent.
\begin{enumerate}
\item[\rm (3)]\label{WPL as qgrZ 1-1} The natural functor $(-)^{H}\colon\mod_2^G\!A\to\mod_2^H\!B$ is an equivalence.
\end{enumerate}
\end{thm}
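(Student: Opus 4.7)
The proof rests on the adjoint pair $F = - \otimes_B A \dashv E = (-)^H$ introduced just before Proposition 4.1. The key observation is that the $B$-module decomposition $A = \bigoplus_{g \in G/H} A(g)^H$ from the preceding lemma makes the unit of this adjunction trivial: for $N \in \mod^H B$, only the $g = 0$ summand contributes to the $H$-graded part, so $(N \otimes_B A)^H = N \otimes_B B = N$. Hence $EF = \mathrm{id}$ already in $\mod^H B$, and (1) reduces to asking whether the counit $c_M \colon M^H \otimes_B A \to M$ is an isomorphism in $\qgr^G A$ for every $M$. By right-exactness of $F$ and the noetherian hypothesis, it suffices to verify this on the generating objects $M = A(i)$ for $i \in G$.

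For $M = A(i)$, the cokernel of $c_{A(i)}$ decomposes over $g \in G/H$ into the pieces $A(i+g)^H/(A(i)^H \cdot A(g)^H)$. This immediately yields $(1) \Rightarrow (2)$, since the $g = -i$ piece is exactly $B/I^i$. Conversely, assuming (2), the quotient $A(i+g)^H/(A(i+g)^H \cdot I^{-g})$ is finitely generated over the finite dimensional ring $B/I^{-g}$, hence finite dimensional; combined with $A(i+g)^H \cdot I^{-g} = A(i+g)^H \cdot A(-g)^H \cdot A(g)^H \subseteq A(i)^H \cdot A(g)^H$, this shows that the cokernel of $c_{A(i)}$ is finite dimensional.

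The heart of $(2) \Rightarrow (1)$ is controlling $K := \ker c_{A(i)}$. Applying the exact functor $E$ to the defining short exact sequence and using that the $H$-graded restriction of $c_{A(i)}$ is the identity on $A(i)^H$ (via the unit isomorphism), one obtains $K^H = 0$. For any homogeneous $x \in K$ of $G$-degree $g = g_0 + h$ with $g_0 \in G/H$ nontrivial, this forces $A(-g_0)^H \cdot x \subseteq K^H = 0$. Exploiting the commutativity of $A$, the two-sided ideal $\tilde{J} \subseteq A$ generated by $A(-g_0)^H$ then annihilates the cyclic submodule $A \cdot x$, and since $\tilde{J} \supseteq I^{-g_0} \cdot A$ with $A/(I^{-g_0} A) = \bigoplus_{g' \in G/H} A(g')^H/(I^{-g_0} \cdot A(g')^H)$ a finite direct sum of modules finitely generated over the finite dimensional ring $B/I^{-g_0}$, the module $Ax$ is finite dimensional. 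Noetherianity then presents $K$ as a finite sum of such cyclic pieces, giving $K$ finite dimensional.

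Finally, the equivalence $(2) \Leftrightarrow (3)$ under the hypothesis $A \in \mod_2^G A$ will follow from $(1) \Leftrightarrow (2)$ via the standard identification, under the depth-at-least-two assumption, of $\mod_2$ with $\qgr$: first verify that $B \in \mod_2^H B$ using the decomposition of $A$ together with $A \in \mod_2^G A$, then transfer $(-)^H$ across the canonical equivalences $\mod_2^G A \simeq \qgr^G A$ and $\mod_2^H B \simeq \qgr^H B$. The main obstacle of the whole argument is the kernel analysis in $(2) \Rightarrow (1)$: although $K^H = 0$ is automatic from $EF = \mathrm{id}$, upgrading this to genuine finite dimensionality of $K$ requires combining commutativity of $A$ with the full ideal-theoretic strength of condition (2).
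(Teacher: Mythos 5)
Your treatment of the equivalence $(1)\Leftrightarrow(2)$ is a legitimate alternative to the paper's: where the paper passes to the matrix algebra $C=(A(i-j)^H)_{i,j\in G/H}$ and the idempotent $e$ with $eCe=B$, you work directly with the adjunction $-\otimes_BA\dashv(-)^H$, and your computation of the cokernel components $A(i+g)^H/(A(i)^H\cdot A(g)^H)$ and the observation $K^H=0$ are both correct. Two caveats there. First, the theorem is stated for an arbitrary noetherian $G$-graded $k$-algebra, but your kernel analysis invokes commutativity of $A$; this is avoidable (from $K^H=0$ one gets $K_{g+H}\cdot A(-g)^H=0$, hence $K_{g+H}\cdot I^{-g}=0$, and each $K_{g+H}$ is a finitely generated $B$-module, so is finitely generated over the finite-dimensional ring $B/I^{-g}$ --- no two-sided ideal of $A$ is needed), and the paper's idempotent formulation sidesteps it entirely, since $e\cdot\ker m=0=(\ker m)\cdot e$ forces $(e)\cdot\ker m=0$. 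Second, you should note that $-\otimes_BA$ descends to $\qgr$ because $A$ is a finitely generated $B$-module (Lemma \ref{Veron is fg2}), so it sends finite-dimensional modules to finite-dimensional modules.

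The genuine gap is in $(2)\Leftrightarrow(3)$. There is no ``standard identification of $\mod_2$ with $\qgr$'' under the hypothesis $A\in\mod_2^G\!A$: the natural functor $\mod_2^G\!A\to\qgr^G\!A$ is fully faithful but far from dense. In the very situation this theorem is built for, $\mod_2^{\bL}\!S=\CM^{\bL}\!S\simeq\vect\bX$ whereas $\qgr^{\bL}\!S\simeq\coh\bX$, and $\vect\bX$ omits all torsion sheaves; these categories are not equivalent. Consequently you cannot ``transfer $(-)^H$ across'' these non-existent equivalences, and both implications $(2)\Rightarrow(3)$ and $(3)\Rightarrow(2)$ need independent arguments. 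The paper supplies them using the \emph{right} adjoint $E_\rho=\Hom_B(eC,-)$ rather than the left adjoint: one shows that $E$ and $E_\rho$ preserve depth at least two (Lemma \ref{depth 2 properties}(1),(2)), that the unit $\Hom_C(m,-)$ is an isomorphism on $\mod_2^H\!C$ when $C/(e)$ is finite-dimensional, and --- for the converse --- that $X$ is finite-dimensional if and only if $\Ext^i_C(X,\mod_2^H\!C)=0$ for $i=0,1$ (Lemma \ref{depth 2 properties}(3)), which is what turns ``the unit is an isomorphism'' back into finite-dimensionality of $C/(e)$. Some version of these steps is unavoidable in your framework as well.
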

\begin{proof}
Consider the matrix algebra
\[
C=(A(i-j)^H)_{i,j\in G/H}
\]
whose rows and columns are indexed by $G/H$, and the product is given by the matrix multiplication together with the product in $A$, namely
\[
(s_{i,j})\cdot(t_{i,j})\colonequals (\sum_{k\in G/H}s_{i,k}\cdot t_{k,j}).
\]
Now we fix a complete set $I$ of representatives of $G/H$ in $G$. Then $C$ has an $H$-grading given by
\[
C_h\colonequals (A_{i-j+h})_{i,j\in I}.
\]
By \cite[Theorem 3.1]{IL} there is an equivalence
\begin{equation}\label{f functor}
F\colon\mod^G\!A\simeq \mod^H\!C
\end{equation}
sending $M=\bigoplus_{i\in G}M_i$ to $F(M)=\bigoplus_{h\in H}F(M)_h$, where $F(M)_h$ is defined by
\[
F(M)_h\colonequals (M_{i+h})_{i\in I}
\]
and the $C$-module structure is given by
\[
(s_{i,j})_{i,j\in I}\cdot(m_i)_{i\in I}\colonequals (\sum_{j\in I}s_{i,j}\cdot m_j)_{i\in I}.
\]
On the other hand, let $e\in C$ be the idempotent corresponding to $0\in G/H$.
Since $eCe=B$ holds, there is an exact functor
\begin{equation}\label{e functor}
E\colonequals e(-)\colon\mod^H\!C\to\mod^H\!B
\end{equation}
such that the following diagram commutes
\[
\begin{tikzpicture}
\node (A) at (0,0) {$\mod^G\!A$};
\node (B) at (4,0) {$\mod^H\!B$};
\node (C) at (2,-1) {$\mod^H\!C$};
\draw[->] (A) -- node[above] {$\scriptstyle  (-)^H$} (B);
\draw[->] (A) -- node[below] {$\scriptstyle F$} node[above] {$\scriptstyle\sim$} (C);
\draw[->] (C) --node[below] {$\scriptstyle E$} (B);
\end{tikzpicture}
\]
The functor \eqref{e functor} has a left adjoint functor $E_\lambda\colonequals Ce\otimes_B-\colon\mod^H\!B\to\mod^H\!C$ and a right adjoint functor $E_\rho\colonequals \Hom_B(eC,-)\colon\mod^H\!B\to\mod^H\!C$.\\
(1)$\Leftrightarrow$(2) The functors $F$ and $E$ induce an equivalence $F\colon\qgr^G\!A\simeq\qgr^H\!C$ and a functor
\begin{equation}\label{e functor2}
E\colon\qgr^H\!C\to\qgr^H\!B
\end{equation}
respectively, which make the following diagram commutative
\[
\begin{tikzpicture}
\node (A) at (0,0) {$\qgr^G\!A$};
\node (B) at (4,0) {$\qgr^H\!B$};
\node (C) at (2,-1) {$\qgr^H\!C$};
\draw[->] (A) -- node[above] {$\scriptstyle  (-)^H$} (B);
\draw[->] (A) -- node[below] {$\scriptstyle F$} node[above] {$\scriptstyle\sim$} (C);
\draw[->] (C) --node[below] {$\scriptstyle E$} (B);
\end{tikzpicture}
\]
Thus the functor $(-)^H\colon\qgr^G\!A\to\qgr^H\!B$ is an equivalence if and only if the functor \eqref{e functor2} is an equivalence.
The functor $E_\lambda\colon\mod^H\!B\to\mod^H\!C$ induces a left adjoint functor $E_\lambda\colon\qgr^H\!B\to\qgr^H\!C$ of \eqref{e functor2}. Clearly $E E_\lambda={\rm id}_{\qgr^H\!B}$ holds, and the counit $\upeta\colon E_\lambda E\to {\rm id}_{\qgr^H\!C}$ is given by $m\otimes_{C}-$, where $m$ is the natural morphism
\begin{equation}\label{m morphism}
m\colon Ce\otimes_BeC\to C.
\end{equation}
Thus the condition (1) holds if and only if $\upeta$ is an isomorphism of functors if and only if $m$ is an isomorphism in $\qgr^H\!C$. On the other hand, the cokernel of $m$ is $C/(e)$, where $(e)$ is the two-sided ideal of $C$ generated by $e$, and the kernel of $m$ is a finitely generated $C/(e)$-module. Therefore (1) holds if and only if the factor algebra $C/(e)$ of $C$ is finite dimensional if and only if (2) holds, by the following observation.

\begin{lemma}\label{fin dim and (3)}
$\dim_{k}C/(e)<\infty$ if and only if the condition (2) holds.
\end{lemma}

\begin{proof}[Proof of Lemma \ref{fin dim and (3)}]
Since
\[
C/(e)=(A(i-j)^H/(A(i)^H\cdot A(-j)^H))_{i,j\in I}
\]
holds, $C/(e)$ is finite dimensional if and only if $A(i-j)^H/(A(i)^H\cdot A(-j)^H)$ is finite dimensional for any $i,j\in I$.
This implies the condition (2) by considering the case $i=j$.

Conversely assume that (2) holds. Since there is a surjective map
\[
A(i-j)^H\otimes_B\frac{B}{A(j)^H\cdot A(-j)^H}=\frac{A(i-j)^H}{A(i-j)^H\cdot A(j)^H\cdot A(-j)^H}\to
\frac{A(i-j)^H}{A(i)^H\cdot A(-j)^H}
\]
whose domain is finite dimensional, the target is also finite dimensional.
Thus the assertion holds.
\end{proof}

\noindent
(2)$\Leftrightarrow$(3) Assume that $A\in\mod_2^G\!A$. Clearly the equivalence \eqref{f functor} induces equivalences
\[
F\colon\mod_0^G\!A\simeq \mod_0^H\!C\ \mbox{ and }\ 
F\colon\mod_2^G\!A\simeq \mod_2^H\!C.
\]
The remainder of the proof requires the following general lemma.

\begin{lemma}\label{depth 2 properties}
With the setup as above,
\begin{enumerate}
\item\label{depth 2 properties 1} The functor \eqref{e functor} induces a functor
\begin{equation}\label{e functor3}
E\colon\mod_2^H\!C\to\mod_2^H\!B.
\end{equation}
\item\label{depth 2 properties 2} The functor $E_\rho\colon\mod^H\!B\to\mod^H\!C$ induces a functor $E_\rho\colon\mod_2^H\!B\to\mod_2^H\!C$.
\item\label{depth 2 properties 3} $X\in\mod^H\!C$ belongs to $\mod_0^H\!C$ if and only if $\Ext^i_{C}(X,\mod_2^H\!C)=0$ for $i=0,1$.
\end{enumerate}
\end{lemma}
\begin{proof}[Proof of Lemma \ref{depth 2 properties}]
(1) Let $X\in\mod_2^H\!C$, $Y\in\mod_0^H\!B$ and ${\bf E}_\lambda Y\colonequals Ce\Ltimes_BY$.
Since $H^i({\bf E}_\lambda Y)$ is zero for any $i>0$ and belongs to $\mod_0^H\!C$ for any $i\le0$, we have $\Hom_{\Db(\mod C)}({\bf E}_\lambda Y,X[i])=0$ for $i=0,1$.
Using $\RHom_B(Y,EX)=\RHom_C({\bf E}_\lambda Y,X)$, we have $\Ext^i_B(Y,EX)=0$ for $i=0,1$.\\
(2) Let $X\in\mod_2^H\!B$, $Y\in\mod_0^H\!C$ and ${\bf E}_\rho X\colonequals \RHom_B(eC,X)$. Since $\RHom_C(Y,{\bf E}_\rho X)=\RHom_B(EY,X)$ and $EY\in\mod_0^H\!B$ hold, we have $\Hom_{\Db(\mod C)}(Y,{\bf E}_\rho X[i])=0$ for $i=0,1$.
There is a triangle
$$E_\rho X\to{\bf E}_\rho X\to Z\to E_\rho X[1]$$
satisfying $H^i(Z)=0$ for all $i\le 0$. Applying $\Hom_{\Db(\mod C)}(Y,-)$ gives $\Ext^i_C(Y,E_\rho X)=0$ for $i=0,1$.\\
(3) It suffices to prove the `if' part. Our assumption $A\in\mod_2^G\!A$ implies $C=\bigoplus_{i\in I}F(A(-i))\in\mod_2^H\!C$, since $Ce_j=(A(i-j)^H)_{i\in I}=F(A(-j))$.
Let $0\to T\to X\to F\to0$ and $0\to\Omega F\to P\to F\to0$ be exact sequences in $\mod^H\!C$ such that $T$ is the largest submodule of $X$ which belongs to $\mod_0^H\!C$ and $P$ is an $H$-graded projective $C$-module. Then $\Omega F$ belongs to $\mod_2^H\!C$ since $C\in\mod_2^H\!C$. Applying $\Hom_C(-,\Omega F)$ to the first sequence gives an exact sequence
\[
0=\Hom_C(T,\Omega F)\to\Ext^1_C(F,\Omega F)\to\Ext^1_C(X,\Omega F)=0.
\]
Thus $\Ext^1_C(F,\Omega F)=0$ holds, and $F$ is projective in $\mod^H\!C$. Hence $X=T\oplus F$, and so $\Hom_C(X,C)=0$ implies that $F=0$. Therefore $X=T$ belongs to $\mod_0^H\!C$.
\end{proof}

It follows from \ref{depth 2 properties} that there is a commutative diagram
\[
\begin{tikzpicture}
\node (A) at (0,0) {$\mod^G_{2}A$};
\node (B) at (4,0) {$\mod^H_{2}B$};
\node (C) at (2,-1) {$\mod^H_{2}C$};
\draw[->] (A) -- node[above] {$\scriptstyle  (-)^{H}$} (B);
\draw[->] (A) -- node[below] {$\scriptstyle F$} node[above] {$\scriptstyle\sim$} (C);
\draw[->] (C) --node[below] {$\scriptstyle E$} (B);
\end{tikzpicture}
\]
Thus the functor $(-)^H\colon\mod_2^G\!A\to\mod_2^H\!B$ is an equivalence if and only if the functor \eqref{e functor3} is an equivalence. By \ref{depth 2 properties}\eqref{depth 2 properties 2}, there is a right adjoint functor $E_\rho\colon\mod_2^H\!B\to\mod_2^H\!C$ of \eqref{e functor3}. Clearly $E E_\rho={\rm id}_{\mod_2^H\!B}$ holds, and the unit $\upvarepsilon\colon {\rm id}_{\mod_2^H\!C}\to E_\rho E$ is given by $\Hom_{C}(m,-)$, where $m$ is the morphism \eqref{m morphism}.
Thus the condition (3) holds if and only if $\upvarepsilon=\Hom_{C}(m,-)$ is an isomorphism of functors.

Now fix $X\in\mod_2^H\!C$ and apply $\Hom_{C}(-,X)$ to exact sequences $0\to (e)\to C\to C/(e)\to0$ and $0\to\Ker m\to Ce\otimes_BeC\to (e)\to0$.  This gives exact sequences
\begin{eqnarray*}
&0\to\Hom_{C}(C/(e),X)\to X\to\Hom_{C}((e),X)\to\Ext^1_{C}(C/(e),X)\to0&\\
&0\to\Hom_{C}((e),X)\to\Hom_{C}(Ce\otimes_BeC,X)\to\Hom_{C}(\Ker m,X).&
\end{eqnarray*}
Therefore, if $C/(e)$ is finite dimensional, then so is $\Ker m$ and hence $\upvarepsilon$ is an isomorphism. Conversely, if $\upvarepsilon$ is an isomorphism, then $\Ext^i_{C}(C/(e),X)=0$ for $i=0,1$ for any $X\in\mod_2^H\!C$ and hence $C/(e)$ is finite dimensional by \ref{depth 2 properties}\eqref{depth 2 properties 3}.  Consequently (3) is equivalent to (2), again by \ref{fin dim and (3)}.
\end{proof}

Later we need the following observation.

\begin{lemma}\label{slightly stronger}
In the setting of \ref{WPL as qgrZ2}, assume that the condition \eqref{WPL as qgrZ 3-1} is satisfied. Then for any $X\in\mod^G\!A$ and $Y\in\mod_2^G\!A$, there is an isomorphism
\[
\Hom_B(X^H,Y^H)\cong\Hom_A(X,Y)^H
\]
of $H$-graded $k$-modules.
\end{lemma}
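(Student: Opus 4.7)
My plan is to reduce this statement to the framework of $C$-modules established in the proof of Theorem \ref{WPL as qgrZ2}. The key tools will be the equivalence $F\colon\mod^G\!A\xrightarrow{\sim}\mod^H\!C$ of $H$-graded categories, together with the adjunction $E\dashv E_\rho$ where $E=e(-)\colon\mod^H\!C\to\mod^H\!B$ and $E_\rho=\Hom_B(eC,-)$, and the natural map $m\colon Ce\otimes_BeC\to C$.

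The first step is to translate both sides of the claimed isomorphism through $F$. Since $F$ is an equivalence of $H$-graded categories, it induces an $H$-graded isomorphism $\Hom_A(X,Y)^H\cong\Hom_C(FX,FY)$. On the other side, since $(-)^H=EF$, the adjunction $E\dashv E_\rho$ yields an $H$-graded isomorphism $\Hom_B(X^H,Y^H)\cong\Hom_C(FX,E_\rho EFY)$. The whole problem thus reduces to showing that the unit $\varepsilon_{FY}\colon FY\to E_\rho EFY$ is an isomorphism in $\mod^H\!C$.

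For the second step, I will exploit the hypothesis that condition \eqref{WPL as qgrZ 3-1} holds. By \ref{fin dim and (3)}, this condition is equivalent to $\dim_k C/(e)<\infty$, which also forces $\Ker m$ to be finite-dimensional (as observed at the end of the proof of \ref{WPL as qgrZ2}). Since $F$ preserves $\Ext$-groups and finite-dimensional modules, it restricts to an equivalence $\mod^G_2\!A\simeq\mod^H_2\!C$, and so $FY\in\mod^H_2\!C$. Now applying $\Hom_C(-,FY)$ to the short exact sequences
\[
0\to\Ker m\to Ce\otimes_BeC\to(e)\to0\quad\textnormal{and}\quad0\to(e)\to C\to C/(e)\to0,
\]
the depth-two property of $FY$ kills the contributions of the finite-dimensional modules $\Ker m$ and $C/(e)$ in both $\Hom_C(-,FY)$ and $\Ext^1_C(-,FY)$. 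Hence $\Hom_C(m,FY)\colon FY=\Hom_C(C,FY)\to\Hom_C(Ce\otimes_BeC,FY)=E_\rho EFY$ becomes an isomorphism, and this map is precisely $\varepsilon_{FY}$.

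The main obstacle I anticipate is purely bookkeeping: verifying that the equivalence $F$ genuinely preserves the $H$-grading on Hom groups (so that $\Hom_A(X,Y)^H$ maps $h$-component-wise onto $\Hom_C(FX,FY)$) and that all natural maps involved in the adjunction are $H$-graded. Once this is settled, the computation is essentially the same one already used to establish (2)$\Leftrightarrow$(3) in Theorem \ref{WPL as qgrZ2}, so no further work is needed beyond invoking depth-two vanishing against the finite-dimensional cokernel and kernel of $m$.
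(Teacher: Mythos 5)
Your proof is correct and follows essentially the same route as the paper: both reduce to $C$-modules via $F$ and exploit the finite-dimensionality of $C/(e)$ and $\Ker m$ against the depth-two property of $FY$. The only (cosmetic) difference is that you transpose along the adjunction $E\dashv E_\rho$ and show the unit $\varepsilon_{FY}$ is an isomorphism, whereas the paper transposes along $E_\lambda\dashv E$ and uses that the kernel and cokernel of the counit $\eta_{FX}$ are finite dimensional.
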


\begin{proof}
Clearly $\Hom_B(X^H,Y^H)=\Hom_B(EFX,EFY)=\Hom_C(E_\lambda EFX,FY)$. This is isomorphic to $\Hom_C(FX,FY)$ since the kernel and the cokernel of $\upeta_X\colon E_\lambda EX\to X$ are finite dimensional by our assumptions.  Finally,
\[
\Hom_C(FX,FY)=\bigoplus_{h\in H}\Hom_{C}^H(FX,(FY)(h))
=\bigoplus_{h\in H}\Hom_{A}^G(X,Y(h))
=\Hom_A(X,Y)^H.\qedhere
\]
\end{proof}

\subsection{Categorical Equivalences for Weighted Projective Lines} In this subsection, we apply the general results of the previous subsection to describe the precise conditions on $\ox\in\bL$ for which $\qgr^\bZ\! S^{\ox}\simeq\coh\bX$ holds. As before, write $S(\oy)^{\ox}\colonequals \bigoplus_{i\in\bZ}S_{\oy+i\ox}$. This subsection does not require the condition that $\ox$ belongs to $\bL_+$, instead assuming that $\ox$ is not torsion.  The following is the main result, where the special case $\ox=\ow$ was given in \cite{GL91}.  Another approach can be found in \cite{H}.

\begin{thm}\label{WPL as qgrZ}
If $\ox=\sum_{i=1}^na_i\ox_i+a\oc\in\bL$ is not torsion, then the following conditions are equivalent.
\begin{enumerate}
\item\label{WPL as qgrZ 1} The natural functor $(-)^{\ox}\colon\CM^\bL\!S\to\CM^\bZ\!S^{\ox}$ is an equivalence.
\item\label{WPL as qgrZ 2} The natural functor $(-)^{\ox}\colon\qgr^\bL\!S\to\qgr^\bZ\!S^{\ox}$ is an equivalence.
\item\label{WPL as qgrZ 3} For any $\oz\in\bL$, the ideal $I^{\oz}\colonequals S(\oz)^{\ox}\cdot S(-\oz)^{\ox}$ of $S^{\ox}$ satisfies $\dim_\KK(S^{\ox}/I^{\oz})<\infty$.
\item\label{WPL as qgrZ 4} $(p_i,a_i)=1$ for all $1\le i\le n$.
\end{enumerate}
\end{thm}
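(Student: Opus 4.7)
The plan is to deduce (1) $\Leftrightarrow$ (2) $\Leftrightarrow$ (3) from Theorem~\ref{WPL as qgrZ2} and then prove (3) $\Leftrightarrow$ (4) by arguments specific to the structure of $S$.

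First I would apply Theorem~\ref{WPL as qgrZ2} with $G=\bL$, $H=\bZ\ox$, $A=S$, and $B=R=S^{\ox}$. The hypotheses are immediate: $\bZ\ox$ has finite index in $\bL$ since $\ox$ is not torsion and $\bL$ has rank one; $S$ is two-dimensional Cohen--Macaulay (it is a complete intersection), so $S\in\mod_2^{\bL}\!S$; and the depth-at-least-two condition coincides with being Cohen--Macaulay in dimension two, so $\mod_2^{\bL}\!S=\CM^{\bL}\!S$ and $\mod_2^\bZ\!R=\CM^\bZ\!R$ (here $R$ is also two-dimensional by Lemma~\ref{Veron is fg}). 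This yields the equivalence of (1), (2), and (3).

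The heart of the proof is then (3) $\Leftrightarrow$ (4). For (4) $\Rightarrow$ (3), fix $\oz\in\bL$; since $I^{\oz}$ is $\bZ\ox$-graded it suffices to show that the multiplication maps
\[
\mu_k\colon\bigoplus_{k_1+k_2=k}S_{\oz+k_1\ox}\otimes_\KK S_{-\oz+k_2\ox}\longrightarrow S_{k\ox}
\]
are surjective for all but finitely many $k$. By Proposition~\ref{L-factorial} any monomial $M\in S_{k\ox}$ factors into $\bL$-prime elements, these being the variables $x_1,\ldots,x_n$ (of degrees $\ox_1,\ldots,\ox_n$) and non-special linear forms in $t_0,t_1$ (of degree $\oc$). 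To realise $M$ in the image of $\mu_k$ one must split such a factorisation into two groups whose degree sums lie respectively in $\oz+\bZ\ox$ and $-\oz+\bZ\ox$. The coprime hypothesis $(p_i,a_i)=1$ forces the image of $\bZ\ox$ in each torsion factor $\bZ/p_i\bZ$ of $\bL/\bZ\oc$ to be the whole group, so that the images of $\ox_1,\ldots,\ox_n,\oc$ generate $\bL/\bZ\ox$ with enough flexibility to meet any target coset; combined with the abundance of $t$-factors for large $k$ to absorb the $\oc$-component, this realises the desired splitting.

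For (3) $\Rightarrow$ (4) I would argue the contrapositive: suppose $d:=(p_i,a_i)>1$ for some $i$. Then $k\ox$ lies in $d\bZ\cdot\ox_i+\bZ\oc+\sum_{j\ne i}\bZ\ox_j$ for every $k$, so the image of $\bZ\ox$ in $\bZ/p_i\bZ$ is the proper subgroup $d\bZ/p_i\bZ$. Taking $\oz:=\ox_i$, any homogeneous factorisation $M=M_1M_2\in S_{k\ox}$ with $\deg M_1\in\ox_i+\bZ\ox$ forces the total $x_i$-exponent of $M_1$ to be congruent to $1\pmod d$ (mod $p_i$), a rigid condition that is incompatible with a sufficient part of the basis of $S_{k\ox}$. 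Combining this congruence obstruction with the dimension formula $\dim_\KK S_{\oy}=c+1$ for $\oy=\sum b_j\ox_j+c\oc$ in normal form with $c\ge0$, a direct count shows $R_{k\ox}/I^{\ox_i}_{k\ox}\ne0$ for infinitely many $k$, whence $\dim_\KK(R/I^{\ox_i})=\infty$, contradicting (3).

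The principal obstacle is the monomial and degree bookkeeping in the (4) $\Rightarrow$ (3) direction: one has to show concretely how an $\bL$-prime factorisation of a given monomial can be redistributed across the two tensor factors of $\mu_k$ so as to land in the prescribed cosets of $\bZ\ox$, and this is precisely where the coprime hypothesis enters in an essential way.
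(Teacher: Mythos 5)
Your reduction of (1)$\Leftrightarrow$(2)$\Leftrightarrow$(3) to \ref{WPL as qgrZ2} is exactly what the paper does, and your (3)$\Rightarrow$(4) rests on the same key observation as the paper's: if $d:=(p_i,a_i)>1$ then the $\ox_i$-coefficient of the normal form of any element of $\pm\ox_i+\bZ\ox$ is $\equiv\pm1\pmod d$, hence nonzero, so $I^{\ox_i}\subseteq Sx_i^2$. (The paper finishes more cleanly than your proposed ``direct count'': since $S$ is module-finite over $R$ by \ref{Veron is fg}, finite-dimensionality of $R/(R\cap Sx_i^2)$ would force $S/Sx_i^2$ to be finite-dimensional, contradicting $\dim S=2$. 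You could adopt this and skip the counting entirely.)

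The genuine gap is in (4)$\Rightarrow$(3), and you have flagged it yourself as ``the principal obstacle'' without closing it. Proving surjectivity of your maps $\mu_k$ for arbitrary $\oz$ and large $k$ by redistributing an $\bL$-prime factorisation is not just bookkeeping: the difficulty is positivity, i.e.\ ensuring that the two degree blocks you want to split a monomial into actually lie in $\bL_+$ so that the corresponding graded pieces are nonzero, and the phrase ``enough flexibility to meet any target coset'' does not address this. The paper avoids the problem with two moves you are missing. First, the submultiplicativity $I^{\oy}\cdot I^{\oz}\subseteq I^{\oy+\oz}$ reduces everything to $\oz=\ox_i$ for $1\le i\le n$. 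Second, for $\oz=\ox_i$ one does not prove surjectivity degree by degree; one exhibits two explicit elements of $I^{\ox_i}$ whose ideal already has finite colength: taking $p=\lcm(p_1,\ldots,p_n)$ with $p\ox=q\oc$, the product $x_i\cdot x_i^{p_iq-1}\in S_{\ox_i}\cdot S_{-\ox_i+p\ox}$ gives a power of $x_i$; and coprimality supplies $\ell,m$ with $a_i\ell+1=p_im$, so that the normal form of $\ox_i+\ell\ox$ has no $\ox_i$-component and $S_{\ox_i+\ell\ox}\subseteq S(\ox_i)^{\ox}$ contains a monomial in the $x_j$ with $j\ne i$ (similarly for $-\ox_i$). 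The ideal generated by these two elements has finite colength in $S$, hence in $R$. This is precisely where $(p_i,a_i)=1$ enters; your sketch identifies the right hypothesis but not the mechanism by which it is used. As written, your proof of (4)$\Rightarrow$(3) is a plan, not an argument.
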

\begin{proof}
To ease notation, write $R\colonequals S^{\ox}$.\\
(1)$\Leftrightarrow$(2)$\Leftrightarrow$(3) These are shown in \ref{WPL as qgrZ2} since $\CM^\bL\!S=\mod_2^\bL\!S$ and $\CM^\bZ\!R=\mod_2^\bZ\!R$.\\
(3)$\Rightarrow$(4). By contrapositive, assume that $a_1$ and $p_1$ are not coprime. Then the normal form of any element in $\ox_1+\bZ\ox$ (respectively, $-\ox_1+\bZ\ox$) contains a positive multiple of $\ox_1$. Thus we have
\[
I^{\ox_1}\subset Sx_1\cdot Sx_1=Sx_1^2.
\]
Therefore the condition (3) implies that the algebra $R/(R\cap Sx_1^2)$ is finite dimensional. Since $S/Sx_1^2$ is a finitely generated $R/(R\cap Sx_1^2)$-module by \ref{Two dim and normal prelim}\eqref{L-factorial 2}, it is also finite dimensional. This is a contradiction since $S$ has Krull dimension two.\\ 
(4)$\Rightarrow$(3). Assume that $(p_i,a_i)=1$ for all $i$.  If $R/I^{\oy}$ and $R/I^{\oz}$ are finite dimensional, then so is $R/I^{\oy+\oz}$ since $I^{\oy}\cdot I^{\oz}\subset I^{\oy+\oz}$ holds.
Thus we only have to show that $R/I^{\ox_i}$ is finite dimensional for each $i$ with $1\le i\le n$. We will show that $I^{\ox_i}$ contains a certain power $A$ of $x_i$ and a certain monomial $B$ of $x_j$'s with $j\neq i$. Then it is easy to check that $S/(SA+SB)$ is finite dimensional, and hence $R/(RA+RB)=(S/(SA+SB))^{\ox}$ and $R/I^{\ox_i}$ are also finite dimensional.

For the least common multiple $p$ of $p_1,\ldots,p_n$, we have $p\ox=q\oc$ for some $q>0$. Then
\[
I^{\ox_i}= S(\ox_i)^{\ox}\cdot S(-\ox_i)^{\ox}
\supset S_{\ox_i}\cdot S_{-\ox_i+p\ox}\ni x_i\cdot x_i^{p_iq-1}=x_i^{p_iq}.
\]
Thus $I^{\ox_i}$ contains a power of $x_i$.
On the other hand, since $a_i$ and $p_i$ are coprime, there exist integers $\ell$ and $m$ such that $a_i\ell+1=p_im$ and
$\ox_i+\ell\ox\in\bL_+$. Then the normal form of $\ox_i+\ell\ox$ does not contain a positive multiple of $\ox_i$, and hence $S(\ox_i)^{\ox}\supset S_{\ox_i+\ell\ox}$ contains a monomial of $x_j$'s with $j\neq i$.
Applying a similar argument to $S(-\ox_i)^{\ox}$, we have that $I^{\ox_i}=S(\ox_i)^{\ox}\cdot S(-\ox_i)^{\ox}$ contains a monomial of $x_j$'s with $j\neq i$. Thus the assertion follows.
\end{proof}

The following is a geometric corollary of the results in this subsection.
\begin{cor}\label{stackminres 2A} 
Suppose that $0\neq\ox\in\bL_+$ and write $\ox=\sum_{i=1}^{n}a_i\ox_i+a\oc$ in normal form. If $n\ge1$ and $(p_i,a_i)=1$ for all $1\leq i\leq n$, then the fully faithful embedding
\[
\Db(\coh Y^{\ox})\hookrightarrow \Db(\coh\bT^{\ox})
\]
in \ref{stackminres} is an equivalence if and only if every $a_i=1$, that is $\ox=\sum_{i=1}^nx_i+a\oc$.
\end{cor}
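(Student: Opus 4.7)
The plan is to translate the derived-category embedding of \ref{stackminres} into a combinatorial comparison of additive closures, then apply the description of special CM modules from \ref{specials determined thm} together with the $i$-series characterization in \ref{i series all}. Under the tilting equivalences $\Db(\coh\bT^{\ox})\simeq\Db(\mod A)$ and $\Db(\coh Y^{\ox})\simeq\Db(\mod B)$, with $A=\End_{\bT^{\ox}}(q^*\cE)$ and $B=eAe=\End_{Y^{\ox}}(\cM)$, the embedding corresponds to $-\otimes^{\mathbf{L}}_{B}eA\colon\Db(\mod B)\to\Db(\mod A)$.  Since both algebras have finite global dimension, this fully faithful functor is an equivalence if and only if $eA$ classically generates $\Db(\mod A)$, equivalently (via the tilting dictionary, in which the indecomposable summands of $q^*\cE$ are the line bundles $q^*\cO(\oy)$ for $\oy\in[0,\oc\,]$) if and only if every module $S(\oy)^{\ox}$ with $\oy\in[0,\oc\,]$ is a summand of $N:=\pi_*\cM=S^{\ox}\oplus\bigoplus_{i\in I}M_i$.

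By \ref{stackminres}\eqref{stackminres 1} combined with \ref{specials determined thm}, the indecomposable summands of $N$ are exactly $\{S(u\ox_j)^{\ox}:j\in[1,n],\ u\in I(p_j,p_j-a_j)\}$, with $u=0$ yielding $S^{\ox}$ and $u=p_j$ yielding $S(\oc)^{\ox}$.  The coprime hypothesis forces every $a_j\neq0$, so every $I(p_j,p_j-a_j)$ contains both $0$ and $p_j$, and one verifies that the modules $S(\oy)^{\ox}$ for distinct $\oy\in[0,\oc\,]$ are pairwise non-isomorphic by checking complete locally at the singularities of $T^{\ox}$ described in \ref{sings on T}, using \ref{Wunram specials thm}.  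Since $[0,\oc\,]=\{0,\oc\}\cup\bigcup_{j=1}^{n}\{u\ox_j:1\le u\le p_j-1\}$, the equivalence condition becomes $I(p_j,p_j-a_j)\supseteq\{1,\ldots,p_j-1\}$ for every $j$, i.e.\ $I(p_j,p_j-a_j)=[0,p_j]$.  By \ref{i series all} this holds iff $p_j-a_j=p_j-1$, iff $a_j=1$.

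The main obstacle is the first reduction: justifying that the embedding being an equivalence corresponds precisely to the additive equality $\add(\bigoplus_{\oy\in[0,\oc\,]}S(\oy)^{\ox})=\add N$.  This requires tracing through the proof of \ref{stackminres} to identify which summands of $q^*\cE$ are selected by the idempotent $e$ (namely those whose pushdowns under $\psi\circ g$ give summands of $N$), and then using the finite global dimension of $A$ to conclude that $eA$ thickly generates $\Db(\mod A)$ precisely when its multiset of indecomposable summands coincides with that of $A$.  Once this reduction is in place, the remaining steps are purely combinatorial applications of \ref{specials determined thm} and \ref{i series all}.
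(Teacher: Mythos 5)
Your argument is correct and follows essentially the same route as the paper: both reduce the question to whether the idempotent $e$ selects all of $\add\bigl(\bigoplus_{\oy\in[0,\oc\,]}S(\oy)^{\ox}\bigr)$, identify the selected summands via \ref{specials determined thm} as those indexed by the $i$-series $I(p_j,p_j-a_j)$, and conclude with \ref{i series all}. The only cosmetic difference is that you justify pairwise non-isomorphism of the $S(\oy)^{\ox}$ by a complete-local check at the singularities of $T^{\ox}$, whereas the paper invokes the equivalence $\CM^{\bL}\!S\simeq\CM^{\bZ}\!S^{\ox}$ of \ref{WPL as qgrZ}; both suffice for the modules that matter.
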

\begin{proof}
We use the notation from the proof of \ref{stackminres}. Note that from the assumption $(p_i,a_i)=1$ for every $1\leq i\leq n$, necessarily each $a_i$ is non-zero. Next, the indecomposable summands of $\uppi_*\cM$ are pairwise non-isomorphic by combining \cite[3.5.3]{VdB1d} and \cite[4.3]{DW2}, and the summands of $\bigoplus_{\oy\in[0,\oc\,]}S(\oy)^{\ox}$ are pairwise non-isomorphic by \ref{WPL as qgrZ}\eqref{WPL as qgrZ 1}. 

The embedding in \ref{stackminres} is induced from idempotents using the observation that $\uppi_*\cM$ is a summand of  $\bigoplus_{\oy\in[0,\oc\,]}S(\oy)^{\ox}$.  It follows that the embedding is an equivalence if and only if for all $t=1,\hdots,n$, the $i$-series on arm $t$ has maximum length.  By \ref{i series all}  this holds if and only if every $a_i=1$.\end{proof}

\subsection{Changing Parameters}\label{changing parameters}

Our next main result, \ref{change parameters so coprime}, shows that we can always change parameters, without changing the category of coherent sheaves, so that the condition $(p_i,a_i)=1$ for all $1\le i\le n$ appearing in both \ref{WPL as qgrZ}\eqref{WPL as qgrZ 4} and  \ref{stackminres 2A} holds.

We now fix notation. Let $S\colonequals S_{\bp,\bl}$, and fix a subset $I$ of $\{1,\ldots,n\}$. For each $i\in I$, choose a divisor $d_i$ of $p_i$. Let $\sp_i\colonequals p_i/d_i$, $\bp'\!\!\colonequals (\sp_i\mid i\in I)$, $\bl'\colonequals (\lambda_i\mid i\in I)$,
\[
S'\colonequals S_{\bp'\!,\bl'}=\frac{\KK[\st_0,\st_1,\sx_i\mid i\in I]}{(\sx_i^{\sp_i}-\ell_i(\st_0,\st_1)\mid i\in I)}
\]
and $\bL^{\! \prime}_{\phantom\prime}\colonequals \bL(\sp_i\mid i\in I)=\langle\osx_i,\osc\mid i\in I\rangle/
(\sp_i\osx_i-\osc\mid i\in I)$. Then $S'$ is an $\bL^{\! \prime}$-graded $\KK$-algebra, and there is an equivalence $\coh\bX_{\bp'\!,\bl'}=\qgr^{\bL^{\! \prime}}S'$ as before.

\begin{prop}\label{Veronese trick}
With notation as above,
\begin{enumerate}
\item\label{Veronese trick 1} There is a monomorphism $\iota\colon\bL^{\! \prime}_{\phantom\prime}\to\bL$ of groups sending
$\osx_i$ to $d_i\ox_i$ for each $i\in I$ and $\osc$ to $\oc$.
\item\label{Veronese trick 2}  There is a monomorphism $S'\to S$ of $\KK$-algebras
sending $\sx_i$ to $x_i^{d_i}$ for each $i\in I$ and $\st_j$ to $t_j$ for $j=0,1$, which induces an isomorphism
$S'\simeq \bigoplus_{\osx\in\bL^{\! \prime}_{\phantom\prime}}S_{\iota(\osx)}$.
\item\label{Veronese trick 3} Let $\ox\in\bL$ be an element with normal form
$\ox=\sum_{i\in I}a_i\ox_i+a\oc$ such that $a_i$ is a multiple of $d_i$.  For $\sa_i\colonequals a_i/d_i$ and $\osx\colonequals \sum_{i\in I}\sa_i\osx_i+a\osc\in\bL^{\! \prime}_{\phantom\prime}$, we have $(S')^{\osx}=S^{\ox}$.
\end{enumerate}
\end{prop}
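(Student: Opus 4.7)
The plan is to prove the three parts in sequence, with (1) and (3) being formal consequences of normal-form bookkeeping, and (2) resting on Lemma~\ref{basic observation}.

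For (1), I will first check that the map $\iota$ is well-defined by verifying that the generating relations $\sp_i\osx_i=\osc$ of $\bL'$ are preserved: they map to $\sp_i d_i\ox_i=p_i\ox_i=\oc$. For injectivity, I will pick any $\osx=\sum_{i\in I}\sa_i\osx_i+a\osc\in\bL'$ in its normal form (so $0\leq\sa_i<\sp_i$), and observe that the image $\sum_{i\in I}\sa_i d_i\ox_i+a\oc$ is already in normal form in $\bL$ because $0\leq\sa_i d_i<p_i$. Hence $\iota(\osx)=0$ forces $\sa_i=0$ for all $i\in I$ and $a=0$.

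For (2), the map $S'\to S$ is well-defined because $\sx_i^{\sp_i}\mapsto x_i^{d_i\sp_i}=x_i^{p_i}=\ell_i(t_0,t_1)$ matches the image of $\ell_i(\st_0,\st_1)$. The key step will be to apply Lemma~\ref{basic observation} to both $S'$ and $S$ componentwise: for $\osx=\sum\sa_i\osx_i+a\osc$ in normal form in $\bL'$, the monomial basis $\{\prod_{i\in I}\sx_i^{\sa_i}\cdot\st_0^\ell\st_1^{a-\ell}\mid 0\leq\ell\leq a\}$ of $S'_{\osx}$ maps bijectively onto the monomial basis $\{\prod_{i\in I}x_i^{\sa_i d_i}\cdot t_0^\ell t_1^{a-\ell}\mid 0\leq\ell\leq a\}$ of $S_{\iota(\osx)}$, using that $\iota(\osx)$ is itself in normal form. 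This shows that the map restricts to an isomorphism on each homogeneous component; combined with the injectivity of $\iota$ from (1), it yields both the injectivity of $S'\to S$ and the identification of its image with $\bigoplus_{\osx\in\bL'}S_{\iota(\osx)}$.

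Part (3) is then essentially automatic: the divisibility assumption $d_i\mid a_i$ ensures that $\osx:=\sum_{i\in I}\sa_i\osx_i+a\osc$ satisfies $\iota(\osx)=\ox$, so
\[
(S')^\osx=\bigoplus_{i\in\bZ}S'_{i\osx}=\bigoplus_{i\in\bZ}S_{i\iota(\osx)}=\bigoplus_{i\in\bZ}S_{i\ox}=S^\ox
\]
as subrings of $S$. The only real work is matching normal forms in $\bL'$ with normal forms in $\bL$ via $\sa_i\leftrightarrow\sa_i d_i=a_i$; no deeper obstacle is expected to arise.
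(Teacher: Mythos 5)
Your proposal is correct and follows essentially the same route as the paper: injectivity of $\iota$ via the observation that images of normal forms are again normal forms, part (2) by matching the monomial bases of homogeneous components supplied by Lemma~\ref{basic observation}, and part (3) as an immediate consequence. The only cosmetic difference is that you spell out the well-definedness checks that the paper leaves implicit.
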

\begin{proof}
(1) Clearly $\iota$ is well-defined. Assume that $\osx\in\bL^{\! \prime}_{\phantom\prime}$ with
normal form $\osx=\sum_{i\in I}a_i\osx_i+a\osc$ belongs to the kernel of $\iota$. Then $0=\iota(\osx)=
\sum_{i\in I}a_id_i\ox_i+a\oc$, where the right
hand side is a normal form in $\bL$, and so $a_i=0=a$
for all $i$. Hence $\osx=0$.\\
(2) Take any element $\osx\in\bL^{\! \prime}_{\phantom\prime}$ with
a normal form $\osx=\sum_{i\in I}a_i\osx_i+a\osc$. We prove $S'_{\osx}\simeq S_{\iota(\osx)}$.
If $\osx\notin\bL^{\! \prime}_+$, then $\iota(\osx)\notin\bL_+$ and both sides are zero.
Assume $\osx\in\bL^{\! \prime}_+$. Then by \ref{basic observation}, $S'_{\osx}$ has a $\KK$-basis
\[
\st_0^{j}\st_1^{a-j}\prod_{i\in I}\sx_i^{a_i}\ \ \ 0\le j\le a.
\]
Since $\iota(\osx)$ has a normal form $\sum_{i\in I}a_id_i\ox_i+a\oc$,
it follows from \ref{basic observation} that $S_{\iota(\osx)}$ has a $\KK$-basis $t_0^{j}t_1^{a-j}\prod_{i\in I}x_i^{a_id_i}$ for $0\le j\le a$. The assertion follows.\\
(3) Immediate from \eqref{Veronese trick 2}.
\end{proof}

\begin{prop}\label{change parameters so coprime}
Suppose that $\ox\in\bL$ is not torsion, and write $\ox=\sum_{i=1}^na_i\ox_i+a\oc\in\bL$ in normal form. Let $I\colonequals \{1\le i\le n\mid a_i\neq0\}$, and consider the parameters $(\bp'\!,\bl')$ defined by  $\bp'\!\colonequals (\sp_i\mid i\in I)$ for $\sp_i\colonequals p_i/(a_i,p_i)$ and $\bl'\colonequals (\lambda_i\mid i\in I)$.  As above, set $\osx\colonequals \sum_{i\in I}\sa_i\osx_i+a\osc\in\bL^{\! \prime}_{\phantom\prime}$, then the following statements hold.
\begin{enumerate}
\item\label{change parameters so coprime 1} There is an isomorphism $S_{\bp,\bl}^{\ox}\cong S_{\bp'\!,\bl'}^{\osx}$ as $\bZ$-graded $\KK$-algebras.
\item\label{change parameters so coprime 2} There are equivalences $\CM^\bZ\!S_{\bp,\bl}^{\ox}\simeq \CM^\bZ\! S_{\bp'\!,\bl'}^{\osx}\simeq \CM^\bL\! S_{\bp'\!,\bl'}$.
\item\label{change parameters so coprime 3} There are equivalences $\qgr^\bZ\!S_{\bp,\bl}^{\ox}\simeq \qgr^\bZ\!S_{\bp'\!,\bl'}^{\osx}\simeq \coh \bX_{\bp'\!,\bl'}$.
\end{enumerate}
\end{prop}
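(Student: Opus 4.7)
The plan is to reduce \ref{change parameters so coprime} entirely to the Veronese trick \ref{Veronese trick} combined with the main categorical equivalence \ref{WPL as qgrZ}. First I would set $d_i := (a_i, p_i)$ for $i \in I$; then $d_i$ divides $p_i$ (so \ref{Veronese trick} applies with this choice of divisors) and also divides $a_i$, placing us in the setting of \ref{Veronese trick}\eqref{Veronese trick 3}. The quotient $\sa_i := a_i/d_i$ then satisfies $0 < \sa_i < \sp_i$, so $\osx = \sum_{i \in I} \sa_i \osx_i + a\osc$ is in normal form in $\bL^{\!\prime}_{\phantom{\prime}}$.

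For \eqref{change parameters so coprime 1}, applying \ref{Veronese trick}\eqref{Veronese trick 3} directly yields $S_{\bp',\bl'}^{\osx} = S_{\bp,\bl}^{\ox}$ as $\bZ$-graded subalgebras of $S_{\bp,\bl}$ (identifying $S_{\bp',\bl'}$ with its image under the embedding of \ref{Veronese trick}\eqref{Veronese trick 2}), which is the desired isomorphism. The first equivalences in both \eqref{change parameters so coprime 2} and \eqref{change parameters so coprime 3} are then immediate from this identification of graded rings.

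For the second equivalences, the key point is that by construction $(\sp_i, \sa_i) = 1$ for all $i \in I$, since $d_i = (a_i, p_i)$ was chosen precisely to cancel the common factor. Moreover, $\osx$ is not torsion in $\bL^{\!\prime}_{\phantom{\prime}}$: since $\iota(\osx) = \sum_{i \in I} \sa_i d_i \ox_i + a\oc = \ox$ is not torsion by assumption, and $\iota$ is injective by \ref{Veronese trick}\eqref{Veronese trick 1}, $\osx$ is not torsion either. Thus \ref{WPL as qgrZ} applies to the parameters $(\bp'\!,\bl')$ and the element $\osx \in \bL^{\!\prime}_{\phantom{\prime}}$: the implication \eqref{WPL as qgrZ 4}$\Rightarrow$\eqref{WPL as qgrZ 1} gives $\CM^{\bL^{\!\prime}}\! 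S_{\bp'\!,\bl'} \simeq \CM^\bZ\! S_{\bp'\!,\bl'}^{\osx}$, and \eqref{WPL as qgrZ 4}$\Rightarrow$\eqref{WPL as qgrZ 2} gives $\qgr^{\bL^{\!\prime}}\! S_{\bp'\!,\bl'} \simeq \qgr^\bZ\! S_{\bp'\!,\bl'}^{\osx}$. Combining the latter with the standard identification $\qgr^{\bL^{\!\prime}}\! S_{\bp'\!,\bl'} \simeq \coh \bX_{\bp'\!,\bl'}$ of Geigle--Lenzing completes \eqref{change parameters so coprime 3}.

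There is no real obstacle; the proof is mainly a matter of bookkeeping to verify that the choice $d_i = (a_i, p_i)$ is admissible for \ref{Veronese trick} and that the reparametrization restores the coprime hypothesis needed for \ref{WPL as qgrZ}. The only subtlety worth noting is ensuring $\osx$ remains non-torsion, which follows from the injectivity in \ref{Veronese trick}\eqref{Veronese trick 1}.
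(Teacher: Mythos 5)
Your proposal is correct and follows essentially the same route as the paper: part (1) via \ref{Veronese trick}\eqref{Veronese trick 3} with $d_i=(a_i,p_i)$, the left equivalences immediately from that identification, and the right equivalences from \ref{WPL as qgrZ} applied to $S_{\bp'\!,\bl'}^{\osx}$, whose parameters now satisfy the coprimality condition. Your extra check that $\osx$ is non-torsion (via injectivity of $\iota$) is a detail the paper leaves implicit but is a welcome addition.
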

\begin{proof}
Part \eqref{change parameters so coprime 1} follows directly from \ref{Veronese trick}\eqref{Veronese trick 3}. Certainly this induces the left equivalences in \eqref{change parameters so coprime 2} and \eqref{change parameters so coprime 3}. Applying \ref{WPL as qgrZ} to $S_{\bp'\!,\bl'}^{\osx}$ gives the right equivalences in \eqref{change parameters so coprime 2} and \eqref{change parameters so coprime 3}.
\end{proof}

Thus we can always replace  $(\bp,\bl,\ox)$ by $(\bp',\bl',\osx)$ such that $S^{\ox}_{\bp,\bl}=S^{\osx}_{\bp',\bl'}$ and the coprime assumptions in both \ref{WPL as qgrZ}\eqref{WPL as qgrZ 4} and  \ref{stackminres 2A} hold, applied to   $(\bp',\bl',\osx)$. Note also that the above implies that if $\ox\in\bL$ is any non-torsion element, then $\qgr^\bZ\!S_{\bp,\bl}^{\ox}$ always gives the category of coherent sheaves over a weighted projective line, perhaps with different parameters.

\subsection{Algebraic Approach to Special CM Modules}
In this subsection we give an algebraic treatment of the special CM $S^{\ox}$-modules, and show how to determine the rank one special CM modules without using geometric arguments.  Hence this subsection is independent of \S\ref{stack}, and the techniques developed will be used later to obtain geometric corollaries.  Note however that the geometry is required to deduce that there are no higher rank indecomposable special CM modules; this algebraic approach seems only to be able to deal with the rank one specials.

Consider $\bX_{\bp,\bl}$ and let $\ox\in\bL$ be an element with normal form $\ox=\sum_{i=1}^na_i\ox_i+a\oc$ with $a\ge0$.  By \ref{change parameters so coprime} we can assume, by changing parameters if necessary, that $(a_i,p_i)=1$ for all $1\le i\le n$.  Then, by \ref{WPL as qgrZ}, there is an equivalence
\[
(-)^{\ox}\colon\CM^\bL\!S\to\CM^\bZ\!S^{\ox}.
\]
Below we will often use the identification 
\begin{equation}\label{Hom between Ss}
S_{\oy-\ox}\cong\Hom_S^{\bL}(S(\ox),S(\oy))
\end{equation}
for any $\ox,\oy\in\bL$. Recall that the \emph{AR translation functor} of $S^{\ox}$ is given by
\[
\tau_{S^{\ox}}\colonequals \Hom_{S^{\ox}}(-,\omega_{S^{\ox}})\circ\Hom_{S^{\ox}}(-,S^{\ox})\colon\CM^{\bZ}\!S^{\ox}\to\CM^{\bZ}\!S^{\ox},
\]
where $\omega_{S^{\ox}}$ is the $\bZ$-graded canonical module of $S^{\ox}$ \cite{AR, IT}.

\begin{prop}\label{AR prep}
With the setup as above, the following statements hold.
\begin{enumerate}
\item\label{AR prep 1} There is an isomorphism $\omega_{S^{\ox}}\cong S(\ow)^{\ox}$.
\item\label{AR prep 2} There is a commutative diagram
\begin{equation}\label{Veronese equivalence}
\begin{array}{c}
\begin{tikzpicture}
\node (top 1) at (0,0) {$\CM^\bL\!S$};
\node (top 2) at (2.5,0) {$\CM^\bZ\!S^{\ox}$};
\node (bottom 1) at (0,-1.5) {$\CM^\bL\!S$};
\node (bottom 2) at (2.5,-1.5) {$\CM^\bZ\!S^{\ox}$};
\draw[->] (top 1) -- node[left] {$\scriptstyle \tau_S\colonequals (\ow)$} (bottom 1);
\draw[->] (top 2) -- node[right] {$\scriptstyle \tau_{S^{\ox}}$} (bottom 2);
\draw[->] (top 1) -- node[above] {$\scriptstyle (-)^{\ox}$} (top 2);
\draw[->] (bottom 1) -- node[above] {$\scriptstyle (-)^{\ox}$} (bottom 2);
\end{tikzpicture}
\end{array}
\end{equation}
\end{enumerate}
\end{prop}
\begin{proof}
Again, to ease notation write $R\colonequals S^{\ox}$.\\
(1) Taking a projective resolution of $\KK$ in $\mod^{\bL}\!S$, applying $\Hom_S(-,S(\ow))$ and using \ref{slightly stronger} we see that $\Ext^i_R(\KK,S(\ow)^{\ox})=\Ext^i_S(\KK,S(\ow))^{\ox}$.  This is $\KK$ for $i=2$ and zero for $i\neq2$ \cite{BHe}. Thus $S(\ow)^{\ox}$ is the $\bZ$-graded canonical module of $R$.\\
(2) Let $X\in\CM^{\bL}\!S$. Using \eqref{AR prep 1} and \ref{slightly stronger}, 
\[
\tau_R(X^{\ox})=\Hom_R(\Hom_R(X^{\ox},S^{\ox}),S(\ow)^{\ox})=\Hom_S(\Hom_S(X,S),S(\ow))^{\ox}=X(\ow)^{\ox}.\qedhere
\]

\vspace{-0.85cm}

\end{proof}

\vspace{0.3cm}

The following gives an algebraic criterion for certain CM $S^{\ox}$-modules to be special.

\begin{lemma}\label{criterion for special}
For $\oy\in\bL$, the CM $S^{\ox}$-module $S(\oy)^{\ox}$ is special
if and only if
\begin{equation}
S_{\oy+\ow+\ell\ox}=\sum_{m\in\bZ}S_{\ow+m\ox}\cdot S_{\oy+(\ell-m)\ox}\label{specials equation}
\end{equation}
holds for all $\ell\in\bZ$.
\end{lemma}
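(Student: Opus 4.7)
The plan is to reduce the Ext-vanishing condition $\Ext^1_R(S(\oy)^{\ox},R)=0$ to the surjectivity of a natural multiplication map, and then read off this surjectivity degree by degree. I would first invoke the criterion, standard for a two-dimensional CM normal (graded-)local ring $R$ with canonical module $\omega_R$ (see \cite{IW}): a rank-one module $M\in\CM R$ is special if and only if the natural map
\[
\mu_M\colon\omega_R\otimes_R M\longrightarrow (\omega_R\otimes_R M)^{**}
\]
is surjective. Briefly, applying $\Hom_R(-,\omega_R)$ to a free presentation of the Auslander transpose $\Tr(M)$ and using that $\Ext^i_R(N,\omega_R)=0$ for $N\in\CM R$ and $i\geq 1$ (local duality) identifies $\Coker(\mu_M)$, up to Matlis duality, with $\Ext^1_R(M,R)$.

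Next I would unpack what $\mu_M$ looks like in our setting. By \ref{AR prep}(1) we have $\omega_R=S(\ow)^{\ox}$, and there is a natural $\bZ$-graded multiplication map
\[
\nu\colon S(\ow)^{\ox}\otimes_R S(\oy)^{\ox}\longrightarrow S(\ow+\oy)^{\ox}.
\]
Since $S(\ow+\oy)^{\ox}$ is a rank-one reflexive (hence CM) $\bZ$-graded $R$-module and $\nu$ agrees with $\mu_M$ generically, the universal property of the reflexive hull, combined with the Hom-preservation of \ref{slightly stronger} applied under the equivalence $(-)^{\ox}\colon\CM^{\bL}S\xrightarrow{\sim}\CM^{\bZ}R$ of \ref{WPL as qgrZ}(1), provides a canonical $\bZ$-graded isomorphism $(\omega_R\otimes_R M)^{**}\cong S(\ow+\oy)^{\ox}$ that identifies $\mu_M$ with $\nu$.

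Finally, the image of $\nu$ in $\bZ$-graded degree $\ell$ is exactly
\[
\sum_{m\in\bZ}S_{\ow+m\ox}\cdot S_{\oy+(\ell-m)\ox}\;\subseteq\; S_{\oy+\ow+\ell\ox},
\]
so surjectivity of $\nu$ in every degree $\ell\in\bZ$ is precisely the displayed equation \eqref{specials equation}. The main obstacle is the first step — the precise identification of $\Ext^1_R(M,R)$ with $\Coker(\mu_M)^{\vee}$ — which must be carried out in the $\bZ$-graded, not-necessarily-complete setting, and is where local duality must be applied carefully with respect to the grading; once the criterion is in place, the rest is a routine unpacking of the Veronese identifications already built up in the paper.
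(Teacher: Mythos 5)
Your proposal is correct in substance but enters through a different door than the paper. The paper's proof runs through graded Auslander--Reiten duality, $D\Ext^1_{\mod^{\bZ}R}(X,Y)\simeq\Hom_{\overline{\CM}^{\bZ}R}(Y,\tau_RX)$, passes to the quotient category of $\CM^{\bL}S$ by the ideal generated by $\add\{S(\ow+\ell\ox)\}$ via \ref{AR prep}, and then identifies $I(S,S(\oy+\ow+\ell\ox))$ with $\sum_m S_{\ow+m\ox}\cdot S_{\oy+(\ell-m)\ox}$; you instead invoke the characterization of specialness from \cite{IW} as surjectivity of $\omega_R\otimes_RM\to(\omega_R\otimes_RM)^{**}$ and read off the image degree by degree. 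These are two faces of the same duality --- since $\tau_RM\cong(\omega_R\otimes_RM)^{**}$ and the morphisms $R\to\tau_RM$ factoring through $\add\{\omega_R(\ell)\}$ are exactly $\Im\mu_M$, your cokernel is literally the paper's $D\Ext^1_R(M,R)$ --- so the computations coincide once unwound. What your route buys is that it avoids setting up the stable category $\overline{\CM}^{\bZ}R$ explicitly; what it costs is that it outsources the duality to the criterion of \cite{IW}, which must be imported in the graded (non-complete) setting, whereas the paper's version keeps everything internal to $\CM^{\bZ}R$ and reuses \ref{AR prep} directly. Two small points to tighten: the identification $(\omega_R\otimes_RM)^{**}\cong S(\ow+\oy)^{\ox}$ needs the two modules to agree in codimension one, not merely generically (generic agreement alone does not determine the reflexive hull of the image; here one uses that $R$ is normal and everything is locally free on the punctured spectrum), and your appeals to \ref{WPL as qgrZ}\eqref{WPL as qgrZ 1} and \ref{slightly stronger} silently use the standing reduction to $(p_i,a_i)=1$ made via \ref{change parameters so coprime} at the start of that subsection, which you should make explicit.
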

\begin{proof}
Set $R\colonequals S^{\ox}$ and as above write $\tau_R\colon\CM^\bZ\!R\simeq\CM^\bZ\!R$ for the AR-translation. If $\overline{\CM}^{\bZ}\!R$ is the quotient category  of $\CM^\bZ\!R$ by the ideal generated by $\{\omega_R(\ell)\mid \ell\in\bZ\}$, this yields AR duality
\begin{equation}\label{AR duality}
D\Ext^1_{\mod^{\bZ}\!R}(X,Y)\simeq\Hom_{\overline{\CM}^{\bZ}\!R}(Y,\tau_RX)
\end{equation}
for any $X,Y\in\CM^\bZ\!R$ \cite{AR, IT}.   By \ref{AR prep}\eqref{AR prep 1}, $S(\ow+\ell\ox)^{\ox}=\omega_R(\ell)$ holds, and hence there is an induced equivalence
\begin{equation}\label{Veronese equivalence2}
(-)^{\ox}\colon (\CM^\bL\!S)/I\simeq\overline{\CM}^{\bZ}\!R
\end{equation}
for the ideal $I$ of the category $\CM^\bL\!S$ generated by $\add\{S(\ow+\ell\ox)\mid\ell\in\bZ\}$. It follows that
\begin{eqnarray*}
D\Ext^1_R(S(\oy)^{\ox},R)&=&
\bigoplus_{\ell\in\bZ}D\Ext^1_{\mod^{\bZ}\!R}(S(\oy)^{\ox},R(\ell))\\
&\stackrel{\eqref{AR duality}}{\simeq}&
\bigoplus_{\ell\in\bZ}\Hom_{\overline{\CM}^{\bZ}\!R}(R,(\tau_R(S(\oy)^{\ox}))(\ell))\\
&\stackrel{\eqref{Veronese equivalence}\eqref{Veronese equivalence2}}{\simeq}&
\bigoplus_{\ell\in\bZ}\frac{\Hom_{\CM^\bL\!S}(S,S(\oy+\ow+\ell\ox))}{I(S,S(\oy+\ow+\ell\ox))}.
\end{eqnarray*}
Thus $S(\oy)^{\ox}$ is special if and only if
$\Hom_{\CM^\bL\!S}(S,S(\oy+\ow+\ell\ox))=I(S,S(\oy+\ow+\ell\ox))$ holds for all $\ell\in\bZ$.
Since $\Hom_{\CM^\bL\!S}(S,S(\oy+\ow+\ell\ox))=S_{\oy+\ow+\ell\ox}$
and $I(S,S(\oy+\ow+\ell\ox))=\sum_{m\in\bZ}S_{\ow+m\ox}\cdot S_{\oy+(\ell-m)\ox}$ hold by \eqref{Hom between Ss}, the assertion follows.
\end{proof}

We will also require the next result, which is much more elementary, and follows from \ref{basic observation}. 
\begin{lemma}[{\cite{GL1}}]\label{basic observation B}
Suppose that $\ox\in\bL$ has normal form  $\ox=\sum_{i=1}^na_i\ox_i+a\oc$.
\begin{enumerate}
\item\label{basic observation B 3} If $\oy\in\bL_+$ and $\ox-\oy\in\bL_+$, write $\oy=\sum_{i=1}^nb_i\ox_i+b\oc$
in normal form.  Then for $I\colonequals \{1\le i\le n\mid a_i<b_i\}$, 
\[
\ox\ge|I|\oc\quad\mbox{ and }\quad S_{\oy}\cdot S_{\ox-\oy}=(\prod_{i\in I}x_i^{p_i})S_{\ox-|I|\oc}.
\]
\item\label{basic observation B 4} 
Let $X,Y$ be a basis of $S_{\oc}$. If $\ox\ge i\oc\ge0$, then 
\[
S_{\ox}=XS_{\ox-\oc}+f(X,Y)S_{\ox-i\oc}
\]
for any $f(X,Y)\in S_{i\oc}$ which is not a multiple of $X$. 
\end{enumerate}
\end{lemma}

Before proving the main result \ref{specials lag approach thm}, we first illustrate a special case.

\begin{example}{\rm
Let $\os_a=\sum_{i=1}^n\ox_i+a\oc$ with $a\ge0$ and $n+a\ge2$ (since $a\geq 0$, the last condition is equivalent to $\os_a\notin[0,\oc\,]$).  Then $S(\oy)^{\os_a}$ is a special CM $S^{\os_a}$-module for all $\oy\in[0,\oc\,]$.}
\end{example}
\begin{proof}
We use \ref{criterion for special}.  When $\ell\le0$, both sides of \eqref{specials equation} are zero.  When $\ell>0$, since $\ow+\os_a=(n-2+a)\oc$ we have
\[
S_{\oy+\ow+\ell\os_a}=S_{\oy+(\ell-1)\os_a+(n-2+a)\oc}
\stackrel{\mbox{\scriptsize\ref{basic observation}\eqref{basic observation 3}}}{=}S_{(n-2+a)\oc}\cdot S_{\oy+(\ell-1)\os_a}=S_{\ow+\os_a}\cdot S_{\oy+(\ell-1)\os_a}
\]
and so \eqref{specials equation} holds.
\end{proof}

The following is the main result in this section.  The algebraic method of proof describes all the rank one indecomposable special CM modules directly, and the geometry is only required to verify that there are no further indecomposable special CM modules of higher rank.   The algebraic method of proof developed below feeds back into the geometry, and allows us to extract the middle self-intersection number in \ref{middle SI number}.  As notation, we write $\SCM^{\bZ}\! S^{\ox}$ for  those special CM $S^{\ox}$-modules that are $\mathbb{Z}$-graded.

\begin{thm}\label{specials lag approach thm}
Let $\ox\in\bL_+$ with $\ox\notin[0,\oc\,]$.  Write $\ox=\sum_{i=1}^na_i\ox_i+a\oc$ in normal form, then the following statements hold.
\begin{enumerate}
\item\label{specials lag approach thm 1} Up to degree shift, the indecomposable objects in $\SCM^{\bZ}\!S^{\ox}$ are precisely those $S(u\ox_j)^{\ox}$ with $1\le j\le n$ and $u\in I(p_j,p_j-a_j)$.
\item\label{specials lag approach thm 2} Forgetting the grading, $\add\{S(u\ox_j)^{\ox}\mid j\in [1,n],  u\in I(p_j,p_j-a_j)   \}= \SCM S^{\ox}$.
\end{enumerate}
In particular, $S^{\ox}$, $S(\oc)^{\ox}$ and $S((p_j-a_j)\ox_j)^{\ox}$ for all $j\in[1,n]$ are always special.
\end{thm}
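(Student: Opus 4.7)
The main tool is the speciality criterion of Lemma~\ref{criterion for special}. By Proposition~\ref{change parameters so coprime}, I would first reduce to the case $(p_i,a_i)=1$ for every $1\le i\le n$; since each $p_i\ge 2$, this in particular forces every $a_i\ge 1$. Once part~(1) is proved, part~(2) will follow immediately: the defining condition $\Ext^1_R(M,R)=0$ of a special module is independent of the $\bZ$-grading, so forgetting the grading sends the $\bZ$-graded specials in~(1) to specials in $\SCM R$, and the inclusion ``$\subseteq$'' for $\add$ is automatic.

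For part~(1), by Theorem~\ref{WPL as qgrZ} the equivalence $(-)^{\ox}\colon\CM^{\bL}\!S\xrightarrow{\sim}\CM^{\bZ}\!R$ parametrises rank one objects of $\CM^{\bZ}\!R$ up to shift by classes $\oy\in\bL/\bZ\ox$. Fixing $\oy$ in normal form, I would apply Lemma~\ref{criterion for special} and test equation~\eqref{specials equation} for every $\ell\in\bZ$. For $\ell\le 0$ both sides vanish: $\oy\in\bL_+$ on the left and $\ow+\ox\notin\bL_+$ on the right (using the hypothesis $\ox\notin[0,\oc\,]$). For $\ell\ge 1$, I would use Lemma~\ref{basic observation B}\eqref{basic observation 3} to factor each product $S_{\ow+m\ox}\cdot S_{\oy+(\ell-m)\ox}$ as a fixed monomial $\prod_i x_i^{p_i\epsilon_{m,i}}$ times a smaller graded piece of $S$, where the exponents $\epsilon_{m,i}$ are dictated by which normal-form coefficients of $\ow+m\ox$ and $\oy+(\ell-m)\ox$ exceed $p_i$ when added. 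The right-hand side of \eqref{specials equation} is thereby confined to a specific monomial ideal.

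Comparing this ideal with $S_{\oy+\ow+\ell\ox}$ yields a dichotomy. First, if the normal form of $\oy$ modulo $\bZ\ox$ has at least two non-zero $\ox_i$-coefficients, then for a suitable $\ell$ the monomial ideal produced is too restrictive to cover $S_{\oy+\ow+\ell\ox}$, and \eqref{specials equation} fails. This reduces the analysis to $\oy=u\ox_j$ for some $j\in[1,n]$ and $0\le u\le p_j$. For such $\oy$, unwinding the exponents turns~\eqref{specials equation} at level $\ell\ge 1$ into the arithmetic statement: there exists $m\in[1,\ell]$ with $[u+\ell a_j-1]_{p_j}\ge[ma_j-1]_{p_j}$, the other coordinates being handled by coprimality and Lemma~\ref{basic observation B}\eqref{basic observation 3}. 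This is exactly condition~(3) of Lemma~\ref{i-series region} with $r=p_j$ and $a=a_j$, so it holds for every $\ell\ge 1$ if and only if $u\in I(p_j,p_j-a_j)$; the boundary case $u=p_j$ is handled directly by taking $m=\ell$.

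The ``in particular'' clause is then an immediate reading of part~(1): $R$, $S(\oc)^{\ox}$ and $S((p_j-a_j)\ox_j)^{\ox}$ correspond to $u=0$, $u=p_j$ and $u=p_j-a_j$, which are respectively $i_{m_j+1}$, $i_0$ and $i_1$ in the $i$-series of $\tfrac{p_j}{p_j-a_j}$, so all lie in $I(p_j,p_j-a_j)$ by definition. The main obstacle is the combinatorial step from~\eqref{specials equation} to the arithmetic condition of Lemma~\ref{i-series region}(3): bookkeeping of the forced $\prod_i x_i^{p_i}$ factors coming from Lemma~\ref{basic observation B}\eqref{basic observation 3}, and the careful dichotomy ruling out multi-coordinate $\oy$. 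Once that matching is made precise, the result is a symbolic translation of Lemma~\ref{i-series region}.
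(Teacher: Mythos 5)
Your overall strategy coincides with the paper's: reduce to the coprime case via \ref{change parameters so coprime}, use \ref{L-factorial} and the equivalence of \ref{WPL as qgrZ} to see that rank one graded CM modules are of the form $S(\oy)^{\ox}$, test \eqref{specials equation} of \ref{criterion for special}, factor the products via \ref{basic observation B}\eqref{basic observation 3}, and translate the outcome into condition (3) of \ref{i-series region}. The $\ell\ge 1$ analysis you sketch, the deduction of part (2) from part (1) by summing graded Ext groups, and the reading of the ``in particular'' clause are all correct and match the paper.

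There is, however, a genuine gap in the step that reduces to $\oy=u\ox_j$. You assert that ``for $\ell\le 0$ both sides vanish,'' but this is only true once $\oy$ is already known to be of the form $u\ox_j$ with $0\le u\le p_j$ (so that $\oy+\ow\le\oc+\ow\not\ge 0$). For a general normalized $\oy$ (with $\oy\ge 0$ and $\oy\not\ge\ox$) having two nonzero coordinates, say $\oy\ge\ox_j+\ox_k$, the left-hand side at $\ell=0$ is \emph{nonzero}: $\oy+\ow\ge\ox_j+\ox_k+\ow=\sum_{i\ne j,k}(p_i-1)\ox_i\ge 0$. Meanwhile the right-hand side at $\ell=0$ is identically zero, since a nonzero term would need some $m$ with $\ow+m\ox\ge 0$ (forcing $m\ge 1$ because $\ow\not\ge 0$) and $\oy-m\ox\ge 0$ (forcing $m\le 0$ because $\oy\not\ge\ox$). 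This is exactly how the paper kills the multi-coordinate case: \eqref{specials equation} already fails at $\ell=0$. Your proposed alternative --- that for a suitable $\ell\ge 1$ the monomial ideal on the right is ``too restrictive'' --- is left entirely unargued, and your blanket vanishing claim for $\ell\le 0$ would prevent you from even seeing the $\ell=0$ failure. The fix is short (run the $\ell=0$ test before, not after, restricting $\oy$), but as written the dichotomy is not established. A second, more minor, omission: in the sufficiency direction of the $\ell\ge 1$ analysis you cite only \ref{basic observation B}\eqref{basic observation 3}, whereas the decomposition $S_{\osx}=x_j^{p_j}S_{\osx-\oc}+(\prod_{i\in I_m}x_i^{p_i})S_{\osx-|I_m|\oc}$ that actually closes the argument requires \ref{basic observation B}\eqref{basic observation 4}.
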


\begin{proof}
We only prove \eqref{specials lag approach thm 1}, since the other statements follow immediately.  By \ref{change parameters so coprime}\eqref{change parameters so coprime 1} we can assume that $(a_i,p_i)=1$ for all $1\le i\le n$.  Write $R\colonequals S^{\ox}$.

(a) We first claim that, up to degree shift, $\bZ$-graded special  CM $R$-modules of rank one must have the form $S(u\ox_j)^{\ox}$ for some $1\le j\le n$ and $0\le u\le p_j$.  

By \ref{L-factorial} $S$ is an $\bL$-graded factorial domain, so all rank one objects in $\CM^{\bL}\!S$ have the form $S(\oy)$ for some $\oy\in\bL$. Under the rank preserving equivalence \ref{WPL as qgrZ}\eqref{WPL as qgrZ 1}, it follows that all rank one objects in $\CM^{\bZ}\!R$ have the form $S(\oy)^{\ox}$ for some $\oy\in\bL$.  Since we are working up to degree shift, and $\ox\ge0$, we can assume without loss of generality that $\oy\ge0$ and $\oy\ {\not\ge}\ \ox$, by, if necessary, replacing $\oy$ by $\oy-\ell\ox$ for some $\ell\in\bZ$.

Hence we can assume that our rank one special CM module has the form $S(\oy)^{\ox}$ with $\oy\geq 0$ and $\oy\ {\not\ge}\ \ox$.  
Now assume that $\oy$ can not be written as $u\ox_j$ for some $1\le j\le n$ and $0\le u\le p_j$. Then there exists $j\neq k$ such that $\oy\ge\ox_j+\ox_k$. By applying \ref{criterion for special} for $\ell=0$, it follows that
\[
S_{\oy+\ow}=\sum_{m\in\bZ}S_{\ow+m\ox}\cdot S_{\oy-m\ox}.
\]
Now $S_{\oy+\ow}\neq0$ by our assumption $\oy\ge\ox_j+\ox_k$,  hence there exists $m\in\bZ$ such that $S_{\ow+m\ox}\neq0$ and $S_{\oy-m\ox}\neq0$.  On one hand, since $\ow\ {\not\ge}\ 0$, this implies that $m>0$. On the other hand, since $\oy\ {\not\ge}\ \ox$, this implies that $m\leq 0$, a contradiction. Thus the rank one special CM modules have the claimed form $S(u\ox_j)^{\ox}$.

(b) Let $1\le j\le n$ and $0\le u\le p_j$.  We now show that $S(u\ox_j)^{\ox}$ is a special CM $R$-module if and only
if $u\in I(p_j,p_j-a_j)$.  By \ref{criterion for special}, the CM $R$-module $S(u\ox_j)^{\ox}$ is special if and only if
\begin{equation}\label{special for ux_j}
S_{u\ox_j+\ow+\ell\ox}=
\sum_{m\in\bZ}S_{\ow+m\ox}\cdot S_{u\ox_j+(\ell-m)\ox}
\end{equation}
holds for all $\ell\in\bZ$, or equivalently, for all $\ell>0$  since
the left hand side vanishes for $\ell\le0$ (in that case we have $u\ox_j+\ow\le \oc+\ow\ {\not\ge}\ 0$). Thus in what follows, we fix an arbitrary $\ell> 0$.  

Clearly equality holds in \eqref{special for ux_j} if and only if $\subseteq$ holds.
To simplify notation, for $m\in\bZ$ write
\begin{align*}
\osx&\colonequals u\ox_j+\ow+\ell \ox\\
\osym&\colonequals \ow+m\ox.
\end{align*}
Then $S_{\osym}\cdot S_{\osx-\osym}=S_{\ow+m\ox}\cdot S_{u\ox_j+(\ell-m)\ox}$. Notice that $\osym,\,\osx-\osym\in\bL_+$ holds if and only if $1\le m\le\ell$.
The `if' part follows easily from $\ox\notin[0,\oc\,]$, and the `only if' part follows from $\ow\ {\not\ge}\ 0$ and $u\ox_j-\ox\le\oc-\ox\ {\not\ge}\ 0$.  Thus \eqref{special for ux_j} holds if and only if

\begin{equation}\label{special for ux_j easy}
S_{\osx}\subseteq
\sum_{m=1}^{\ell}S_{\osym}\cdot S_{\osx-\osym}
\end{equation}
holds.  Note that $\osx$ and $\osym$ can be written more explicitly as 
\begin{equation}
\left.\begin{array}{l}
\osx=\left( \sum_{i\neq j}(\ell a_i-1)\ox_i \right)+(u+\ell a_j-1)\ox_j+(n-2+a\ell)\oc\\
\osym=\sum_{i=1}^n(m a_i-1)\ox_i +(n-2+am)\oc.
\end{array}\right\}\label{forms of x and y}
\end{equation}
Since $\osym, \,\osx-\osym\in\bL_+$ for each $1\le m\le\ell$, \ref{basic observation B}\eqref{basic observation B 3} implies that
\begin{equation}
S_{\osym}\cdot S_{\osx-\osym}=
(\prod_{i\in I_{m}}x_i^{p_i})S_{\osx-|I_{m}|\oc},\label{simple product}
\end{equation}
where $I_m$ is the set $I$ in \ref{basic observation B}\eqref{basic observation B 3} for $\osx$ and $\osym$.  
As before, for an integer $k$, we write $[k]_{p_i}$ for the integer $k'$ satisfying $0\le k'\le p_i-1$ and $k-k'\in p_i\bZ$. Simply writing out $\osx$ and $\osym$ into normal form, from \eqref{forms of x and y} we see that 
\begin{equation}\label{I set}
I_{m}=\{1\le i\le n\mid [u_i+\ell a_i-1]_{p_i}<[ma_i-1]_{p_i}\}
\end{equation}
where $u_i\colonequals u$ if $i=j$ and $u_i\colonequals 0$ otherwise. 

For the case $m=\ell$, it is clear that $I_{\ell}\subseteq\{j\}$.  Hence  we see that 
\begin{equation}
S_{\osyl}\cdot S_{\osx-\osyl}\stackrel{\eqref{simple product}}{=}
(\prod_{i\in I_{\ell}}x_i^{p_i})S_{\osx-|I_{\ell}|\oc}
\supseteq x_j^{p_j}S_{\osx-\oc}.\label{ell case product}
\end{equation}
Now we claim that \eqref{special for ux_j easy} holds
if and only if $j\notin I_m$ for some $1\le m\le\ell$.\\
($\Rightarrow$) Assume that  \eqref{special for ux_j easy} holds.  If further $j\in I_m$ for all $1\le m\le\ell$, then using
\[
S_{\osx}
\stackrel{\mbox{\scriptsize{\eqref{special for ux_j easy}}}}{=}
\sum_{m=1}^{\ell}S_{\osym}\cdot S_{\osx-\osym}
\stackrel{\mbox{\scriptsize{\eqref{simple product}}}}{=}
\sum_{m=1}^{\ell}(\prod_{i\in I_{m}}x_i^{p_i})S_{\osx-|I_{m}|\oc}
\]
we see that $x_j^{p_j}$ divides every element in $S_{\osx}$.  This gives a contradiction, since we can use the normal form of $\osx$ to obtain elements of $S_{\osx}$ which are not divisible by $x_j^{p_j}$ .\\
($\Leftarrow$)  Suppose that $j\notin I_m$ for some $1\le m\le\ell$.  Since $\ox\ge|I_m|\oc\ge0$ holds by \ref{basic observation B}\eqref{basic observation B 3}, we have
\[
S_{\osx}=x_j^{p_j}S_{\osx-\oc}+(\prod_{i\in I_{m}}x_i^{p_i})S_{\osx-|I_{m}|\oc}.
\]
by choosing $X\colonequals x_j^{p_j}$ and $f(X,Y)\colonequals \prod_{i\in I_{m}}x_i^{p_i}$ in \ref{basic observation B}\eqref{basic observation B 4}.
Finally, using \eqref{simple product} and \eqref{ell case product} this gives
\[
S_{\osx}\subseteq S_{\osyl}\cdot S_{\osx-\osyl}+S_{\osym}\cdot S_{\osx-\osym},
\]
which clearly implies \eqref{special for ux_j easy}.

Consequently, \eqref{special for ux_j easy} holds if and only if
$j\notin I_m$ for some $1\leq m\leq \ell$, which by \eqref{I set} holds if and only if $[u+\ell a_j-1]_{p_j}\ge[ma_j-1]_{p_j}$ for some $1\le m\le\ell$.  By \ref{i-series region}, this holds if and only if $u\in I(p_j,p_j-a_j)$, proving claim (b).

(c) We now prove part \eqref{specials lag approach thm 1}. Combining (a) and (b), it suffices to show that there is no indecomposable object $X$ in $\SCM^{\bZ}\! R$ with rank bigger than one. Otherwise, by \cite[15.2.1]{Y}, $\widehat{X}$ is an indecomposable object in $\SCM\mathfrak{R}$ with rank bigger than one, where $\mathfrak{R}$ is the completion of $R$.  This is a contradiction to \ref{Wunram main} and \ref{fund is reduced}, and so Part (1) follows.
\end{proof}

\subsection{The Middle Self-Intersection Number}
In this subsection we use the techniques of the previous subsections to determine the middle self-intersection number in \eqref{key dual graph}. This requires the following two elementary but technical lemmas.

\begin{lemma}\label{basis of Sn-1}
Let $\ell_1,\ldots,\ell_m$ be elements in $S_{\oc}$ such that any two elements are linearly independent. Then $\prod_{j\neq1}\ell_j,\ldots,\prod_{j\neq m}\ell_j$ is a basis of $S_{(m-1)\oc}$.
\end{lemma}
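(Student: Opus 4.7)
The plan is to reduce to an elementary Lagrange--interpolation style argument on $\mathbb{P}^1$. First, by Lemma~\ref{basic observation}\eqref{basic observation 2}, $\dim_{\KK} S_{(m-1)\oc}=m$, which is exactly the number of proposed basis elements. So it suffices to verify linear independence of $\prod_{j\neq 1}\ell_j,\ldots,\prod_{j\neq m}\ell_j$ inside $S_{(m-1)\oc}$.

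Next, I would identify $S_{a\oc}$ with the space of homogeneous forms of degree $a$ in $t_0,t_1$: by Lemma~\ref{basic observation}\eqref{basic observation 2} the monomials $t_0^{\ell}t_1^{a-\ell}$ form a basis, and multiplication gives $S_{a\oc}\cdot S_{b\oc}\subseteq S_{(a+b)\oc}$ compatibly with multiplication of forms. In particular each $\ell_i\in S_{\oc}$ is a nonzero linear form in $t_0,t_1$, defining a point $q_i\in\mathbb{P}^1$ where it vanishes. The hypothesis that any two $\ell_i,\ell_j$ are linearly independent translates precisely into the statement that the points $q_1,\ldots,q_m$ are pairwise distinct.

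The key observation is then that, for each pair $i,k$, the product $\prod_{j\neq i}\ell_j$ vanishes at $q_k$ if $k\neq i$ (because $\ell_k$ is one of the factors), whereas $\prod_{j\neq i}\ell_j(q_i)\neq 0$, since none of the remaining $\ell_j$ with $j\neq i$ vanishes at $q_i$ by distinctness of the $q_j$. Suppose now that $\sum_{i=1}^m c_i\prod_{j\neq i}\ell_j=0$ in $S_{(m-1)\oc}$; evaluating this identity of forms at $q_k$ kills every term except $i=k$, yielding $c_k\prod_{j\neq k}\ell_j(q_k)=0$, and hence $c_k=0$. This holds for all $k$, so the $m$ elements are linearly independent and therefore form a basis.

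There is essentially no obstacle; the only mildly delicate point is the translation between $S_{\oc}$ (as a piece of the $\bL$-graded ring $S$) and the space of linear forms in $t_0,t_1$, but this is immediate from Lemma~\ref{basic observation}\eqref{basic observation 2} together with the fact that $S$ is an $\bL$-domain, so the product of nonzero homogeneous elements of $S$ coincides with the usual product of the corresponding forms in $\KK[t_0,t_1]$.
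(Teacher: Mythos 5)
Your proof is correct, but it takes a different route from the paper. You argue directly: after the dimension count $\dim_{\KK}S_{(m-1)\oc}=m$ from \ref{basic observation}\eqref{basic observation 2}, you identify $\bigoplus_a S_{a\oc}$ with $\KK[t_0,t_1]$, view the $\ell_i$ as linear forms cutting out pairwise distinct points $q_i\in\mathbb{P}^1$, and run the standard Lagrange-interpolation argument (evaluate a vanishing linear combination at a representative of $q_k$ to kill all terms but the $k$-th). The paper instead proceeds by induction on $m$: assuming the statement for $m-1$, the products $\prod_{j\neq i}\ell_j$ with $i\le m-1$ all contain the factor $\ell_m$ and hence give a basis of $\ell_mS_{(m-2)\oc}$, and then the decomposition $S_{(m-1)\oc}=\ell_mS_{(m-2)\oc}+\KK\prod_{j\neq m}\ell_j$ (an instance of \ref{basic observation B}\eqref{basic observation 4}, since $\prod_{j\neq m}\ell_j$ is not a multiple of $\ell_m$) finishes the step. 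Your argument is more self-contained and makes the geometric content (distinct points on $X\cong\mathbb{P}^1$) explicit, at the cost of spelling out the identification of $S_{a\oc}$ with binary forms; the paper's induction is shorter because it leans on the surrounding combinatorial lemmas already in place. One small remark: the justification you want for ``products in $S$ agree with products of forms'' is simply that $\KK[t_0,t_1]$ embeds in $S$ as the subring $\bigoplus_aS_{a\oc}$ (which follows from \ref{basic observation}\eqref{basic observation 2}), rather than the $\bL$-domain property per se; this does not affect the validity of your argument.
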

\begin{proof}
Assume that the assertion holds for $m-1$. Then
$\prod_{j\neq1}\ell_j,\ldots,\prod_{j\neq m-1}\ell_j$ gives a basis of $\ell_mS_{(m-2)\oc}$.
Since $S_{(m-1)\oc}=\ell_mS_{(m-2)\oc}+\KK\prod_{j\neq m}\ell_j$ holds, the assertion also holds for $m$.
\end{proof}

The following lemma is general, and does not require $n>0$.
\begin{lemma}\label{decompose Stx-c}
Let $\ox\in\bL_+$, and write $\ox=\sum_{i=1}^na_i\ox_i+a\oc$ in normal form.
If  $t\ge2$, then every morphism in $\Hom_S^{\bL}(S(\oc),S(t\ox))$ factors through $\add\{S(\ox+(p_i-a_i)\ox_i)\mid1\le i\le n\}$.
\end{lemma}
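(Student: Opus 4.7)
The plan is to exploit the identification $\Hom_S^{\bL}(S(\ox),S(\oy))\cong S_{\oy-\ox}$ from \eqref{Hom between Ss} to recast the lemma as a concrete calculation with graded pieces of $S$. Noting that $(p_i-a_i)\ox_i-\oc = -a_i\ox_i$ in $\bL$, a morphism $\alpha\in S_{t\ox-\oc}$ factors through $S(\ox+(p_i-a_i)\ox_i)$ precisely when $\alpha$ lies in the product $S_{\ox-a_i\ox_i}\cdot S_{(t-1)\ox-(p_i-a_i)\ox_i}$. The lemma thus reduces to showing the inclusion
\[
S_{t\ox-\oc}\;\subseteq\; \sum_{i=1}^n S_{\ox-a_i\ox_i}\cdot S_{(t-1)\ox-(p_i-a_i)\ox_i}.
\]

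First I would use Lemma~\ref{basic observation}\eqref{basic observation 1}\eqref{basic observation 2} to give explicit monomial descriptions of each graded piece, pulling out the common $x_j$-factors. A careful normal form computation, resting on the fact that $e_j:=\lfloor ta_j/p_j\rfloor-\lfloor (t-1)a_j/p_j\rfloor\in\{0,1\}$ for every $j$ (since $a_j<p_j$), expresses a typical product $g\cdot h$ with $g\in S_{\ox-a_i\ox_i}$ and $h\in S_{(t-1)\ox-(p_i-a_i)\ox_i}$ in the form
\[
\prod_{j=1}^n x_j^{[ta_j]_{p_j}}\cdot \prod_{j\in J\setminus\{i\}}\ell_j\cdot f,
\]
where $J:=\{j:e_j=1\}$ and $f\in\KK[t_0,t_1]$ ranges over polynomials of appropriate degree. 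Since $\prod_{j=1}^n x_j^{[ta_j]_{p_j}}$ is precisely the common monomial factor in the normal form description of $S_{t\ox-\oc}$, the inclusion reduces to the statement
\[
\KK[t_0,t_1]_d \;=\; \sum_{i=1}^n \prod_{j\in J\setminus\{i\}}\ell_j\cdot \KK[t_0,t_1]_{d-|J\setminus\{i\}|},
\]
where $d:=ta+\sum_j\lfloor ta_j/p_j\rfloor-1$ is the $\oc$-component of $t\ox-\oc$ in normal form.

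If $J=\emptyset$ this is immediate. Otherwise I would restrict the sum to indices $i\in J$, for which $|J\setminus\{i\}|=|J|-1$ and a short degree count (using $|J|\le\sum_j\lfloor ta_j/p_j\rfloor$ and $t\ge2$) yields $d-|J|+1\ge0$. Since the linear forms $\{\ell_j:j\in J\}$ are pairwise linearly independent, Lemma~\ref{basis of Sn-1} shows that the $|J|$ products $\prod_{j\in J\setminus\{i\}}\ell_j$ with $i\in J$ form a basis of $\KK[t_0,t_1]_{|J|-1}$. Combined with the elementary surjectivity of multiplication $\KK[t_0,t_1]_{|J|-1}\otimes\KK[t_0,t_1]_{d-|J|+1}\to\KK[t_0,t_1]_d$, already the subsum over $i\in J$ exhausts $\KK[t_0,t_1]_d$.

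The main obstacle is the normal form bookkeeping in the reduction step: to track precisely the exponents of each $x_j$ in the product $g\cdot h$, one must carefully separate those $j$ for which passing from $(t-1)a_j$ to $ta_j$ crosses a multiple of $p_j$ (i.e.\ $j\in J$, contributing an extra $\ell_j$ via the relation $x_j^{p_j}=\ell_j(t_0,t_1)$) from those for which it does not. Degenerate cases (when $a=0$ or some $a_i=0$, causing certain $S_{(t-1)\ox-(p_i-a_i)\ox_i}$ to vanish) cause no harm, since they simply contribute zero to the sum; the hypothesis $t\ge 2$ guarantees that the surviving terms with $i\in J$ are enough to span.
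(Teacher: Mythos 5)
Your proposal is correct and follows essentially the same route as the paper's proof: the same reduction to the inclusion $S_{t\ox-\oc}\subseteq\sum_i S_{\ox-a_i\ox_i}\cdot S_{(t-1)\ox-(p_i-a_i)\ox_i}$, the same bookkeeping via $e_j=\lfloor ta_j/p_j\rfloor-\lfloor(t-1)a_j/p_j\rfloor$ (the paper's $\varepsilon_j$), and the same appeal to Lemma~\ref{basis of Sn-1} to span $\KK[t_0,t_1]_{|J|-1}$ by the products $\prod_{j\in J\setminus\{i\}}\ell_j$ with $i\in J$. The only cosmetic difference is that you phrase the graded pieces as $\KK[t_0,t_1]_d$ rather than $S_{d\oc}$, and your treatment of the degenerate $J=\emptyset$ case is slightly terser than the paper's explicit check that $m=a=0$ forces $S_{t\ox-\oc}=0$.
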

\begin{proof}
It suffices to show that
\[
S_{t\ox-\oc}\subset\sum_{i=1}^nS_{\ox-a_i\ox_i}\cdot S_{(t-1)\ox-(p_i-a_i)\ox_i}.
\]
For each $i$ with $1\le i\le n$, take $m_i\ge0$ and $\upvarepsilon_i\in\{0,1\}$ such that
\[
(t-1)a_i=[(t-1)a_i]_{p_i}+m_ip_i\ \mbox{ and }\ ta_i=[ta_i]_{p_i}+(m_i+\upvarepsilon_i)p_i.
\]
Let $m\colonequals \sum_{i=1}^nm_i$ and $\upvarepsilon\colonequals \sum_{i=1}^n\upvarepsilon_i$. Then the equality
\[
t\ox-\oc=\sum_{j=1}^n[ta_j]_{p_j}\ox_j+(m+\upvarepsilon-1+ta)\oc
\] 
implies that
\begin{equation}\label{tx-c}
S_{t\ox-\oc}=(\prod_{j=1}^nx_j^{[ta_j]_{p_j}})S_{(m+\upvarepsilon-1+ta)\oc}.
\end{equation}
Similarly the equality 
\[
(t-1)\ox-(p_i-a_i)\ox_i=[ta_i]_{p_i}\ox_i+\sum_{j\neq i}[(t-1)a_j]_{p_j}\ox_j+(m+\upvarepsilon_i-1+(t-1)a)\oc
\]
implies that
\[
S_{(t-1)\ox-(p_i-a_i)\ox_i}=x_i^{[ta_i]_{p_i}}(\prod_{j\neq i}x_j^{[(t-1)a_j]_{p_j}})S_{(m+\upvarepsilon_i-1+(t-1)a)\oc}.
\]
Multiplying $S_{\ox-a_i\ox_i}=(\prod_{j\neq i}x_j^{a_j})S_{a\oc}$ and using $[(t-1)a_j]_{p_j}+a_j=[ta_j]_{p_j}+\upvarepsilon_jp_j$ gives
\begin{equation}\label{product of S}
S_{\ox-a_i\ox_i}\cdot S_{(t-1)\ox-(p_i-a_i)\ox_i}=(\prod_{j=1}^nx_j^{[ta_j]_{p_j}})(\prod_{j\neq i}x_j^{\upvarepsilon_jp_j})S_{a\oc}\cdot S_{(m+\upvarepsilon_i-1+(t-1)a)\oc}.
\end{equation}
Now set $I\colonequals \{1\le i\le n\mid \upvarepsilon_i=1\}$.  Clearly $|I|=\upvarepsilon$ holds.

First we assume $I\neq\emptyset$. By \ref{basis of Sn-1} we have $\sum_{i\in I}\KK\prod_{j\neq i}x_j^{\upvarepsilon_jp_j}=S_{(\upvarepsilon-1)\oc}$ and thus
\begin{eqnarray*}
\sum_{i\in I}S_{\ox-a_i\ox_i}\cdot S_{(t-1)\ox-(p_i-a_i)\ox_i}&\stackrel{\eqref{product of S}}{=}&(\prod_{j=1}^nx_j^{[ta_j]_{p_j}})S_{(\upvarepsilon-1)\oc}\cdot S_{a\oc}\cdot S_{(m+(t-1)a)\oc}\\
&=&(\prod_{j=1}^nx_j^{[ta_j]_{p_j}})S_{(m+\upvarepsilon-1+ta)\oc}\\
&\stackrel{\eqref{tx-c}}{=}&S_{t\ox-\oc},
\end{eqnarray*}
as desired.

Next we assume $I=\emptyset$.
If further $m-1+(t-1)a\ge0$, then \eqref{product of S} is equal to
\[
(\prod_{j=1}^nx_j^{[ta_j]_{p_j}})S_{a\oc}\cdot S_{(m-1+(t-1)a)\oc}=(\prod_{j=1}^nx_j^{[ta_j]_{p_j}})S_{(m-1+ta)\oc}\stackrel{\eqref{tx-c}}{=}S_{t\ox-\oc},
\]
as desired, so we can assume that $m-1+(t-1)a<0$. But $a\ge0$ since $\ox\in\bL_+$, and $t\ge2$ by assumption, so necessarily $m=0=a$.
Then $m-1+ta<0$ holds, so $S_{t\ox-\oc}=0$ by \eqref{tx-c}, which implies the assertion.
\end{proof}

The following is the main result of this subsection; the main point is that the manipulations above involving the combinatorics of the weighted projective line give the geometric corollary in \ref{middle SI number} below.

\begin{thm}\label{number of arrows}
Let $\ox\in\bL_+$ with $\ox\notin[0,\oc\,]$, and write $\ox=\sum_{i=1}^na_i\ox_i+a\oc$ in normal form. Set $R\colonequals S^{\ox}$ and $N\colonequals S(\oc)^{\ox}$, and consider their completions $\mathfrak{R}$ and $\widehat{N}$.  Then in the quiver of the reconstruction algebra of $\mathfrak{R}$, the number of arrows from $\widehat{N}$ to $\mathfrak{R}$ is $a$.
\end{thm}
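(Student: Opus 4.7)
The plan is to compute the number of arrows as $\dim_{\KK}\Hom_R(N,R)/\rad^2(N,R)$, where $\rad^2$ denotes compositions through other indecomposable summands (or through endomorphisms of $N$ or $R$ lying in $\mathfrak{m}$). Since this quotient will turn out to be finite-dimensional, the graded computation coincides with its analogue over the completion, and so equals the number of arrows $\widehat N\to\mathfrak R$ in the Gabriel quiver of $\widehat{\Gamma}$. By \ref{change parameters so coprime}\eqref{change parameters so coprime 1}, we may assume without loss of generality that $(p_i,a_i)=1$ for all $i$; this change of parameters preserves both $R$ and the value of $a$. Combining the Veronese equivalence \ref{WPL as qgrZ} with \ref{slightly stronger} then identifies $\Hom_R(N,R)\cong\bigoplus_{k\in\bZ}S_{-\oc+k\ox}$ as $\bZ$-graded vector spaces, and analogously for Hom spaces involving the other indecomposable summands $M_{uj}=S(u\ox_j)^{\ox}$ of the additive generator of $\SCM R$.

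The argument then proceeds by analyzing the graded pieces of $\Hom_R(N,R)$. First, a short normal-form calculation in $\bL$ shows that for $k\le 0$ the element $-\oc+k\ox$ has strictly negative $\oc$-coefficient (with the hypothesis $\ox\notin[0,\oc\,]$ ensuring this even when $v=0$), so $\Hom_R(N,R)_{\le 0}=0$. Secondly, for $k=1$, since $\ox-\oc=\sum_i a_i\ox_i+(a-1)\oc$ is in normal form, applying \ref{basic observation} gives $\Hom_R(N,R)_1=S_{\ox-\oc}=(\prod_i x_i^{a_i})\cdot S_{(a-1)\oc}$, which has dimension $a$ when $a\ge 1$ and vanishes when $a=0$; in both cases the dimension equals $a$. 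Thirdly, for $k\ge 2$, Lemma \ref{decompose Stx-c} shows every morphism $S(\oc)\to S(k\ox)$ factors through modules of the form $S(\ox+(p_i-a_i)\ox_i)$, which under the Veronese correspond (up to degree shift) to the special CM modules $M_{p_i-a_i,i}$. Since these are distinct indecomposable summands of the additive generator, the factoring maps lie in the radical and hence the degree $\ge 2$ piece sits entirely inside $\rad^2(N,R)$.

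It remains to verify that $\Hom_R(N,R)_1\cap\rad^2(N,R)=0$, which will then give $\dim_{\KK}\Hom_R(N,R)/\rad^2(N,R)=a$ and complete the proof. A factorization $N\xrightarrow{h}L\xrightarrow{g}R$ with $h,g\in\rad$ contributing a total degree of one must have $\deg h+\deg g=1$. If $L=R$ or $L=N$, then the endomorphism factor lies in strictly positive degree (being in $\mathfrak{m}_{\End(R)}$ or $\mathfrak{m}_{\End(N)}$), which forces the other map into the already-vanishing piece $\Hom_R(N,R)_{\le 0}$. If instead $L=M_{uj}$ with $0<u<p_j$, then both $\Hom_R(N,M_{uj})_0=S_{u\ox_j-\oc}$ and $\Hom_R(M_{uj},R)_0=S_{-u\ox_j}$ vanish by the same normal-form argument, since $u\ox_j-\oc$ has $\oc$-coefficient $-1$ and $-u\ox_j$ has normal form $(p_j-u)\ox_j-\oc$, again with $\oc$-coefficient $-1$. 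Consequently $\deg h\ge 1$ and $\deg g\ge 1$, contradicting $\deg h+\deg g=1$.

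The main obstacle is precisely this last step: ruling out every possible intermediate factorization in $\rad^2(N,R)$. The mechanism that makes it tractable is the degree-vanishing of low-degree Hom spaces among the rank-one specials, and this in turn rests on the positivity of $\ox$ together with the uniqueness of normal forms in $\bL$, which together force the ``arriving'' and ``departing'' Hom spaces at every proper intermediate summand to share no common nonzero degrees summing to one.
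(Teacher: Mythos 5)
Your proposal is correct and takes essentially the same route as the paper's own proof: after the same change of parameters and passage through the Veronese equivalence, you compute the graded pieces of $\Hom_S^{\bL}(S(\oc),S(t\ox))$, kill degrees $t\le 0$ by positivity/normal forms, kill degrees $t\ge 2$ via Lemma \ref{decompose Stx-c}, and isolate degree one of dimension $\dim_{\KK}S_{\ox-\oc}=a$. Your degree-counting argument that $\rad^2$ misses the degree-one piece is just a repackaging of the paper's observation that for any intermediate $S(u\ox_j+s\ox)$ one of the two Hom spaces $S_{u\ox_j+s\ox-\oc}$ or $S_{(1-s)\ox-u\ox_j}$ vanishes according to the sign of $s$.
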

\begin{proof}
By \ref{change parameters so coprime}\eqref{change parameters so coprime 2},  $S_{\bp,\bl}^{\ox}\cong S_{\bp'\!,\bl'}^{\osx}$ as $\bZ$-graded algebras, where $\osx\colonequals \sum_{i\in I}\sa_i\osx_i+a\osc\in\bL^{\! \prime}_{\phantom\prime}$ satisfies the condition in \ref{WPL as qgrZ}\eqref{WPL as qgrZ 4}.  Note that this change in parameters has not changed the value $a$ on $\oc$, hence in what follows, we can assume that $\CM^\bL\!S\simeq\CM^\bZ\!R$ holds, via the functor $(-)^{\ox}$.

Let $\cC$ be the full subcategory of $\CM^{\bL}\!S$ corresponding to $\SCM^\bZ\!R$ via the functor $(-)^{\ox}$.
Then the number of arrows from $\widehat{N}$ to $\mathfrak{R}$ is equal to the dimension of the $\KK$-vector space
\[
\frac{{\rm rad}_{\SCM\mathfrak{R}}(\widehat{N},\mathfrak{R})}{{\rm rad}^2_{\SCM\mathfrak{R}}(\widehat{N},\mathfrak{R})}
\cong\prod_{t\in\bZ}\frac{\Hom_R^{\bZ}(N,R(t))}{{\rm rad}^2_{\SCM^{\bZ}\!R}(N,R(t))}\cong\prod_{t\in\bZ}\frac{\Hom_S^{\bL}(S(\oc),S(t\ox))}{{\rm rad}^2_{\cC}(S(\oc),S(t\ox))}.
\]
By \ref{specials lag approach thm}, $\cC$ is the additive closure of $S(u\ox_j+s\ox)$, where $s\in\bZ$, $1\le j\le n$ and $u\in I(p_j,p_j-a_j)$. We split into three cases.
\begin{enumerate}
\item  If $t\le 0$, then $\Hom_S^{\bL}(S(\oc),S(t\ox))=0$.

\item If $t\ge2$, then since $S(\ox+(p_i-a_i)\ox_i)$ belongs to $\cC$ by \ref{specials lag approach thm}, and is not isomorphic to both $S(\oc)$ and $S(t\ox)$ in $\mod^{\bL}\!S$ (since $\ox\notin[0,\oc\,]$), we have $\Hom_S^{\bL}(S(\oc),S(t\ox))={\rm rad}^2_{\cC}(S(\oc),S(t\ox))$ by \ref{decompose Stx-c}.

\item Suppose that $t=1$.  By definition any morphism in ${\rm rad}^2_{\cC}(S(\oc),S(\ox))$ can be written as
a sum of compositions $S(\oc)\to S(u\ox_j+s\ox)\to S(\ox)$.
If $s\le0$, then $\Hom_S^{\bL}(S(\oc),S(u\ox_j+s\ox))\cong S_{u\ox_j+s\ox-\oc}$, and hence ${\rm rad}_{\cC}(S(\oc),S(u\ox_j+s\ox))=0$.  If $s\ge1$, then  $\Hom_S^{\bL}(S(u\ox_j+s\ox),S(\ox))\cong S_{(1-s)\ox-u\ox_j}$, and hence ${\rm rad}_{\cC}(S(u\ox_j+s\ox),S(\ox))=0$.  Either way, ${\rm rad}^2_{\cC}(S(\oc),S(t\ox))=0$ in this case.
\end{enumerate}
Combining all cases, the desired number is thus
\[
\sum_{t\in\bZ}\dim_{\KK}\left(\frac{\Hom_S^{\bL}(S(\oc),S(t\ox))}{{\rm rad}^2_{\cC}(S(\oc),S(t\ox))}\right)=\dim_{\KK}\Hom_S^{\bL}(S(\oc),S(\ox))=\dim_{\KK}S_{\ox-\oc}
\stackrel{\scriptsize\mbox{\ref{basic observation}}}{=}a.\qedhere
\]

\vspace{-1.1cm}

\end{proof}

\vspace{0.6cm}

This allows us to finally complete the proof of \ref{dual graph general intro} from the introduction.
\begin{cor}\label{middle SI number}
Let $\ox\in\bL_+$ with $\ox\notin[0,\oc\,]$, and write $\ox=\sum_{i=1}^na_i\ox_i+a\oc$ in normal form. Then the morphism $\uppi\colon Y^{\ox}\to\Spec S^{\ox}$ is the minimal resolution, and its dual graph is precisely \eqref{key dual graph} with $\upbeta=a+v=a+\#\{i\mid a_i\neq0\}$.
\end{cor}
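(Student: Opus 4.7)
The plan is to assemble three ingredients that are already in place. First, minimality of $\pi$ is immediate from Lemma~\ref{when Y min res}, since the hypothesis $\ox\notin[0,\oc\,]$ is precisely the one there. Second, the gross shape of the dual graph---a central vertex with arms labelled by Hirzebruch--Jung continued fractions---follows by resolving the surface singularities of $T^{\ox}$ described in \ref{sings on T}. Recall that on the central $\mathbb{P}^1\subset T^{\ox}$, the only singularities are the cyclic quotients $\frac{1}{p_i}(1,-a_i)$ at $\lambda_i$; those with $a_i=0$ contribute nothing (the group acts by pseudoreflections, so the local ring is smooth), while for $a_i\neq0$ the minimal resolution of $\frac{1}{p_i}(1,-a_i)$ is a chain of $\mathbb{P}^1$s with self-intersections $-\alpha_{i1},\ldots,-\alpha_{im_i}$, where $\tfrac{p_i}{p_i-a_i}=[\alpha_{i1},\ldots,\alpha_{im_i}]$. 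The central curve is the proper transform of the $\mathbb{P}^1$ in $T^{\ox}$, whose self-intersection number is the only piece not yet pinned down. This establishes the graph \eqref{key dual graph} up to the single integer $\beta$, and also gives $v=\#\{i\mid a_i\neq 0\}$ as the number of arms.

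To compute $\beta$, the key input is Theorem~\ref{number of arrows}, which I would apply to the complete local ring $\mathfrak{R}$. By \ref{positions forced} the module $N=S(\oc)^{\ox}$ corresponds to the central vertex $E_0$ of the dual graph, and \ref{number of arrows} asserts that the number of arrows from $\widehat{N}$ to $\mathfrak{R}$ in the quiver of the reconstruction algebra equals $a$. On the other hand, Theorem~\ref{GL2 for R} identifies this number as $-E_0\cdot Z_f$, where $Z_f$ is the fundamental cycle of $\pi$. By \ref{fund is reduced} the fundamental cycle is reduced, so $Z_f=\sum_j E_j$, and the calculation \eqref{Zf dot Ei} (applied with $Z=\sum_j E_j$) gives $-E_0\cdot Z_f=\beta-v$. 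Equating the two expressions yields $\beta-v=a$, i.e.\ $\beta=a+v$, as required.

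The only place there is any subtlety is the matching of the module $S(\oc)^{\ox}$ with the central vertex: this is where the star-shaped topology of the dual graph, combined with the irreducible morphisms between the specials appearing in \ref{specials determined thm}, forces the assignment via the intersection data of \ref{GL2 for R}, as outlined in \ref{positions forced}. Once this identification is in hand, the remainder is a short bookkeeping combining \ref{number of arrows}, \ref{fund is reduced}, and \eqref{Zf dot Ei}. I expect this identification step to be the only real obstacle; every other ingredient is already fully established earlier in the paper.
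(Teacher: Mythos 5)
Your proposal is correct and follows essentially the same route as the paper's own proof: minimality from \ref{when Y min res}, the arm structure from resolving the singularities in \ref{sings on T}, and then $\beta-v=-E_0\cdot Z_f=a$ by combining \ref{number of arrows}, \ref{GL2 for R}, \ref{fund is reduced} and \eqref{Zf dot Ei}. The extra care you take over identifying $S(\oc)^{\ox}$ with the central vertex via \ref{positions forced} is exactly the point the paper leaves implicit, so nothing is missing.
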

\begin{proof}
We know from \ref{when Y min res} that $\uppi$ is the minimal resolution, and we know from construction of $Y^{\ox}$ that all the self-intersection numbers are determined by the continued fraction expansions (\S\ref{iseries section}), except the middle curve $E_i$ corresponding to the special CM module $S(\oc)^{\ox}$.   The dual graph does not change under completion. By \ref{number of arrows} the number of arrows in the reconstruction algebra from the middle vertex to the vertex $\circ$ is $a$.  Thus the calculation \eqref{Zf dot Ei} combined with \ref{GL2 for R} shows that $a=-E_i\cdot Z_f=\upbeta-v$.
\end{proof}

\subsection{The Reconstruction Algebra and its $\qgr$}
Using the above subsections, we next describe the quiver of the reconstruction algebra and determine the associated $\qgr$ category.  Consider the dual graph \eqref{key dual graph}, then with the convention that we only draw the arms that are non-empty, we see from \ref{fund is reduced}, \eqref{Zf dot Ei} and $Z_K\cdot E_i=E_i^2+2$ that
\begin{equation}\label{ZfZk dot Ei}
\begin{array}{cc}
((Z_K-Z_f)\cdot E_i)_{i}=&
\begin{array}{c}
\begin{tikzpicture}[xscale=0.75,yscale=0.75]
\node (0) at (0,0) {$\scriptstyle 2-v$};
\node (A1) at (-3,1) {$\scriptstyle 0$};
\node (A2) at (-3,2) {$\scriptstyle 0$};
\node (A3) at (-3,3) {$\scriptstyle 0$};
\node (A4) at (-3,4) {$\scriptstyle 1$};
\node (B1) at (-1.5,1) {$\scriptstyle 0$};
\node (B2) at (-1.5,2) {$\scriptstyle 0$};
\node (B3) at (-1.5,3) {$\scriptstyle 0$};
\node (B4) at (-1.5,4) {$\scriptstyle 1$};
\node (C1) at (0,1) {$\scriptstyle 0$};
\node (C2) at (0,2) {$\scriptstyle 0$};
\node (C3) at (0,3) {$\scriptstyle 0$};
\node (C4) at (0,4) {$\scriptstyle 1$};
\node (n1) at (2,1) {$\scriptstyle 0$};
\node (n2) at (2,2) {$\scriptstyle 0$};
\node (n3) at (2,3) {$\scriptstyle 0$};
\node (n4) at (2,4) {$\scriptstyle 1$};
\node at (-3,2.6) {$\vdots$};
\node at (-1.5,2.6) {$\vdots$};
\node at (0,2.6) {$\vdots$};
\node at (2,2.6) {$\vdots$};
\node at (1,3.5) {$\hdots$};
\node at (1,1.5) {$\hdots$};
\node (T) at (0,4.25) {};
\draw (A1) -- (0);
\draw (B1) -- (0);
\draw (C1) -- (0);
\draw (n1) -- (0);
\draw (A2) -- (A1);
\draw (B2) -- (B1);
\draw (C2) -- (C1);
\draw (n2) -- (n1);
\draw (A4) -- (A3);
\draw (B4) -- (B3);
\draw (C4) -- (C3);
\draw (n4) -- (n3);
\end{tikzpicture}
\end{array}
\end{array}
\end{equation}
Note that the cases $v=0$ and $v=1$ are degenerate, and are already well understood \cite{WemA}. Therefore in the next result, we only consider the case $v\geq 2$. 

Inspecting the list of special CM $S^{\ox}$-modules in \ref{specials determined thm}, the conditions in \ref{Gamma convention remark} are satisfied, so we consider the particular choice of reconstruction algebra
\[
\Upgamma_{\ox}\colonequals \End_{S^{\ox}}(M^{\ox})\quad\mbox{where}\quad M\colonequals S\oplus(\bigoplus_{j\in [1,n], u} S(u\ox_j))\oplus S(\oc),
\]
and $u$ in the middle direct sum ranges over $I(p_j,p_j-a_j)\backslash \{0,p_j\}$.  Since the above $S^{\ox}$-modules are clearly $\bZ$-graded, this induces a $\bZ$ grading on $\Upgamma_{\ox}$.

\begin{cor}\label{recon quiver and number relations}
For $\ox\in\bL_+$ with $\ox\notin[0,\oc\,]$, write $\ox=\sum_{i=1}^na_i\ox_i+a\oc$ in normal form.  For each $i$ with $a_i\neq 0$, as before $m_i$ is defined via $\frac{p_i}{p_i-a_i}=[\upalpha_{i1},\hdots,\upalpha_{im_i}]$, and if $a_i=0$ set $m_i=0$.  Suppose that $v=\#\{i\mid a_i\neq 0\}$ satisfies $v\geq 2$. Then the reconstruction algebra  $\Upgamma_{\ox}$ can be presented as a quiver with relations, where the relations are homogeneous with respect to the natural grading, and the quiver is the following: we first  consider the double quiver of the dual graph \eqref{key dual graph} and add an extending vertex (denoted $\begin{tikzpicture} \node at (0,0) [cvertex] {};\end{tikzpicture}$) as follows:
\begin{equation}\label{recon quiver}
\begin{array}{c}
\\
\begin{tikzpicture}[xscale=1,yscale=1,bend angle=30, looseness=1]
\node (0) at (0,0) [vertex] {};
\node (A1) at (-3,1) [vertex] {};
\node (A2) at (-3,2) [vertex] {};
\node (A3) at (-3,3) [vertex] {};
\node (A4) at (-3,4) [vertex] {};
\node (B1) at (-1.5,1) [vertex] {};
\node (B2) at (-1.5,2) [vertex] {};
\node (B3) at (-1.5,3) [vertex] {};
\node (B4) at (-1.5,4) [vertex] {};
\node (C1) at (0,1) [vertex] {};
\node (C2) at (0,2) [vertex] {};
\node (C3) at (0,3) [vertex] {};
\node (C4) at (0,4) [vertex] {};
\node (n1) at (2,1) [vertex] {};
\node (n2) at (2,2) [vertex] {};
\node (n3) at (2,3) [vertex] {};
\node (n4) at (2,4) [vertex] {};
\node at (-3,2.6) {$\vdots$};
\node at (-1.5,2.6) {$\vdots$};
\node at (0,2.6) {$\vdots$};
\node at (2,2.6) {$\vdots$};
\node at (1.5,2.5) {$\hdots$};
\node (T) at (0,5) [cvertex] {};
\draw [->] (A1)+(-30:4.5pt) -- ($(0) + (190:4.5pt)$);
\draw [->] (B1) --(0);
\draw [->] (C1) --(0);
\draw [->] (n1) --(0);
\draw [->] (A2) -- (A1);
\draw [->] (B2) --(B1);
\draw [->] (C2) --(C1);
\draw [->] (n2) -- (n1);
\draw [->] (A4) -- (A3);
\draw [->] (B4) --(B3);
\draw [->] (C4) --(C3);
\draw [->] (n4) --  (n3);
\draw [->] (T)+(-190:4.5pt) -- ($(A4) + (30:4.5pt)$);
\draw [->] (T) -- (B4);
\draw [->] (T) -- (C4);
\draw [->] (T) -- (n4);
\draw [bend right, bend angle=10, looseness=0.5, <-, red] (A1)+(-50:4.5pt) to ($(0) + (200:4.5pt)$);
\draw [bend right, bend angle=10, looseness=0.5, <-, red] (B1)+(-50:4.5pt) to ($(0) + (160:4.5pt)$);
\draw [bend right, <-, red](C1) to (0);
\draw [bend right, bend angle=10, looseness=0.5, <-, red](n1) to (0);
\draw [bend right, <-, red] (A2) to (A1);
\draw [bend right, <-, red] (B2) to (B1);
\draw [bend right, <-, red] (C2) to (C1);
\draw [bend right, <-, red] (n2) to (n1);
\draw [bend right, <-, red] (A4) to (A3);
\draw [bend right, <-, red] (B4) to (B3);
\draw [bend right, <-, red] (C4) to (C3);
\draw [bend right, <-, red] (n4) to (n3);
\draw [bend right, bend angle=10, looseness=0.5, <-, red] (T)+(-200:4.5pt) to ($(A4) + (50:4.5pt)$);
\draw [bend right, bend angle=10, looseness=0.5, <-, red] (T)+(-155:4.5pt) to ($(B4) + (60:4.5pt)$);
\draw [bend right, <-, red] (T) to (C4);
\draw [bend right, bend angle=10, looseness=0.5,  <-, red] (T) to (n4);
\draw [decorate,decoration={brace,amplitude=5pt,mirror},xshift=4pt,yshift=0pt]
(2,1) -- (2,4) node [black,midway,xshift=0.55cm] 
{$\scriptstyle m_n$};
\draw [decorate,decoration={brace,amplitude=5pt,mirror},xshift=4pt,yshift=0pt]
(0,1) -- (0,4) node [black,midway,xshift=0.55cm] 
{$\scriptstyle m_3$};
\draw [decorate,decoration={brace,amplitude=5pt,mirror},xshift=4pt,yshift=0pt]
(-1.5,1) -- (-1.5,4) node [black,midway,xshift=0.55cm] 
{$\scriptstyle m_2$};
\draw [decorate,decoration={brace,amplitude=5pt,mirror},xshift=4pt,yshift=0pt]
(-3,1) -- (-3,4) node [black,midway,xshift=0.55cm] 
{$\scriptstyle m_1$};
\end{tikzpicture}
\end{array}
\end{equation}
where by convention if $m_i=0$ the $i$th arm does not exist. 
Further, we add extra arrows subject to the following rules:
\begin{enumerate}
\item If some $\upalpha_{ij}>2$, add $\upalpha_{ij}-2$ extra arrows from that vertex to the top vertex.
\item Add further $a$ arrows from the bottom vertex to the top vertex.
\end{enumerate} 
\end{cor}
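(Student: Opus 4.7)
The plan is to apply Theorem \ref{GL2 for R} to compute the quiver of $\Gamma_{\!\ox}$, using the intersection-theoretic data already assembled earlier in the paper, and then to note that the $\bZ$-grading on $\Gamma_{\!\ox}$ forces the relations to be homogeneous.

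First I would pass to the completion $\mathfrak{R}$ of $R:=S^{\ox}$ at its unique singular point. Since each indecomposable summand of the generator of $\SCM R$ used to define $\Gamma_{\!\ox}$ is finitely generated, $\widehat{\Gamma}_{\!\ox}$ coincides with the complete reconstruction algebra of $\mathfrak{R}$ and hence has the same quiver and number of relations as $\Gamma_{\!\ox}$; via \ref{Check locally gen} and \ref{specials determined thm} the bijection between the vertices of $\widehat{\Gamma}_{\!\ox}$ and those of $\Gamma_{\!\ox}$ is as claimed. By \ref{middle SI number} the dual graph of the minimal resolution of $\mathfrak{R}$ is \eqref{key dual graph} with $\beta=a+v$, and by \ref{fund is reduced} its fundamental cycle is $Z_f=\sum_{i\in I}E_i$. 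From this, the intersection numbers $-E_i\cdot Z_f$ are as listed in \eqref{Zf dot Ei}, and from $Z_K\cdot E_i=E_i^2+2$ the numbers $(Z_K-Z_f)\cdot E_i$ are as listed in \eqref{ZfZk dot Ei}.

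Next I would apply \ref{GL2 for R} to $\widehat{\Gamma}_{\!\ox}$. The arrow counts $(E_i\cdot E_j)_+$ between arm vertices give precisely one arrow in each direction between vertices adjacent in the dual graph, recovering the double quiver of \eqref{key dual graph}. The arrow counts $((Z_K-Z_f)\cdot E_i)_+$ from $\circ$ to arm vertices yield a single arrow from $\circ$ to the outer vertex of each non-empty arm and zero otherwise, with the contribution $2-v$ at the middle vertex being killed by the hypothesis $v\geq 2$. The arrow counts $-E_i\cdot Z_f$ from arm vertices to $\circ$ give $\alpha_{ij}-2$ at an internal arm vertex, $\alpha_{im_i}-1$ at the outer vertex of each arm, and $\beta-v=a$ at the middle vertex, consistent with \ref{number of arrows}. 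Subtracting off the arrows already built into \eqref{recon quiver} (namely the double of the dual graph together with the single pair of arrows linking $\circ$ to the outer vertex of each arm), the residual arrows are exactly $\alpha_{ij}-2$ from every arm vertex to $\circ$ and $a$ from the middle vertex to $\circ$; this is rules (1) and (2).

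Finally, since every summand of the defining module of $\Gamma_{\!\ox}$ is a $\bZ$-graded $R$-module and $R$ itself is $\bZ$-graded, $\Gamma_{\!\ox}$ inherits a $\bZ$-grading, and all Hom spaces between summands decompose into graded pieces; one can therefore choose a quiver presentation in which all arrows, and hence all relations, are homogeneous. I do not expect a genuine obstacle here: every ingredient needed has already been proved, and the main step is the matching in the third paragraph, which is purely a bookkeeping verification that the intersection-theoretic output of \ref{GL2 for R} agrees with the figure \eqref{recon quiver} after the two rules are applied.
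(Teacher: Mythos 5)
Your proposal is correct and follows essentially the same route as the paper: pass to the completion, feed the intersection data \eqref{Zf dot Ei} and \eqref{ZfZk dot Ei} (with $\beta=a+v$ from \ref{middle SI number} and the reduced fundamental cycle from \ref{fund is reduced}) into Theorem \ref{GL2 for R}, and return to the graded setting via the natural filtration. The paper leaves the arrow-count bookkeeping implicit, whereas you carry it out explicitly; the matching you describe is accurate.
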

\begin{proof}
As in \cite[\S4]{D1}, we first work on the completion $\Uppi_{i\in\bZ}\Hom^{\bZ}_{S^{\ox}}(M^{\ox},M^{\ox}(i\ox))$ of $\Upgamma_{\ox}$, where the result follows by combining \ref{positions forced}, \eqref{Zf dot Ei}, \eqref{ZfZk dot Ei} and \ref{GL2 for R}.  The result then follows from the easy fact that if $f\colon \bigoplus_{i\geq 0}A_i\to \bigoplus_{i\geq 0}B_i$ is a morphism of graded rings, then $f$ is an isomorphism if and only $\widehat{f}\colon \Uppi_{i\geq 0}A_i\to \Uppi_{i\geq 0}B_i$ is an isomorphism.
\end{proof}

It is possible to describe the relations of $\Upgamma_{\ox}$ in this level of generality, but for notational ease we will only do this for the $0$-Wahl Veronese in \S\ref{Hilb series and Veronese} below.  However, in full generality, we do have the following.

\begin{prop}\label{deg 0 general prop}
Suppose that $\ox=\sum_{i=1}^na_i\ox_i+a\oc\in\bL_+$ with $\ox\notin[0,\oc\,]$.  Then, with notation as in \ref{recon quiver and number relations}, $(\Upgamma_{\ox})_0$, the degree zero part of the reconstruction algebra $\Upgamma_{\ox}$, is isomorphic to the canonical algebra $\Lambda_{\bq,\bm}$, where $I\colonequals \{i\in[1,n]\mid a_i\neq 0\}$, $\bq\colonequals (m_i+1)_{i\in I}$ and $\bm\colonequals (\lambda_i)_{i\in I}$.
\end{prop}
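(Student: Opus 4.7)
The plan is to reduce to the coprime case via \ref{change parameters so coprime}, then use the equivalence $(-)^{\ox}\colon\CM^{\bL}\!S\simeq\CM^{\bZ}\!R$ from \ref{WPL as qgrZ} to compute $(\Gamma_{\!\ox})_0$ directly. Under the resulting coprime assumption, \ref{slightly stronger} gives the identification
\[
\Hom_R^{\mathrm{gr},0}(S(\oy)^{\ox},S(\oz)^{\ox})\cong S_{\oz-\oy},
\]
so $(\Gamma_{\!\ox})_0$ is computed entirely from $\bL$-graded pieces of $S$ between the summands. This reduction is harmless because replacing $(p_i,a_i)$ by $(p_i/d_i,a_i/d_i)$ preserves the Hirzebruch--Jung expansion of $\tfrac{p_i}{p_i-a_i}$, so both $\bq=(m_i+1)_{i\in I}$ and $\bm=(\lambda_i)_{i\in I}$ are unchanged.

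Next I would identify the quiver of $(\Gamma_{\!\ox})_0$ with $Q_{\bq}$. By \ref{specials determined thm} the summands of $\Gamma_{\!\ox}$ are $R$, $S(\oc)^{\ox}$ and $S(i_t\ox_j)^{\ox}$ for $j\in I$, $1\le t\le m_j$, matching the vertex set of $Q_{\bq}$. A direct analysis of normal forms, using \ref{basic observation} and \ref{basic observation B}, should show that $S_{\oz-\oy}=0$ whenever $\oy$ and $\oz$ are interior vertices on different arms (the normal form of $i'\ox_{j'}-i\ox_j$ for $j\neq j'$ and $0<i<p_j$, $0<i'<p_{j'}$ has $\oc$-coefficient $-1$), while on each arm $j$ the space between adjacent specials is one-dimensional, spanned by the appropriate power of $x_j$. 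Combined with a minimality check in the style of \ref{decompose Stx-c}, this pins down the minimal degree-zero arrows as $x_j\colon R\to S(\ox_j)^{\ox}$, the powers $x_j^{i_{t-1}-i_t}\colon S(i_t\ox_j)^{\ox}\to S(i_{t-1}\ox_j)^{\ox}$ for $2\le t\le m_j$, and $x_j^{a_j}\colon S((p_j-a_j)\ox_j)^{\ox}\to S(\oc)^{\ox}$, recovering $Q_{\bq}$ exactly.

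For the relations, the composition along arm $j$ from $R$ to $S(\oc)^{\ox}$ telescopes to multiplication by $x_j^{p_j}=\ell_j(t_0,t_1)\in S_{\oc}$. Since $S_{\oc}$ is two-dimensional and the $\lambda_j$ are pairwise distinct points of $\bP^1$, the family $\{\ell_j\}_{j\in I}$ satisfies exactly $|I|-2$ independent linear relations. After the standard normalization $\lambda_1=(1\!:\!0)$, $\lambda_2=(0\!:\!1)$ (which does not affect the isomorphism type of $\Lambda_{\bq,\bm}$) these take the form $\ell_1-\lambda_i\ell_2+\ell_i=0$ for $i\in I\setminus\{1,2\}$, which are precisely the defining relations of $\Lambda_{\bq,\bm}$. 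This produces a natural surjection $\Lambda_{\bq,\bm}\twoheadrightarrow(\Gamma_{\!\ox})_0$, which I would upgrade to an isomorphism by comparing dimensions: both algebras are finite-dimensional since $Q_{\bq}$ has no oriented cycles, and summing $\dim_{\KK}S_{\oz-\oy}$ over all pairs of summands matches an explicit basis-path count in $\Lambda_{\bq,\bm}$.

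The main obstacle is the combinatorial verification that the listed minimal degree-zero arrows are complete, that is, the systematic vanishing of $S_{\oz-\oy}$ across different arms together with the non-factorization of the arm arrows through other summands. This is not deep but requires careful bookkeeping with normal forms in $\bL$, exactly as in the proof of \ref{specials lag approach thm}.
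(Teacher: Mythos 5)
Your proposal is correct and follows essentially the same route as the paper: reduce to the coprime case via \ref{change parameters so coprime}, use the equivalence of \ref{WPL as qgrZ} to identify $(\Gamma_{\!\ox})_0$ with $\End^{\bL}_S\bigl(\bigoplus_u S(u\ox_j)\bigr)$, and recognise this as a canonical algebra with arms indexed by the $i$-series. The only difference is that where the paper cites \cite{GL1} and asserts that composing arrows through the omitted vertices of $[0,\oc\,]$ "clearly" yields $\Lambda_{\bq,\bm}$, you carry out that final identification explicitly via normal forms in $\bL$ and the relations among the $\ell_j$ in $S_{\oc}$ — which is precisely the computation the paper leaves implicit.
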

\begin{proof}
By \ref{change parameters so coprime} we can change parameters to assume that the coprime assumption \ref{WPL as qgrZ}\eqref{WPL as qgrZ 4} holds. Thus we have $\CM^{\bZ}\!S^{\ox}\simeq\CM^{\bL}\!S$ and hence $(\Upgamma_{\ox})_0\cong\End^{\bL}_S(M)$.  But by  \cite{GL1} it is well known that there is a ring isomorphism $\End^{\bL}_S(\bigoplus_{\oy\in[0,\oc\,]} S(\oy))\cong\Lambda_{\bp,\bl}$.  Thus $(\Upgamma_{\ox})_0=e\Lambda_{\bp,\bl}e$ for an idempotent $e$ corresponding to a subset of $[0,\oc\,]$ containing $0$ and $\oc$.  Clearly $e\Lambda_{\bp,\bl}e\cong \Lambda_{\bq,\bm}$ for $\bq$ and $\bm$ in the statement.
\end{proof}

When $\ox\in\bL_+$ with $\ox\notin[0,\oc\,]$, we will next show in \ref{qgrR via qgrLambda} that $\qgr^\bZ\! S^{\ox}\simeq \qgr^\bZ\!\Upgamma_{\ox}$, since after combining with \ref{change parameters so coprime} this then allows us to realise any weighted projective line as $\qgr^\bZ\!\Upgamma_{\ox}$.  Since $\Upgamma_{\ox}$ is an $\bN$-graded ring, this allows us to interpret any weighted projective line as a `noncommutative projective scheme' over `$\Spec(\Upgamma_{\ox})_0$', and thus by \ref{deg 0 general prop} a noncommutative projective scheme over the canonical algebra.

\begin{prop}\label{qgrR via qgrLambda}
For $\ox\in\bL_+$ with $\ox\notin[0,\oc\,]$, there is an equivalence
\[
\qgr^\bZ\! S^{\ox}\simeq\qgr^\bZ\!\Upgamma_{\ox},
\]
where $\Upgamma_{\ox}$ is  a $\bZ$-graded $\KK$-algebra such that $(\Upgamma_{\ox})_i$ is $\Lambda_{\bq,\bm}$ for $i=0$ and zero for $i<0$.
\end{prop}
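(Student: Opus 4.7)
The plan is to derive the equivalence from \ref{comm to noncomm} by realising $R=S^{\ox}$ as the corner algebra $e\Gamma_{\!\ox} e$ for a suitable idempotent $e$, and then verifying the finite-dimensionality hypothesis using the coprime version of \ref{WPL as qgrZ}. The graded structure of $\Gamma_{\!\ox}$ is handled alongside: the identification $(\Gamma_{\!\ox})_0\cong\Lambda_{\bq,\bm}$ is \ref{canonicalEnd}, while the negative-degree vanishing will emerge from the same block description used for the equivalence.

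Concretely, I would first apply \ref{change parameters so coprime} to reduce to the case $(p_i,a_i)=1$ for all $i$, noting that this change of parameters preserves both $S^{\ox}$ as a $\bZ$-graded ring and the reconstruction algebra $\Gamma_{\!\ox}$. Writing $\Gamma_{\!\ox}=\End_R(M)$ for $M:=R\oplus(\bigoplus_{j,u}S(u\ox_j)^{\ox})\oplus S(\oc)^{\ox}$, the idempotent $e\in(\Gamma_{\!\ox})_0$ projecting onto the $R$-summand satisfies $e\Gamma_{\!\ox} e=R$ as $\bZ$-graded rings. By \ref{comm to noncomm} with $G=\bZ$, $A=\Gamma_{\!\ox}$ and $B=R$, the functor $e(-)\colon\qgr^\bZ\!\Gamma_{\!\ox}\to\qgr^\bZ\!R$ is an equivalence provided $\dim_{\KK}\Gamma_{\!\ox}/(e)<\infty$.

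To verify the dimension bound, and simultaneously to obtain the structure of $\Gamma_{\!\ox}$, I would decompose by pairs of summands. For two summands $M_1=S(\oy_1)^{\ox}$ and $M_2=S(\oy_2)^{\ox}$ (with $\oy_i\in[0,\oc\,]$), \ref{slightly stronger} identifies the $(M_1,M_2)$-block of $\Gamma_{\!\ox}$ with the $\bZ$-graded $R$-module $S(\oy_2-\oy_1)^{\ox}$, whose degree $d$ piece is $S_{\oy_2-\oy_1+d\ox}$. For $d\le-1$, non-vanishing would force $\ox\le\oy_2-\oy_1\le\oc$, contradicting $\ox\notin[0,\oc\,]$, so $(\Gamma_{\!\ox})_d=0$. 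Blocks indexed by a pair that includes $R$ are killed in $\Gamma_{\!\ox}/(e)$ since compositions through $R$ are tautologically in $(e)$. For the remaining blocks the corresponding piece of $(e)$ is the sub-$R$-module $S(-\oy_1)^{\ox}\cdot S(\oy_2)^{\ox}$ of $S(\oy_2-\oy_1)^{\ox}$ arising from composition through all $\bZ$-shifts of $R$. The finite-codimension ideal $I^{-\oy_1}=S(-\oy_1)^{\ox}\cdot S(\oy_1)^{\ox}$ annihilates the resulting quotient, since
\[
I^{-\oy_1}\cdot S(\oy_2-\oy_1)^{\ox}=S(-\oy_1)^{\ox}\cdot S(\oy_1)^{\ox}\cdot S(\oy_2-\oy_1)^{\ox}\subseteq S(-\oy_1)^{\ox}\cdot S(\oy_2)^{\ox}.
\]
As $S(\oy_2-\oy_1)^{\ox}$ is a finitely generated $R$-module and $R/I^{-\oy_1}$ is finite-dimensional by \ref{WPL as qgrZ}\eqref{WPL as qgrZ 3} under the coprime assumption, each block of the quotient is finite-dimensional, and summing over finitely many pairs gives $\dim_{\KK}\Gamma_{\!\ox}/(e)<\infty$.

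The main obstacle is this final block-by-block identification, particularly matching the image of composition through arbitrary $\bZ$-shifts of $R$ with the product sub-$R$-module $S(-\oy_1)^{\ox}\cdot S(\oy_2)^{\ox}$; once this is unpacked using \ref{slightly stronger}, the annihilation trick and the invocation of \ref{comm to noncomm} go through cleanly.
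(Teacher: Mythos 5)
Your proof is correct, and it follows the paper's skeleton: both arguments apply \ref{comm to noncomm} to the idempotent $e\in(\Gamma_{\!\ox})_0$ corresponding to the summand $R=S^{\ox}$, with $e\Gamma_{\!\ox}e\cong R$ and the degree-zero identification supplied by \ref{canonicalEnd}. Where you genuinely diverge is in verifying the one substantive hypothesis, $\dim_{\KK}\Gamma_{\!\ox}/(e)<\infty$. The paper's verification is local-geometric and takes one line: since $R$ is normal with its unique singular point at the graded maximal ideal, $N_{\p}$ is free for every non-maximal prime $\p$, so $\End_{R_{\p}}(N_{\p})$ is a matrix algebra over $R_{\p}$ and $(\Gamma_{\!\ox}/(e))_{\p}=0$; hence $\Gamma_{\!\ox}/(e)$ is supported only at the graded maximal ideal and is finite-dimensional. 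Your verification is graded-combinatorial: after reducing to coprime parameters via \ref{change parameters so coprime}, you use \ref{slightly stronger} to identify each block of $\Gamma_{\!\ox}$ with $S(\oy_2-\oy_1)^{\ox}$, identify the corresponding block of $(e)$ with the product submodule $S(-\oy_1)^{\ox}\cdot S(\oy_2)^{\ox}$, and kill the quotient with the finite-codimension ideal $I^{-\oy_1}$ from \ref{WPL as qgrZ}. This is longer and leans on the coprime reduction, which the paper's localization argument does not need (and which strictly speaking requires a word of justification that the reconstruction algebra, with its grading, is unchanged under the change of parameters — true, since the graded special CM modules are intrinsic to the graded ring $R$). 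In exchange, your explicit block description buys you something the paper's proof leaves implicit: the vanishing of $(\Gamma_{\!\ox})_d$ for $d<0$ falls out immediately from $\ox\notin[0,\oc\,]$, which is part of the statement but is not addressed in the paper's own proof.
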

\begin{proof}
Set $R\colonequals S^{\ox}$.  Let $A\colonequals \Upgamma_{\ox}=\End_{R}(M^{\ox})$ and let $e\in A$ be the idempotent corresponding to the summand $R$ of $M^{\ox}$.  Clearly $B\colonequals eAe\cong R$.  Note that $\dim_\KK(A/\langle e\rangle)<\infty$ since by \ref{Two dim and normal prelim}\eqref{L-factorial 7} $R$ is normal, so $\add M^{\ox}_{\p}=\add R_{\p}$ for any non-maximal prime ideal $\p$ of $R$, and hence $(A/\langle e\rangle)_{\p}=0$. The first statement then follows from \ref{comm to noncomm}, and the second statement by \ref{deg 0 general prop}.
\end{proof}

\section{The $0$-Wahl Veronese}\label{Hilb series and Veronese}

Throughout this section we work with an arbitrary $\bX_{\bp,\bl}$ with $n\ge3$, and consider the  $0$-Wahl Veronese subring of $S=S_{\bp,\bl}$ from the introduction, namely $S^{\os}$, where $\os=\sum_{i=1}^n\ox_i$.  It is not too hard, but more notationally complicated, to extend to cover the case $\os_a=\os+a\oc$, but we shall not do this here.  We investigate the more general $S^{\os_a}$ for Dynkin type in \S\ref{domestic section}.

\subsection{Presenting the $0$-Wahl Veronese} 
The aim of this subsection is to give a presentation of the $0$-Wahl Veronese subring $S^{\os}$ of $S$ by constructing an isomorphism $S^{\os}\cong R_{\bp,\bl}$.
We define elements of $S^{\os}$ as follows:
\begin{eqnarray*}
\mathsf{u}_i&\colonequals &\left\{\begin{array}{ll}x_1^{p_1+p_2}x_3^{p_2}\hdots x_n^{p_2}&i=1,\\
x_2^{p_1+p_2}x_3^{p_1}\hdots x_n^{p_1}&i=2,\\
-x_1^{p_i}x_2^{p_2+p_i}x_3^{p_i}\hdots \widehat{x_i}\hdots x_n^{p_i}&3\leq i\leq n,\end{array}\right.\\
\mathsf{v}&\colonequals &x_1x_2\hdots x_n,
\end{eqnarray*}
where we write $\widehat{x_i}$ to mean `omit $x_i$'. Then with respect to the $\bZ$-grading $S^{\os}=\bigoplus_{i\in\bZ}S_{i\os}$, the element $\mathsf{v}$ is homogeneous of degree one, and $\mathsf{u}_i$ is homogeneous of degree $p_2$ if $i=1$, $p_1$ if $i=2$ and $p_i$ if $3\le i\le n$.

To construct an isomorphism between $R_{\bp,\bl}$ and $S^{\os}$, we first construct a morphism of graded algebras.

\begin{lemma}\label{check relations}
The morphism $\KK[u_1,\ldots,u_n,v]\to S^{\os}$ of graded algebras given by $u_i\mapsto\mathsf{u}_i$ for $1\le i\le n$ and $v\mapsto\mathsf{v}$ induces a morphism $R_{\bp,\bl}\to S^{\os}$ of graded algebras. 
\end{lemma}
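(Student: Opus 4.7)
My approach is a direct verification in two steps: first checking that the images $\mathsf{u}_1,\ldots,\mathsf{u}_n,\mathsf{v}$ lie in $S^{\os}$ with the correct $\bZ$-degrees, and then checking that each $2\times 2$ minor relation defining $R_{\bp,\bl}$ vanishes in $S^{\os}$. The first step is immediate from $p_i\ox_i=\oc$: one finds $\deg\mathsf{v}=\os$, $\deg\mathsf{u}_1=p_2\os$, $\deg\mathsf{u}_2=p_1\os$ and $\deg\mathsf{u}_i=p_i\os$ for $3\le i\le n$, and under the identification $S^{\os}=\bigoplus_{i\in\bZ}S_{i\os}$ this matches the $\bZ$-grading on $R_{\bp,\bl}$ given by $\deg u_1=p_2$, $\deg u_2=p_1$, $\deg u_i=p_i$ for $3\le i\le n$ and $\deg v=1$.

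For the second step the minors fall into four types: (a)~first column with last column, giving $u_2u_1-v^{p_1+p_2}$; (b)~first column with column $k$ for $3\le k\le n$, giving $u_2(\lambda_ku_k+v^{p_k})-u_kv^{p_1}$; (c)~columns $j$ and $k$ for $3\le j<k\le n$, giving $u_j(\lambda_ku_k+v^{p_k})-u_k(\lambda_ju_j+v^{p_j})$; and (d)~column $j$ with last column for $3\le j\le n$, giving $u_ju_1-v^{p_2}(\lambda_ju_j+v^{p_j})$. Relation (a) is transparent, since $\mathsf{u}_2\mathsf{u}_1=(x_1x_2\cdots x_n)^{p_1+p_2}=\mathsf{v}^{p_1+p_2}$. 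The key observation driving (b), (c), (d), and the single place where the defining relation $x_i^{p_i}=\lambda_ix_2^{p_2}-x_1^{p_1}$ of $S$ enters, is the identity
\[
\lambda_i\mathsf{u}_i+\mathsf{v}^{p_i}=x_1^{p_i}x_2^{p_i}\Big(\prod_{k\ge 3,\,k\ne i}x_k^{p_i}\Big)(x_i^{p_i}-\lambda_ix_2^{p_2})=-x_1^{p_1+p_i}x_2^{p_i}\prod_{k\ge 3,\,k\ne i}x_k^{p_i},
\]
valid for each $3\le i\le n$.

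With this substitution, each of (b), (c), (d) reduces to a monomial identity in the $x_j$ that I will verify by comparing exponents. For instance (c) becomes the symmetric expression $x_1^{p_1+p_i+p_j}x_2^{p_2+p_i+p_j}x_i^{p_j}x_j^{p_i}\prod_{k\ge 3,\,k\ne i,j}x_k^{p_i+p_j}$ on both sides, (d) becomes $-x_1^{p_1+p_2+p_i}x_2^{p_2+p_i}x_i^{p_2}\prod_{k\ge 3,\,k\ne i}x_k^{p_2+p_i}$ on both sides, and (b) is analogous. In truth there is no substantial obstacle: once the identity above is in hand, the remainder is mechanical exponent-counting. The only non-formal input is this one use of the defining relations of $S$, without which the mixed $\mathsf{u}$- and $\mathsf{v}$-terms in (b), (c), (d) would not visibly cancel.
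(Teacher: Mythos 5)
Your proposal is correct and follows essentially the same route as the paper: direct verification of the $2\times2$ minor relations, with the single non-formal input being the identity $\lambda_i\mathsf{u}_i+\mathsf{v}^{p_i}=-x_1^{p_1+p_i}x_2^{p_i}\prod_{k\ge3,\,k\neq i}x_k^{p_i}$ coming from $x_i^{p_i}-\lambda_ix_2^{p_2}=-x_1^{p_1}$, which is exactly the substitution the paper performs inline in its computation of $\mathsf{u}_1\mathsf{u}_i$. The only organizational difference is that the paper first uses the fact that $S^{\os}$ is a domain to reduce to the $n$ minors containing the last column, whereas you check all four families of minors by exponent-counting; your version is marginally longer but avoids invoking the domain property.
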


\begin{proof}
It suffices to show that all $2\times2$ minors of the following matrix have determinant zero.
\begin{equation}\label{R matrix}
\left(
\begin{array}{ccccc}
{\mathsf u}_2&{\mathsf u}_3&\hdots&{\mathsf u}_{n}&{\mathsf v}^{p_2}\\
{\mathsf v}^{p_1}&\lambda_3{\mathsf u}_3+{\mathsf v}^{p_3}&\hdots&\lambda_n{\mathsf u}_n+{\mathsf v}^{p_n}&{\mathsf u}_1
\end{array}
\right)
\end{equation}
Since $S^{\os}$ is a domain, it suffices to show that all $2\times2$ minors containing the last column have determinant zero.  The outer $2\times 2$ minor has determinant
\[
\mathsf{u}_1\mathsf{u}_2-\mathsf{v}^{p_1+p_2}=(x_1\hdots x_n)^{p_1+p_2}-(x_1\hdots x_n)^{p_1+p_2}=0.  
\]
Further for any $i\geq 3$, using the relation $x_1^{p_1}=\lambda_ix_2^{p_2}-x_i^{p_i}$ it follows that
\begin{align*}
\mathsf{u}_1\mathsf{u}_i&=-x_1^{p_1+p_2+p_i}x_2^{p_2+p_i}x_3^{p_2+p_i}\hdots x_i^{p_2}\hdots x_n^{p_2+p_i}\\
&=-(\lambda_ix_2^{p_2}-x_i^{p_i})x_1^{p_2+p_i}x_2^{p_2+p_i}x_3^{p_2+p_i}\hdots x_i^{p_2}\hdots x_n^{p_2+p_i}\\
&=x_1^{p_2}x_2^{p_2}\hdots x_n^{p_2}(-\lambda_i  x_1^{p_i}x_2^{p_2+p_i}x_3^{p_i}\hdots \widehat{x_i}\hdots x_n^{p_i}+x_1^{p_i}x_2^{p_i}\hdots x_n^{p_i}) \\
&=\mathsf{v}^{p_2}(\lambda_i\mathsf{u}_i+\mathsf{v}^{p_i}).
\end{align*}
Thus the $2\times 2$ minor consisting of $i$th column and the last one has determinant zero.
\end{proof}

The following calculation is elementary.

\begin{prop}\label{S is generated}
\begin{enumerate}
\item The $\KK$-algebra $S^{\os}$ is generated by $\mathsf{v}$ and $\mathsf{u}_i$ with $1\le i\le n$.
\item The $\KK$-vector space $S^{\os}/\mathsf{v}S^{\os}$ is generated by $\mathsf{u}_i^\ell$ with $1\le i\le n$ and $\ell\ge0$.
\end{enumerate}
\end{prop}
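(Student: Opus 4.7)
The plan is to establish (1) first, with (2) then following as an easy corollary. For (2), the identities arising from the $2\times2$ minors of \eqref{R matrix} yield $\mathsf{u}_1\mathsf{u}_2=\mathsf{v}^{p_1+p_2}$ and $\mathsf{u}_1\mathsf{u}_i=\mathsf{v}^{p_2}(\lambda_i\mathsf{u}_i+\mathsf{v}^{p_i})$ for $i\geq3$, and the remaining minors analogously express $\mathsf{u}_2\mathsf{u}_i$ and $\mathsf{u}_i\mathsf{u}_j$ (distinct $i,j\geq3$) as elements of $\mathsf{v}S^{\os}$, so that $\mathsf{u}_i\mathsf{u}_j\in\mathsf{v}S^{\os}$ whenever $i\neq j$. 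Once (1) is known, every element of $S^{\os}$ is a polynomial in $\mathsf{v}$ and the $\mathsf{u}_i$; these relations then reduce any such polynomial modulo $\mathsf{v}S^{\os}$ to a $\KK$-linear combination of pure powers $\mathsf{u}_i^\ell$ (with $\mathsf{u}_i^0:=1$), which gives (2).

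For (1), I will work in each $\bZ$-graded piece $S^{\os}_{(d)}=S_{d\os}$ separately. Writing $d=q_jp_j+r_j$ with $0\leq r_j<p_j$ and setting $a:=\sum_j q_j$, \ref{basic observation} supplies a $\KK$-linear isomorphism $S^{\os}_{(d)}\cong\KK[t_0,t_1]_a$ given by factoring out $\prod_j x_j^{r_j}$ and using the identifications $t_0=x_2^{p_2}$, $t_1=x_1^{p_1}$; in particular $\dim_\KK S^{\os}_{(d)}=a+1$. I will exhibit $a+1$ elements of the subalgebra $A:=\KK[\mathsf{v},\mathsf{u}_1,\ldots,\mathsf{u}_n]\subseteq S^{\os}$ in this degree, namely $\mathsf{v}^d$ together with $\mathsf{v}^{d-\ell p_{\sigma(i)}}\mathsf{u}_i^\ell$ for $1\leq i\leq n$ and $1\leq\ell\leq q_{\sigma(i)}$, where $\sigma$ swaps $1,2$ and fixes the remaining indices; the cardinalities add to $1+q_1+q_2+\sum_{k\geq3}q_k=a+1$. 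A direct computation, substituting $x_j^{p_j}=\ell_j(t_0,t_1)$, shows that each such element corresponds under the isomorphism to a monomial in $t_0,t_1$ times $\prod_{j\geq3}(\lambda_jt_0-t_1)^{\gamma_j}$, for explicit exponents $\gamma_j$ depending on $(i,\ell)$.

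The hard part will be verifying that these $a+1$ polynomials are linearly independent in $\KK[t_0,t_1]_a$, for this pins down the desired equality $A_{(d)}=S^{\os}_{(d)}$. I argue in two stages. The polynomials coming from $\mathsf{v}^d$ and from $\mathsf{v}^{d-\ell p_{\sigma(i)}}\mathsf{u}_i^\ell$ with $i\in\{1,2\}$ all contain the factor $P:=\prod_{j\geq3}(\lambda_jt_0-t_1)^{q_j}$, and their remaining monomial parts run exactly over the basis $\{t_0^\alpha t_1^\beta\mid\alpha+\beta=q_1+q_2\}$, so together they span the subspace $W:=P\cdot\KK[t_0,t_1]_{q_1+q_2}$ of dimension $q_1+q_2+1$. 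For each $k\geq3$, the polynomial attached to $\mathsf{v}^{d-\ell p_k}\mathsf{u}_k^\ell$ vanishes at $\lambda_k$ to order exactly $q_k-\ell$; projecting $\KK[t_0,t_1]_a$ modulo the subspace of polynomials vanishing at $\lambda_k$ to order $\geq q_k$ — a subspace containing $W$ and every polynomial coming from $\mathsf{u}_{k'}^\ell$ with $k'\neq k$ — the $q_k$ polynomials with $k'=k$ have distinct leading orders at $\lambda_k$ and hence are linearly independent in the $q_k$-dimensional quotient. Varying $k$ over $\{3,\ldots,n\}$, the corresponding $\sum_{k\geq3}q_k$ polynomials are linearly independent and complement $W$ inside $\KK[t_0,t_1]_a$, yielding the direct sum decomposition and thus the linear independence of the whole collection. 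The dimension count then forces $A_{(d)}=S^{\os}_{(d)}$, which completes (1).
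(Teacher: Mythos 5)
Your proof is correct, but it runs in the opposite direction to the paper's and uses genuinely different ingredients. The paper proves (2) first and entirely inside $S$: it takes an arbitrary monomial $X=x_1^{a_1}\cdots x_n^{a_n}\in S_{N\os}$, writes $a_i=N+\ell_ip_i$ with $\sum_i\ell_i=0$, and reduces $X$ modulo $\mathsf{v}S^{\os}$ to a combination of pure powers $\mathsf{u}_i^{\ell}$ by an induction on $\sum_{\ell_i>0}\ell_i$, repeatedly substituting $x_k^{p_k}=\lambda'x_i^{p_i}+\lambda''x_j^{p_j}$; statement (1) then follows from (2) by the graded Nakayama argument (since $\deg\mathsf{v}=1>0$). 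You instead prove (1) by a dimension count in each graded piece: the identification $S_{d\os}=(\prod_jx_j^{r_j})S_{a\oc}\cong\KK[t_0,t_1]_a$ from \ref{basic observation}, the explicit list of $a+1$ monomials $\mathsf{v}^{d-\ell p_{\sigma(i)}}\mathsf{u}_i^{\ell}$, and linear independence read off from vanishing orders at the points $\lambda_k$ — all of which checks out (the counts, degrees, and orders of vanishing are as you claim). You then deduce (2) from (1) via the minor relations of \eqref{R matrix}, which requires the full set of minors (not just those through the last column) together with $\lambda_i\neq0$ and $\lambda_i\neq\lambda_j$ to solve for $\mathsf{u}_i\mathsf{u}_j$; both are guaranteed by the normalization and distinctness of $\bl$, and the full vanishing of the minors is exactly what \ref{check relations} establishes. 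The trade-off: your route yields an explicit monomial basis of each $S_{d\os}$ (hence the Hilbert series) as a by-product and makes the role of the determinantal relations in (2) transparent, but it leans on the chosen normalization of $\bl$ and on \ref{check relations}; the paper's rewriting induction is normalization-free for (2) and never invokes the minors, at the cost of being less explicit.
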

\begin{proof}
It is enough to prove (2).  Let $V$ be the subspace of $S^{\os}/\mathsf{v}S^{\os}$ generated by $\mathsf{u}_i^\ell$ with $1\le i\le n$ and $\ell\ge0$.  Take any monomial $X\colonequals x_1^{a_1}\hdots x_n^{a_n}$ in $S_{N\os}$ with $N>0$, then
\[
a_1\ox_1+\hdots+a_n\ox_n=N\ox_1+\hdots +N\ox_n.
\]
For each $1\le i\le n$, there exists $\ell_i\in\bZ$ such that $a_i=N+\ell_ip_i$. Then $\sum_{i=1}^n\ell_i=0$ holds.\\
(i) We first show that $X$ belongs to $V$ in the situation when there exists $i$ with $1\le i\le n$ satisfying $\ell_i\le0$, $\ell_j\ge0$ and $\ell_k=0$ for all $k\neq i,j$, where $j$ is defined by $j\colonequals 2$ if $i\neq2$ and $j\colonequals 1$ if $i=2$. 

If $a_i\neq0$, then by the assumptions, all $a_k\geq 1$ for $1\leq k\leq n$, and hence $X$ belongs to $\mathsf{v}S^{\os}$.  Thus we can assume that $a_i=0$. Then $N=-\ell_ip_i$ and $a_j=N+\ell_jp_j=-\ell_i(p_i+p_j)$ hold, and further
\[
X=x_j^{a_j}\prod_{k\neq i,j}x_k^N=(x_j^{p_i+p_j}\prod_{k\neq i,j}x_k^{p_i})^{-\ell_i}=\pm\mathsf{u}_{i'}^{-\ell_i},
\]
where $i'\colonequals 2$ if $i=1$, $i'\colonequals 1$ if $i=2$ and $i'=i$ if $i\ge 3$.
Thus the assertion follows.\\
(ii) We consider the general case.
Using induction on $\ell(X)\colonequals \sum_{1\le i\le n,\ \ell_i>0}\ell_i$, we show that $X$ belongs to $V$.

Assume $\ell(X)=0$. Then $X=\mathsf{v}^N$ holds, and hence $X$ belongs to $V$.

Assume that there exist $1\le i\neq j\le n$ such that $\ell_i<0$ and $\ell_j<0$.
Take $1\le k\le n$ such that $\ell_k>0$.  Since $\{x_i^{p_i}, x_j^{p_j}\}$ is a $\KK$-basis of $S_{\oc}$, there is a relation $x_k^{p_k}=\lambda'x_i^{p_i}+\lambda''x_j^{p_j}$ with $\lambda',\lambda''\in\KK$, and we have $X=\lambda'X'+\lambda''X''$ for some monomials $X',X''$ satisfying $\ell(X')<\ell(X)$ and $\ell(X'')<\ell(X)$.
Since $X'$ and $X''$ belong to $V$, so does $X$.

In the rest, assume that there exists a unique $1\le i\le n$ satisfying $\ell_i<0$. Define $j$ by $j\colonequals 2$ if $i\neq2$ and $j\colonequals 1$ if $i=2$.
Using the relation $x_k^{p_k}=\lambda'_kx_i^{p_i}+\lambda''_kx_j^{p_j}$ with $\lambda'_k,\lambda''_k\in\KK$, we have
\[
X
=x_i^{a_i}x_j^{a_j}\prod_{k\neq i,j}x_k^{N+\ell_kp_k}
=x_i^{a_i}x_j^{a_j}\prod_{k\neq i,j}x_k^N(\lambda'_kx_i^{p_i}+\lambda''_kx_j^{p_j})^{\ell_k}.
\]
This is a linear combination of monomials $Y=x_i^{b_i}x_j^{b_j}\prod_{k\neq i,j}x_k^N$ which satisfies the condition in (i).
Thus $X$ belongs to $V$.
\end{proof}

The above leads to the following, which is the main result of this subsection.
\begin{thm}\label{Veronese result}
There is a graded ring isomorphism $R_{\bp,\bl}\cong S^{\os}$ given by $u_i\mapsto\mathsf{u}_i$ for $1\le i\le n$ and $v\mapsto\mathsf{v}$.
\end{thm}
\begin{proof}
Combining \ref{check relations} and \ref{S is generated}, there is a surjective graded ring homomorphism $\vartheta\colon R_{\bp,\bl}\twoheadrightarrow S^{\os}$. But now $R_{\bp,\bl}$, being a rational surface singularity, is automatically a domain.  Since $S^{\os}$ is two-dimensional, $\vartheta$ must be an isomorphism.
\end{proof}

\subsection{Special CM $S^{\os}$-Modules and the Reconstruction Algebra}\label{Sect 5.2}
The benefit of our Veronese construction of $R_{\bp,\bl}$ is that it also produces the special CM modules, and we now describe them explicitly as $2$-generated ideals.   We first do this in the notation of $S$, then translate into the coordinates $\mathsf{u}_1,\hdots,\mathsf{u}_n,\mathsf{v}$.

\begin{prop}\label{2 gen as S modules}
The following are, up to degree shift, precisely the indecomposable non-free objects in $\SCM^{\bZ}\!S^{\os}$. Moreover, they have the following generators and degrees: 
\[
\begin{tabular}{*3c}
\toprule
Module & Generators & Degree of generators\\ 
\midrule
$S(q\ox_1)^{\os}$& $x_2^{p_2}(x_2x_3\hdots x_n)^{p_1-q}$  and $ x_1^{q}$ & $p_1-q$ and $0$\\
$S(q\ox_2)^{\os}$& $x_1^{p_1}(x_1x_3\hdots x_n)^{p_2-q}$ and  $x_2^{q}$ & $p_2-q$ and $0$\\ 
$S(q\ox_i)^{\os}$& $x_2^{p_2}(x_1\hdots\widehat{x_i}\hdots x_n)^{p_i-q}$ and $x_i^{q}$& $p_i-q$ and $0$\\
$S(\oc)^{\os}$ & $x_1^{p_1}$ and $x_2^{p_2}$& $0$ and $0$\\ 
\bottomrule
\end{tabular}
\]
where in row one $q\in[1, p_1-1]$, in row two $q\in [1,p_2-1]$, and in row three $i\in [3,n], \,q\in [1,p_i-1]$.
\end{prop}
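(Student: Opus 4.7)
The plan is in two steps: first, use Theorem~\ref{specials determined thm} to identify the list of special CM modules; second, verify the generators and their $\bZ$-degrees by direct monomial calculation.

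For the first step, I would apply Theorem~\ref{specials determined thm} to $\ox=\os$, noting that every $a_j=1$ and $a=0$.  The Hirzebruch--Jung expansion $\tfrac{p_j}{p_j-1}=[\underbrace{2,\ldots,2}_{p_j-1}]$ gives $i_t=p_j-t$ from Definition~\ref{combdata}, hence $I(p_j,p_j-1)=\{0,1,\ldots,p_j\}$, and therefore
\[
\SCM S^{\os}=\add\{S(q\ox_j)^{\os}\mid 1\le j\le n,\ 0\le q\le p_j\}.
\]
The cases $q=0$ and $q=p_j$ produce the common modules $S^{\os}$ and $S(\oc)^{\os}$ (using $p_j\ox_j=\oc$).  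The remaining modules $S(q\ox_j)^{\os}$ with $q\in[1,p_j-1]$ are pairwise non-isomorphic: the rank-preserving equivalence $(-)^{\os}\colon\CM^\bL\!S\xrightarrow{\sim}\CM^\bZ\!S^{\os}$ of Theorem~\ref{WPL as qgrZ} (valid since $(p_j,1)=1$) reduces this to the statement that the $q\ox_j$ are distinct modulo $\bZ\os$, and Corollary~\ref{middle SI number} verifies that the count of indecomposables matches the vertices of the dual graph~\eqref{s Veron dual graph}.  This yields the table of modules.

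For the second step, the $\bZ$-degree claims are immediate from $\oc=p_i\ox_i$.  For instance in row three, $x_j^q$ has weight $q\ox_j$ so lives at degree $0$, while
\[
\mathrm{weight}\bigl(x_2^{p_2}(x_1\cdots\widehat{x_j}\cdots x_n)^{p_j-q}\bigr)=(p_j-q)(\os-\ox_j)+\oc=q\ox_j+(p_j-q)\os,
\]
so this element lies in degree $p_j-q$.  Rows one, two and four are analogous.

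The main obstacle is showing that these two elements $S^{\os}$-generate $S(q\ox_j)^{\os}$.  Mimicking the proof of Proposition~\ref{S is generated}, take any monomial $X\in S_{q\ox_j+k\os}$; its exponents have the shape $a_i=k+\delta_{ij}q+p_i\ell_i$ with $\sum_i\ell_i=0$.  Using the defining relations $x_i^{p_i}+x_1^{p_1}-\lambda_ix_2^{p_2}=0$ for $3\le i\le n$, one reduces by induction on $\sum_{\ell_i>0}\ell_i$ to monomials with at most one negative $\ell_i$, from which a direct case analysis exhibits $X$ as an $S^{\os}$-multiple of either $x_j^q$ or of the second generator.  The asymmetric role of $x_1,x_2$ versus $x_i$ (with $i\ge 3$) in the defining relations forces this analysis to be carried out separately for each of the four rows, but the arguments are structurally identical to Proposition~\ref{S is generated}(2).
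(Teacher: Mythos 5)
Your proposal is correct and follows essentially the same route as the paper: the classification of the graded special CM modules is quoted from the earlier results (the paper cites Theorem~\ref{specials lag approach thm}(1), you cite the equivalent Theorem~\ref{specials determined thm} together with $I(p_j,p_j-1)=[0,p_j]$), and the generation claim is proved by exactly the monomial reduction modelled on Proposition~\ref{S is generated}(2), writing the exponents of a monomial in $S_{q\ox_j+k\os}$ as $a_i=k+\delta_{ij}q+p_i\ell_i$ with $\sum_i\ell_i=0$ and using the defining relations to induct on $\sum_{\ell_i>0}\ell_i$. The paper's proof carries out this case analysis explicitly for $S(q\ox_1)^{\os}$ (showing every such monomial is divisible by one of the two generators) and declares the other rows analogous, which matches your outline.
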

\begin{proof}
The first statement is \ref{specials lag approach thm}\eqref{specials lag approach thm 1}. We only prove the assertions for $S(q\ox_1)^{\os}$ since all other cases are similar. Let $M$ be the submodule of $S(q\ox_1)^{\os}$ generated by $g_1\colonequals x_1^q$ and $g_2\colonequals x_2^{p_1+p_2-q}x_3^{p_1-q}\hdots x_n^{p_1-q}$. To prove $M=S(q\ox_1)^{\os}$, it suffices to show that any monomial $X=x_1^{a_1}\hdots x_n^{a_n}\in S(q\ox_1)^{\os}$ of degree $N\ge0$ has either $g_1$ or $g_2$ as a factor. Since
\[
a_1\ox_1+\hdots+a_n\ox_n=(N+q)\ox_1+N\ox_2+\hdots +N\ox_n.
\]  
holds, there exists $\ell_i\in\bZ$ for each $1\le i\le n$ such that $a_1=N+q+\ell_1p_1$ and $a_i=N+\ell_ip_i$ for $i\ge2$.
Then $\sum_{i=1}^n\ell_i=0$ holds.\\
(i) If $a_1\ge q$, then $X$ belongs to $M$ since $X$ has $g_1=x_1^q$ as a factor.\\
(ii) We show that $X$ belongs to $M$ if $\ell_3=\hdots=\ell_n=0$.
By (i), we can assume that $a_1<q$ and hence $\ell_1<0$. Then $N=a_1-q-\ell_1p_1\ge p_1-q$ holds. Since $\ell_2=-\ell_1>0$, we have $a_2=N+\ell_2p_2\ge p_1+p_2-q$, which implies that $X=x_1^{a_1}x_2^{a_2}x_3^N\hdots x_n^N$ has $g_2=x_2^{p_1+p_2-q}x_3^{p_1-q}\hdots x_n^{p_1-q}$ as a factor.\\
\noindent(iii) We show that $X$ belongs to $M$ if all $\ell_3, \hdots,\ell_n$ are non-positive. By (i), we can assume that $a_1<q$ and hence $\ell_1<0$.
Using $\ell_2=-\sum_{i\neq 2}\ell_i$ and the relation $x_2^{p_2}=\lambda'_ix_1^{p_1}+\lambda''_ix_i^{p_i}$, it follows that
\[
X=x_1^{a_1}x_2^{N-\sum_{i\neq 2}\ell_ip_2}x_3^{a_3}\hdots x_n^{a_n}=x_1^{a_1}x_2^{N-\ell_1p_2}\prod_{i\ge3}x_i^{a_i}(\lambda'_ix_1^{p_1}+\lambda''_ix_i^{p_i})^{-\ell_i}.
\]
Since $a_1+p_1>q$, this is a linear combination of monomials which have $g_1=x_1^q$ as a factor and of a monomial $x_1^{a_1}x_2^{N-\ell_1p_2}\prod_{i\ge3}x_i^{a_i-\ell_ip_i}=x_1^{a_1}x_2^{N-\ell_1p_2}x_3^N\hdots x_n^N$ satisfying (ii). Thus $X$ belongs to $M$.\\
(iv) We show that $X$ belongs to $M$ in general. Let $\ell_i^+=\max\{\ell_i,0\}$ and $\ell_i^-=\min\{\ell_i,0\}$, then $\ell_i=\ell_i^++\ell_i^-$.  Further, using the relation $x_i^{p_i}=-x_1^{p_1}+\lambda_ix_2^{p_2}$, 
\[
X=x_1^{a_1}x_2^{a_2}\prod_{i\ge3}x_i^{N+\ell_ip_i}=-x_1^{a_1}x_2^{a_2}\prod_{i\ge3}x_i^{N+\ell_i^-p_i}(-x_1^{p_1}+\lambda_ix_2^{p_2})^{\ell^+_i}.
\]
This is a linear combination of monomials satisfying (iii), so $X$ belongs to $M$.
\end{proof}

Using \ref{S is generated} we now translate the modules in \ref{2 gen as S modules} into ideals. 

\begin{prop}\label{dual graph assignment}
With notation in \ref{Veronese result}, up to degree shift, the non-free indecomposable objects in $\SCM^{\bZ}\!S^{\os}$ are precisely the following ideals of $S^{\os}$, and furthermore across the bijection in \ref{Wunram main} they correspond to the dual graph of the minimal resolution of $\Spec S^{\os}$ \eqref{s Veron dual graph} in the following way:
\[
\begin{array}{c}
\begin{tikzpicture}[xscale=1,yscale=0.8,bend angle=30, looseness=1]
\node (0) at (0,0) {$\scriptstyle (\mathsf{v}^{p_2},\mathsf{u}_1)$};
\node (A1) at (-3.25,1) {$\scriptstyle (\mathsf{v}^{p_2+1},\mathsf{u}_1)$};
\node (A2) at (-3.25,2) {$\scriptstyle (\mathsf{v}^{p_2+2},\mathsf{u}_1)$};
\node (A3) at (-3.25,3) {$\scriptstyle (\mathsf{v}^{p_2+p_1-2},\mathsf{u}_1)$};
\node (A4) at (-3.25,4) {$\scriptstyle (\mathsf{v}^{p_2+p_1-1},\mathsf{u}_1)$};
\node (B1) at (-1.5,1) {$\scriptstyle (\mathsf{u}_1,\mathsf{v}^{p_2-1})$};
\node (B2) at (-1.5,2) {$\scriptstyle (\mathsf{u}_1,\mathsf{v}^{p_2-2})$};
\node (B3) at (-1.5,3) {$\scriptstyle (\mathsf{u}_1,\mathsf{v}^2)$};
\node (B4) at (-1.5,4) {$\scriptstyle (\mathsf{u}_1,\mathsf{v})$};
\node (C1) at (0,1) {$\scriptstyle (\mathsf{u}_3,\mathsf{v}^{p_3-1})$};
\node (C2) at (0,2) {$\scriptstyle (\mathsf{u}_3,\mathsf{v}^{p_3-2})$};
\node (C3) at (0,3) {$\scriptstyle (\mathsf{u}_3,\mathsf{v}^{2})$};
\node (C4) at (0,4) {$\scriptstyle (\mathsf{u}_3,\mathsf{v})$};
\node (n1) at (2,1) {$\scriptstyle (\mathsf{u}_n,\mathsf{v}^{p_n-1})$};
\node (n2) at (2,2) {$\scriptstyle (\mathsf{u}_n,\mathsf{v}^{p_n-2})$};
\node (n3) at (2,3) {$\scriptstyle (\mathsf{u}_n,\mathsf{v}^{2})$};
\node (n4) at (2,4) {$\scriptstyle (\mathsf{u}_n,\mathsf{v})$};
\node at (-3.25,2.6) {$\vdots$};
\node at (-1.5,2.6) {$\vdots$};
\node at (0,2.6) {$\vdots$};
\node at (2,2.6) {$\vdots$};
\node at (1,2.5) {$\hdots$};
\draw [-] (A1)+(-30:15pt) -- ($(0) + (160:15pt)$);
\draw [-] (B1) --(0);
\draw [-] (C1) --(0);
\draw [-] (n1) --(0);
\draw [-] (A2) -- (A1);
\draw [-] (B2) --(B1);
\draw [-] (C2) --(C1);
\draw [-] (n2) --(n1);
\draw [-] (A4) -- (A3);
\draw [-] (B4) --(B3);
\draw [-] (C4) --(C3);
\draw [-] (n4) --  (n3);
\end{tikzpicture}
\end{array}
\]
\end{prop}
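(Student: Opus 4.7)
The proof has two parts: enumerating the indecomposable special CM $S^{\os}$-modules, and identifying each with the displayed ideal at the correct vertex.  Since $\os=\sum_{i=1}^n\ox_i$ is already in normal form with all $a_i=1$ and $a=0$, and $n\ge3$ gives $\os\notin[0,\oc\,]$, Theorem \ref{specials lag approach thm}\eqref{specials lag approach thm 1} applies.  Lemma \ref{i series all} gives $I(p_j,p_j-1)=[0,p_j]$, so up to $\bZ$-shift the rank-one indecomposable $\bZ$-graded special CM $S^{\os}$-modules are exactly $S(q\ox_j)^{\os}$ for $j\in[1,n]$ and $q\in[0,p_j]$, with $q=0$ recovering $S^{\os}$ and $q=p_j$ recovering the common module $S(\oc)^{\os}$.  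Proposition \ref{2 gen as S modules} lists two explicit $S$-monomial generators of each such module.

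The main content is then the identification with the displayed ideals.  In each case the plan is to produce an explicit twisting monomial $\alpha\in S$ whose weight satisfies $q\ox_j+\mathrm{weight}(\alpha)\in\bZ\os$, so that multiplication by $\alpha$ carries $S(q\ox_j)^{\os}$ into $S^{\os}$ as a shifted ideal, and then to check that the two products $\alpha\cdot(\text{generator})$ coincide with the two generators of the claimed ideal in the $\mathsf{u}_i,\mathsf{v}$ coordinates.  For arm $1$ with $q=p_1-k$ and $k\in[1,p_1-1]$, the choice $\alpha:=x_1^{p_2+k}x_3^{p_2}\cdots x_n^{p_2}$ gives
\[
\alpha\cdot x_2^{p_2}(x_2x_3\cdots x_n)^{k}=\mathsf{v}^{p_2+k},\qquad \alpha\cdot x_1^{p_1-k}=\mathsf{u}_1,
\]
so $S((p_1-k)\ox_1)^{\os}\cong(\mathsf{v}^{p_2+k},\mathsf{u}_1)$ up to a shift in $\bZ$-degree.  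Entirely parallel monomial computations produce the analogous identifications on arm $2$ (with $(\mathsf{u}_1,\mathsf{v}^{p_2-k})\cong S((p_2-k)\ox_2)^{\os}$), on arm $i\ge3$ (with $(\mathsf{u}_i,\mathsf{v}^{p_i-k})\cong S((p_i-k)\ox_i)^{\os}$), and for the middle vertex (with $(\mathsf{v}^{p_2},\mathsf{u}_1)\cong S(\oc)^{\os}$).

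The assignment of each module to its specific vertex of the dual graph is then forced by Remark \ref{positions forced}: $S(\oc)^{\os}$ sits at the middle vertex by the geometric argument in the proof of Theorem \ref{specials determined thm}, and the evident nested containments $(\mathsf{u}_j,\mathsf{v})\supset(\mathsf{u}_j,\mathsf{v}^2)\supset\cdots$ along each arm give irreducible morphisms that must appear as arrows in the reconstruction algebra, pinning down the linear order on each arm.

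The only real obstacle is bookkeeping: because $\mathsf{u}_1$ and $\mathsf{u}_2$ are defined asymmetrically from $\mathsf{u}_i$ for $i\ge3$, the twisting calculation must be carried out in four separate cases.  Each case follows the pattern illustrated above for arm $1$ and presents no genuine difficulty beyond careful tracking of $\bL$-weights.
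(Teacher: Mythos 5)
Your proposal is correct and follows essentially the same route as the paper: realise each $S(q\ox_j)^{\os}$ as an ideal of $S^{\os}$ by multiplying by an explicit homogeneous element of $S$ and computing the images of the two generators from \ref{2 gen as S modules}, then fix the vertex assignment via the irreducible morphisms and the intersection-theoretic constraints of \ref{GL2 for R} (as in \ref{positions forced}). The only cosmetic difference is that you use a single monomial twist in each case, whereas the paper twists first to $(\mathsf{u}_2,\mathsf{v})$ and then multiplies by $\mathsf{u}_1$, invoking the relation $\mathsf{u}_1\mathsf{u}_2=\mathsf{v}^{p_1+p_2}$ from \ref{Veronese result}.
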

\begin{proof}
We first claim that $S(\ox_1)^{\os}\cong (\mathsf{v}^{p_1+p_2-1},\mathsf{u}_1)$.  Indeed, since $S$ is an $\bL$-domain by \ref{L-factorial}, multiplication by any homogeneous element $S\to S$ is injective.  Thus, multiplying by  $x_2\hdots x_n$, we see that $S(\ox_1)^{\os}$ is isomorphic to the $S^{\os}$-submodule of $S$ generated by $x_2^{p_1+p_2}x_3^{p_1}\hdots x_n^{p_1}$ and $x_1\hdots x_n$, that is generated by $\mathsf{u}_2$ and $\mathsf{v}$. But then
\[
\mathsf{u}_1(\mathsf{u}_2,\mathsf{v})=(\mathsf{u}_1\mathsf{u}_2,\mathsf{u}_1\mathsf{v})
\stackrel{{\scriptstyle\ref{Veronese result}}}{\cong}
(\mathsf{v}^{p_1+p_2},\mathsf{u}_1\mathsf{v})=(\mathsf{v}^{p_1+p_2-1},\mathsf{u}_1)\mathsf{v},
\]
which shows that  $S(\ox_1)^{\os}\cong(\mathsf{v}^{p_1+p_2-1},\mathsf{u}_1)$. The other cases are similar.  The statement regarding the bijection is a special case of \ref{positions forced}.
\end{proof}

\begin{prop}\label{all arrows}
The reconstruction algebra $\Upgamma_{\os}$ is given by the following quiver, where the arrows correspond to the following morphisms.
\[
\begin{array}{c}
\begin{tikzpicture}[xscale=1.5,yscale=1.3,bend angle=30, looseness=1]
\node (0) at (-0.75,0) {$\scriptstyle (v^{p_2},u_1)$};
\node (A1) at (-2.75,1) {$\scriptstyle (v^{p_2+1},u_1)$};
\node (A3) at (-2.75,2) {$\scriptstyle (v^{p_2+p_1-2},u_1)$};
\node (A4) at (-2.75,3) {$\scriptstyle (v^{p_2+p_1-1},u_1)$};
\node (B1) at (-1.5,1) {$\scriptstyle (u_1,v^{p_2-1})$};
\node (B3) at (-1.5,2) {$\scriptstyle (u_1,v^2)$};
\node (B4) at (-1.5,3) {$\scriptstyle (u_1,v)$};
\node (C1) at (0,1) {$\scriptstyle (u_3,v^{p_3-1})$};
\node (C3) at (0,2) {$\scriptstyle (u_3,v^{2})$};
\node (C4) at (0,3) {$\scriptstyle (u_3,v)$};
\node (n1) at (2,1) {$\scriptstyle (u_n,v^{p_n-1})$};
\node (n3) at (2,2) {$\scriptstyle (u_n,v^{2})$};
\node (n4) at (2,3) {$\scriptstyle (u_n,v)$};
\node at (-3,1.6) {$\vdots$};
\node at (-1.5,1.6) {$\vdots$};
\node at (0,1.6) {$\vdots$};
\node at (2,1.6) {$\vdots$};
\node at (1,2) {$\hdots$};
\node (T) at (-0.75,4)  {$\scriptstyle R$};
\draw [->] (A1)+(-30:8.5pt) -- node[gap] {$\scriptstyle \inc$} ($(0) + (180:11.5pt)$);
\draw [->] (B1) --node[gap] {$\scriptstyle v$}(0);
\draw [->] (C1) --node[right=-0.05] {$\scriptstyle \frac{v^{p_2+1}}{u_3}$}(0);
\draw [->] (n1) --node[above=-0.05] {$\scriptstyle \frac{v^{p_2+1}}{u_n}$}(0);
\draw [->] (A4) -- node[gap] {$\scriptstyle \inc$} (A3);
\draw [->] (B4) --node[gap] {$\scriptstyle v$}(B3);
\draw [->] (C4) --node[gap] {$\scriptstyle v$}(C3);
\draw [->] (n4) -- node[gap] {$\scriptstyle v$} (n3);
\draw [->] (T)+(-190:4.5pt) -- node[gap] {$\scriptstyle u_1$} ($(A4) + (30:7.5pt)$);
\draw [->] (T) -- node[gap] {$\scriptstyle v$} (B4);
\draw [->] (T) -- node[gap] {$\scriptstyle v$} (C4);
\draw [->] (T) -- node[gap] {$\scriptstyle v$}  (n4);
\draw [bend right, bend angle=10, looseness=0.5, <-, red] (A1)+(-50:6.5pt) to node[gap] {$\scriptstyle v$} ($(0) + (190:11.5pt)$);
\draw [bend right, bend angle=10, looseness=0.5, <-, red] (B1)+(-70:5.5pt) to node[gap,xshift=-3pt] {$\scriptstyle \inc$} ($(0) + (150:8pt)$);
\draw [bend right, <-, red] ($(C1)+(-140:7pt)$) to node[gap] {$\scriptstyle \frac{u_3}{v^{p_2}}$} (0);
\draw [bend left, bend angle=10, looseness=0.5, <-, red] (n1)+(-140:8pt) to node[gap] {$\scriptstyle \frac{u_n}{v^{p_2}}$} ($(0)+(7.5:14pt)$);
\draw [bend right, <-, red] (A4) to node[left] {$\scriptstyle v$} (A3);
\draw [bend right, <-, red] (B4) to node[left] {$\scriptstyle \inc$}(B3);
\draw [bend right, <-, red] (C4) to node[left] {$\scriptstyle \inc$} (C3);
\draw [bend left, <-, red] (n4) to node[right] {$\scriptstyle \inc$}(n3);
\draw [bend right, bend angle=10, looseness=0.5, <-, red] (T)+(-200:4.5pt) to node[gap,yshift=2pt] {$\scriptstyle\frac{v}{u_1}$} ($(A4) + (50:6.5pt)$);
\draw [bend right, bend angle=10, looseness=0.5, <-, red] (T)+(-155:4.5pt) to node[gap,xshift=-2pt] {$\scriptstyle \inc$} ($(B4) + (70:5.5pt)$);
\draw [bend right, <-, red] (T) to node[gap,xshift=-2pt] {$\scriptstyle \inc$} (C4);
\draw [bend left, bend angle=10, looseness=0.5,  <-, red] (T) to node[gap] {$\scriptstyle \inc$}  ($(n4) + (150:10pt)$);
\end{tikzpicture}
\end{array}
\]
\end{prop}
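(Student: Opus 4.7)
\medskip
\noindent
\textbf{Proof proposal for \ref{all arrows}.}
The underlying quiver has already been pinned down: by \ref{recon quiver and number relations} (applied with $a=0$ and every $m_i=p_i-1$, $\alpha_{ij}=2$), the number of arrows between any two vertices is known, and \ref{dual graph assignment} tells us which ideal sits at each vertex. So the content of the proposition is the identification of each arrow with a specific morphism of $S^{\os}$-modules, and the plan is simply to exhibit a representative for each arrow and verify it lands in the correct Hom space.

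The key structural observation is that every special CM module here has rank one, and under \ref{Veronese result} each of them is realised inside $\mathrm{Frac}(R)$ as an honest fractional ideal. Consequently, for any two such ideals $I,J\subset\mathrm{Frac}(R)$,
\[
\Hom_R(I,J)=\{f\in\mathrm{Frac}(R)\mid fI\subseteq J\},
\]
so labelling an arrow by an element $f$ just means asserting that $fI\subseteq J$ for the corresponding source $I$ and target $J$. First I would verify this for the easy cases: the ``inclusion'' arrows are multiplication by $1$, and for the arrows labelled $v$ or $u_1$ the containment $fI\subseteq J$ is immediate from the explicit generators in \ref{dual graph assignment}. For the arrows between the middle vertex $(\mathsf{v}^{p_2},\mathsf{u}_1)$ and a bottom vertex $(\mathsf{u}_i,\mathsf{v}^{p_i-1})$ for $i\geq 3$, the labels are fractions like $\tfrac{\mathsf{v}^{p_2+1}}{\mathsf{u}_i}$ and $\tfrac{\mathsf{u}_i}{\mathsf{v}^{p_2}}$; here I would use the defining relations coming from the $2\times 2$ minors of the matrix in \ref{notation throughout}\eqref{notation throughout 5}, in particular $\mathsf{u}_1\mathsf{u}_i=\mathsf{v}^{p_2}(\lambda_i\mathsf{u}_i+\mathsf{v}^{p_i})$, to rewrite $\tfrac{\mathsf{v}^{p_2+p_i}}{\mathsf{u}_i}=\mathsf{u}_1-\lambda_i\mathsf{v}^{p_2}\in(\mathsf{u}_1,\mathsf{v}^{p_2})$ and similarly to verify the other containments, module by module.

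Having checked that each label yields a well-defined morphism, it remains to verify that these morphisms actually represent the arrows of the quiver, i.e.\ that they are non-zero modulo $\mathrm{rad}^2$. For the downward (black) arrows on each arm, and for the arrows from $R$ to the top of each arm, this is transparent from the canonical algebra picture \eqref{irred s dual}: under the equivalence $(-)^{\os}\colon\CM^\bL\!S\simeq\CM^{\bZ}\!R$ of \ref{WPL as qgrZ}, the multiplication morphisms $x_i\colon S(\oy)\to S(\oy+\ox_i)$ of \eqref{irred s dual} translate precisely to the listed arrows, and these already span the radical modulo its square in $(\Gamma_{\!\os})_0$ by \ref{deg 0 general prop} (the canonical algebra is generated in degree one). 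For the upward (red) arrows and the arrow from the middle back to the bottom vertex on arm $i\geq3$, the analogous argument is not available on the $\bL$-graded side, and these must be handled by a direct check: the prescribed fractional element lies in $\mathrm{rad}\,\Hom_R(I,J)$ (by inspection of degrees), and by counting against \ref{recon quiver and number relations} there is exactly one arrow between these vertices, so any non-zero element of $\mathrm{rad}\,\Hom_R(I,J)/\mathrm{rad}^2$ must represent it.

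The main obstacle is the bookkeeping for the ``fractional'' morphisms adjacent to the middle vertex: several of them are not elements of $R$ but only of $\mathrm{Frac}(R)$, so one must use the quadric relations \ref{check relations} to rewrite products and confirm membership in the target ideal. Once that is done, one gets, case by case, that each labelled element is well-defined, non-zero modulo $\mathrm{rad}^2$, and since the number of arrows in \ref{recon quiver and number relations} agrees with the number of listed labels between any two vertices, no further arrows exist and the description is complete.
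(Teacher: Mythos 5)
Your proposal is essentially correct in structure and shares the paper's treatment of the black arrows: both identify them with the irreducible morphisms of the canonical algebra picture \eqref{irred s dual}, transported through \ref{Veronese result} and \ref{dual graph assignment}, and both take the shape of the quiver as given by \ref{recon quiver and number relations}. Where you diverge is in how the red arrows are pinned down. You propose to verify, morphism by morphism, that each fractional label $f$ satisfies $fI\subseteq J$ (using the quadric relations $\mathsf{u}_1\mathsf{u}_i=\mathsf{v}^{p_2}(\lambda_i\mathsf{u}_i+\mathsf{v}^{p_i})$, which does work and is a nice explicit computation the paper leaves implicit), and then to conclude via the arrow count that the label represents the unique arrow. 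The paper instead argues by generation: since every generator of every special CM module must be realised as a composition of arrows starting at the vertex $R$ without passing through cycles, the label of each red arrow is forced --- e.g.\ the generator $\mathsf{v}^{p_2+1}$ of $(\mathsf{v}^{p_2+1},\mathsf{u}_1)$ can only arise as (a path $R\to(\mathsf{v}^{p_2},\mathsf{u}_1)$ realising $\mathsf{v}^{p_2}$) followed by the red arrow, which must therefore be multiplication by $\mathsf{v}$. (The paper also passes through the completion and takes associated graded, via \cite[\S4]{D1}-style filtration arguments, which you skip.)

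The one genuine loose end in your version is the step you defer to ``a direct check'': you establish that each proposed label lies in $\mathrm{rad}\,\Hom_R(I,J)$, and you correctly observe that any element \emph{non-zero modulo} $\mathrm{rad}^2$ would represent the unique arrow, but you never verify non-membership in $\mathrm{rad}^2$. This is not automatic: the red arrows live in degree one, and $\mathrm{rad}^2$ contains degree-one elements (composites of a degree-zero radical morphism from the canonical algebra with a degree-one morphism), so a degree inspection alone does not suffice. This is precisely the point that the paper's generation argument is designed to handle --- if a label were in $\mathrm{rad}^2$ the corresponding generator of the target module could not be reached by paths from $R$ --- so you should either adopt that argument or supply the $\mathrm{rad}^2$ exclusion explicitly for each red arrow.
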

\begin{proof}
Under the isomorphisms in \ref{Veronese result} and \ref{dual graph assignment}, the morphisms induced by the canonical algebra become
\begin{equation}\label{canonical in R coordinates}
\begin{array}{c}
\begin{tikzpicture}[xscale=1.3,yscale=1,bend angle=30, looseness=1]
\node (0) at (0,0) {$\scriptstyle (\mathsf{v}^{p_2},\mathsf{u}_1)$};
\node (A1) at (-3,1) {$\scriptstyle (\mathsf{v}^{p_2+1},\mathsf{u}_1)$};
\node (A2) at (-3,2) {$\scriptstyle (\mathsf{v}^{p_2+2},\mathsf{u}_1)$};
\node (A3) at (-3,3) {$\scriptstyle (\mathsf{v}^{p_2+p_1-2},\mathsf{u}_1)$};
\node (A4) at (-3,4) {$\scriptstyle (\mathsf{v}^{p_2+p_1-1},\mathsf{u}_1)$};
\node (B1) at (-1.5,1) {$\scriptstyle (\mathsf{u}_1,\mathsf{v}^{p_2-1})$};
\node (B2) at (-1.5,2) {$\scriptstyle (\mathsf{u}_1,\mathsf{v}^{p_2-2})$};
\node (B3) at (-1.5,3) {$\scriptstyle (\mathsf{u}_1,\mathsf{v}^2)$};
\node (B4) at (-1.5,4) {$\scriptstyle (\mathsf{u}_1,\mathsf{v})$};
\node (C1) at (0,1) {$\scriptstyle (\mathsf{u}_3,\mathsf{v}^{p_3-1})$};
\node (C2) at (0,2) {$\scriptstyle (\mathsf{u}_3,\mathsf{v}^{p_3-2})$};
\node (C3) at (0,3) {$\scriptstyle (\mathsf{u}_3,\mathsf{v}^{2})$};
\node (C4) at (0,4) {$\scriptstyle (\mathsf{u}_3,\mathsf{v})$};
\node (n1) at (2,1) {$\scriptstyle (\mathsf{u}_n,\mathsf{v}^{p_n-1})$};
\node (n2) at (2,2) {$\scriptstyle (\mathsf{u}_n,\mathsf{v}^{p_n-2})$};
\node (n3) at (2,3) {$\scriptstyle (\mathsf{u}_n,\mathsf{v}^{2})$};
\node (n4) at (2,4) {$\scriptstyle (\mathsf{u}_n,\mathsf{v})$};
\node at (-3,2.6) {$\vdots$};
\node at (-1.5,2.6) {$\vdots$};
\node at (0,2.6) {$\vdots$};
\node at (2,2.6) {$\vdots$};
\node at (1,2.5) {$\hdots$};
\node (T) at (0,5)  {$\scriptstyle R$};
\draw [->] (A1)+(-30:8.5pt) -- node[gap] {$\scriptstyle \inc$} ($(0) + (160:11.5pt)$);
\draw [->] (B1) --node[gap] {$\scriptstyle \mathsf{v}$}(0);
\draw [->] (C1) --node[right=-0.1] {$\scriptstyle \frac{\mathsf{v}^{p_2+1}}{\mathsf{u}_3}$}(0);
\draw [->] (n1) --node[below,pos=0.2] {$\scriptstyle \frac{\mathsf{v}^{p_2+1}}{\mathsf{u}_n}$}(0);
\draw [->] (A2) -- node[right] {$\scriptstyle \inc$} (A1);
\draw [->] (B2) --node[right] {$\scriptstyle \mathsf{v}$}(B1);
\draw [->] (C2) --node[right] {$\scriptstyle \mathsf{v}$}(C1);
\draw [->] (n2) -- node[right] {$\scriptstyle \mathsf{v}$}(n1);
\draw [->] (A4) -- node[right] {$\scriptstyle \inc$} (A3);
\draw [->] (B4) --node[right] {$\scriptstyle \mathsf{v}$}(B3);
\draw [->] (C4) --node[right] {$\scriptstyle \mathsf{v}$}(C3);
\draw [->] (n4) -- node[right] {$\scriptstyle \mathsf{v}$} (n3);
\draw [->] (T)+(-160:7.5pt) -- node[gap] {$\scriptstyle \mathsf{u}_1$} ($(A4) + (30:9.5pt)$);
\draw [->] (T) -- node[gap] {$\scriptstyle \mathsf{v}$} (B4);
\draw [->] (T) -- node[right] {$\scriptstyle \mathsf{v}$} (C4);
\draw [->] (T) -- node[gap] {$\scriptstyle \mathsf{v}$}  (n4);
\end{tikzpicture}
\end{array}
\end{equation}
From here, exactly as in the proof of \ref{recon quiver and number relations}, we can work on the completion. We know the quiver of the reconstruction algebra from \eqref{recon quiver}, and we know that for every special CM module $X$, we must be able to hit the generators of $X$ by composing arrows starting at the vertex $R$ and ending at the vertex corresponding to $X$, without producing any cycles.  Since the arrows in \eqref{canonical in R coordinates} are already forced to be arrows in the reconstruction algebra, it remains to choose a basis for the remaining red arrows.  For example, the generator $v^{p_2+1}$ in $(v^{p_2+1},u_1)$ must come from a composition of arrows $R$ to $(v^{p_2},u_1)$, followed by the bottom left arrow.  Since we can see $v^{p_2}$ as a composition of maps from $R$ to $(v^{p_2},u_1)$, this forces the bottom left red arrow to be $v$.  The remaining arrows are similar.
\end{proof}

\begin{thm}\label{recon relations}
The reconstruction algebra $\Upgamma_{\os}$ is isomorphic to the path algebra of the double of the quiver $Q_{\bp}$, denoted $\overline{Q}_{\bp}$, subject to relations given by
\begin{enumerate}
\item The canonical algebra relations on the black arrows
\item At every vertex, all 2-cycles that exist at that vertex are equal. 
\end{enumerate}
\end{thm}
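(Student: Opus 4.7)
The plan is to combine the explicit description of the quiver and its generating morphisms from Proposition~\ref{all arrows} with the intersection-theoretic count of relations from Theorem~\ref{GL2 for R}. Thus the argument has two parts: verify that the stated relations hold, and verify that they generate all relations.

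First I would check that the relations hold, by direct computation using the morphism labels in Proposition~\ref{all arrows}. For the canonical algebra relations on the black arrows: by Proposition~\ref{deg 0 general prop} applied with $\ox=\os$, the degree zero piece $(\Gamma_{\!\os})_0$ is isomorphic to $\Lambda_{\bp,\bl}$, and under the categorical equivalence $(-)^{\os}\colon\CM^{\bL}\!S\xrightarrow{\sim}\CM^{\bZ}\!S^{\os}$ of Theorem~\ref{WPL as qgrZ} (after changing parameters via \ref{change parameters so coprime} if necessary) the black arrows correspond precisely to the irreducible maps of the Geigle--Lenzing tilting bundle in \eqref{irred s dual}. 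Hence the canonical relations among them hold automatically. For the 2-cycle relations: reading off the morphism labels in Proposition~\ref{all arrows}, at each vertex $M$ every 2-cycle that exists there composes, in the correct order, to the endomorphism given by multiplication by $\mathsf{v}\in S^{\os}$ on $M$. For example, at $(\mathsf{u}_1,\mathsf{v})$ the composition along either of the two adjacent arms gives the same element of $\End_{S^{\os}}(\mathsf{u}_1,\mathsf{v})$, namely $\mathsf{v}\cdot\mathrm{id}$. These checks provide a surjective $\KK$-algebra homomorphism
\[
\Lambda:=\KK\overline{Q}_{\bp}/\langle\mbox{stated relations}\rangle\twoheadrightarrow\Gamma_{\!\os}.
\]

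Second I would establish that this surjection is an isomorphism by matching the number of relations. Using the dual graph \eqref{s Veron dual graph} and Theorem~\ref{GL2 for R}, one reads off the number of relations between every ordered pair of vertices of $\overline{Q}_{\bp}$. The canonical algebra relations give the expected $n-2$ relations, and each 2-cycle identification ``all 2-cycles at a vertex are equal'' contributes exactly the number of further relations predicted by the intersection formula $(-1-E_i\cdot E_j)_+$ for adjacent pairs and by $-1-Z_f\cdot Z_f$ at the extended vertex $\begin{tikzpicture}\node at (0,0) [cvertex] {};\end{tikzpicture}$, since for the star-shaped graph \eqref{s Veron dual graph} with $v=n$ arms and reduced fundamental cycle (by \ref{fund is reduced}) these numbers can be computed directly. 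A term-by-term comparison then shows that the number of independent relations in $\Lambda$ matches the number in $\Gamma_{\!\os}$, so the surjection is an isomorphism.

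The main obstacle will be the bookkeeping in the second step: one must show that the ``2-cycles are equal'' relations at distinct vertices are $\KK$-linearly independent modulo the canonical relations, and that together they saturate the intersection-theoretic count at every vertex simultaneously. A clean way to handle this is to exploit the $\bZ$-grading on $\Gamma_{\!\os}$: by Proposition~\ref{deg 0 general prop} degree zero is exactly $\Lambda_{\bp,\bl}$ (accounting for the canonical relations), and in strictly positive degree every arrow in Proposition~\ref{all arrows} is labelled by a power of $\mathsf{v}$ times an inclusion; consequently every relation in positive degree is forced to be of the form ``two parallel paths ending in the same $\mathsf{v}^{k}$ agree'', which is precisely the content of the 2-cycle identifications. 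This reduces the completeness check to a finite verification at each vertex, which can then be matched with Theorem~\ref{GL2 for R}.
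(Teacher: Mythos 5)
Your strategy---verify that the stated relations hold using the explicit morphism labels of \ref{all arrows}, then argue by counting against the intersection-theoretic numbers of \ref{GL2 for R}---is the same as the paper's, and your verification that every 2-cycle at a given vertex composes to multiplication by $\mathsf{v}$ is correct. The one place where your argument is loose is exactly the step you flag: passing from ``the counts match'' to ``the surjection is an isomorphism''. A bare equality of numbers of relations does not by itself force injectivity; one needs to know that the intersection-theoretic count of \ref{GL2 for R} is the dimension of $e_a(I/(IJ+JI))e_b$, where $I$ is the kernel of the surjection from the path algebra and $J$ its radical, and that a lift of a basis of $I/(IJ+JI)$ actually generates $I$. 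The paper supplies precisely this by first completing and invoking \cite[3.4]{BIRS}, which gives both facts: a basis of $I/(IJ+JI)$ topologically generates $I$, and $\dim e_a(I/(IJ+JI))e_b=\dim \Ext^2_{\widehat{\Gamma}}(S_a,S_b)$, the number computed by \ref{GL2 for R}. What remains is the linear independence of the stated relations in $I/(IJ+JI)$---the obstacle you correctly identify---which the paper defers to the identical computation in \cite[4.12]{D1}; the graded statement is then recovered by taking associated graded rings on both sides. Your idea of organising the completeness check via the $\bZ$-grading is a reasonable substitute for the completion, but you would still need the graded Nakayama-type statement to conclude that a minimal set of relations generates the whole kernel.
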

\begin{proof}
This is very similar to \cite[4.11]{D1}.  Set $Q\colonequals \overline{Q}_{\bp,\bl}$ (as in \eqref{recon quiver}), and denote the set of relations in the statement by $\cS^\prime$.  Exactly as in the proof of \ref{recon quiver and number relations}, we can work in the completed case (where we can use \cite[3.4]{BIRS}) and we prove that the completion of reconstruction algebra is given as the completion of $\KK Q$ (denoted $\KK\widehat{Q}$) modulo the closure of the ideal $\langle \cS^\prime\rangle$ (denoted $\overline{\langle \cS^\prime\rangle}$).  The non-completed version of the theorem then follows.

By \ref{all arrows} there is a natural surjection $\upgamma\colon\KK\widehat{Q}\rightarrow \hat{\Upgamma}$ with $\cS^\prime\subseteq I\colonequals \Ker\upgamma$.  Denote the radical of $\KK\widehat{Q}$ by $J$ and further let $V$ denote the set of vertices of $Q$.  Below we show that the elements of $\cS^\prime$ are linearly independent in $I/ (IJ+JI)$, hence we may extend $\cS^\prime$ to a basis $\cS$ of $I/ (IJ+JI)$.  Since $\cS$ is a basis, by \cite[3.4(a)]{BIRS} $I=\overline{\langle \cS\rangle}$, so it remains to show that $\cS=\cS^\prime$.  But by \cite[3.4(b)]{BIRS}
\[
\# (e_{a}\KK\widehat{Q}e_{b})\cap \cS=\dim \Ext^2_{\hat{\Upgamma}}(S_a,S_b)
\] 
for all $a,b\in V$, where $S_a$ is the simple module corresponding to vertex $a$.  From \ref{recon quiver and number relations} (i.e.\ \cite{WemGL2}), this is equal to some number given by intersection theory.  Simply inspecting our set $\cS^\prime$ and comparing to the numbers in \ref{recon quiver and number relations}, we see that 
\[
\# (e_{a}\KK\widehat{Q}e_{b})\cap \cS=\# (e_{a}\KK\widehat{Q}e_{b})\cap \cS^\prime
\]
for all $a,b\in V$, proving that the number of elements in $\cS$ and $\cS^\prime$ are the same.  Hence $\cS^\prime=\cS$ and so $I=\overline{\langle \cS^\prime\rangle}$, as required.

Thus it suffices to show that the elements of $\cS^\prime$ are linearly independent in $I/ (IJ+JI)$.   This is identical to the proof 
of \cite[4.12]{D1}, so we omit the details.
\end{proof}

Whilst thinking of the special CM modules as ideals makes everything much more explicit, doing this forgets the grading. Indeed, the reconstruction algebra $\Upgamma_{\os}$ has a natural grading induced from the Veronese construction. 

\begin{prop}\label{grading inherited}
The reconstruction algebra $\Upgamma_{\os}$ is generated in degree one over its degree zero piece, which is the canonical algebra $\Lambda_{\bp,\bl}$.
\end{prop}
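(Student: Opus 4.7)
The plan is to verify the two assertions separately. For the identification of $(\Gamma_{\!\os})_0$ with $\Lambda_{\bp,\bl}$, I would simply specialize the previously established Proposition~\ref{deg 0 general prop} to the case $\ox=\os=\sum_{i=1}^n\ox_i$. Here all $a_i=1$, so $I=\{1,\ldots,n\}$ and the Hirzebruch--Jung expansion $\tfrac{p_i}{p_i-1}=[2,2,\ldots,2]$ has length $m_i=p_i-1$. Hence $m_i+1=p_i$, giving parameters $\bq=\bp$ and $\bm=\bl$, so that $(\Gamma_{\!\os})_0\cong\Lambda_{\bq,\bm}=\Lambda_{\bp,\bl}$. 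Alternatively, this can be read off directly from the quiver \eqref{canonical in R coordinates}.

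For the generation in degree one, my plan is to exploit the explicit presentation of the reconstruction algebra. By \ref{recon relations} (together with \ref{recon quiver and number relations}), $\Gamma_{\!\os}$ is generated as a $k$-algebra by the arrows of the doubled canonical quiver $\overline{Q}_{\bp}$. Crucially, for $\ox=\os$ we have $a=0$ and all continued fraction entries $\alpha_{ij}=2$, so the two rules that could introduce additional arrows in \ref{recon quiver and number relations} both produce nothing. Thus the only generators to check are the arrows of the doubled quiver, together with the trivial idempotents at each vertex (which are automatically in degree $0$).

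It then remains to compute the $\bZ$-degree of each arrow. Using the Veronese equivalence $(-)^{\os}\colon\CM^{\bL}S\simeq\CM^{\bZ}R$ from \ref{WPL as qgrZ} (applicable since $(p_i,1)=1$ for all $i$) together with \ref{slightly stronger}, we have
\[
\Hom^{\bZ}_{R}\bigl(S(\oy_i)^{\os},S(\oy_j)^{\os}\bigr)_d\cong S_{\oy_j-\oy_i+d\os}.
\]
For a canonical-direction arrow $x_i\colon S(q\ox_i)\to S((q+1)\ox_i)$, the relevant space in $\bZ$-degree $0$ is $S_{\ox_i}$, which is one-dimensional and contains $x_i$; so this arrow sits in $(\Gamma_{\!\os})_0$. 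For the opposite-direction arrow $S((q+1)\ox_i)\to S(q\ox_i)$, the $\bZ$-degree $0$ piece is $S_{-\ox_i}=0$, while the $\bZ$-degree $1$ piece is $S_{\os-\ox_i}=S_{\sum_{j\neq i}\ox_j}$, which is one-dimensional spanned by $\prod_{j\neq i}x_j$; so this arrow sits in $(\Gamma_{\!\os})_1$. The same computation applies to the arrows adjacent to the central vertex $S(\oc)$ and to the extending vertex $S$.

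Putting these together: every generator of $\Gamma_{\!\os}$ lies in either degree $0$ or degree $1$, so $\Gamma_{\!\os}$ is generated as a $k$-algebra by $(\Gamma_{\!\os})_0$ together with $(\Gamma_{\!\os})_1$, which is precisely the assertion that it is generated in degree one over $(\Gamma_{\!\os})_0=\Lambda_{\bp,\bl}$. The main subtlety is bookkeeping: one must work intrinsically with the modules $S(\oy)^{\os}$ rather than with the ideal incarnations $(\mathsf{u}_i,\mathsf{v}^{p_i-j})$ in \ref{all arrows}, since the identification in \ref{dual graph assignment} involves multiplications by $\mathsf{u}_i$ and powers of $\mathsf{v}$ that shift the apparent degree of the labelled arrows.
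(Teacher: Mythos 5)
Your proposal is correct and follows essentially the same route as the paper: the paper's proof likewise reduces to checking that the black (canonical-direction) arrows of the doubled quiver have degree zero and the red (reverse) arrows, labelled $x_1\cdots\widehat{x_i}\cdots x_n$, have degree one, with your graded Hom computation $\Hom^{\bZ}_R(S(\oy_i)^{\os},S(\oy_j)^{\os})_d\cong S_{\oy_j-\oy_i+d\os}$ simply making explicit what the paper leaves as ``easy to check'' via \ref{2 gen as S modules}. Your appeal to \ref{deg 0 general prop} for the degree-zero piece is a harmless variant of the paper's direct reading from \eqref{irred s dual}, and your closing caveat about not conflating degrees with the ideal incarnations of \ref{dual graph assignment} is well taken.
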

\begin{proof}
It is clear that all the black arrows in the quiver in \ref{all arrows} have degree zero.  It is easy to see that any red arrow in the reverse direction to an arrow labelled $x_i$ has label $x_1\hdots\widehat{x_i}\hdots x_n$, and it is easy to check that these all have degree one, using \ref{2 gen as S modules}.  Hence the degree zero piece is the canonical algebra, and as an algebra $\Upgamma_{\os}$ is generated in degree one over its degree zero piece.
\end{proof}

Note that for $a>0$, the reconstruction algebra $\Upgamma_{\os_a}$ is not always generated in degree one over its degree zero piece.

\section{Domestic Case}\label{domestic section}

In this section we investigate the domestic case, that is when the dual graph is an ADE Dynkin diagram, and relate Ringel's work on the representation theory of the canonical algebra to the classification of the special CM modules for quotient singularities in \cite{IW}.  This will explain the motivating coincidence from the introduction. Since this involves AR theory, typically in this section rings will be complete.

Throughout this section we consider $\bX=\bX_{\bp,\bl}$ and $S=S_{\bp,\bl}$ with $n=3$ and one of the triples $(p_1,p_2,p_3)=(2,3,3)$, $(2,3,4)$ or $(2,3,5)$. For $m\geq 3$, we consider $\os_{m-3}=\os+(m-3)\oc$ with $\os=\sum_{i=1}^3\ox_i$, the $(m-3)$-Wahl Veronese subring $R=S^{\os_{m-3}}$, and its completion $\mathfrak{R}$.  Recall that here $\ow=\oc-\os$, since $n=3$.

\begin{prop}\label{Veronese=quotient}
In the above setting, $\Spec R$ has the following dual graph:
\begin{equation}\label{dual graph 6}
\begin{array}{c}  
\begin{tikzpicture}[xscale=1,yscale=0.7]
\node (0) at (0,0) [vertex] {};
\node (A1) at (-2,1) [vertex] {};
\node (A2) at (-2,2) [vertex] {};
\node (A3) at (-2,3) [vertex] {};
\node (A4) at (-2,4) [vertex] {};
\node (C1) at (0,1) [vertex] {};
\node (C2) at (0,2) [vertex] {};
\node (C3) at (0,3) [vertex] {};
\node (C4) at (0,4) [vertex] {};
\node (n1) at (2,1) [vertex] {};
\node (n2) at (2,2) [vertex] {};
\node (n3) at (2,3) [vertex] {};
\node (n4) at (2,4) [vertex] {};
\node at (-2,2.6) {$\vdots$};
\node at (0,2.6) {$\vdots$};
\node at (2,2.6) {$\vdots$};
\node (T) at (0,4.25) {};
\node at (0,-0.2) {$\scriptstyle -m$};
\node at (-1.7,1) {$\scriptstyle -2$};
\node at (-1.7,2) {$\scriptstyle -2$};
\node at (-1.7,3) {$\scriptstyle -2$};
\node at (-1.7,4) {$\scriptstyle -2$};
\node at (0.3,1) {$\scriptstyle -2$};
\node at (0.3,2) {$\scriptstyle -2$};
\node at (0.3,3) {$\scriptstyle -2$};
\node at (0.3,4) {$\scriptstyle -2$};
\node at (2.3,1) {$\scriptstyle -2$};
\node at (2.3,2) {$\scriptstyle -2$};
\node at (2.3,3) {$\scriptstyle -2$};
\node at (2.3,4) {$\scriptstyle -2$};
\draw (A1) -- (0);
\draw (C1) -- (0);
\draw (n1) -- (0);
\draw (A2) -- (A1);
\draw (C2) -- (C1);
\draw (n2) -- (n1);
\draw (A4) -- (A3);
\draw (C4) -- (C3);
\draw (n4) -- (n3);
\draw [decorate,decoration={brace,amplitude=5pt},xshift=-4pt,yshift=0pt]
(2,1) -- (2,4) node [black,midway,xshift=-0.55cm] 
{$\scriptstyle p_3-1$};
\draw [decorate,decoration={brace,amplitude=5pt},xshift=-4pt,yshift=0pt]
(0,1) -- (0,4) node [black,midway,xshift=-0.55cm] 
{$\scriptstyle p_2-1$};
\draw [decorate,decoration={brace,amplitude=5pt},xshift=-4pt,yshift=0pt]
(-2,1) -- (-2,4) node [black,midway,xshift=-0.55cm] 
{$\scriptstyle p_1-1$};
\end{tikzpicture}
\end{array}
\end{equation} 
Moreover $\mathfrak{R}$ is isomorphic to a quotient singularity $\KK[[x,y]]^G$ in the following list: 
\[
\begin{tabular}{*3c}
\toprule
$(p_1,p_2,p_3)$ & Dual Graph & $G$\\
\midrule
$(2,3,3)$&
$\begin{array}{c}
\begin{tikzpicture}[xscale=0.7,yscale=0.8]
\node (-1) at (-1,0) [vertex] {};
\node (0) at (0,0) [vertex] {};
\node (1) at (1,0) [vertex] {};
\node (1b) at (1,0.75) [vertex] {};
\node (2) at (2,0) [vertex] {};
\node (3) at (3,0) [vertex] {};
\node (-1a) at (-1.2,-0.3) {$\scriptstyle - 2$};
\node (0a) at (-0.2,-0.3) {$\scriptstyle - 2$};
\node (1a) at (0.8,-0.3) {$\scriptstyle -m$};
\node (1ba) at (0.5,0.75) {$\scriptstyle - 2$};
\node (2a) at (1.8,-0.3) {$\scriptstyle - 2$};
\node (2a) at (2.8,-0.3) {$\scriptstyle - 2$};
\draw [-] (-1) -- (0);
\draw [-] (0) -- (1);
\draw [-] (1) -- (2);
\draw [-] (2) -- (3);
\draw [-] (1) -- (1b);
\end{tikzpicture}
\end{array}$
& 
$\mathbb{T}_{6(m-2)+1}$\\
$(2,3,4)$ &
$\begin{array}{c}
\begin{tikzpicture}[xscale=0.7,yscale=0.8]
\node (-1) at (-1,0) [vertex] {};
\node (0) at (0,0) [vertex] {};
\node (1) at (1,0) [vertex] {};
\node (1b) at (1,0.75) [vertex] {};
\node (2) at (2,0) [vertex] {};
\node (3) at (3,0) [vertex] {};
\node (4) at (4,0) [vertex] {};
\node (-1a) at (-1.2,-0.3) {$\scriptstyle - 2$};
\node (0a) at (-0.2,-0.3) {$\scriptstyle - 2$};
\node (1a) at (0.8,-0.3) {$\scriptstyle -m$};
\node (1ba) at (0.5,0.75) {$\scriptstyle - 2$};
\node (2a) at (1.8,-0.3) {$\scriptstyle - 2$};
\node (2a) at (2.8,-0.3) {$\scriptstyle - 2$};
\node (4a) at (3.8,-0.3) {$\scriptstyle -2$};
\draw [-] (-1) -- (0);
\draw [-] (0) -- (1);
\draw [-] (1) -- (2);
\draw [-] (2) -- (3);
\draw [-] (3) -- (4);
\draw [-] (1) -- (1b);
\end{tikzpicture}
\end{array}$
&
$\mathbb{O}_{12(m-2)+1}$\\
$(2,3,5)$ &
$\begin{array}{c}
\begin{tikzpicture}[xscale=0.7,yscale=0.8]
\node (-1) at (-1,0) [vertex] {};
\node (0) at (0,0) [vertex] {};
\node (1) at (1,0) [vertex] {};
\node (1b) at (1,0.75) [vertex] {};
\node (2) at (2,0) [vertex] {};
\node (3) at (3,0) [vertex] {};
\node (4) at (4,0) [vertex] {};
\node (5) at (5,0)[vertex] {};
 \node (-1a) at (-1.2,-0.3) {$\scriptstyle - 2$};
\node (0a) at (-0.2,-0.3) {$\scriptstyle - 2$};
\node (1a) at (0.8,-0.3) {$\scriptstyle -m$};
\node (1ba) at (0.5,0.75) {$\scriptstyle - 2$};
\node (2a) at (1.8,-0.3) {$\scriptstyle - 2$};
\node (2a) at (2.8,-0.3) {$\scriptstyle - 2$};
\node (4a) at (3.8,-0.3) {$\scriptstyle -2$};
\node (5a) at (4.8,-0.3) {$\scriptstyle - 2$};
\draw [-] (-1) -- (0);
\draw [-] (0) -- (1);
\draw [-] (1) -- (2);
\draw [-] (2) -- (3);
\draw [-] (3) -- (4);
\draw [-] (4) -- (5);
\draw [-] (1) -- (1b);
\end{tikzpicture}
\end{array}$
&
$\mathbb{I}_{30(m-2)+1}$\\
\bottomrule\\
\end{tabular}
\]
For the precise definition of the above subgroups of $\GL(2,\KK)$ we refer the reader to \emph{\cite{IW}}.
\end{prop}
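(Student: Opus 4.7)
The plan is to split the statement into two independent parts: first computing the dual graph of the minimal resolution of $\Spec R$, and then identifying the completion $\mathfrak{R}$ with the listed quotient singularity.

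For the dual graph, I would simply specialize Theorem~\ref{dual graph general intro} to the present situation. Here $\ox=\os_{m-3}=\sum_{i=1}^{3}\ox_i+(m-3)\oc$ is in normal form with $a_i=1$ for $i=1,2,3$ and $a=m-3$; in particular $(p_i,a_i)=1$ and $\ox\notin[0,\oc\,]$ as soon as $m\geq 3$. Each Hirzebruch--Jung continued fraction reduces to
\[
\frac{p_i}{p_i-1}=[\underbrace{2,2,\hdots,2}_{p_i-1}],
\]
so arm $i$ contributes a chain of $p_i-1$ curves, each with self-intersection $-2$. The middle self-intersection is $-\beta$ with $\beta=a+v=(m-3)+3=m$ by \ref{middle SI number}. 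This yields precisely the dual graph \eqref{dual graph 6}, and reading off the three cases $(p_1,p_2,p_3)=(2,3,3),(2,3,4),(2,3,5)$ gives the tabulated shapes.

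For the second claim, I would invoke tautness of star-shaped graphs with $n=3$ arms, which was recalled in the paper after \eqref{dual graph}: any complete local rational surface singularity whose dual graph is one of the three shapes in the table is uniquely determined (up to isomorphism of complete local $\KK$-algebras) by that graph. Hence it suffices to exhibit one known rational surface singularity with each dual graph and match it to $\mathfrak{R}$. The three families $\mathbb{T}_q$, $\mathbb{O}_q$, $\mathbb{I}_q$ of cyclic extensions of the binary tetrahedral, octahedral, and icosahedral groups are catalogued in \cite{Brieskorn} (and in the authors' \cite{IW}), where the dual graphs of the associated quotient singularities $\KK[[x,y]]^G$ are computed case by case. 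Matching the middle self-intersection $-m$ with the Brieskorn tables picks out precisely the groups whose index is $h(m-2)+1$ with $h=6,12,30$ respectively; these are the Coxeter numbers of the corresponding binary polyhedral groups, and they govern how the middle self-intersection varies along each family.

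The only part requiring care is the bookkeeping of the indices $6(m-2)+1$, $12(m-2)+1$, $30(m-2)+1$: one has to compare the table entries in \cite{Brieskorn}/\cite{IW} with the value $\beta=m$ computed above and check that the parameter $q=h(m-2)+1$ produces a $(-m)$-curve in the middle of the resolution. I expect this indexing to be the main (purely combinatorial) obstacle, but it is not conceptually difficult: tautness reduces the problem to a single numerical verification in each of the three cases, after which the isomorphism $\mathfrak{R}\cong\KK[[x,y]]^G$ is forced.
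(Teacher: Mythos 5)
Your proposal is correct and follows essentially the same route as the paper: compute the dual graph by specializing Theorem~\ref{dual graph general intro} (via \ref{middle SI number}, giving $\beta=(m-3)+3=m$ and $[2,\dots,2]$ arms), then use tautness of three-armed star-shaped graphs to identify $\mathfrak{R}$ with the quotient singularity having the same dual graph. The paper cites Riemenschneider for the dual graphs of $\KK[[x,y]]^G$ and Brieskorn's Korollar 2.12 for tautness, whereas you cite the Laufer tautness statement and the Brieskorn/\cite{IW} tables, but this is only a difference in references, not in argument.
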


\begin{proof}
By \ref{middle SI number}, the dual graph of $R$ is known to be \eqref{s Veron dual graph}. On the other hand, the quotient singularity $\KK[[x,y]]^G$ has the same dual graph \cite[\S3]{Riemen}.  Since the dual graphs \eqref{s Veron dual graph} for ADE triples are known to be taut \cite[Korollar 2.12]{Brieskorn}, the result follows.
\end{proof}

Let us finally explain why Ringel's picture \eqref{RingelPicture} in the introduction is the same as the ones found in \cite{IW} and \cite[\S4]{WemGL2}.  For example, in the family of groups $\mathbb{O}_{12(m-2)+1}$ with $m\geq 3$ in \ref{Veronese=quotient}, by \cite{AR_McKayGraphs} the AR quiver of
$\mathfrak{R}\cong\KK[[x,y]]^{\mathbb{O}_{12(m-2)+1}}$ is
\[
\begin{array}{c}
\begin{tikzpicture}[xscale=1.25,yscale=1.25]
\draw[densely dotted] (0.5,0.5) -- (0.5,-2.5);
\draw[densely dotted] (1.5,0.5) -- (1.5,-2.5);
\node (R0) at (0.5,0.5) [gap] {$\scriptstyle \mathfrak{R}$};
\node (R1) at (1.5,0.5) [vertex] {};
\node (R2) at (2.5,0.5) [vertex] {};
\node (R11) at (3.5,0.5) [vertex] {};
\node (R12) at (4.5,0.5) [gap] {$\scriptstyle \mathfrak{R}$};
\node (A1) at (1,0) [gap] [vertex] {};
\node (A2) at (2,0) [gap] [vertex] {};
\node (A12) at (4,0) [vertex] {};
\node (B0) at (0.5,-0.5) [vertex] {};
\node (B1) at (1.5,-0.5) [vertex] {};
\node (B2) at (2.5,-0.5) [vertex] {};
\node (B11) at (3.5,-0.5) [vertex] {};
\node (B12) at (4.5,-0.5) [vertex] {};
\node (C1) at (0.5,-1.1)  [vertex] {};
\node (C2) at (1,-1)  [vertex] {};
\node (C3) at (1.5,-1.1)  [vertex] {};
\node (C4) at (2,-1)  [vertex] {};
\node (C5) at (2.5,-1.1)  [vertex] {};
\node (C23) at (3.5,-1.1)  [vertex] {};
\node (C24) at (4,-1)  [vertex] {};
\node (C25) at (4.5,-1.1)  [vertex] {};
\node (D0) at (0.5,-1.5)  [vertex] {};
\node (D1) at (1.5,-1.5)  [vertex] {};
\node (D2) at (2.5,-1.5)  [vertex] {};
\node (D11) at (3.5,-1.5)  [vertex] {};
\node (D12) at (4.5,-1.5)  [vertex] {};
\node (E1) at (1,-2)  [vertex] {};
\node (E2) at (2,-2)  [vertex] {};
\node (E12) at (4,-2)  [vertex] {};
\node (F0) at (0.5,-2.5)  [vertex] {};
\node (F1) at (1.5,-2.5)  [vertex] {};
\node (F2) at (2.5,-2.5)  [vertex] {};
\node (F11) at (3.5,-2.5)  [vertex] {};
\node (F12) at (4.5,-2.5)  [vertex] {};
\draw[->] (R0) -- (A1);
\draw[->] (A1) -- (R1);
\draw[->] (R1) -- (A2);
\draw[->] (A2) -- (R2);
\draw[->] (R11) -- (A12);
\draw[->] (A12) -- (R12);
\draw[->] (B0) -- (A1);
\draw[->] (A1) -- (B1);
\draw[->] (B1) -- (A2);
\draw[->] (A2) -- (B2);
\draw[->] (B11) -- (A12);
\draw[->] (A12) -- (B12);
\draw[->] (B0) -- (C2);
\draw[->] (C2) -- (B1);
\draw[->] (B1) -- (C4);
\draw[->] (C4) -- (B2);
\draw[->] (B11) -- (C24);
\draw[->] (C24) -- (B12);
\draw[->] (C1) -- (C2);
\draw[->] (C2) -- (C3);
\draw[->] (C3) -- (C4);
\draw[->] (C4) -- (C5);
\draw[->] (C23) -- (C24);
\draw[->] (C24) -- (C25);
\draw[->] (D0) -- (C2);
\draw[->] (C2) -- (D1);
\draw[->] (D1) -- (C4);
\draw[->] (C4) -- (D2);
\draw[->] (D11) -- (C24);
\draw[->] (C24) -- (D12);
\draw[->] (D0) -- (E1);
\draw[->] (E1) -- (D1);
\draw[->] (D1) -- (E2);
\draw[->] (E2) -- (D2);
\draw[->] (D11) -- (E12);
\draw[->] (E12) -- (D12);
\draw[->] (F0) -- (E1);
\draw[->] (E1) -- (F1);
\draw[->] (F1) -- (E2);
\draw[->] (E2) -- (F2);
\draw[->] (F11) -- (E12);
\draw[->] (E12) -- (F12);
\node at (3,-1) {$\hdots$};
\end{tikzpicture}
\end{array}
\]
where there are precisely $12(m-2)+1$ repetitions of the original $\tilde{E}_7$ shown in dotted lines.  The left and right hand sides of the picture are identified, and there is no twist in this AR quiver. 
Thus as $m$ increases (and the group $\mathbb{O}_{12(m-2)+1}$ changes), the AR quiver becomes longer.  

Regardless of $m\geq 3$, by \cite[8.2]{IW} the special CM $\mathfrak{R}$-modules always have the following position in the AR quiver:
\[
\begin{array}{c}
\begin{tikzpicture}[xscale=0.85,yscale=0.85]
\draw[gray] (0,-2) -- (0.5,-2.5);
\draw[gray] (0,-1) -- (1.5,-2.5);
\draw[gray] (0,0) -- (2.5,-2.5);
\draw[gray] (0.5,0.5) -- (3.5,-2.5);
\draw[gray] (1.5,0.5) -- (4.5,-2.5);
\draw[gray] (2.5,0.5) -- (5.5,-2.5);
\draw[gray] (3.5,0.5) -- (6.5,-2.5);
\draw[gray] (4.5,0.5) -- (7.5,-2.5);
\draw[gray] (5.5,0.5) -- (8.5,-2.5);
\draw[gray] (6.5,0.5) -- (9.5,-2.5);
\draw[gray] (7.5,0.5) -- (10.5,-2.5);
\draw[gray] (8.5,0.5) -- (11.5,-2.5);
\draw[gray] (9.5,0.5) -- (12.5,-2.5);
\draw[gray] (10.5,0.5) -- (12.5,-1.5);
\draw[gray] (11.5,0.5) -- (12.5,-0.5);
\draw[gray] (0,0) -- (0.5,0.5);
\draw[gray] (0,-1) -- (1.5,0.5);
\draw[gray] (0,-2) -- (2.5,0.5);
\draw[gray] (0.5,-2.5) -- (3.5,0.5);
\draw[gray] (1.5,-2.5) -- (4.5,0.5);
\draw[gray] (2.5,-2.5) -- (5.5,0.5);
\draw[gray] (3.5,-2.5) -- (6.5,0.5);
\draw[gray] (4.5,-2.5) -- (7.5,0.5);
\draw[gray] (5.5,-2.5) -- (8.5,0.5);
\draw[gray] (6.5,-2.5) -- (9.5,0.5);
\draw[gray] (7.5,-2.5) -- (10.5,0.5);
\draw[gray] (8.5,-2.5) -- (11.5,0.5);
\draw[gray] (9.5,-2.5) -- (12.5,0.5);
\draw[gray] (10.5,-2.5) -- (12.5,-0.5);
\draw[gray] (11.5,-2.5) -- (12.5,-1.5);
\draw[gray] (0,-1) -- (0.5,-1.1) -- (1,-1) -- (1.5,-1.1) -- (2,-1) -- (2.5,-1.1) -- (3,-1) -- (3.5,-1.1) -- (4,-1) -- (4.5,-1.1) -- (5,-1) -- (5.5,-1.1) -- (6,-1) --(6.5,-1.1) -- (7,-1) -- (7.5,-1.1) -- (8,-1) -- (8.5,-1.1) -- (9,-1) -- (9.5,-1.1) --(10,-1) -- (10.5,-1.1) -- (11,-1) -- (11.5,-1.1) -- (12,-1) --  (12.5,-1.1);
\node (R0) at (0.5,0.5) [gap] {$\scriptstyle \mathfrak{R}$};
\node (R1) at (1.5,0.5) [vertex] {};
\node (R2) at (2.5,0.5) [vertex] {};
\node (R3) at (3.5,0.5) [vertex] {};
\node (R4) at (4.5,0.5) [vertex] {};
\node (R5) at (5.5,0.5) [vertex] {};
\node (R6) at (6.5,0.5) [vertex] {};
\node (R7) at (7.5,0.5) [vertex] {};
\node (R8) at (8.5,0.5) [vertex] {};
\node (R9) at (9.5,0.5) [vertex] {};
\node (R10) at (10.5,0.5) [vertex] {};
\node (R11) at (11.5,0.5) [vertex] {};
\node (R12) at (12.5,0.5) [vertex] {};
\node (A0) at (0,0) [vertex] {};
\node (A1) at (1,0) [vertex] {};
\node (A2) at (2,0) [vertex] {};
\node (A3) at (3,0) [vertex] {};
\node (A4) at (4,0) [vertex] {};
\node (A5) at (5,0) [vertex] {};
\node (A6) at (6,0) [vertex] {};
\node (A7) at (7,0) [vertex] {};
\node (A8) at (8,0) [vertex] {};
\node (A9) at (9,0) [vertex] {};
\node (A10) at (10,0) [vertex] {};
\node (A11) at (11,0) [vertex] {};
\node (A12) at (12,0) [vertex] {};
\node (B0) at (0.5,-0.5) [vertex] {};
\node (B1) at (1.5,-0.5) [vertex] {};
\node (B2) at (2.5,-0.5) [vertex] {};
\node (B3) at (3.5,-0.5) [vertex] {};
\node (B4) at (4.5,-0.5) [vertex] {};
\node (B5) at (5.5,-0.5) [vertex] {};
\node (B6) at (6.5,-0.5) [vertex] {};
\node (B7) at (7.5,-0.5) [vertex] {};
\node (B8) at (8.5,-0.5) [vertex] {};
\node (B9) at (9.5,-0.5) [vertex] {};
\node (B10) at (10.5,-0.5) [vertex] {};
\node (B11) at (11.5,-0.5) [vertex] {};
\node (B12) at (12.5,-0.5) [vertex] {};
\node (C0) at (0,-1)  [vertex] {};
\node (C1) at (0.5,-1.1)  [vertex] {};
\node (C2) at (1,-1)  [vertex] {};
\node (C3) at (1.5,-1.1)  [vertex] {};
\node (C4) at (2,-1)  [vertex] {};
\node (C5) at (2.5,-1.1)  [vertex] {};
\node (C6) at (3,-1)  [vertex] {};
\node (C7) at (3.5,-1.1)  [vertex] {};
\node (C8) at (4,-1)  [vertex] {};
\node (C9) at (4.5,-1.1)  [vertex] {};
\node (C10) at (5,-1)  [vertex] {};
\node (C11) at (5.5,-1.1)  [vertex] {};
\node (C12) at (6,-1)  [vertex] {};
\node (C13) at (6.5,-1.1)  [vertex] {};
\node (C14) at (7,-1)  [vertex] {};
\node (C15) at (7.5,-1.1)  [vertex] {};
\node (C16) at (8,-1)  [vertex] {};
\node (C17) at (8.5,-1.1)  [vertex] {};
\node (C18) at (9,-1)  [vertex] {};
\node (C19) at (9.5,-1.1)  [vertex] {};
\node (C20) at (10,-1)  [vertex] {};
\node (C21) at (10.5,-1.1)  [vertex] {};
\node (C22) at (11,-1)  [vertex] {};
\node (C23) at (11.5,-1.1)  [vertex] {};
\node (C24) at (12,-1)  [vertex] {};
\node (C25) at (12.5,-1.1)  [vertex] {};
\node (D0) at (0.5,-1.5)  [vertex] {};
\node (D1) at (1.5,-1.5)  [vertex] {};
\node (D2) at (2.5,-1.5)  [vertex] {};
\node (D3) at (3.5,-1.5)  [vertex] {};
\node (D4) at (4.5,-1.5)  [vertex] {};
\node (D5) at (5.5,-1.5)  [vertex] {};
\node (D6) at (6.5,-1.5)  [vertex] {};
\node (D7) at (7.5,-1.5)  [vertex] {};
\node (D8) at (8.5,-1.5)  [vertex] {};
\node (D9) at (9.5,-1.5)  [vertex] {};
\node (D10) at (10.5,-1.5)  [vertex] {};
\node (D11) at (11.5,-1.5)  [vertex] {};
\node (D12) at (12.5,-1.5)  [vertex] {};
\node (E0) at (0,-2)  [vertex] {};
\node (E1) at (1,-2)  [vertex] {};
\node (E2) at (2,-2)  [vertex] {};
\node (E3) at (3,-2)  [vertex] {};
\node (E4) at (4,-2)  [vertex] {};
\node (E5) at (5,-2)  [vertex] {};
\node (E6) at (6,-2)  [vertex] {};
\node (E7) at (7,-2)  [vertex] {};
\node (E8) at (8,-2)  [vertex] {};
\node (E9) at (9,-2) [vertex] {};
\node (E10) at (10,-2)  [vertex] {};
\node (E11) at (11,-2)  [vertex] {};
\node (E12) at (12,-2)  [vertex] {};
\node (F0) at (0.5,-2.5)  [vertex] {};
\node (F1) at (1.5,-2.5)  [vertex] {};
\node (F2) at (2.5,-2.5)  [vertex] {};
\node (F3) at (3.5,-2.5)  [vertex] {};
\node (F4) at (4.5,-2.5)  [vertex] {};
\node (F5) at (5.5,-2.5)  [vertex] {};
\node (F6) at (6.5,-2.5)  [vertex] {};
\node (F7) at (7.5,-2.5)  [vertex] {};
\node (F8) at (8.5,-2.5)  [vertex] {};
\node (F9) at (9.5,-2.5)  [vertex] {};
\node (F10) at (10.5,-2.5)  [vertex] {};
\node (F11) at (11.5,-2.5)  [vertex] {};
\node (F12) at (12.5,-2.5)  [vertex] {};
\draw (0.5,0.5) circle (4.5pt);
\draw (4.5,0.5) circle (4.5pt);
\draw (6.5,0.5) circle (4.5pt);
\draw (8.5,0.5) circle (4.5pt);
\draw (12.5,0.5) circle (4.5pt);
\draw (3.5,-2.5) circle (4.5pt);
\draw (6.5,-2.5) circle (4.5pt);
\draw (9.5,-2.5) circle (4.5pt);
\node at (13.5,-1) {$\hdots$};
\end{tikzpicture}
\end{array}
\]
In particular, comparing this to \eqref{RingelPicture}, we observe the following coincidences.
\begin{enumerate}
\item\label{ident} The AR quiver of $\CM\mathfrak{R}$ is the quotient of the AR quiver of $\vect\bX$ by $\tau^{12(m-2)+1}=((12(m-2)+1)\ow)$.
\item The canonical tilting bundle $\mathcal{E}$ on $\bX$ is given by the circled vertices in \eqref{RingelPicture}, and so under the identification in \eqref{ident}, this gives the additive generator of $\SCM\mathfrak{R}$.
\end{enumerate}
The same coincidence can also be observed for type $\mathbb{T}$ and $\mathbb{I}$ by replacing $12$ by $6$ and $30$ respectively. 
To give a theoretical explanation to these observations, we need the following preparation.
\begin{lemma}\label{h s lemma}
Define $h$ as follows
\[
\begin{tabular}{*2c}
\toprule
Type&$h$\\
\midrule
$\mathbb{T}$&$6$\\
$\mathbb{O}$&$12$\\
$\mathbb{I}$&$30$\\
\bottomrule
\end{tabular}
\]
Then $(h+1)\ow=-\os$ and $(h(m-2)+1)\ow=-\os_{m-3}$. 
\end{lemma}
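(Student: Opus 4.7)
The plan is to reduce both equalities to one simple linear identity in the rank-one abelian group $\mathbf{L}$, namely $(h+1)\vec{c}=h\vec{s}$, and then verify this identity by a direct one-line computation in each of the three cases.

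First I would rewrite the two claims in terms of $\vec{c}$ and $\vec{s}$. Since $n=3$, the formula $\vec{\omega}=(n-2)\vec{c}-\sum_{i=1}^n\vec{x}_i$ collapses to $\vec{\omega}=\vec{c}-\vec{s}$. Hence
\[
(h+1)\vec{\omega}+\vec{s}=(h+1)(\vec{c}-\vec{s})+\vec{s}=(h+1)\vec{c}-h\vec{s},
\]
so the first statement $(h+1)\vec{\omega}=-\vec{s}$ is equivalent to
\begin{equation}\label{key}
(h+1)\vec{c}=h\vec{s}\qquad\text{in }\mathbf{L}(p_1,p_2,p_3).
\end{equation}
For the second statement, using $\vec{s}_{m-3}=\vec{s}+(m-3)\vec{c}$ and expanding $(h(m-2)+1)\vec{\omega}+\vec{s}_{m-3}=(h(m-2)+1)(\vec{c}-\vec{s})+\vec{s}+(m-3)\vec{c}$, a straightforward rearrangement yields $(m-2)\bigl((h+1)\vec{c}-h\vec{s}\bigr)$, which is $0$ precisely by \eqref{key}. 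So both statements reduce to \eqref{key}.

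Next I would verify \eqref{key} case by case using only the defining relations $p_i\vec{x}_i=\vec{c}$. In each case, multiplying each $\vec{x}_i$ by a suitable integer turns $h\vec{x}_i$ into a multiple of $\vec{c}$, and the key arithmetic identity is simply $\tfrac{h}{p_1}+\tfrac{h}{p_2}+\tfrac{h}{p_3}=h+1$. Concretely:
\[
\begin{array}{l}
(2,3,3),\ h=6:\ \ 6\vec{s}=6\vec{x}_1+6\vec{x}_2+6\vec{x}_3=3\vec{c}+2\vec{c}+2\vec{c}=7\vec{c};\\[2pt]
(2,3,4),\ h=12:\ \ 12\vec{s}=6\vec{c}+4\vec{c}+3\vec{c}=13\vec{c};\\[2pt]
(2,3,5),\ h=30:\ \ 30\vec{s}=15\vec{c}+10\vec{c}+6\vec{c}=31\vec{c}.
\end{array}
\]
Each line confirms \eqref{key}, completing the proof.

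There is essentially no obstacle here: the only subtle point is that $\mathbf{L}$ may have torsion (indeed $\mathbf{L}(2,3,3)$ has $\mathbb{Z}/3$-torsion and $\mathbf{L}(2,3,4)$ has $\mathbb{Z}/2$-torsion), so I deliberately work with the defining presentation of $\mathbf{L}$ rather than with any free quotient. The computation above respects all relations, so the identities hold in $\mathbf{L}$ itself, not merely modulo torsion. This is the one thing I would flag explicitly, but it does not require any work beyond being careful to use only the relations $p_i\vec{x}_i=\vec{c}$.
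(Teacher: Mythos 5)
Your proof is correct and follows essentially the same route as the paper: both arguments reduce to the arithmetic identity $\frac{h}{p_1}+\frac{h}{p_2}+\frac{h}{p_3}=h+1$ applied via the relations $p_i\ox_i=\oc$ (the paper phrases it as $h\ow=-\oc$, you as $(h+1)\oc=h\os$, which are trivial rearrangements of each other), and both derive the second identity from the first by the same linear manipulation. Your explicit remark about working in the presentation of $\bL$ rather than a torsion-free quotient is a sound point of care, though not strictly needed since only the defining relations are ever used.
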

\begin{proof}
If $(p_1,p_2,p_3)=(2,3,3)$, then $6\ow=(6-3-2-2)\oc=-\oc$ and so $7\ow=-\os$.  Similarly, in the case $(p_1,p_2,p_3)=(2,3,4)$ then $12\ow=(12-6-4-3)\oc=-\oc$, thus $13\ow=-\os$.  Lastly, if $(p_1,p_2,p_3)=(2,3,5)$ then $30\ow=(30-15-10-6)\oc=-\oc$, hence $31\ow=-\os$.

Therefore $(h(m-2)+1)\ow=-(m-2)\os-(m-3)\ow=-\os-(m-3)\oc=-\os_{m-3}$.
\end{proof}

Let $\mathcal{C}$ be an additive category with an action by a cyclic group $G=\langle g\rangle\cong\bZ$. 
Assume that, for any $X,Y\in\mathcal{C}$, $\Hom_{\mathcal{C}}(X,g^iY)=0$ holds for $i\gg0$.
The \emph{complete orbit category} $\mathcal{C}/G$ has the same object as $\mathcal{C}$ and the morphism sets are given by
\[
\Hom_{\mathcal{C}/G}(X,Y)\colonequals \prod_{i\in\bZ}\Hom_{\mathcal{C}}(X,g^iY)
\]
for $X,Y\in\mathcal{C}$, where the composition is defined in the obvious way.

\begin{thm}\label{equiv last}
Let $R$ be the $(m-3)$-Wahl Veronese subring associated with $(p_1,p_2,p_3)=(2,3,3)$, $(2,3,4)$ or $(2,3,5)$ and $m\ge3$, and $\mathfrak{R}$ its completion. Let $G\leq \bL$ be the infinite cyclic group generated by the element $-\os_{m-3}=(h(m-2)+1)\ow$.  Then
\begin{enumerate}
\item\label{equiv last 1} There are equivalences $\vect\bX\simeq\CM^\bZ\!R$ and 
\[
F\colon (\vect\bX)/G\xrightarrow{\sim}\CM\mathfrak{R}.
\]
\item\label{equiv last 2} For the canonical tilting bundle $\mathcal{E}$ on $\bX$, we have $\SCM\mathfrak{R}=\add F\mathcal{E}$.
\end{enumerate}
\end{thm}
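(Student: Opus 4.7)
My plan is to prove (1) as a concatenation of three equivalences — Geigle--Lenzing's $\vect\bX\simeq\CM^\bL\!S$, the Veronese equivalence of Theorem~4.2, and an orbit-category description of $\CM\mathfrak{R}$ — and then to derive (2) by tracking the summands of the canonical tilting bundle $\mathcal{E}$ through this chain using Theorem~3.6.

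First, I would establish the equivalence $\vect\bX\simeq\CM^\bZ\!R$. The Geigle--Lenzing theorem yields $\vect\bX\simeq\CM^\bL\!S$. For $\ox=\os_{m-3}=\ox_1+\ox_2+\ox_3+(m-3)\oc$, the coefficients $a_i=1$ trivially satisfy $(p_i,a_i)=1$, so Theorem~4.2 delivers an equivalence $(-)^{\os_{m-3}}\colon\CM^\bL\!S\xrightarrow{\sim}\CM^\bZ\!R$. Composing gives the first assertion of (1). Under this composite, the degree-shift autoequivalence $M\mapsto M(1)$ on $\CM^\bZ\!R$ corresponds to $\mathcal{F}\mapsto\mathcal{F}(\os_{m-3})$ on $\vect\bX$, since degree $1$ in the Veronese $R=S^{\os_{m-3}}$ is precisely $\os_{m-3}$ in the $\bL$-grading of $S$. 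The cyclic group $G=\langle -\os_{m-3}\rangle$ equals $\langle\os_{m-3}\rangle$ as a subgroup of $\bL$, so passing to complete orbit categories reduces (1) to establishing $(\CM^\bZ\!R)/\langle(1)\rangle\simeq\CM\mathfrak{R}$.

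For this last equivalence I would invoke the standard Hom-formula
\[
\Hom_{\mathfrak{R}}(\widehat M,\widehat N)\cong\prod_{i\in\bZ}\Hom^\bZ_R(M,N(i))
\]
for $M,N\in\CM^\bZ\!R$, which identifies $\CM\mathfrak{R}$ as a full subcategory of the complete orbit category. The sums on the right make sense because each $\Hom^\bZ_R(M,N(i))$ is finite dimensional and vanishes for $i\ll 0$. Essential surjectivity requires every indecomposable $X\in\CM\mathfrak{R}$ to lift to a graded CM $R$-module; in our positively graded setting with isolated singularity at the irrelevant ideal, this follows from a Hensel-type lifting of idempotents in the completed endomorphism ring of a sufficiently large graded CM module, together with the Krull--Schmidt theorem.

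For (2), apply Theorem~3.6 to $\ox=\os_{m-3}$. Since each $a_i=1$, Lemma~2.17 gives $I(p_j,p_j-1)=[0,p_j]$, so
\[
\SCM^\bZ\!R\;=\;\add\{S(u\ox_j)^{\os_{m-3}}\mid 1\le j\le 3,\ 0\le u\le p_j\}.
\]
A direct normal-form computation identifies the distinct graded isomorphism classes on the right, up to degree shift, with the set $\{\cO_\bX(\oy)\mid\oy\in[0,\oc\,]\}$ under $\vect\bX\simeq\CM^\bL\!S\simeq\CM^\bZ\!R$: namely $S^{\os_{m-3}}\leftrightarrow\oy=0$, $S(\oc)^{\os_{m-3}}\leftrightarrow\oy=\oc$, and $S(u\ox_j)^{\os_{m-3}}\leftrightarrow\oy=u\ox_j$ for $1\le u\le p_j-1$. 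These are exactly the $p_1+p_2+p_3-1$ summands of the canonical tilting bundle $\mathcal{E}$, and the counts agree. Since $F$ is the composite of these equivalences with completion, $\add F\mathcal{E}$ coincides with the image of $\SCM^\bZ\!R$ in $\CM\mathfrak{R}$, which by Lemma~2.5 is $\SCM\mathfrak{R}$.

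The main obstacle is the essential surjectivity in the orbit-category step: one must show that every indecomposable CM $\mathfrak{R}$-module descends to a $\bZ$-graded CM $R$-module. Everything else is careful bookkeeping along equivalences already established in the paper, combined with the computation of the $i$-series and a normal-form check to match summands with $[0,\oc\,]$.
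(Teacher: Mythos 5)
Your overall architecture matches the paper's proof: Geigle--Lenzing gives $\vect\bX\simeq\CM^\bL\!S$, the coprimality $(p_i,1)=1$ lets Theorem~\ref{WPL as qgrZ} supply $\CM^\bL\!S\simeq\CM^\bZ\!R$ with the degree shift $(1)$ corresponding to $(\os_{m-3})$, and part (2) then follows from Theorem~\ref{specials determined thm} together with $I(p_j,p_j-1)=[0,p_j]$ (Lemma~\ref{i series all}), identifying the graded specials with the summands $\cO_\bX(\oy)$, $\oy\in[0,\oc\,]$, of $\cE$. All of that is sound bookkeeping.

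The genuine gap is the one you flag yourself: the equivalence $(\CM^\bZ\!R)/\bZ\simeq\CM\mathfrak{R}$, specifically essential surjectivity. Your proposed fix --- Hensel lifting of idempotents in the completed endomorphism ring of ``a sufficiently large graded CM module'' --- is circular: to apply idempotent lifting so as to realise an indecomposable $X\in\CM\mathfrak{R}$ as a summand of $\widehat{M}$ for some graded $M$, you must first produce such an $M$ whose completion contains $X$, and that is exactly the gradability problem you are trying to solve. No argument working only from ``positively graded with isolated singularity'' can succeed, since for complete local CM surface singularities of infinite CM type non-gradable CM modules can exist. The missing input is that in the domestic case $\mathfrak{R}$ is a quotient singularity of type $\mathbb{T}$, $\mathbb{O}$ or $\mathbb{I}$ by Proposition~\ref{Veronese=quotient}, hence has only finitely many indecomposable CM modules; the paper then invokes the standard comparison between graded and complete local CM categories in finite CM type (\cite[15.14]{Y}) to obtain $(\CM^\bZ\!R)/\bZ\simeq\CM\mathfrak{R}$. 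With that substitution your proof closes; without it, the central step of part (1) is unproved.
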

\begin{proof}
Since $(h(m-2)+1)\ow=-\os_{m-3}$ is a non-zero element in $-\bL_+$, for any $X,Y\in\vect\bX$, necessarily $\Hom_{\bX}(X,Y(i(h(m-2)+1)\ow))=0$ holds for $i\gg0$.
Therefore the complete orbit category $(\vect\bX)/G$ is well-defined.\\
(1) There are equivalences $\vect\bX\simeq\CM^{\bL}\!S\simeq\CM^\bZ\!R$, where the first equivalence is standard \cite{GL1}, and the second is \ref{WPL as qgrZ}.  Furthermore, the following diagram commutes.
\[
\begin{array}{c}
\begin{tikzpicture}
\node (top 1) at (0,0) {$\vect\bX$};
\node (top 2) at (2.5,0) {$\CM^\bZ\!R$};
\node (bottom 1) at (0,-1.5) {$\vect\bX$};
\node (bottom 2) at (2.5,-1.5) {$\CM^\bZ\!R$};
\draw[->] (top 1) -- node[left] {$\scriptstyle (\os_{m-3})$} (bottom 1);
\draw[->] (top 2) -- node[right] {$\scriptstyle (1)$} (bottom 2);
\draw[->] (top 1) -- (top 2);
\draw[->] (bottom 1) -- (bottom 2);
\end{tikzpicture}
\end{array}
\]
Since $\mathfrak{R}$ has only finitely many indecomposable CM modules (see e.g.\ \cite[15.14]{Y}), there is an equivalence $(\CM^\bZ\!R)/\bZ\simeq\CM\mathfrak{R}$.
Therefore  $(\vect\bX)/G\simeq(\CM^\bZ\!R)/\bZ\simeq\CM\mathfrak{R}$.\\
(2) This follows by the equivalences in \eqref{equiv last 1}, the definition of $\cE$, and \ref{specials determined thm}.
\end{proof}

As one final observation, recall that for a canonical algebra $\Lambda=\Lambda_{\bp,\bl}$, the \emph{preprojective algebra} of $\Lambda$ is defined by
\[
\Uppi\colonequals \bigoplus_{i\ge0}\Uppi_i,\ \ \ \Uppi_i\colonequals \Hom_{\Db(\mod \Lambda)}(\Lambda,\tau^{-i}\Lambda),
\]
where $\tau$ is the Auslander-Reiten translation in the derived category $\Db(\mod \Lambda)$.  Moreover, for a positive integer $t$, we denote the $t$-th Veronese subring of $\Uppi$ by
\[
\Uppi^{(t)}\colonequals \bigoplus_{i\ge0}\Uppi_{ti}.
\]
As notation we write $\Upgamma_m$ for the reconstruction algebra of $R$ above, which  corresponds to one of the types $\mathbb{T}$, $\mathbb{O}$ or $\mathbb{I}$ in \ref{Veronese=quotient}.

The following is an analogue of \ref{deg 0 general prop}, but also describes the other graded pieces. 
\begin{prop}\label{VeroneseGL2}
There is an isomorphism of $\bZ$-graded algebras
\[
\Uppi^{(h(m-2)+1)}\cong \Upgamma_m.
\]
\end{prop}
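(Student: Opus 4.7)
The plan is to identify both $\Pi^{(h(m-2)+1)}$ and $\Gamma_m$ with the same explicit $\bZ$-graded algebra built from the multigraded ring $S$. Concretely, I want to show that both algebras are canonically isomorphic to
\[
A := \bigoplus_{k \geq 0} \bigoplus_{\oy,\oz\in[0,\oc\,]} S_{\oz-\oy+k\os_{m-3}},
\]
with multiplication inherited from multiplication in $S$, and then the result follows.

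For $\Pi^{(h(m-2)+1)}$, I first use the Geigle--Lenzing derived equivalence $\Db(\mod\Lambda)\simeq\Db(\coh\bX)$ sending $\Lambda$ to $\cE$. Since $\bX$ is one-dimensional with dualizing sheaf $\omega_\bX=\cO_\bX(\ow)$, the Serre functor is $(\ow)[1]$, so the AR translation is $\tau=(\ow)$ and $\tau^{-i}\cE=\cE(-i\ow)$. Using $\Hom_\bX(\cO(\oy),\cO(\oz))=S_{\oz-\oy}$, this gives
\[
\Pi_i \;=\; \Hom_\bX(\cE,\cE(-i\ow)) \;=\; \bigoplus_{\oy,\oz\in[0,\oc\,]} S_{\oz-\oy-i\ow}.
\]
Applying \ref{h s lemma}, $-(h(m-2)+1)\ow=\os_{m-3}$, so taking the Veronese subalgebra in degrees divisible by $h(m-2)+1$ yields $\Pi^{(h(m-2)+1)}\cong A$, with the algebra structure given by the natural composition of morphisms in $\coh\bX$, which factors through multiplication in $S$.

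For $\Gamma_m$, I note that $\os_{m-3}=\sum_{i=1}^3\ox_i+(m-3)\oc$ has every $a_i=1$, so $(p_i,a_i)=1$, and by \ref{i series all} each $I(p_j,p_j-1)=[0,p_j]$. Hence \ref{specials determined thm} gives $\SCM^\bZ\!R=\add\{S(\oy)^{\os_{m-3}}\mid\oy\in[0,\oc\,]\}$, and therefore $\Gamma_m=\End_R^\bZ(\bigoplus_{\oy\in[0,\oc\,]}S(\oy)^{\os_{m-3}})$. The coprime condition enables \ref{WPL as qgrZ}, which in turn gives the hypothesis needed to apply \ref{slightly stronger}: for $\oy,\oz\in[0,\oc\,]$,
\[
\Hom_R^\bZ\bigl(S(\oy)^{\os_{m-3}},\,S(\oz)^{\os_{m-3}}\bigr) \;\cong\; \Hom_S^\bL\bigl(S(\oy),S(\oz)\bigr)^{\os_{m-3}} \;=\; \bigoplus_{k\in\bZ} S_{\oz-\oy+k\os_{m-3}},
\]
where the $\bZ$-grading on the left corresponds to the subgroup $\bZ\os_{m-3}\subset\bL$ on the right. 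The summands for $k<0$ vanish (since for $\oy,\oz\in[0,\oc\,]$ and any $m\geq3$, $\oz-\oy+k\os_{m-3}\notin\bL_+$ when $k<0$, as one verifies by checking normal forms), so $\Gamma_m\cong A$ as $\bN$-graded algebras. Since composition of homomorphisms on both sides reduces to multiplication of homogeneous elements in the $\bL$-factorial domain $S$, the algebra structures match, and the isomorphism $\Pi^{(h(m-2)+1)}\cong\Gamma_m$ follows.

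The main obstacle is essentially bookkeeping: verifying that the $\bZ$-grading that $\Gamma_m$ inherits from the $\bN$-grading of $R$ matches up, piece by piece, with the Veronese grading on $\Pi$ under the identification $k\leftrightarrow k(h(m-2)+1)$ supplied by \ref{h s lemma}, and that the two compositions (composition of sheaf morphisms on $\bX$ versus composition of graded $R$-module morphisms) both unravel to the same multiplication in $S$. Once this identification is carefully set up, no further computation is required.
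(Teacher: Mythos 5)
Your proposal is correct and follows essentially the same route as the paper: both identify $\Pi_{i(h(m-2)+1)}$ with $\Hom_S^{\bL}(M,M(i\os_{m-3}))$ for $M=\bigoplus_{\oy\in[0,\oc\,]}S(\oy)$ via the Geigle--Lenzing tilting equivalence, $\tau=(\ow)$, and \ref{h s lemma}, and then match this with $(\Gamma_m)_i$ using the classification of specials and the equivalence $\CM^{\bL}\!S\simeq\CM^{\bZ}\!R$. Your write-up merely makes explicit the appeal to \ref{slightly stronger} and the vanishing in negative degrees, which the paper leaves implicit.
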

\begin{proof}
By \ref{h s lemma} we know that $(h(m-2)+1)\ow=-\os_{m-3}$.
Setting $M=\bigoplus_{\oy\in[0,\oc\,]}S(\oy)$, then $\Uppi^{(h(m-2)+1)}_i$ for $i\ge0$ is given by
\begin{align*}
\Hom_{\Db(\mod\Lambda)}(\Lambda,\tau^{-(h(m-2)+1)i}\Lambda)&\cong\Hom_S^{\bL}(M,M(-i(h(m-2)+1)\ow))\\
&\cong\Hom_S^{\bL}(M,M(i\os_{m-3}))\\
&\cong(\Upgamma_m)_i.
\end{align*}
Thus all the graded pieces match. It is easy to see that the isomorphisms are natural, and so give an isomorphism of graded rings. 
\end{proof}

\begin{remark}{\rm
By \ref{VeroneseGL2}, it follows that in fact on the abelian level 
\[
\qgr^{\bZ}\! \Upgamma_m\simeq \qgr^{\bZ}\! \Uppi^{(h(m-2)+1)}
\]
and so, combining \ref{WPL as qgrZ} and \ref{qgrR via qgrLambda}, 
\[
\coh \bX\simeq  \qgr^{\bZ}\! \Uppi^{(h(m-2)+1)}
\]
for any $m\geq 3$.  This is a stronger version of results of \cite{GL1} and Minamoto \cite{Minamoto}, which combine to say that for the weighted projective lines of non-tubular type there are derived equivalences
\[
\Db(\coh \bX)\simeq \Db(\mod \Lambda)\simeq \Db(\qgr^{\bZ}\! \Uppi).
\]}
\end{remark}

\bibliographymark{References}

\providecommand{\bysame}{\leavevmode\hbox to3em{\hrulefill}\thinspace}
\providecommand{\arXiv}[2][]{\href{https://arxiv.org/abs/#2}{arXiv:#1#2}}
\providecommand{\MR}{\relax\ifhmode\unskip\space\fi MR }
\providecommand{\MRhref}[2]{%
  \href{http://www.ams.org/mathscinet-getitem?mr=#1}{#2}
}
\providecommand{\href}[2]{#2}

\end{document}